\theoremstyle{plain}
\newtheorem{thm}{Theorem}[section]
\newtheorem{lemma}[thm]{Lemma}
\newtheorem{claim}[thm]{Claim}
\newtheorem{prop}[thm]{Proposition}
\newtheorem{prop-df}[thm]{Proposition-Definition}
\newtheorem{conj}[thm]{Conjecture}
\newtheorem{cor}[thm]{Corollary}
\newtheorem{df}[thm]{Definition}
\theoremstyle{definition}
\theoremstyle{remark}
\newtheorem{rk}[thm]{Remark}
\numberwithin{equation}{section}
\def\bbA{\mathbb{A}}
\def\bbC{\mathbb{C}}
\def\bbH{\mathbb{H}}
\def\bbN{\mathbb{N}}
\def\bbP{\mathbb{P}}
\def\bbV{\mathbb{V}}
\def\bbW{\mathbb{W}}
\def\bbZ{\mathbb{Z}}
\def\scrA{\mathscr{A}}
\def\scrC{\mathscr{C}}
\def\scrP{\mathscr{P}}
\def\fraks{\mathfrak{s}}
\def\frakl{\mathfrak{l}}
\def\frakS{\mathfrak{S}}
\def\calC{\mathcal{C}}
\def\calE{\mathcal{E}}
\def\calF{\mathcal{F}}
\def\calG{\mathcal{G}}
\def\calH{\mathcal{H}}
\def\calL{\mathcal{L}}
\def\calM{\mathcal{M}}
\def\calN{\mathcal{N}}
\def\calO{\mathcal{O}}
\def\calP{\mathcal{P}}
\def\calR{\mathcal{R}}
\def\calS{\mathcal{S}}
\def\calU{\mathcal{U}}
\def\calY{\mathcal{Y}}
\def\frakb{\mathfrak{b}}
\def\frakg{\mathfrak{g}}
\def\frakl{\mathfrak{l}}
\def\frakm{\mathfrak{m}}
\def\frakt{\mathfrak{t}}
\def\bfone{\mathbf{1}}
\def\bfb{\mathbf{b}}
\def\bfp{\mathbf{p}}
\def\bfg{\mathbf{g}}
\def\bfK{\mathbf{K}}
\def\bfZ{\mathbf{Z}}
\def\bfO{\mathbf{O}}
\def\bft{\mathbf{t}}
\def\bfA{\mathbf{A}}
\def\bfC{\mathbf{C}}
\def\bfD{\mathbf{D}}
\def\bfF{\mathbf{F}}
\def\bfP{\mathbf{P}}
\def\oo{{{\rm{o}}}}
\def\iso{{\buildrel\sim\over\to}}
\def\dd{{{\rm{d}}}}
\def\ghom{\operatorname{hom}}
\def\homo{\operatorname{\it \mathscr{H}\kern-.25em om}}
\def\ext{\operatorname{\it \mathscr{E}\kern-.25em xt}}
\def\edo{\operatorname{\it \mathscr{E}\kern-.25em nd}}
\def\der{\operatorname{\it \mathscr{D}\kern-.25em er}}
\def\triv{\operatorname{triv}\nolimits}
\def\sgn{\operatorname{sgn}\nolimits}
\def\Inj{\operatorname{Inj}\nolimits}
\def\Proj{\operatorname{Proj}\nolimits}
\def\Tilt{\operatorname{Tilt}\nolimits}
\def\codim{\operatorname{codim}\nolimits}
\def\hom{\operatorname{hom}\nolimits}
\def\ext{\operatorname{ext}\nolimits}
\def\Hom{\operatorname{Hom}\nolimits}
\def\RHom{\operatorname{RHom}\nolimits}
\def\Spec{\operatorname{Spec}\nolimits}
\def\End{\operatorname{End}\nolimits}
\def\max{{\operatorname{max}\nolimits}}
\def\min{{\operatorname{min}\nolimits}}
\def\supp{\operatorname{supp}\nolimits}
\def\wt{\operatorname{wt}\nolimits}
\def\top{{\operatorname{top}\nolimits}}
\def\op{{{\operatorname{{op}}\nolimits}}}
\def\Ext{\operatorname{Ext}\nolimits}
\def\Ker{\operatorname{Ker}\nolimits}
\def\Im{\operatorname{Im}\nolimits}
\def\Irr{\operatorname{Irr}\nolimits}
\def\pro{{\lim\limits_{\longleftarrow}}}
\def\ind{{\lim\limits_{\longrightarrow}}}
\def\bfmod{\operatorname{\!-\mathbf{mod}}\nolimits}
\def\bfgmod{\operatorname{\!-\mathbf{gmod}}\nolimits}
\def\bfMod{\operatorname{\!-\mathbf{Mod}}\nolimits}
\author{P. Shan, M. Varagnolo, E. Vasserot}
\email{peng.shan@unicaen.fr}
\email{michela.varagnolo@u-cergy.fr}
\email{vasserot@math.jussieu.fr}
\title
[Koszul duality of affine Kac-Moody algebras]
{Koszul duality of affine Kac-Moody algebras
and cyclotomic rational double affine Hecke algebras}
\begin{document}
\begin{abstract}
We give a proof of the parabolic/singular Koszul duality for the
category O of affine Kac-Moody algebras. The main new tool is a relation 
between moment graphs and finite codimensional affine Schubert varieties.
We apply this duality to $q$-Schur algebras and to cyclotomic rational
double affine Hecke algebras. This yields a proof of a conjecture of Chuang-Miyachi
relating the level-rank duality with the Ringel-Koszul duality of cyclotomic rational
double affine Hecke algebras.
\end{abstract}
\thanks{This research was partially supported by the ANR grant number ANR-10-BLAN-0110}

\maketitle

\setcounter{tocdepth}{2}

\tableofcontents

\section{Introduction}

The purpose of this paper is 
to give a proof of the parabolic/singular Koszul duality for the
category O of affine Kac-Moody algebras. 
The main motivation for this is the conjecture in \cite{VV} (proved in \cite{RSVV}) relating
the parabolic affine category O and the category O of cyclotomic rational
double affine Hecke algebras (CRDAHA for short).
Using the present work,
we deduce from this conjecture the main conjecture of \cite{CM} which claims that the category O of 
CRDAHA's is Koszul and that the Koszul equivalence is related to the level-rank duality on the Fock space.

There are several possible approaches to Koszul duality for affine Kac-Moody
algebras. In \cite{BY}, a geometric analogue of the composition
of the Koszul and the Ringel duality is given, which involves Whittaker sheaves
on the affine flag variety. Our principal motivation  comes from representation theory of CRDAHA's.
For this, we need to prove a Koszul duality for the category O 
itself rather than for its geometric analogues.

One difficulty of the Kac-Moody case comes from the fact that,
at a positive level, the category O has no tilting modules, while at a negative
level it has no projective modules. One way to overcome this is 
to use a different
category of modules than the usual category O, as the Whittaker 
category in loc.~cit. or a category of linear complexes as in
\cite{MOS}. To remedy this, we use a truncated version of the (affine parabolic) category O.
Under truncation any singular block of an affine parabolic category O at a non-critical level
yields a finite highest-weight category which contains both
tilting and projective objects. We prove that these highest weight categories 
are Koszul and are Koszul dual to each other.

Note that the affine category O is related to two different
types of geometry.
In negative level it is related to the affine flag ind-scheme and to 
finite dimensional affine Schubert varieties.
In positive level it is related to Kashiwara's affine flag manifold 
and to finite codimensional affine Schubert varieties.
In negative level, a localization theorem (from the category O to perverse sheaves on the affine flag ind-scheme) has been worked out by
Beilinson-Drinfeld and Frenkel-Gaitsgory in \cite{BD}, \cite{FG2}.
A difficulty in the proof of the main theorem comes from the absence of a localization theorem (from the category O to perverse sheaves on
Kashiwara's affine flag manifold)
at the positive level. To overcome this we use standard Koszul duality and Ringel duality to relate the positive and negative level.

Our general argument is similar to the one in \cite{B}, \cite{BGS}. 
For this, we need an affine analogue of the Soergel functor on the category O.
We use the functor introduced by Fiebig in \cite{F2}.
To define it, we must introduce the deformed category O, which is a highest weight category
over a localization of a polynomial ring, and some category of sheaves over a moment graph.

By the work of Fiebig, sheaves over moment graphs give an algebraic analogue of
equivariant perverse sheaves associated with finite dimensional affine Schubert varieties.
An important new tool in our work is a relation 
between sheaves over some moment graph and equivariant perverse sheaves
associated with finite codimensional affine Schubert varieties, see Appendix \ref{app:B}.
This relation is of independent interest.

Let us now explain the structure of the paper.
Section 2 contains generalities on highest weight categories and standard Koszul duality.
In Section \ref{sec:3} we introduce the affine parabolic category O and its truncated version.
Section \ref{sec:momentgraph} contains some genreralities on moment graphs and the relation with the deformed affine category O. 
The section \ref{sec:5} is technical and contains the proof of the main theorem.
Next, we apply the Koszul duality to CRDAHA's and $q$-Schur algebras in Section \ref{app:A}.

The Kazhdan-Lusztig equivalence \cite{KL} implies
that the module category of the $q$-Schur algebra is equivalent to
a highest weight subcategory of the affine category O of $GL_n$ 
at a negative level.
Thus, our result implies that the $q$-Schur algebra is Morita equivalent to a Koszul algebra
(and also to a standard Koszul algebra),
see Section 
\ref{sec:schurkoszul}
\footnote{After our paper was written, we received a copy of \cite{CM}
where a similar result is obtained by different methods}.
To our knowledge, this was not proved so far.
There are different possible 
approachs for proving that the $q$-Schur algebra is Koszul.
Some are completely algebraic, see e.g., \cite{PS}.
Some use analogues of the Bezrukavnikov-Mirkovic 
modular localization theorem, see e.g., \cite{Rc}.
Our approach has the advantage that it yields
an explicit description of the Koszul dual of the $q$-Schur algebra.

Finally, we apply the Koszul duality of the category O to CRDAHA's.
More precisely, in \cite{VV} some higher analogue
of the $q$-Schur algebra has been introduced. 
It is a highest-weight subcategory 
of the affine parabolic category O. 
Since the category is standard Koszul, these higher $q$-Schur algebras are also Koszul
and their are Koszul dual to each other.
Next, it was conjectured in loc.~cit. (and proved in \cite{RSVV}) that
these higher $q$-Schur algebras are
equivalent to the category O of the CRDAHA.
Thus, our result also implies that the CRDAHA's are Koszul. Using this,
we prove the level-rank conjecture
for CRDAHA's in \cite{CM}.

\vspace{1cm}

\section{Preliminaries on Koszul rings and highest weight categories}

\subsection{Categories}

For an object $M$ of a category $\bfC$
let ${\bf 1}_M$ be the identity endomorphism of $M$.
Let $\bfC^{\op}$ be the category opposite to $\bfC$.

A functor of additive categories is always assumed to be additive.
If $\bfC$ is an exact category then $\bfC^\op$ is 
equipped with the exact structure such that
$0\to M'\to M\to M''\to 0$ is exact in $\bfC^\op$ if and only if 
$0\to M''\to M\to M'\to 0$ is exact in $\bfC$.
An {\it exact functor} of exact categories is a 
functor which takes short exact sequences
to short exact sequences.
Unless specified otherwise, a functor will always be a covariant functor.
A contravariant functor $F:\bfC\to\bfC'$ is exact if and only if the functor
$F:\bfC^\op\to\bfC'$ is exact.

Given an abelian category $\bfC$,
let $\Irr(\bfC)$ be the set of isomorphism classes of simple objects.
Let $\Proj(\bfC)$ and $\Inj(\bfC)$ be the sets of 
isomorphism classes of indecomposable projective, injective objects respectively.
For an object $M$ of  $\bfC$ we abbreviate 
$\Ext_{\bfC}(M)=\Ext_{\bfC}(M,M),$
where $\Ext_\bfC$ stands for the direct sum of all $\Ext^i_\bfC$'s.

Let $R$ be a commutative, noetherian, integral domain.
An {\it $R$-category} is an additive
category enriched over the tensor category of $R$-modules.
Unless mentioned otherwise, a functor of 
$R$-categories is always assumed to be $R$-linear.
A {\it hom-finite $R$-category} is an $R$-category 
whose Hom spaces are finitely
generated over $R$. 

An additive category  is {\it Krull-Schmidt} if any object  has a decomposition
such that each summand is indecomposable with local
endomorphism ring.
A full additive subcategory of a Krull-Schmidt category  is again Krull-Schmidt
if every idempotent splits (i.e., if it is closed under direct summands).
A hom-finite $k$-category over a field $k$ is Krull-Schmidt if 
every idempotent splits (e.g., if it is abelian).


We call {\it finite abelian $k$-category} a $k$-category which is 
equivalent to the category of finite dimensional modules over a finite 
dimensional $k$-algebra.
 
For any abelian category $\bfC$, let $\bfD^b(\bfC)$ be the corresponding
bounded derived category.

\vspace{2mm}

\subsection{Graded rings}\label{sec:gradedrings}
For a ring $A$ let $A^\op$ be the opposite ring. Let
$A\bfMod$ be the category of left $A$-modules and let
$A\bfmod$ be the subcategory of the finitely generated ones.
We abbreviate 
$\Irr(A)=\Irr(A\bfmod)$, $\Proj(A)=\Proj(A\bfmod)$ and $\Inj(A)=\Inj(A\bfmod).$

By a graded ring $\bar A$ we'll always mean a $\bbZ$-graded ring.
Let $\bar A\bfgmod$ be 
the category of finitely generated graded $\bar A$-modules.
We abbreviate $\bfD^b(\bar A)=\bfD^b(\bar A\bfgmod),$
$\Irr(\bar A)=\Irr(\bar A\bfgmod),$ $\Inj(\bar A)=\Inj(\bar A\bfgmod)$ and
$\Proj(\bar A)=\Proj(\bar A\bfgmod).$
Given a graded $\bar A$-module $M$ and an integer $j$, 
let $M\langle j\rangle$ be  the graded $\bar A$-module obtained from $\bar M$
by shifting the grading by $j$, i.e., such that $(M\langle j\rangle)^i=M^{i-j}$.
Given $M,N\in\bar A\bfgmod$ let $\hom_{\bar A}(M,N)$ and 
$\ext_{\bar A}(M,N)$ be the morphisms and extensions in the category of graded modules.

We say that the ring $\bar A$ is {\it positively graded} if $\bar A^{<0}=0$ and if
$\bar A^{0}$ is a semisimple ring.
A finite dimensional graded algebra $\bar A$ over a field
$k$ is positively graded if $\bar A^{<0}=0$ and $\bar A^0$ is 
semisimple as an $\bar A$-module.
Here $\bar A^0$ is identified with $\bar A/\bar A^{>0}$.

A graded module $M$ is called \emph{pure of weight
$i$} if it is concentrated in degree $-i$, i.e., $M = M^{-i}$.
Suppose $\bar A$ is a positively graded ring. Then any
simple graded module is pure, and any pure graded module is semisimple.

Assume that $k$ is a field and that
$\bar A$ is a positively graded $k$-algebra.
We say that $\bar A$ is {\it basic} if 
$\bar A^0$ isomorphic to a finite product of copies of $k$
as a $k$-algebra.

Assume that $\bar A$ is finite dimensional.
Let $\{1_x\,;\,x\in\Irr(\bar A^0)\}$ be a complete system of 
primitive orthogonal idempotents of $\bar A^0$.
The {\it Hilbert polynomial} of $\bar A$ is the 
matrix $P(\bar A,t)$ with entries $\bbN[t]$ given by
$P(\bar A,t)_{x,x'}=\sum_it^i\dim\bigl(1_x\bar A^i1_{x'}\bigr)$
for each $x,x'$.
Assume further that $\bar A$ is positively graded.
We have canonical bijections
$\Irr(A)=\Irr(\bar A^0)=\{\bar A^01_x\}$
such that $x$ is the isomorphism class of $\bar A^01_x$.
Since $A\bfmod$ is Krull-Schmidt,
there is a canonical bijection
$\top:\Proj(A)\to\Irr(A)$, $P\mapsto\top(P)$.
We have 
$\top(A1_x)=\bar A^01_x=x.$
The set $\{1_x\,;\,x\in\Irr(\bar A^0)\}$
is a complete system of 
primitive orthogonal idempotents of $A$.

Given a graded commutative, noetherian, integral domain $R$,
a {\it graded $R$-category} is an additive category 
enriched over the monoidal category of graded $R$-modules.

\vspace{2mm}

\subsection{Koszul duality}
\label{sec:koszul}
Let $k$ be a field and
$\bar A$ be a  graded $k$-algebra.
Let $A$ be the (non graded) $k$-algebra underlying $\bar A$.

The {\it Koszul dual} of $\bar A$ is the graded $k$-algebra 
$E(\bar A)=\Ext_{A}(\bar A^0).$
Forgetting the grading of $E(\bar A)$, we get a $k$-algebra $E(A)$.
It is finite dimensional if $A$ is finite dimensional and has finite global dimension.

A \emph{linear projective resolution} of a graded $\bar A$-module $M$ is a 
graded projective resolution $\dots\to P_1\to P_0\to M\to 0$ such that, for each $i$, the projective graded 
$\bar A$-module $P_i$ is finitely generated by a set of homogeneous elements of degree $i$.

Assume that $\bar A$ is positively graded (in the sense of Section \ref{sec:gradedrings}).
We say that $\bar A$ is \emph{Koszul}
if each simple graded module which is pure of weight 0 has a linear
projective resolution.
If $\bar A$ is Koszul, then we say that $A$ 
{\it has a Koszul grading}.
If $\bar A$ is finite dimensional then this grading 
is unique up to isomorphism of graded $k$-algebras, 
see \cite[cor.~2.5.2]{BGS}.

Assume that $\bar A$ is finite dimensional, has finite global dimension and is  Koszul.
Then $E(\bar A)$ is Koszul and $E^2(\bar A)=\bar A$ canonically, see \cite[thm.~1.2.5]{BGS}.
Put $\bar A^!=E(\bar A)^\op$ and $A^!=E(A)^\op$. Note that $\bar A^!$ is also Koszul by \cite[prop.~2.2.1]{BGS}.

For each $x\in\Irr(A)=\Irr(\bar A^0)$ the idempotent $1_x\in\bar A^0$ yields an idempotent
$1_x^!=E(1_x)\in E(A)$ in the obvious way.
The set $\{1_x^!\,;\,x\in\Irr(A)\}$
is a complete system of 
primitive orthogonal idempotents of $E(A)$.

By \cite[thm.~2.12.5, 2.12.6]{BGS}, there is an equivalence of triangulated categories
$E:\bfD^b(\bar A)\to \bfD^b(\bar A^!)$
such that $E(M\langle i\rangle)=E(M)[-i]\langle-i\rangle$ and
$E(\bar A^01_x)=\bar A^! 1_x^!$ for each $x\in\Irr(A)$.
We'll call it the \emph{Koszul equivalence}.
By forgetting the grading, we get a
bijection $\Irr(A)\to\Proj(A^!)$, $x\mapsto A^!1_x^!$.
It induces a bijection $(\bullet)^!=\top\circ E:\Irr(A)\to\Irr(A^!)$, which will be called the {\it natural bijection}
between $\Irr(A)$ and $\Irr(A^!).$

Let $\bfC$ be a finite abelian $k$-category.
We say that {\it $\bfC$ has a Koszul grading} if 
there is a projective generator $P$ such that
the ring $A=\End_\bfC(P)^{\op}$ has a Koszul grading $\bar A$. 
We may also simply say that \textit{$\bfC$ is Koszul} and
we abbreviate $\bfC^!=A^!\bfmod.$

The following lemmas are well-known.

\vspace{2mm}

\begin{lemma}
\label{lem:1.1}
Let $P$, $A$, $\bfC$ be as above.
If $\bar A$ is a positively graded 
then  
$E(\bar A)=\Ext_{\bfC}(L)$, where $L=\top(P)$.
\end{lemma}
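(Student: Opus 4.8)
The plan is to push the whole statement through the standard Morita-type equivalence between $\bfC$ and the category of finitely generated $R$-modules, tracking the object $L$ and its self-extensions along the way.

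First I would recall that, since $\bfC$ is a finite abelian $k$-category with projective generator $P$, the functor
$$F=\Hom_\bfC(P,-)\colon\bfC\to\bfmod(R)$$
is an equivalence of abelian categories, where $R=\End_\bfC(P)^\op$ acts on $\Hom_\bfC(P,M)$ on the left in the usual way, and where $F$ sends the generator $P$ to the regular module $R$. Being an equivalence of abelian categories, $F$ is exact, preserves projectives, and therefore induces, for all objects $M,N$ of $\bfC$, isomorphisms $\Ext^i_\bfC(M,N)\simeq\Ext^i_R(FM,FN)$ for every $i$, compatible with the Yoneda (splice) products; in particular it yields an isomorphism of graded $k$-algebras $\Ext_\bfC(M)\simeq\Ext_R(FM)$.

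Next I would identify $FL$. The top of $P$ is $L=P/\rad P$, and $F$ carries maximal subobjects to maximal subobjects, hence $F(\rad P)=\rad(R)$ (the radical of the regular module, i.e.\ the Jacobson radical of $R$), so $FL\simeq R/\rad(R)$. The Jacobson radical $\rad(R)$ does not depend on the grading, and by the very definition of a positively graded algebra we have $\rad(\bar R)=\bar R^{>0}$, whence $R/\rad(R)=\bar R^0$ as an $R$-module. Combining this with the previous paragraph,
$$\Ext_\bfC(L)=\Ext_\bfC(L,L)\simeq\Ext_R(\bar R^0,\bar R^0)=\Ext_R(\bar R^0)=E(\bar R)$$
as graded $k$-algebras, which is the assertion.

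The only point needing care — rather than a genuine obstacle — is the bookkeeping: checking that the left/right module conventions arising from the $\op$ in $R=\End_\bfC(P)^\op$ are used consistently throughout, and that the cohomological grading on $\Ext_R(\bar R^0)$ that defines $E(\bar R)$ is exactly the one transported by $F$ from $\Ext_\bfC(L)$. Both are immediate once one records that $F$ is an additive exact equivalence sending $P$ to $R$, so nothing beyond unwinding the definitions is involved.
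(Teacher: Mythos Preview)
Your proof is correct and follows essentially the same approach as the paper: transport the computation through the equivalence $\bfC\to\bfmod(R)$, $X\mapsto\Hom_\bfC(P,X)$, observe that it sends $L$ to $\bar R/\bar R^{>0}=\bar R^0$, and conclude. The paper's proof is simply a two-line version of what you wrote out in detail.
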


\vspace{.5mm}

\begin{proof}
The equivalence 
$\bfC\to A\bfmod$, $X\mapsto\Hom_\bfC(P,X)$
takes $L$ to $\bar A/\bar A^{>0}$.
Therefore
$E(\bar A)=
\Ext_{\bfC}(L).$
\end{proof}

\vspace{2mm}

\begin{lemma}
\label{lem:1.2}
Let $\bfC$ be a finite abelian $k$-category with
a Koszul grading. If
$\bfD$ is a Serre subcategory and the inclusion
$\bfD\subset\bfC$ induces injections on extensions, then 
$\bfD$ has also a Koszul grading.
\end{lemma}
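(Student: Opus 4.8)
The plan is to realise $\bfC'$ as the module category of a graded quotient of the Koszul algebra attached to $\bfC$, and then to transport Koszulity through the hypothesis on extensions. Concretely, I would fix a projective generator $P$ of $\bfC$, set $R=\End_\bfC(P)^{\op}$, and use the standard equivalence $\bfC\simeq\bfmod(R)$, $X\mapsto\Hom_\bfC(P,X)$, which sends the simple objects of $\bfC$ to $\Irr(R)=\{\bar R^01_x\}$, where $\{1_x\}$ is a complete system of primitive orthogonal idempotents. Since $\bfC'$ is thick, it is the full subcategory of $\bfC$ consisting of the objects all of whose composition factors lie in $S=\{x\,;\,L_x\in\bfC'\}$; with $e=\sum_{x\notin S}1_x$ and $R'=R/ReR$, restriction of scalars along $R\twoheadrightarrow R'$ identifies $\bfmod(R')$ with this subcategory, so $\bfC'\simeq\bfmod(R')$. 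Hence $\bfC'$ is a finite abelian $k$-category with projective generator $R'$, and by definition $\bfC'$ has a Koszul grading precisely when $R'$ carries one.

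Next I would let $\bar R$ be the Koszul grading of $R$, which exists by assumption. As $e\in\bar R^0$ is homogeneous, $\bar Re\bar R$ is a homogeneous two-sided ideal with $(\bar Re\bar R)^0=\bar R^0e\bar R^0$, so $\bar R':=\bar R/\bar Re\bar R$ is a finite-dimensional graded $k$-algebra with $(\bar R')^{<0}=0$, with $(\bar R')^0=\bar R^0/\bar R^0e\bar R^0$ semisimple, and with $(\bar R')^{>0}$ nilpotent and hence equal to its radical; thus $\bar R'$ is positively graded and its underlying ungraded algebra is $R'$. It then suffices to prove that $\bar R'$ is Koszul, which by the standard characterisation (see \cite{BGS}) amounts to showing that for each $n$ the space $\Ext^n_{\bar R'}((\bar R')^0,(\bar R')^0)$, with its internal grading induced by $\bar R'$, is concentrated in internal degree $n$.

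To get this I would use that restriction of scalars along the graded surjection $\bar R\twoheadrightarrow\bar R'$ is an exact functor $\bfgmod(\bar R')\to\bfgmod(\bar R)$ commuting with the shifts $\{j\}$ and carrying $(\bar R')^0$ to the semisimple graded $\bar R$-module $M=\bar R^0(1-e)=\bigoplus_{x\in S}\bar R^01_x$; hence for each $n$ it induces a map of internally graded spaces
$$\Ext^n_{\bar R'}\bigl((\bar R')^0,(\bar R')^0\bigr)\longrightarrow\Ext^n_{\bar R}(M,M).$$
Its target is a graded direct summand of $\Ext^n_{\bar R}(\bar R^0,\bar R^0)$ (via $\bar R^0=\bar R^0e\oplus M$), hence concentrated in internal degree $n$ because $\bar R$ is Koszul. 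Forgetting the internal grading, the displayed map becomes the restriction-of-scalars map $\Ext^n_{R'}((\bar R')^0)\to\Ext^n_R(M)$, that is, under the equivalences above, $\bigoplus_{x,x'\in S}\bigl(\Ext^n_{\bfC'}(L_x,L_{x'})\to\Ext^n_\bfC(L_x,L_{x'})\bigr)$, which is injective because the inclusion $\bfC'\subset\bfC$ induces injections on extensions. A map of internally graded spaces which is injective after forgetting the grading is injective in each internal degree; since the target above sits in internal degree $n$, so does the source. This is exactly the needed concentration, so $\bar R'$ is Koszul and the lemma follows.

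The step I expect to need the most care is the claim, used in the last paragraph, that forgetting the internal grading of the displayed comparison map returns precisely the restriction-of-scalars map on ordinary $\Ext$ that the hypothesis constrains. This rests on the identification $\bigoplus_j\Ext^n_{\bar R'}(N,N'\{j\})=\Ext^n_{R'}(N,N')$ for finitely generated graded modules together with the functoriality of restriction of scalars on bounded derived categories; granting that, the rest is routine bookkeeping.
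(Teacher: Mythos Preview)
Your proposal is correct and follows essentially the same approach as the paper's own proof: realise $\bfC'$ as $\bfmod(R')$ for $R'=R/ReR$ with $e=\sum_{x\notin\Irr(\bfC')}1_x$, push the Koszul grading $\bar R$ down to a positive grading $\bar R'$ on $R'$, and then deduce the required Ext-concentration for $\bar R'$ from that of $\bar R$ via the hypothesis that the inclusion injects on extensions. The only cosmetic difference is that the paper phrases the Koszul criterion as the vanishing $\Ext^i_{\bar R}(\bar L,\bar L')=0$ unless $i=d_{L'}-d_L$ (\cite[prop.~2.1.3]{BGS}) rather than as concentration in internal degree $n$, and it asserts directly that $\bfgmod(\bar S)\subset\bfgmod(\bar R)$ injects on extensions; your final paragraph spelling out the passage between graded and ungraded $\Ext$ is in fact the justification for that step.
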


\vspace{.5mm}

\begin{proof}
Let $P_L$ be the projective cover of $L\in\Irr(\bfC)$. The set
$\Irr(\bfC)$ is finite and  $P=\bigoplus_LP_L$ is a minimal projective generator.
Set $A=\End_\bfC(P)^{\op}$. 

Let $A_I$ be the quotient of $A$ by the two-sided ideal $I$
generated by $\{{\bf 1}_{P_L}\,;\,L\notin \Irr(\bfD)\}.$
The pull-back by the ring homomorphism $A\to A_I$ identifies
$A_I\bfmod$ with the full subcategory of $A\bfmod$ consisting of the modules killed by $I$.

An object $M\in\bfC$ belongs to $\bfD$ if and only if $\Hom_\bfC(P_L,M)=0$
whenever $L\notin\Irr(\bfD)$.
Therefore, the equivalence 
$\Hom_\bfC(P,\bullet):\bfC\to A\bfmod$
identifies $\bfD$ with the full subcategory of $A\bfmod$ consisting of the modules killed by $I$.
We deduce that $\bfD$ is equivalent to $A_I\bfmod$.

Now, let $\bar A$ be the Koszul grading on $A$.
Since $\bar A$ is positively graded,
the idempotent ${\bf 1}_{P_L}$ has degree 0. 
Thus $I$ is an homogeneous ideal of $\bar A$.
Hence $\bar A$ yields a grading $\bar A_I$ on $A_I$ such that $\bar A_I^{<0}=0$ 
and $\bar A_I^0$ is semi-simple
as an $\bar A_I$-module. Thus $\bar A_I$ is also positively graded.

Fix a graded lift $\bar L$ of $L\in\Irr(A)$, see Section \ref{sec:standardkoszul}.
The graded $\bar A$-module $\bar L$ is  pure.
Let $d_L$ be its degree.
By \cite[prop.~2.1.3]{BGS} we have
$\ext_{\bar A}^i(\bar L,\bar L')=0$ 
unless $i=d_{L'}-d_L$ for
$L,L'\in \Irr(\bfC)$. We must check that
$\ext_{\bar A_I}^i(\bar L,\bar L')=0$ unless $i=d_{L'}-d_L$
if 
$L,L'\in \Irr(\bfD)$.
This is obvious because the inclusion
$\bar A_I\bfgmod\subset\bar A\bfgmod$
induces injections on extensions. 
\end{proof}

\vspace{2mm}

\subsection{Highest weight categories}
\label{sec:5}
Let $R$ be a commutative, noetherian ring with 1
which is a local ring with residue field $k$.

Let $\bfC$ be an $R$-category which is equivalent to the
category of finitely generated modules over a finite projective $R$-algebra $A$.

The category $\bfC$ is a \textit{highest weight $R$-category} 
if it is equipped with a poset of isomorphisms of objects $(\Delta(\bfC),\leqslant)$ called the standard objects satisfying the following conditions:
\begin{itemize}
\item the objects of $\Delta(\bfC)$ are projective over $R$
\item given $M\in\bfC$ such that $\Hom_{\bfC}(D,M)=0$ for all $D\in\Delta(\bfC)$,
we have $M=0$
\item given $D\in\Delta(\bfC)$, there is a projective object $P\in\bfC$ and a surjection 
$f:P\twoheadrightarrow D$ such that $\ker f$ has a (finite) filtration whose successive
quotients are objects $D'\in\Delta$ with $D'>D$.
\item given $D\in\Delta$, we have $\End_{\bfC}(D)=R$
\item given $D_1,D_2\in\Delta$ with $\Hom_{\bfC}(D_1,D_2)\not=0$, we have $D_1\le D_2$.
\end{itemize}
See \cite[def.~4.11]{Ro}. Note that since $R$ is local, any finitely generated projective $R$-module is free, 
hence the set $\tilde\Delta$ in loc.~cit.~is the set of finite direct sums of objects in $\Delta$.
The partial order $\leqslant$ is called the \emph{highest weight order} on $\bfC$. We write $\Delta(\bfC)=\{\Delta(\lambda)\}_{\lambda\in\Lambda}$ for $\Lambda$ an indexing poset. 

\smallskip

\begin{lemma}\label{lem:hwtbasic}
Let $\bfC$ be a highest weight $R$-category. Given $\lambda\in\Lambda$, there
is a unique (up to isomorphism) indecomposable projective (resp. injective, tilting, costandard) object associate with $\lambda$, denoted by 
 $P(\lambda)$ (rep. $I(\lambda)$,
 $T(\lambda)$, $\nabla(\lambda)$) such that
\begin{itemize}
\item[{\small($\nabla$)}] $\Hom_{\bfC}(\Delta(\mu),\nabla(\lambda))\simeq \delta_{\lambda\mu}R$
and $\Ext^1_{\bfC}(\Delta(\mu),\nabla(\lambda))=0$ for all $\mu\in\Lambda$,
\item[{\small($P$)}] there is a surjection $f:P(\lambda)\twoheadrightarrow\Delta(\lambda)$ 
such that $\ker f$ has a filtration whose successive
quotients are $\Delta(\mu)$'s with $\mu>\lambda$,
\item[{\small($I$)}] there is an injection $f:\nabla(\lambda)\hookrightarrow I(\lambda)$
such that $\mathrm{coker} f$ has a filtration whose successive
quotients are $\nabla(\mu)$'s with $\mu>\lambda$,
\item[{\small($T$)}] there is an injection $f:\Delta(\lambda)\hookrightarrow T(\lambda)$
and a surjection $g:T(\lambda)\twoheadrightarrow \nabla(\lambda)$ such that
$\mathrm{coker} f$ (resp. $\ker g$) has a 
filtration whose successive
quotients are $\Delta(\mu)$'s (resp. $\nabla(\mu)$'s) with $\mu<\lambda$.
\end{itemize}
\end{lemma}
See e.g. \cite[prop~.2.1]{RSVV}. The objects
$\nabla(\lambda)$, $\Delta(\lambda)$, $P(\lambda)$, $I(\lambda)$
and $T(\lambda)$ are projective over $R$.
We have $\Proj(\bfC)=\{P(\lambda)\}_{\lambda\in\Lambda}$, $\Inj(\bfC)=\{I(\lambda)\}_{\lambda\in\Lambda}$. The set $\Tilt(\bfC)=\{T(\lambda)\}_{\lambda\in\Lambda}$ is the set of isomorphism classes of indecomposable tilting
objects in $\bfC$. Let $\nabla(\bfC)=\{\nabla(\lambda)\}_{\lambda\in\Lambda}$. Note that $\Delta(\lambda)$ has a unique simple quotient $L(\lambda)$. The set of isomorphism classes of simple objects in $\bfC$ is given by $\Irr(\bfC)=\{L(\lambda)\}_{\lambda\in\Lambda}$.

Let $\bfC^\Delta$, $\bfC^\nabla$ be the full 
subcategories of $\bfC$ consisting of the 
{\it $\Delta$-filtered} and {\it $\nabla$-filtered} objects, i.e., 
the objects having a finite
filtration whose successive  quotients are standard, costandard respectively.
These categories are exact. Recall that a tilting object is by definition an object that is both $\Delta$-filtered and $\nabla$-filtered.

The opposite of $\bfC$ 
is a highest weight $R$-category such that
$\Delta(\bfC^\op)=\nabla(\bfC)$  with the opposite highest weight order.

Given a commutative local $R$-algebra $S$ and an $R$-module $M$, we write
$SM=M\otimes_RS$. Let $S\bfC=SA\bfmod.$ We have the following, see e.g. \cite[prop.~2.1, 2.4, 2.5]{RSVV}.



\vspace{2mm}

\begin{prop} \label{prop:2.3} 
Let $\bfC$ be a highest weight $R$-category, and let $S$ be a commutative local $R$-algebra
with 1.
For any $M,N\in\bfC$ the following holds :

(a) if $S$ is $R$-flat then $S\Ext^d_\bfC(M,N)=\Ext^d_{S\bfC}(SM,SN)$ for all $d\geqslant 0,$

(b) if either $M\in\bfC$ is projective or ($M\in\bfC^\Delta$ and $N\in\bfC^\nabla$), then we have
$S\Hom_\bfC(M,N)=\Hom_{S\bfC}(SM,SN)$,

(c) if $M$ is $R$-projective then $M$ is projective in $\bfC$ 
(resp.~ $M$ is tilting in $\bfC$, $M\in\bfC^\Delta$)
if and only if $k M$ is projective in $k\bfC$
(resp.~ $k M$ is tilting in $k\bfC,$ $k M\in k\bfC^\Delta$),

(d) if either ($M$ is projective in $\bfC$ and $N$ is $R$-projective)
or ($M\in\bfC^\Delta$ and $N\in\bfC^\nabla$) 
then $\Hom_\bfC(M,N)$ is $R$-projective,

(e) the category $S\bfC$ is a highest weight 
$S$-category on the poset $\Lambda$ with standard objects
$S\Delta(\lambda)$
and costandard objects $S\nabla(\lambda)$. The projective, injective and tilting objects associated with $\lambda$ are $SP(\lambda)$, $SI(\lambda)$ and $ST(\lambda)$
\qed
\end{prop}

\vspace{2mm}

\begin{rk}
\label{rk:2.3}
For any subset $\Sigma\subset\Lambda$,
let $\bfC[\Sigma]$ be the Serre subcategory generated by all the $L(\lambda)$ with $\lambda\in\Sigma$ and 
let $\bfC(\Sigma)$ be the Serre quotient
$\bfC/\bfC[\Irr(\bfC)\setminus \Sigma]$.

An {\it ideal} in the poset $\Lambda$
is a subset of the form $I=\bigcup_{i\in I}\{\leqslant i\}$.

A {\it coideal}  is the complement of an ideal, i.e., a subset of the form
$J=\bigcup_{j\in J}\{\geqslant j\}$.

Now, assume that $\bfC$ is a highest weight category over a field $k$, 
and that $I$, $J$ are respectively an ideal and a coideal of $\Lambda$.
Then $\bfC[I]$, $\bfC(J)$ are highest weight categories and
the inclusion $\bfC[I]\subset\bfC$ induces injections on extensions by
\cite[thm.~3.9]{CPS2}, \cite[prop.~A.3.3]{Do}.
\end{rk}

\vspace{2mm}

\subsection{Ringel duality}
\label{sec:R}
Let $R$ be a commutative, noetherian ring with 1
which is a local ring with residue field $k$.

Let $\bfC$ be a highest-weight $R$-category 
which is equivalent to $A\bfmod$ for a finite projective $R$-algebra $A$. 

We call $T=\bigoplus_{\lambda\in\Lambda}T(\lambda)$ the \emph{characteristic tilting module}. 
Set $D(A)=\End_\bfC(T)$ and $A^\diamond=D(A)^\op.$
The {\it Ringel dual} of
$A$ is the $R$-algebra $A^\diamond,$
the  Ringel dual of $\bfC$ is the category $\bfC^\diamond=A^\diamond\bfmod.$


The category $\bfC^\diamond$ is a highest-weight $R$-category on the poset $\Lambda^\op$.
We have an equivalence of triangulated categories
$(\bullet)^\diamond:\ \bfD^b(\bfC)\to \bfD^b(\bfC^\diamond)$ 
called the {\it Ringel equivalence}. 
It restricts to an equivalence of exact categories
$(\bullet)^\diamond:\ \bfC^\Delta\to (\bfC^\diamond)^\nabla$ such that $M\mapsto\RHom_\bfC(M,T)^*$. Here $(\bullet)^*$ is the dual as a $k$-vector space.
We have $\Delta(\lambda)^\diamond=\nabla^\diamond(\lambda)$, $P(\lambda)^\diamond=T^\diamond(\lambda)$ and 
$T(\lambda)^\diamond=I^\diamond(\lambda)$,
 for each $\lambda\in\Lambda$, see \cite[prop.~4.26]{Ro}. 

The ring $(A^\diamond)^\diamond$ is Morita equivalent to $A$,
see loc.~cit. and \cite[sec.~A.4]{Do}.

Now, assume that $R=k$.
For each primitive idempotent $e\in A,$ there is a unique $\lambda\in\Lambda$ 
such that $Ae=P(\lambda)$. We define $e^\diamond\in A^\diamond$ to be the primitive idempotent
such that $A^\diamond e^\diamond=P^\diamond(\lambda)$.
The bijection $(\bullet)^\diamond:\Irr(\bfC)\to\Irr(\bfC^\diamond)$ given by $L(\lambda)\mapsto L^\diamond(\lambda)$ is called the {\it natural bijection}
between $\Irr(\bfC)$ and $\Irr(\bfC^\diamond).$

The following is well-known, see, e.g., \cite[prop.~A.4.9]{Do}.

\begin{lemma}\label{lem:ringeltroncation}
A subset $I\subset \Lambda$ is an ideal if and only if it is a coideal in $\Lambda^\op$. We have 
$\bfC[I]^\diamond=\bfC^\diamond(I),$
and the Ringel equivalence
factors to an equivalence of categories
$\bfC[I]^\Delta\to \bfC^\diamond(I)^\nabla$.
\end{lemma}

\vspace{2mm}

\subsection{Standard Koszul duality}
\label{sec:standardkoszul}

Let $k$ be a field and $\bfC$ be a highest weight $k$-category.
Assume that $\bfC$ is equivalent to the
category of finitely generated left modules over a finite dimensional $k$-algebra $A$.

Let $\bar A$ be a graded $k$-algebra which is
isomorphic to $A$ as an $k$-algebra. 
We call $\bar A$ a {\it graded lift} of $A$. 
A {\it graded lift} of an object $M\in\bfC$ is a graded
$\bar A$-module which is isomorphic to $M$ as an $A$-module. 

Assume that $\bar A$ is positively graded (in the sense of Section \ref{sec:gradedrings}). We have the following
\vspace{2mm}

\begin{prop}
Given $\lambda\in\Lambda$ there exists unique graded lifts $\bar L(\lambda)$, $\bar P(\lambda)$, $\bar I(\lambda)$, $\bar \Delta(\lambda)$, $\bar \nabla(\lambda)$, $\bar T(\lambda)$ such that 
\begin{itemize}
\item $\bar L(\lambda)$ is pure of degree zero,
\item the surjection $\bar P(\lambda)\twoheadrightarrow\bar L(\lambda)$ is homogeneous of degree zero,
\item the injection $\bar L(\lambda)\hookrightarrow\bar I(\lambda)$ is homogeneous of degree zero,
\item the surjection $f:\bar P(\lambda)\twoheadrightarrow\bar \Delta(\lambda)$ in $(P)$ is homogeneous of degree zero,
\item the injection $f:\bar\nabla(\lambda)\hookrightarrow\bar I(\lambda)$ in $(I)$ is homogeneous of degree zero,
\item both the injection $f:\bar\Delta(\lambda)\hookrightarrow \bar T(\lambda)$ and the surjection $g:\bar T(\lambda)\twoheadrightarrow\bar\nabla(\lambda)$ in $(T)$ are homogeneous of degree zero.
\end{itemize}

\end{prop}

\vspace{.5mm}

\begin{proof}
The existence of the graded lifts is proved in \cite[cor.~4,5]{MO}. They are unique up to isomorphisms
because, by \cite[lem.~2.5.3]{BGS}, the graded lift of an indecomposable 
object of $k\bfC$ is unique up to a graded 
$\bar A$-module isomorphism and up to a shift of the grading.
\end{proof}

\vspace{2mm}

The gradings above will be called the {\it natural gradings}.
In particular, let $\bar T=\bigoplus_{\lambda\in\Lambda}\bar T(\lambda)$. 
The grading on $\bar T$ induces a grading on the $k$-algebra $A^\diamond$ given by
$\bar A^\diamond=\End_\bfC(\bar T)^\op$ . It is called the
\emph{natural grading}.

A chain complex of projective (resp. ~injective, tilting) modules $\dots \to M_i\to M_{i-1}\to\dots$
is called \emph{linear}
provided that for every $i\in\bbZ$ all indecomposable direct summands of $M_i\langle -i\rangle$ have the natural grading.

Following \cite{ADL}, we say that $\bar A$ is {\it standard Koszul} provided that
all standard modules have linear projective resolutions and all costandard modules have linear injective 
coresolutions.
By \cite[thm.~1]{ADL}, a standard Koszul graded algebra is Koszul.

We will identify $\Lambda=\{L(\lambda)\}_{\lambda\in\Lambda}=\{1_x\,;\,x\in \Irr(A)\}$ and equip the latter with the partial order $\leqslant$.
Assume that $\bar A$ is Koszul. By \cite[thm.~3]{ADL}, 
the graded $k$-algebra $\bar A$ is standard Koszul
if and only if
$\bar A^!$ is quasi-hereditary relatively to the poset
$\{1_x^!\,;\,x\in \Irr(A)\}$ such that 
$1_x^!\leqslant 1_y^!\iff 1_x\geqslant 1_y.$

Following \cite{Ma}, we say that
$\bar A$ is {\it balanced} provided that
all standard modules have linear tilting coresolutions and all costandard modules have linear tilting
resolutions.
By \cite[thm.~7]{MO}, the graded $k$-algebra
$\bar A$ is balanced
if and only if it is standard Koszul and if the graded
$k$-algebra $D(\bar A)$ is positive.

If $\bar A$ is balanced, then the following hold \cite[thm.~1]{Ma}
\begin{itemize}
\item $\bar A$, $\bar A^\diamond$, $\bar A^!$ and $(\bar A^!)^\diamond$ 
are positively graded, quasi-hereditary, Koszul, standard Koszul and balanced,
\item $(\bar A^!)^\diamond=(\bar A^\diamond)^!$ as graded quasi-hereditary $k$-algebras,
\item the natural bijection $\Irr(A)\to\Irr(A^!)$ takes the 
highest weight order on $\Irr(A)$ to the opposite of the highest weight order
on $\Irr(A^!)$.
\end{itemize}
Note that the notation $E(\bar A)$ in \cite{Ma} corresponds to our notation $\bar A^!$.

We'll say that \textit{$\bfC$ is standard Koszul or balanced}
if we can choose the graded $k$-algebra $\bar A$ in such a way that it is 
standard Koszul or balanced respectively.

\vspace{2mm}

\begin{rk} \label{rk:KS}
Assume that the graded $k$-algebra $\bar A$ is standard Koszul.
In particular, it is finite dimensional, Koszul and with finite global dimension.
The Koszul equivalence
is given by $E=\RHom_A(\bar A^0,\bullet)$, up to the grading. 
See \cite[sec.~2]{BGS}, \cite[sec.~3]{RH} for details. 
It takes costandard (resp. injective, simple) modules to standard (resp. simple, projective) ones,
by \cite[prop.~2.7]{ADL}, \cite[thm.~2.12.5]{BGS}.
\end{rk}

\vspace{2mm}

\begin{rk}
Assume that $\bar A$, $\bar A^\diamond$ are both positively graded (in the sense of Section \ref{sec:gradedrings}).
The functor $\Hom_\bfC(\bullet,\bar T)^*$
takes the natural graded indecomposable tilting objects in $\bar A\bfgmod$
to the natural graded indecomposable projective ones in
$\bar A^\diamond\bfgmod$. It takes also
the natural graded indecomposable injective objects in $\bar A\bfgmod$
to natural graded indecomposable tilting ones in $\bar A^\diamond\bfgmod$.
\end{rk}

\vspace{2mm}

Let $\bfC$ be a highest weight category over a field $k$ and
$I\subset\Irr(\bfC)$ be an ideal. 
Put $J=\Irr(\bfC)\setminus I$. 
Assume that $\bfC$ is standard Koszul.

The category $\bfC[I]$ has a Koszul grading by 
Lemma \ref{lem:1.2} and Remark \ref{rk:2.3}.
The natural bijection $\Irr(\bfC)\to\Irr(\bfC^!)$ is an anti-isomorphism of posets. 
Thus, the images $I^!,$ $J^!$ of $I,$ $J$ are respectively a coideal and an ideal of $\Irr(\bfC^!)$.

\vspace{2mm}

\begin{lemma} \label{lem:2.6:E} 
For each ideal $I\subset\Irr(\bfC)$ we have $\bfC[I]^!=\bfC^!(I^!).$
\end{lemma}

\vspace{.5mm}

\begin{proof}
Set $L_I=\bigoplus_{L\in I}L$.
By Lemma \ref{lem:1.1} and Remark \ref{rk:2.3}, we have 
$\bfC^!=\Ext_\bfC(L)\bfmod$ and
$\bfC[I]^!=\Ext_\bfC(L_I)\bfmod.$ 
Let $e\in\End_\bfC(L)$ be the projection from $L$ to $L_I$.
We have $\Ext_\bfC(L_I)=e\,\Ext_\bfC(L)\,e$.

The full subcategory $K_I\subset \bfC^!$ consisting of the modules killed by $e$
is a Serre subcategory and there is an equivalence
$\bfC^!/K_I\to \bfC[I]^!,$ $M\mapsto eM$.
In other words, the restriction with respect to the obvious inclusion
$\Ext_\bfC(L_I)\subset\Ext_\bfC(L)$, yields  a quotient functor
$\bfC^!\to \bfC[I]^!$ whose kernel is $K_I$.

Since $\bfC$ is standard Koszul, its Koszul dual $\bfC^!$ is a highest weight category.
We have $\bfC^!(I^!)=\bfC^!/\bfC^![J^!],$ because $J^!=\Irr(\bfC^!)\setminus I^!$.
Since  $\bfC^![J^!]$ is the Serre subcategory of $\bfC^!$ generated by 
$J^!$, it consists of the $\Ext_\bfC(L)$-modules killed by $e$.
This implies the lemma.
\end{proof}

\vspace{3mm}

\section{Affine Lie algebras and the parabolic category $\bfO$}
\label{sec:3}

\vspace{2mm}

\subsection{Lie algebras}
Let $\frakg$ be a simple Lie $\bbC$-algebra and let $G$ 
be a connected simple group over $\bbC$ with Lie algebra $\frakg$.
Let  $T\subset G$ be a maximal tori and let $\frakt\subset\frakg$   be its Lie algebra.
Let $\frakb\subset\frakg$ 
be a Borel subalgebra containing $\frakt$.

The elements of $\frakt$,
$\frakt^ *$ are called {\it coweights} and {\it weights} respectively. 
Given a root $\alpha\in\frakt^*$ let $\check\alpha\in\frakt$ 
denote the corresponding coroot.
Let $\Pi\subset\frakt^*$ be the set of roots of $\frakg$,
$\Pi^+\subset\Pi$ the set of roots of $\frakb$,
and $\bbZ\Pi$ be the root lattice. 
Let $\rho$ be half the sum of the positive roots. 
Let $\Phi=\{\alpha_i\,;\,i\in I\}$ be the set of simple roots in $\Pi^+$.
Let $W$ be the Weyl group. 
Let $N$ be the dual Coxeter number of $\frakg$.

\vspace{2mm}

\subsection{Affine Lie algebras}
\label{sec:affine}
Let $\bfg$ be the affine Lie algebra associated with $\frakg$.
Recall that $\bfg=\bbC\partial\oplus\widehat{L\frakg},$
where $\widehat{L\frakg}$ is a central extension of 
$L\frakg=\frakg\otimes\bbC[t,t^{-1}]$ and
$\partial=t\partial_t$ is a derivation of 
$\widehat{L\frakg}$ acting trivially on
the canonical central element $\bfone$ of $\widehat{L\frakg}$. 
Consider the Lie subalgebras 
$\bfb=\frakb\oplus\frakg\otimes t\bbC[t]\oplus\bbC\partial\oplus \bbC\bfone$
and
$\bft=\bbC\partial\oplus\frakt\oplus\bbC\bfone.$

Let $\widehat\Pi,$ $\widehat\Pi^+$
be the sets of roots of $\bfg$, $\bfb$ respectively. We'll call an element of $\widehat\Pi$ an
{\it affine root}.  The set of simple roots in $\widehat\Pi^+$ is 
$\widehat\Phi=\{\alpha_i; i\in \{0\}\cup I\}.$

The elements of $\bft$,
$\bft^*$ are called {\it affine coweights} and {\it affine weights} 
respectively. Let $(\bullet:\bullet):\bft^*\times\bft\to\bbC$ 
be the canonical pairing.
Let $\delta$, $\Lambda_0$, $\hat\rho$ be the affine weights given by
$(\delta:\partial)=(\Lambda_0:\bfone)=1,$
$(\Lambda_0:\bbC\partial\oplus\frakt)=(\delta:\frakt\oplus\bbC\bfone)=0$
and
$\hat\rho=\rho+N\Lambda_0.$

An element of $\bft^*/\bbC\delta$ is called
a {\it classical affine weight}.
Let $cl:\bft^*\to\bft^*/\bbC\delta$ denote the obvious projection. 

Let $e$ be an integer $\neq 0$.
We set 
$\bft^*_e=\{\lambda\in\bft^*;(\lambda:\bfone)=-e-N\}.$
We'll use the identification
$\bft^*=\bbC\times\frakt^*\times\bbC$
such that
$\alpha_i\mapsto(0,\alpha_i,0)$ if $i\neq 0$, 
$\Lambda_0\mapsto (0,0,1)$ and
$\delta\mapsto(1,0,0)$. 

Let $\check\alpha\in\bft$ be the affine coroot associated 
with the real affine root $\alpha$.
Let $\langle\bullet :\bullet \rangle$ be the non-degenerate 
symmetric bilinear form
on $\bft^*$ such that 
$(\lambda:\check\alpha_i)=
2\langle\lambda:\alpha_i\rangle/\langle\alpha_i:\alpha_i\rangle$
and
$(\lambda:\bfone)=\langle\lambda:\delta\rangle.$
Using $\langle\bullet :\bullet \rangle$ we identify $\check\alpha$
with an element of $\bft^*$ 
for any real affine root $\alpha$.

Let
$\widehat W=W\ltimes\bbZ\Pi$ be the affine Weyl group and
let $\calS=\{s_i=s_{\alpha_i}\,;\,\alpha_i\in\widehat\Phi\}$ be the set of simple affine reflections.
The group 
$\widehat W$ acts on $\bft^*$. For
$w\in W$, $\tau\in\bbZ\Pi$ we have
$w(\Lambda_0)=\Lambda_0,$
$w(\delta)=\delta,$
$\tau(\delta)=\delta,$
$\tau(\lambda)=\lambda-\langle\tau:\lambda\rangle\delta$
and
$\tau(\Lambda_0)=\tau+\Lambda_0-\langle\tau:\tau\rangle\delta/2$.

The {\it $\bullet$-action} on $\bft^*$ is given by 
$w\bullet\lambda=w(\lambda+\hat\rho)-\hat\rho$.
It factors to a $\widehat W$-action on $\bft^*/\bbC\delta$.
Two (classical)
affine weights $\lambda$, $\mu$ are {\it linked} if they belong to the same
orbit of the $\bullet$-action, and we write $\lambda\sim\mu$.
Let $W_\lambda$ be the stabilizer of an affine weight $\lambda$ under the $\bullet$-action. 
We say that $\lambda$ is {\it regular} if $W_\lambda=\{1\}$.

Set
$\calC^\pm=\{\lambda\in\bft^*\,;\,
\langle\lambda+\hat\rho:\alpha\rangle\geqslant 0,\,
\alpha\in\widehat\Pi^\pm\}.$
An element of $\calC^-$ (resp.~of $\calC^+$)
is called an {\it antidominant affine weight} (resp.~
a {\it dominant affine weight}).
We write again $\calC^\pm$ for $cl(\calC^\pm)$.
We have the following basic fact, see e.g.,  \cite[lem.~2.10]{KT}.

\vspace{2mm}

\begin{lemma}
Let $\lambda$ be an integral affine weight of level $-e-N$. We have

(a) $\sharp(\widehat W\bullet\lambda\cap\calC^-)=1$ and
$\sharp(\widehat W\bullet\lambda\cap\calC^+)=0$
if  $e>0$,

(b) 
$\sharp(\widehat W\bullet\lambda\cap\calC^+)=1$ and
$\sharp(\widehat W\bullet\lambda\cap\calC^-)=0$
if  $e<0$.
\end{lemma}

\vspace{.5mm}

We say that
$\lambda$ is {\it negative} in the first case,
and {\it positive} in the second case.

For $\lambda\in\calC^\pm$ the subgroup
$W_\lambda$ of $\widehat W$ is finite and is a standard
parabolic subgroup.  It is isomorphic to the Weyl group
of the root system 
$
\{\alpha\in\widehat\Pi\,;\,\langle\lambda+\hat\rho:\alpha\rangle=0\}.$

\vspace{2mm}

\subsection{The parabolic category $\bfO$} 
\label{sec:2.9}
Let $\calP$ be the set of proper subsets of $\widehat\Phi$.
An element of $\calP$ is called a {\it parabolic type}.
Fix a parabolic type $\nu$.
If $\nu$ is the empty set we say that {\it $\nu$ is regular},
and we write $\nu=\phi$.

Let $\bfp_\nu\subset\bfg$ be the unique parabolic subalgebra containing $\bfb$ whose set of roots
is generated by $\widehat\Phi\cup(-\nu)$. 
Let $\Pi_\nu$ be the root system of a levi subalgebra of $\bfp_\nu$.
Set $\Pi^+_\nu=\Pi^+\cap\Pi_\nu$.
Let $W_\nu\subset\widehat W$ be the Weyl group of $\Pi_\nu$.
Let $w_\nu$ be the longest element in $W_\nu$. 
Let $\tilde\bfO^{\nu}$ be the category of
all $\bfg$-modules $M$ such that
$M=\bigoplus_{\lambda\in\bft^*}M_\lambda$ with
$M_\lambda=\{m\in M;xm=\lambda(x)m,\,x\in\bft\}$ 
and $U(\bfp_\nu)\,m$ is finite dimensional for each $m\in M$.
An affine weight $\lambda$ is {\it $\nu$-dominant } if
$(\lambda:\check\alpha)\in\bbN$ for all
$\alpha\in\Pi_\nu^+.$
For any $\nu$-dominant affine weight $\lambda,$ let 
$V^\nu(\lambda)$, $L(\lambda)$ 
be the parabolic Verma module with the highest weight
$\lambda$ and its simple top.

Recall that $e$ is an integer $\neq 0$.
For any weight $\lambda\in\frakt^*$, let
$\lambda_e=\lambda-(e+N)\Lambda_0$ and
$z_\lambda=\langle\lambda:2\rho+\lambda\rangle/2e.$
Let $\bfO^{\nu}\subset\tilde\bfO^{\nu}$ 
be the full subcategory of the modules such that 
the highest weight of any of its simple subquotients is of the form
$\tilde\lambda_e=\lambda_e+z_\lambda\,\delta$, where $\lambda$ is a $\nu$-dominant integral weight.
We'll abbreviate
$V^\nu(\lambda_e)=V^\nu(\tilde\lambda_e)$ and
$L(\lambda_e)=L(\tilde\lambda_e).$

Now, fix $\mu\in\calP$ and assume that $e>0$. We use the following notation
\begin{itemize}
\item $\oo_{\mu,-}$ is an antidominant 
integral classical affine weight of level $-e-N$
whose stabilizer for the $\bullet$-action of $\widehat W$ is equal to $W_\mu$, 
\item $\oo_{\mu,+}=-\oo_{\mu,-}-2\hat\rho$
is a dominant integral classical affine weight of level $e-N$
whose stabilizer for the $\bullet$-action of $\widehat W$ is 
equal to $W_\mu$,
\item $\bfO^{\nu}_{\mu,\pm}\subset\bfO^{\nu}$
is the full subcategory
consisting of the 
modules such that the highest weight of any of its simple subquotients
is linked to $\oo_{\mu,\pm}$.
\end{itemize}

Let $I_\mu^\min, I^\max_\mu\subset\widehat W$ be the sets of minimal 
and maximal length
representatives of the left cosets in $\widehat W/W_\mu$.
Let $\leqslant$ be the Bruhat order.
We'll consider the posets
\begin{itemize}
\item
$I_{\mu,-}=(I_\mu^\max,\preccurlyeq)$ where
$\preccurlyeq=\leqslant$,  and $I_{\mu,-}^\nu=\{x\in I_{\mu,-}\,;\,x\bullet\oo_{\mu,-}\ 
\text{is}\  \nu\text{-dominant}\}$,
\item
$I_{\mu,+}=(I_\mu^\min,\preccurlyeq)$ where
$\preccurlyeq=\geqslant$, and 
$I_{\mu,+}^\nu=\{x\in I_{\mu,+}\,;\,x\bullet\oo_{\mu,+}\ 
\text{is}\  \nu\text{-dominant}\}.$
\end{itemize}
They have the following properties.

\vspace{2mm}

\begin{lemma} 
\label{lem:C}
For any $\mu$, $\nu\in\calP$ we have

(a) $x\in I_{\phi,+}^\mu\iff x^{-1}\in I_{\mu,+}$,

(b) $x\in I_{\phi,-}^\mu\iff x^{-1}\in I_{\mu,-}$,

(c) $I_{\phi,\pm}^\mu\cap I_{\nu,\mp}=\{xw_\nu\,;\,x\in I_{\nu,\pm}^\mu\}$, 

(d) $x\in I^\mu_{\nu,+}\iff w_\mu xw_\nu\in I_{\nu,-}^\mu$.
\end{lemma}

\vspace{.5mm}

\begin{proof}
To prove $(a)$ note that, since $\oo_{\phi,+}$ is dominant regular, we have
$$\aligned
I_{\phi,+}^\mu
&=\{x\in\widehat W\,;\,x\bullet\oo_{\phi,+}\ 
\text{is}\ \mu\text{-dominant}\}\cr
&=\{x\in \widehat W\,;\, 
\langle \oo_{\phi,+}+\hat\rho:x^{-1}(\check\alpha)\rangle\geqslant 0,\,\forall
\alpha\in\Pi^+_\mu\}\\
&=\{x\in\widehat W\,;\,x^{-1}(\Pi_\mu^+)\subset\widehat\Pi^+\}\cr
&=\{x\in\widehat W\,;\,x^{-1}\in I_{\mu,+}\}.
\endaligned$$
The proof of $(b)$ is  similar and is left to the reader.
Now we prove part $(c)$.
Choose positive integers $d,f$ such that $\pm(f-d)>-N$.
Then, the translation functor $T_{\phi,\nu}:{}^z\bfO_{\phi,\pm}\to{}^z\bfO_{\nu,\pm}$ 
in Proposition \ref{prop:translation2} is well-defined for any $z\in I_\nu^\max$.
By Proposition \ref{prop:translation2}$(d),(e)$, we have
$$\aligned
I^\mu_{\nu,\pm}&=\{x\;;\;xw_\nu\in I^\mu_{\phi,\pm},\,x\in I_{\nu,\pm}\}
=\{x\;;\;xw_\nu\in I^\mu_{\phi,\pm}\cap I_{\nu,\mp}\}.
\endaligned $$
Part $(d)$ is standard. More precisely, by \cite[prop.~2.7.5]{Ca}, if
$x\in I^\mu_{\nu,+}$ then we have $W_\mu\cap x(W_\nu)=\emptyset$ and
$x\in(I^\min_\mu)^{-1}\cap I_\nu^\min$. Then, we also have
$w_\mu x\in(I^\max_\mu)^{-1}\cap I_\nu^\min,$ from which we deduce that
$w_\mu x w_\nu\in I^\mu_{\nu,-}$. The lemma is proved.
\end{proof}

\vspace{2mm}

For each $x\in I_{\mu,\mp}^\nu$, we write $x_\pm=w_\nu xw_\mu\in I_{\mu,\pm}^\nu$. 
From Lemma \ref{lem:C} and its proof, we get the following.

\vspace{2mm}

\begin{cor}
\label{lem:C2}
(a) We have
$I^\mu_{\nu,+}=\{x\;;\;xw_\nu\in(I^\min_\mu)^{-1}\cap I_\nu^\max\}$ and
$I^\mu_{\nu,-}=\{x\;;\;xw_\nu\in(I^\max_\mu)^{-1}\cap I_\nu^\min\}.$

(b) The map
$I_{\mu,\pm}^\nu\to I_{\nu,\pm}^\mu$, $x\mapsto x^{-1}$ is an isomorphism of posets.

(c) The map
$I_{\mu,\mp}^\nu\to I_{\mu,\pm}^\nu,$ $x\mapsto x_\pm$
is an anti-isomorphism of posets.
\end{cor}



\vspace{2mm}

The  category
$\bfO^\nu_{\mu,\pm}$ is a direct summand in $\bfO^\nu$ by
the linkage principle, see \cite[thm.~6.1]{So}, and
we have
$\Irr(\bfO^{\nu}_{\mu,\pm})=
\{L(x\bullet\oo_{\mu,\pm})\,;\,x\in I_{\mu,\pm}^\nu\}$.
Further, for each $x\in I^\nu_{\mu,+}$, the simple module
$L(x\bullet\oo_{\mu,+})$ has a projective cover $P^\nu(x\bullet\oo_{\mu,+})$ in $\bfO_{\mu,+}^\nu$. 
For each $x\in I^\nu_{\mu,-},$ there is a tilting module $T^\nu(x\bullet\oo_{\mu,-})$ in $\bfO_{\mu,-}^\nu$ with
 highest weight $x\bullet\oo_{\mu,-}$, see e.g., \cite{So}.
Note that $\bfO_{\mu,+}^\nu$ does not have tilting objects and 
$\bfO_{\mu,-}^\nu$ does not have projective objects.
Let $\bfO^{\nu,\Delta}_{\mu,\pm}$ be the full subcategory of $\bfO^{\nu,\Delta}_{\mu,\pm}$ consisting of objects with a finite filtration by the parabolic Verma modules.
The following is a version of Ringel equivalence for the categories $\bfO_{\mu,\pm}^\nu$.

\vspace{2mm}

\begin{lemma}\label{lem:ringel-soergel}
There is an equivalence of exact categories
$D: \bfO^{\nu,\Delta}_{\mu,+}\iso\,(\bfO^{\nu,\Delta}_{\mu,-})^\op$
which maps $V^\nu(x\bullet\oo_{\mu,+})$
to $V^\nu(x_-\bullet\oo_{\mu,-}),$
and $P^\nu(x\bullet\oo_{\mu,+})$
to $T^\nu(x_-\bullet\oo_{\mu,-}).$
\end{lemma}

\begin{proof}
Note that $x_-\bullet\oo_{\mu,-}=w_\nu x\bullet\oo_{\mu,-}=-w_\nu(x\bullet\oo_{\mu,+}+\hat\rho)-\hat\rho$. 
So the lemma is given by \cite[thm.~6.6, proof of cor.~7.6]{So}.
\end{proof}

\smallskip
\begin{rk}\label{rk:ringel-soergel}
Recall that the BGG-duality on $\bfO^{\nu}_{\mu,-}$ is an equivalence $\dd: \bfO^{\nu}_{\mu,-}\iso \bfO^{\nu,\op}_{\mu,-}$ which maps a simple module to itself, maps a parabolic Verma module to a dual parabolic Verma module, and maps a tilting module to itself. Let $\bfO^{\nu,\nabla}_{\mu,-}$ be the full subcategory of $\bfO^{\nu}_{\mu,-}$ consisting of objects with a finite filtration by dual parabolic Verma modules. Then we have an equivalence
$$\dd\circ D:\bfO^{\nu,\Delta}_{\mu,+}\iso\,\bfO^{\nu,\nabla}_{\mu,-}$$
which maps $V^\nu(x\bullet\oo_{\mu,+})$
to $\dd(V^\nu(x_-\bullet\oo_{\mu,-})),$
and $P^\nu(x\bullet\oo_{\mu,+})$
to $T^\nu(x_-\bullet\oo_{\mu,-}).$
\end{rk}
\vspace{2mm}

\subsection{The truncated category $\bfO$}
\label{sec:truncation}
Fix parabolic types $\mu,\nu\in\calP$ and an integer $e>0$.
Fix an element $w\in\widehat W$. We
define the \emph{truncated parabolic category} $\bfO$ at the negative/positive level
in the following way.

\smallskip

First, set 
${}^w\!I^\nu_{\mu,-}=\{x\in I^\nu_{\mu,-}\,;\,x\preccurlyeq\! w\}$ and
let ${}^w\bfO^{\nu}_{\mu,-}$ be the full subcategory of $\bfO^{\nu}_{\mu,-}[{}^w\!I^\nu_{\mu,-}]$ consisting of the
finitely generated modules. We have
$\Irr({}^w\bfO^{\nu}_{\mu,-})=
\{L(x\bullet\oo_{\mu,-})\,;\,x\in {}^w\!I_{\mu,-}^\nu\}$.
For each $x\in {}^w\!I^\nu_{\mu,-},$ the modules $V^\nu(x\bullet\oo_{\mu,-})$ and $T^\nu(x\bullet\oo_{\mu,-})$ belong to ${}^w\bfO^{\nu}_{\mu,-}$. We may 
write $T^\nu(x\bullet\oo_{\mu,-})={}^w\!T^\nu(x\bullet\oo_{\mu,-})$. 
The module $L(x\bullet\oo_{\mu,-})$ has a projective cover in ${}^w\bfO^{\nu}_{\mu,-}$. We denoted it by ${}^w\!P^\nu(x\bullet\oo_{\mu,-})$. Let ${}^w\!P^\nu_{\mu,-}$ be the minimal projective generator, ${}^w\!T^\nu_{\mu,-}$ the characteristic tilting 
module, and let ${}^w\!L^\nu_{\mu,-}$ be the sum of simple modules.
\smallskip

\begin{df}\label{df:3.6}
Set
${}^w\!\bar A^\nu_{\mu,-}=
\Ext_{{}^{w}\bfO^\nu_{\mu,-}}({}^{w}\!L^\nu_{\mu,-})^\op$ and
${}^w\!A^\nu_{\mu,-}=\End_{{}^w\bfO^\nu_{\mu,-}}({}^w\!P^\nu_{\mu,-})^{\op}.$
\end{df}

\smallskip

\begin{lemma}
The category ${}^w\bfO^{\nu}_{\mu,-}\simeq {}^w\!A^\nu_{\mu,-}\bfmod$ is a highest weight category 
with the set of standard objects
$\Delta({}^w\bfO^{\nu}_{\mu,-})=
\{V^\nu(x\bullet\oo_{\mu,-})\,;\,x\in{}^w\!I^{\nu}_{\mu,-}\}$
and the highest weight order given by the
partial order $\preccurlyeq$ on ${}^w\!I_{\mu,-}^\nu$.
\end{lemma}

\smallskip

\begin{proof}
First, the category $\bfO^{\nu}_{\mu,-}$ is a highest weight category in the sense of 
\cite[def.~3.1]{CPS2}, i.e., it satisfies the axioms in Section \ref{sec:5} although it is not equivalent to the module category of a finite dimensional algebra. By \cite[thm.~3.5]{CPS2} the subcategory ${}^w\bfO^{\nu}_{\mu,-}$ is also highest weight. Since ${}^w\bfO^{\nu}_{\mu,-}\simeq {}^w\!A^\nu_{\mu,-}\bfmod$ and ${}^w\!A^\nu_{\mu,-}$ is finite dimensional, the category ${}^w\bfO^{\nu}_{\mu,-}$ is a highest weight category in the sense of
Section \ref{sec:5}.
\end{proof}

\smallskip

Next, set
${}^w\!I_{\mu,+}^\nu=\{x\in I_{\mu,+}^\nu\,;\,x\succcurlyeq\! w\}$ and let
${}^w\bfO^{\nu}_{\mu,+}$ be the full subcategory of $\bfO^{\nu}_{\mu,+}({}^w\!I_{\mu,+}^\nu)$ consisting of the finitely 
generated objects. We have
$\Irr({}^w\bfO^{\nu}_{\mu,+})=
\{L(x\bullet\oo_{\mu,+})\,;\,x\in {}^w\!I_{\mu,+}^\nu\}$.
Let
${}^wL^\nu_{\mu,+}=\bigoplus_{x\in {}^w\!I^\nu_{\mu,+}}L(x\bullet\oo_{\mu,+})$
and let ${}^w\!P^\nu_{\mu,+}=\bigoplus_{x\in {}^w\!I^\nu_{\mu,+}}P^\nu(x\bullet\oo_{\mu,+})$.

\smallskip
\begin{df}
Set
${}^w\!\bar A^\nu_{\mu,+}=
\Ext_{{}^{w}\bfO^\nu_{\mu,+}}({}^{w}\!L^\nu_{\mu,+})^\op$ and
${}^w\!A^\nu_{\mu,+}=\End_{{}^w\bfO^\nu_{\mu,-}}({}^w\!P^\nu_{\mu,+})^{\op}.$
\end{df}
\smallskip

Consider the quotient functor
$$F=\Hom_{\bfO^{\nu}_{\mu,+}}({}^w\!P^\nu_{\mu,+},\bullet): \bfO^{\nu}_{\mu,+}\to 
{}^w\!A^\nu_{\mu,+}\bfMod.$$
Its kernel is the full subcategory generated by the modules $L(x\bullet\oo_{\mu,+})$ with $x\not\in {}^v\!I^\nu_{\mu,+}$. 
So, we have an equivalence of categories 
$\bfO^{\nu}_{\mu,+}({}^v\!I^\nu_{\mu,+})\simeq 
{}^w\!A^\nu_{\mu,+}\bfMod$.
It restricts to an equivalence
$${}^w\bfO^{\nu}_{\mu,+}\simeq {}^w\!A^\nu_{\mu,+}\bfmod.$$
For each $x\in{}^w\!I_{\mu,+}^\nu$, we'll view $V^\nu(x\bullet\oo_{\mu,+})$ as an object in ${}^w\bfO^{\nu}_{\mu,+}$ by identifying it with the ${}^w\!A^\nu_{\mu,+}$-module $F(V^\nu(x\bullet\oo_{\mu,+}))$. We denote
${}^w\!P^\nu(x\bullet\oo_{\mu,+})=F(P^\nu(x\bullet\oo_{\mu,+}))$.

\smallskip

For each $x\in{}^w\!I_{\mu,\pm}^\nu$, 
let $1_x$ be the (obvious) idempotents in the $\bbC$-algebras ${}^w\!\bar A^\nu_{\mu,\pm}$ and
${}^w\!A^\nu_{\mu,\pm},$
associated with the modules $L(x\bullet\oo_{\mu,\pm})$
and ${}^w\!P^\nu(x\bullet\oo_{\mu,\pm})$.
We'll abbreviate $1_x^\diamond=(1_x)^\diamond$, ${}^w\!A^{\nu,\diamond}_{\mu,-}=({}^w\!A^\nu_{\mu,-})^\diamond$ and
${}^v\bfO^{\nu,\diamond}_{\mu,-}=({}^v\bfO^{\nu}_{\mu,-})^\diamond$.

\smallskip

\begin{prop}
\label{prop:ringel}
Assume that $w\in I^\nu_{\mu,+}$ and $v=w_-\in I^\nu_{\mu,-}$.

(a)
The category ${}^w\bfO^{\nu}_{\mu,+}\simeq {}^w\!A^\nu_{\mu,+}\bfmod$ is a highest weight category 
with the set of standard objects
$\Delta({}^w\bfO^{\nu}_{\mu,+})=
\{V^\nu(x\bullet\oo_{\mu,+})\,;\,x\in{}^w\!I^{\nu}_{\mu,+}\}$
and the highest weight order given by the
partial order $\preccurlyeq$ on ${}^w\!I_{\mu,+}^\nu$.

(b)
There is an equivalence of highest weight categories
${}^v\bfO^{\nu,\diamond}_{\mu,-}\simeq{}^{w}\bfO^{\nu}_{\mu,+}$ 
which takes
$V^\nu(x\bullet\oo_{\mu,-})^\diamond$
to $V^\nu(x_+\bullet\oo_{\mu,+})$ for any $x\in{}^v\!I_{\mu,-}^\nu.$

(c) There is a $\bbC$-algebra isomorphism 
${}^w\!A^{\nu,\diamond}_{\mu,-}\simeq{}^{v}\!A^\nu_{\mu,+}$
such that $1_x^\diamond\mapsto 1_{(x_+)}$ for any $x\in{}^v\!I_{\mu,-}^\nu.$
\end{prop}
\begin{proof}
First, note that the anti-isomorphism of posets $I_{\mu,+}^\nu\to I_{\mu,-}^\nu,$ $x\mapsto x_-,$
in Corollary \ref{lem:C2} restricts to an anti-isomorphism of posets 
\begin{equation*}
{}^w\!I_{\mu,+}^\nu\iso\, {}^v\!I_{\mu,-}^\nu.
\end{equation*} 
Therefore by Remark \ref{rk:ringel-soergel} the functor $\dd\circ D$ maps ${}^w\!P^\nu_{\mu,+}$ to ${}^v\!T^\nu_{\mu,-}$, and yields an algebra isomorphism
$\End_{\bfO^{\nu}_{\mu,+}}({}^w\!P^\nu_{\mu,+})^\op\simeq\End_{\bfO^{\nu}_{\mu,-}}({}^v\!T^\nu_{\mu,-})^\op.$ So, we have ${}^w\bfO^{\nu}_{\mu,+}=({}^v\bfO^{\nu}_{\mu,-})^\diamond$. 
In particular ${}^w\bfO^{\nu}_{\mu,+}$ is a highest weight category because it is the Ringel dual of a highest weight category. The rest of the proposition follows from the generalities on Ringel duality in Section \ref{sec:R}.
\end{proof}

\smallskip

We'll denote by ${}^w\! T^\nu(x\bullet\oo_{\mu,+})$ the tilting object associated with $V^\nu(x\bullet\oo_{\mu,+})$ in ${}^w\bfO^{\nu}_{\mu,+}$
and by ${}^w\! T^\nu_{\mu,+}$ the characteristic tilting module.
If $\nu=\phi,$ we also abbreviate 
${}^w\bfO_{\mu,\pm}={}^w\bfO_{\mu,\pm}^\phi,$
suppressing $\phi$ everywhere in the notation.

\vspace{2mm}

\begin{rk}
\label{rk:level}
The highest weight category 
${}^w\bfO^\nu_{\mu,\pm}$ does not depend on the choice of 
$\oo_{\mu,\pm}$ and $e$
but only on $\mu$, $\nu$ and on the sign of the level, see \cite[thm.~11]{F2}. 
\end{rk}

\vspace{2mm}

\begin{rk}
Write $D=\dd\circ(\bullet)^\diamond$, where $\dd$ is the BGG duality.
Then, $D$ restrict to an equivalence of categories
${}^w\bfO^{\nu,\Delta}_{\mu,+}\to({}^v\bfO^{\nu,\Delta}_{\mu,-})^\op$.
\end{rk}

\vspace{3mm}

\subsection{Parabolic inclusion and truncation}
\label{sec:tau}

Let $i=i_{\nu,\phi}$ be the canonical inclusion ${}^w\bfO^\nu_{\mu,\pm}\subset{}^w\bfO_{\mu,\pm}$.
It is a fully faithful functor and it admits a left adjoint $\tau=\tau_{\phi,\nu}$ which
takes an object of ${}^w\bfO_{\mu,\pm}$
to its largest quotient which lies in ${}^w\bfO_{\mu,\pm}^\nu$. 
We'll call $i$ the {\it parabolic inclusion functor} 
and $\tau$ the {\it parabolic truncation functor}.

Since $i$ is exact, the functor $\tau$ takes projectives to projectives.
We have
\begin{enumerate}
\item[(a)]
$i(L(x\bullet \oo_{\mu,\pm}))=L(x\bullet \oo_{\mu,\pm})$
for $x\in{}^w\!I_{\mu,\pm}^\nu$,
\item[(b)]
$\tau({}^w\!P(x\bullet \oo_{\mu,\pm}))={}^w\!P^\nu(x\bullet \oo_{\mu,\pm})$
for $x\in{}^w\!I_{\mu,\pm}^\nu$,
\item[(c)] 
$\tau({}^w\!P(x\bullet \oo_{\mu,\pm}))=0$
for $x\in{}^w\!I_{\mu,\pm}\setminus{}^w\!I_{\mu,\pm}^\nu$.
\end{enumerate}

The same argument as in \cite[thm.~3.5.3]{BGS},
using \cite[thm.~5.5]{FG2},
implies that the functor $i$ is injective on extensions in ${}^w\bfO^\nu_{\mu,-}$. 
See the proof of Proposition \ref{prop:loc2} for more details.

By Lemma \ref{lem:ringeltroncation}, for each $w\in I^\nu_{\mu,-}$,
the Ringel equivalence yields equivalences of
triangulated categories 
$\bfD^b({}^v\bfO_{\mu,+})\to\bfD^b({}^w\bfO_{\mu,-})$
and
$\bfD^b({}^v\bfO^\nu_{\mu,+})\to\bfD^b({}^w\bfO^\nu_{\mu,-})$,
where $v=w_+:=w_\nu ww_\mu$,
such that the diagram below commutes
$$\xymatrix{
\bfD^b({}^v\bfO_{\mu,+})\ar[r]&\bfD^b({}^w\bfO_{\mu,-})\\ 
\bfD^b({}^v\bfO^\nu_{\mu,+})\ar[r]\ar[u]^-i&
\bfD^b({}^w\bfO^\nu_{\mu,-}).\ar[u]_-i
}$$
Thus, the parabolic inclusion functor $i$ is also injective on extensions in ${}^v\bfO^\nu_{\mu,+}$.

\vspace{3mm}


\vspace{2mm}


\vspace{2mm}

\subsection{The main result}
Fix integers $e,f>0$ and parabolic types $\mu,\nu\in\calP$.
We choose  
$\oo_{\mu,\pm}$ of level $\pm e-N$ and
$\oo_{\nu,\pm}$ of level $\pm f-N$. 
Fix an element $w\in\widehat W$.
Assume that $w\in I^\nu_{\mu,+},$ and set $v=w_-^{-1}\in I^\mu_{\nu,-}$.

The main result of this paper is the following.

\vspace{2mm}

\begin{thm} \label{thm:main}
We have $\bbC$-algebra isomorphisms
${}^w\!A^\nu_{\mu,+}={}^v\!\bar A^\mu_{\nu,-}$
and
${}^w\!\bar A^\nu_{\mu,+}={}^v\! A^\mu_{\nu,-}$
such that $1_x\mapsto 1_y$ with $y=x_-^{-1}$ for each $x\in{}^w\!I_{\mu,\mp}^\nu$.
The graded $\bbC$-algebras
${}^w\!\bar A^\nu_{\mu,+}$ and
${}^v\!\bar A^\mu_{\nu,-}$
are Koszul and balanced. They are Koszul dual to each other, i.e., we have
a graded $\bbC$-algebra isomorphism
$({}^w\!\bar A^\nu_{\mu,+})^!={}^v\!\bar A^\mu_{\nu,-}$
such that $1_x^!=1_{y}$ for each $x\in{}^w\!I_{\mu,+}^\nu.$
The categories 
${}^w\bfO^\nu_{\mu,+},$
${}^v\bfO^\mu_{\nu,-}$
are Koszul and are Koszul dual to each other.
\qed
\end{thm}

\vspace{2mm}

We'll abbreviate ${}^w\!\bar A^{\nu,!}_{\mu,\pm}=({}^w\!\bar A^\nu_{\mu,\pm})^!$.

\vspace{2mm}

\begin{rk}
\label{rk:2.20}
Since ${}^w\!\bar A^\nu_{\mu,+}$ and
${}^v\!\bar A^\mu_{\nu,-}$
are balanced,
the Koszul duality commutes with the Ringel duality.
In particular, for $w\in I^\nu_{\mu,-}$ and $v=w^{-1}\in I^\mu_{\nu,-}$, composing $(\bullet)^!$ and $(\bullet)^\diamond$ we get
a graded $\bbC$-algebra isomorphism
$({}^w\!\bar A^{\nu,\diamond}_{\mu,-})^!={}^v\!\bar A^\mu_{\nu,-}$
such that $(1_x^\diamond)^!=1_{y}$ with $y=x^{-1}$ for each $x\in{}^w\!I_{\mu,-}^\nu.$

\end{rk}

\vspace{1cm}

\section{Moment graphs, deformed category $\bfO$ and localization}
\label{sec:momentgraph}

First, we introduce some notation.
Fix integers $e,f>0$ and parabolic types $\mu,\nu\in\calP$.

Let $V$ be a finite dimensional $\bbC$-vector space, let
$S$ be the symmetric $\bbC$-algebra over $V$ and let
$\frakm\subset S$ be the maximal ideal generated by $V$.
We may regard $S$ as a graded $\bbC$-algebra
such that $V$ has the degree 2. 

Let $R$  be a commutative, noetherian, integral domain which is
a (possibly graded) $S$-algebra with 1.

Let $S_0$ be the localization of $S$ at the ideal $\frakm$ and
let $k$ be the residue field of $S_0$. Note that $k=\bbC$ and
that $S_0$ has no natural grading.

\vspace{2mm}

\subsection{Moment graphs}
\label{sec:3.3}
Assume that $R=S$, viewed as a graded $\bbC$-algebra.  

\vspace{2mm}

\begin{df}
A {\it moment graph} over $V$ is
a tuple $\calG=(I,H,\alpha)$ where
$(I,H)$ is a graph with a  set of vertices $I$, 
a set of edges $H$, each edge joins two vertices,
and $\alpha$ is a map $H\to\bbP(V)$, $h\mapsto k\alpha_h$.
\end{df}

We say that the moment graph $\calG$ is a {\it GKM-graph} if we have
$k\alpha_{h_1}\neq k\alpha_{h_2}$ for any edges $h_1\neq h_2$ adjacent 
to the same vertex.

\vspace{1mm}

\begin{df}
An order on $\calG$ is a partial order $\preccurlyeq$
on $I$ such that the two vertices joined by an edge
are comparable.
\end{df}

\vspace{1mm}

Given an order $\preccurlyeq$ on $\calG$ let 
$h',h''$ denote the {\it origin} and the {\it goal} of the edge $h$.
In other words $h'$ and $h"$ are two vertices joined by $h$ and we have $h'\prec h''$.

\begin{rk}
We  use the terminology in \cite{J}.
In \cite{F3}, a moment graph is  always assumed to be ordered. 
We'll also assume that $\calG$ is {\it finite}, i.e., that the sets 
$I$ and $H$ are finite. 
\end{rk}

\vspace{2mm}

\begin{df}
A {\it graded $R$-sheaf over $\calG$} 
is a tuple $\calM=(\calM_x,\calM_h,\rho_{x,h})$
with
\begin{itemize}
\item a graded $R$-module $\calM_x$ for each $x\in I$,
\item a graded $R$-module $\calM_h$ for each $h\in H$ such that 
$\alpha_h\calM_h=0$,
\item a graded $R$-module homomorphism $\rho_{x,h}:\calM_x\to\calM_h$  for $h$ adjacent to $x$.
\end{itemize}
A {\it morphism}  $\calM\to\calN$ is an $I\sqcup H$-tuple
$f$ of graded $R$-module homomorphisms
$f_x:\calM_x\to\calN_x$ and
$f_h:\calM_h\to\calN_h$ which are compatible with $(\rho_{x,h})$.
\end{df}

\vspace{2mm}

The $R$-modules $\calM_x$ are called the \textit{stalks} of $\calM$.
For $J\subset I$ we set
$$\calM(J)=\bigl\{(m_x)_{x\in J}\,;\,m_x\in\calM_x,\,\rho_{h',h}(m_{h'})=
\rho_{h'',h}(m_{h''}),\ \forall h\,\text{with}\, h',h''\in J\bigr\}.$$
Note that $\calM(\{x\})=\calM_x$.
The {\it space of global sections} of the graded
$R$-sheaf $\calM$ is the graded $R$-module
$\Gamma(\calM)=\calM(I)$.

We say that $\calM$ is of {\it finite type} if all $\calM_x$ and all $\calM_h$ are
finitely generated graded $R$-modules.
If  $\calM$ is of finite type, then the graded $R$-module
$\Gamma(\calM)$ is finitely generated, 
because $R$ is noetherian.

The {\it graded structural algebra} of $\calG$
is the graded $R$-algebra $\bar Z_R=\bigl\{(a_x)\in R^{\oplus I}\,;\,a_{h'}-a_{h''}\in\alpha_h R,\,
\forall h\}.$

\vspace{2mm}

\begin{df}\label{def:3.5}
Let $\bar\bfF_{R}$ be the category consisting of the graded $R$-sheaves of finite type
over $\calG$ whose stalks are torsion free $R$-modules.
Let $\bar\bfZ_{R}$ be the category consisting of the graded
$\bar Z_{R}$-modules which are finitely generated and torsion free over $R$.
\end{df}

\vspace{2mm}

The categories $\bar\bfF_{R}$ and $\bar\bfZ_{R}$ are Krull-Schmidt graded 
$R$-categories (because they are hom-finite $R$-categories and each idempotent splits).
Taking the global sections yields a functor $\Gamma:\bar\bfF_R\to\bar\bfZ_R.$

We'll call again graded $R$-sheaves the objects of $\bar\bfZ_{R}$, hoping it will not create any confusion.

The global sections functor $\Gamma$
has a left adjoint $\calL$ called the {\it localization functor}
 \cite[thm.~3.6]{F3}, \cite[prop.~2.16, sec.~2.19]{J}.

We say that $\calM$ is {\it generated by global sections} 
if the counit $\calL\Gamma(\calM)\to\calM$ is an isomorphism, or,
equivalently, if $\calM$ belongs to the essential image of $\calL$. 
This implies that the obvious map $\Gamma(\calM)\to\calM_x$ 
is surjective for each $x$.
The functor $\Gamma$ is fully faithful on the 
full subcategory of $\bar\bfF_{R}$ 
of the graded $R$-sheaves which are generated by global sections,
see \cite[sec.~3.7]{F3}.

For each subsets $E\subset H$ and $J\subset I,$ we set
$
\calM_{E}=\bigoplus_{h\in E}\calM_h
$ and
$$
\rho_{J,E}=
\bigoplus_{x\in J}\bigoplus_{h\in E}\rho_{x,h}:\calM(J)\to \calM_{E}.
$$

Let $d_x$ be the set of edges with goal $x$,
$u_x$ be the set of edges with origin $x$ and
$e_x=d_x\bigsqcup u_x$.
Given an order $\preccurlyeq$ on $\calG,$ we set
$\calM_{\partial x}=\Im(\rho_{\prec x,d_x}),$
$\calM^x=\Ker(\rho_{x,e_x})$ and
$\calM_{[x]}=\Ker(\rho_{x,d_x}),$
where we abbreviate $\rho_{x,E}=\rho_{\{x\},E}$.
Following \cite[sec.~4.3]{FW}, we call $\calM^x$ the {\it costalk} of $\calM$ at $x$.
Note that $\calM^x$, 
$\calM_{[x]}$, 
$\calM_x$ are graded $\bar Z_{R}$-modules such that
$\calM^x\subset\calM_{[x]}\subset\calM_x.$ 

\smallskip

Assume from now on that the moment graph $\calG$ is a GKM graph. Let us quote the following definitions, see \cite[lem.~3.2]{J} and \cite[lem.~4.8]{F3}.

\smallskip

\begin{df}
Assume that $\calM\in\bar\bfF_R$ is generated by global sections. Then,

$(a)$ $\calM$ is {\it flabby} 
if $\Im(\rho_{x,d_x})=\calM_{\partial x}$ for each $x\in I$,

$(b)$ $\calM$ is {\it $\Delta$-filtered} if it is flabby and if
the graded $\bar Z_{R}$-module $\calM_{[x]}$ is a free graded $R$-module 
for each $x\in I$.
\end{df}

\vspace{2mm}

Now, we can define the following categories.

\smallskip

\begin{df}
Let $\bar\bfF^\Delta_{R,\preccurlyeq}$ 
be the full subcategory of $\bar\bfF_{R}$ 
consisting of the $\Delta$-filtered objects.
Let $\bar\bfZ^\Delta_{R,\preccurlyeq}$ be the
essential image of $\bar\bfF^\Delta_{R,\preccurlyeq}$ by $\Gamma$. 
\end{df}

\vspace{2mm}

We may abbreviate $\bar\bfF^\Delta_{R}=\bar\bfF^\Delta_{R,\preccurlyeq}$ 
and $\bar\bfZ^\Delta_{R}=\bar\bfZ^\Delta_{R,\preccurlyeq}$ when the order is clear from the context.
The categories $\bar\bfF^\Delta_{R}$ and $\bar\bfZ^\Delta_{R}$
are Krull-Schmidt graded $R$-categories.
Since $\Delta$-filtered objects are generated by global sections, from the discussion above we deduce that $\calL$, $\Gamma$ are mutually inverse equivalences
of graded $R$-categories
between $\bar\bfF_{R}^\Delta$ and $\bar\bfZ_{R}^\Delta$.

For each $M\in\bar\bfZ_{R}^\Delta$ we write
$M^x=\calL(M)^x,$ $M_{[x]}=\calL(M)_{[x]},$
$M_x=\calL(M)_x,$
$M_{\prec x}=\calL(M)_{\prec x},$
$M_{\partial x}=\calL(M)_{\partial x}$
and $M_h=\calL(M)_h.$

\smallskip

The categories $\bar\bfF^\Delta_{R}$ and $\bar\bfZ^\Delta_{R}$
are exact categories with the exact structures defined as follows.
Following \cite[lem.~4.5]{F3}, we say that a sequence 
$0\to M'\to M\to M''\to 0$ 
in $\bar\bfZ_{R}^\Delta$ 
is exact
if and only if  the  sequence of (free) $R$-modules 
$0\to M'_{[x]}\to M_{[x]}\to M''_{[x]}\to 0$
is exact  for each $x\in I$.
This yields an exact structure on $\bar\bfZ^\Delta_{R}$.  We transport it
to an exact structure on $\bar\bfF^\Delta_{R}$ via the equivalence $(\calL,\Gamma)$.

\vspace{2mm}

\begin{rk}
\label{rk:exact}
The forgetful functor $\bar\bfZ_R^\Delta\to\bar \bfZ_R$ 
is exact by \cite[lem.~2.12]{F4}. Here $\bar \bfZ_R$ is equipped with the exact structure 
naturally associated with its structure of abelian category. To avoid confusion we'll call it the \emph{stupid}
exact structure.
\end{rk}

\vspace{2mm}

\begin{rk}
\label{rk:MI}
Let $M\in\bar\bfZ_R^\Delta.$ Then, we have $M\simeq\Gamma\calL(M)$ and $M_x=\calL(M)_x.$
For each $J\subset I,$ let $M_J$ be the image of the obvious map
$M=\Gamma \calL (M)\to\bigoplus_{x\in J}M_x,$ see also \cite[def.~2.7]{F4}.
It is a graded $R$-module, and
we have $M_J=\calL(M)(J)$ by \cite[lem.~3.4]{F3}.
Since $\bar Z_{R,x}=R$ for all $x$, the set $(\bar Z_R)_J$ is a graded 
$R$-subalgebra of $R^{\oplus J}$.
So $M_J$ is a graded $(\bar Z_R)_J$-module.
\end{rk}

\vspace{2mm}

We'll use two different notions of projective objects in the exact category $\bar\bfZ_{R}^\Delta$,
compare \cite[sec.~3.8]{J}.

\vspace{2mm}

\begin{df}
$(a)$ A module $M\in\bar\bfZ_{R}^\Delta$ 
is {\it projective} if the functor $\Hom_{\bar\bfZ_{R}}(M,\bullet)$ on $\bar\bfZ_{R}^\Delta$
maps short exact sequences to short exact sequences.

$(b)$ A module $M\in\bar\bfZ_{R}$ 
is {\it F-projective} if 
$\calL(M)$ is flabby, 
$M_x$ is a projective graded $R$-module for each $x$,
$M_h=M_{h'}/\alpha_hM_{h'}$ and
$\rho_{h',h}$ is the canonical map for each $h$.
\end{df}

\vspace{2mm}

\begin{rk}\label{rk:Fprojective}
If $M\in\bar\bfZ_{R}^\Delta$ is F-projective, then it is projective \cite[prop.~5.1]{F3}.
Note that loc.~cit.~uses \emph{reflexive} graded $R$-sheaves.
However, $\Delta$-filtered graded $R$-sheaves are automatically reflexive by 
\cite[sec.~4]{F3}.
\end{rk}

\vspace{2mm}

We say that an object $M\in\bar\bfZ_{R}^\Delta$ 
is {\it tilting} if  
the contravariant functor $\Hom_{\bar\bfZ_{R}}(\bullet,M)$ on $\bar\bfZ_{R}^\Delta$
maps short exact sequences to short exact sequences.

For each graded $R$-module $M,$ let $M^*$ be its {\it dual graded $R$-module}, i.e.,
we set
$M^*=\bigoplus_i(M^*)^i$
with $(M^*)^i=\ghom_{R}(M,R\langle i\rangle)$.
Here $\ghom_{R}$ is the Hom's space of graded $R$-modules.
Since $\bar Z_{R}$ is commutative, 
the graded dual $R$-module $M^*$ of a graded $\bar Z_R$-module $M$ is a 
graded $\bar Z_{R}$-module.

As above, let the symbols $\Tilt$ and $\Proj$ denote the sets of indecomposable tilting and projective objects.
 By \cite[sec.~4.6]{F3}, the following holds.
 
 \vspace{2mm}

\begin{prop}\label{prop:F3}
The duality $D:\bar\bfZ_{R}\to\bar\bfZ_{R}$ such that $M\mapsto M^*$
restricts to an exact equivalence 
$\bar\bfZ_{R,\preccurlyeq}^\Delta\to(\bar\bfZ_{R,\succcurlyeq}^\Delta)^\op$.
In particular, it yields bijections
$\Proj(\bar\bfZ_{R,\preccurlyeq}^\Delta)\to\Tilt(\bar\bfZ_{R,\succcurlyeq}^\Delta)$
and
$\Tilt(\bar\bfZ_{R,\preccurlyeq}^\Delta)\to\Proj(\bar\bfZ_{R,\succcurlyeq}^\Delta).$
\qed
\end{prop}

\vspace{.5mm}

We define the {\it support} of a graded 
$R$-sheaf $M$ on $\calG$ to be the set
$\supp(M)=\{x\in I\,;\,M_x\neq 0\}.$
We can now turn to the following important definition.

\vspace{2mm}

\begin{prop-df}[\cite{F3}] \label{df:3.9}
Let $(\calG,\preccurlyeq)$ be an ordered GKM-graph.
There is a unique object 
$\bar B_{R,\preccurlyeq}(x)$ in $\bar\bfZ_{R}$ 
which is indecomposable, F-projective,
supported on the coideal $\{\succcurlyeq\! x\}$ and
with $\bar B_{R,\preccurlyeq}(x)_x=R$. 
We call $\bar B_{R,\preccurlyeq}(x)$ a graded {\it BM-sheaf}.
\end{prop-df}

\vspace{2mm}

We may abbreviate 
$\bar B_{R}(x)=\bar B_{R,\preccurlyeq}(x)$ and
$\bar C_{R}(x)=\bar C_{R,\preccurlyeq}(x)=D(\bar B_{R,\succcurlyeq}(x)).$

\vspace{2mm}

\begin{rk} \label{rk:BM} 
The existence and unicity of graded BM-sheaves is proved in \cite[thm.~5.2]{F3}
using the Braden-MacPherson algorithm \cite[sec.~1.4]{BM}. 
See also \cite[thm.~6.3]{FW}.
The construction of
$\bar B_{R}(x)$ is as follows. 
\begin{itemize}
\item
Set $\bar B_{R}(x)_y=0$ for $y\not\succcurlyeq x$.
\item
Set $\bar B_{R}(x)_x=R$.
\item
Let $y\succ x$ and suppose we have already constructed $\bar B_{R}(x)_z$ and
$\bar B_{R}(x)_h$ for  any $z,h$ such that
$y\succ z,h',h''\succcurlyeq x$. For $h\in d_y$ set 
\begin{itemize}
\item $\bar B_{R}(x)_h=\bar B_{R}(x)_{h'}/\alpha_h\bar B_{R}(x)_{h'}$ 
and $\rho_{h',h}$ is the canonical map,
\item $\bar B_{R}(x)_{\partial y}=
\Im(\rho_{\prec y,d_y})\subset \bar B_{R}(x)_{d_y},$ 
\item $\bar B_{R}(x)_y$ is the projective cover of the graded $R$-module $\bar B_{R}(x)_{\partial y}$,
\item $\rho_{y,h}$ is the composition of the projective cover map
$\bar B_{R}(x)_y\to\bar B_{R}(x)_{\partial y}$ with the obvious projection
$\bar B_{R}(x)_{d_y}\to\bar B_{R}(x)_h$.
\end{itemize}
\end{itemize}
\end{rk}

\vspace{2mm}

\begin{prop}
\label{prop:decomposition}
An F-projective object in $\bar\bfZ_{R}$ is a direct sum of objects of the form 
$\bar B_{R}(x)\langle j\rangle$ with $x\in I$ and $j\in\bbZ$.
\end{prop}

\begin{proof}
See  \cite[prop.~3.9]{J}.\end{proof}

\vspace{2mm}

Next, following \cite[sec.~4.5]{F3}, we introduce the following.

\vspace{2mm}

\begin{prop-df} 
There is a unique object $\bar V_R(x)\in\bar\bfZ_R$ that is isomorphic to
$R$ as a graded $R$-module and on
which the tuple $(z_y)\in\bar Z_R$ acts by multiplication with $z_x$.
We call it a {\it graded Verma-sheaf}. 
\end{prop-df}

\vspace{2mm}

\begin{prop}\label{prop:vermasheaf}
(a) The graded Verma-sheaves are $\Delta$-filtered and self-dual. 

(b) We have $\bar V_R(x)^y=\bar V_R(x)_y=\bar V_R(x)_{[y]}=R$
if $x=y$ and 0 else.

(c) For $M\in\bar\bfZ_R$ we have
$$\Hom_{Z_R}
\bigl(\bar V_R(x)\langle i\rangle,M\bigr)=M^x\langle-i\rangle,$$
$$\Hom_{Z_R}
\bigl(M,\bar V_R(y)\langle j\rangle\bigr)=(M_y)^*\langle j\rangle,$$
$$
\hom_{\bar Z_R}
\bigl(\bar V_R(x)\langle i\rangle,\bar V_R(y)\langle j\rangle\bigr)=
\delta_{x,y}\,R^{i-j}.$$
\end{prop}

\vspace{.5mm}

\begin{proof}
Obvious, see e.g.,
\cite[sec.~3.11]{J}.
\end{proof}

\vspace{2mm}

\begin{rk}
\label{rk:filtration}
A $\Delta$-filtered graded $R$-sheaf $M$ has a filtration 
$0=M_0\subseteq M_1\subseteq\dots\subseteq M_n=M$ 
by graded $\bar Z_R$-submodules with
\cite[rk.~4.4, sec.~4.5]{F3}
$$\gathered
\bigoplus_{r=1}^nM_{r}/M_{r-1}=\bigoplus_y M_{[y]},\quad
M_{r}/M_{r-1}
=\bar V_R(y_r)\langle i_r\rangle,\quad
r\leqslant s\Rightarrow y_s\preccurlyeq y_r.
\endgathered$$
In particular, a $\Delta$-filtered graded $\bar Z_R$-module is 
free and finitely generated as a graded $R$-module.
\end{rk}

\vspace{2mm}

\begin{rk}
\label{rk:stalk/costalk}
From Proposition \ref{prop:vermasheaf}$(c)$ we deduce that
$(M_y)^*=(M^*)^y.$
\end{rk}

\vspace{2mm}

\subsection{Ungraded version and base change}
\label{rk:basechange}

Assume that $R$ is the localization of $S$ with respect
to some multiplicative set.
Then, we define an
$R$-sheaf over $\calG$, its space of global sections, the structural algebra
$Z_R$, the categories  $\bfF_{R}$, $\bfZ_{R}$, etc., in the obvious way, 
by forgetting the grading in the definitions above. In particular, we define as above an exact category $\bfZ^\Delta_{R}$.

Now, assume that $R=S$. 
Forgetting the grading yields a faithful and 
faithfully exact functor $\bar\bfZ^\Delta_{R}\to\bfZ^\Delta_{R}$.
See, e.g., \cite{Ishi}, for details on faithfully exact functors. Let
$V_R(x)$, $B_{R}(x)$,  $C_{R}(x)$ be the images of $\bar V_R(x)$, $\bar B_{R}(x)$,  $\bar C_{R}(x)$.
We call $V_R(x)$ a Verma-sheaf and $B_{R}(x)$ a BM-sheaf.

\vspace{1mm}

Next, for each morphism of  $S$-algebras 
$R\to R'$ we consider the {\it base change} functor 
$\bullet\otimes_{R}R':R\bfmod\to R'\bfmod$ given by $M\mapsto R'M=M\otimes_RR'$. 

Assume that $R'$ is the localization of $R$ with respect
to some multiplicative subset.
Then, the base change yields functors
$\bullet\otimes_{R}R':\bfF_{R}\to\bfF_{R'}$ and
$\bullet\otimes_{R}R':\bfZ_{R}\to\bfZ_{R'}$.
These functors commute with $\Gamma$, $\calL$ and the canonical map 
$R'\Hom_{\bfZ_R}(M,N)\to \Hom_{\bfZ_{R'}}(R'M,R'N)$
is an isomorphism, see  \cite[sec.~2.7, 2.18, 3.15]{J}.
Further, the base change commutes with the functor
$M\mapsto M_{[x]}$, it preserves the flabby sheaves, the Verma sheaves,
the F-projective ones and it yields an exact and 
faithful functor $\bfZ_{R}^\Delta\to\bfZ_{R'}^\Delta$, see \cite[sec.~3.15]{J}.

\smallskip
Assume now that $R$ is the localization of $S$ with respect
to some multiplicative set. Then, we define $V_{R}(x)=R V_S(x),$
$B_{R}(x)=R B_S(x)$ and $C_{R}(x)=R C_S(x).$
Note that $B_{R}(x),$ $C_{R}(x)$
are F-projective by \cite[sec.~3.8]{J}, and  that
if $R=S_0$ then $B_{R}(x),  C_{R}(x)$ are indecomposable by
\cite[sec.~3.16]{J}. Further, any $F$-projective $R$-sheaf in $\bfF_R$ is a direct sum of objects of the form $B_{R}(x)$ with $x\in I$, see \cite[prop.~3.13]{J}.

\smallskip

Forgetting the grading and taking the base change, we get the functor
$\varepsilon:S\bfgmod\to S_0\bfmod$. It is faithful and faithfully exact. Indeed, if $\varepsilon(M)=0$, then there is an element $f\in S\setminus\frakm$ such that $fM=0$.
Since $M$ is graded, this implies that $M=0$. Hence $\varepsilon$ is faithful,
and by \cite[thm.~1.1]{Ishi} it is also faithfully exact. Finally, note that a graded $S$-module $M$ is free over $S$ 
(as a graded module) if and only if $\varepsilon(M)$
is free over $S_0$. 

\smallskip
Similarly, consider the functor $\varepsilon:\bar\bfZ_{S}\to\bfZ_{S_0}$ given by forgetting the grading and taking the base change.
We have the following lemma.

\begin{lemma}\label{lem:forget}
(a) The functor $\varepsilon$ commutes with the functor $M\mapsto M_{[x]}.$

(b) The functor $\varepsilon:\bar\bfZ_{S}\to\bfZ_{S_0}$ is faithful and faithfully exact.

(c) For each $M\in\bar\bfZ_{S},$ we have
$M\in\bar\bfZ_{S}^\Delta$ if and only if
$\varepsilon(M)\in\bfZ^\Delta_{S_0}$.
\qed
\end{lemma}

\vspace{3mm}

\subsection{The moment graph of $\bfO$}
\label{sec:3.4}
Fix a parabolic type $\mu\in\calP$ and fix $w\in\widehat W$.

\smallskip
Unless specified otherwise, we'll set $V=\bft$.
In this section, we define a moment graph ${}^w\calG_{\mu,\pm}$
over $\bft$ which is naturally associated with the truncated categories ${}^w\bfO_{\mu,\pm}$.  
Then, we consider its category of (graded) $R$-sheaves over ${}^w\calG_{\mu,\pm}$.
In the graded case we'll assume that $R=S$, with the grading above.
In the ungraded one, we'll assume that $R=S$ or $S_0$, see the previous section for details.

\vspace{2mm}

\begin{df}
\label{def:momentgraph}
Let ${}^w\calG_{\mu,\pm}$
be the moment graph over $\bft$ 
whose set of vertices is 
${}^w\!I_{\mu,\pm}$, 
with an edge between $x,y$ labelled by $k\,\check\alpha$  if there is an affine
reflection $s_{\alpha}\in\widehat W$ such that $s_\alpha y\in xW_\mu$.
We equip ${}^w\calG_{\mu,+}$ with the partial order $\preccurlyeq$ given by
the opposite Bruhat order,
and we equip ${}^w\calG_{\mu,-}$ with the partial order $\preccurlyeq$ given by
the Bruhat order.
\end{df}

\smallskip

Let ${}^w\bar Z_{S,\mu,\pm}$
be the graded structural algebra of 
${}^w\calG_{\mu,\pm}$. Let
${}^w\bar\bfZ_{S,\mu,\pm}$
be the category of graded ${}^w\bar Z_{S,\mu,\pm}$-modules which are finitely generated 
and torsion free over $S$. 
Let ${}^w\bar\bfZ_{S,\mu,\pm}^\Delta$ 
be the category of $\Delta$-filtered graded $S$-sheaves 
on ${}^w\calG_{\mu,\pm,\preccurlyeq}$.
Let $\bar V_{S,\mu,\pm}(x)\in{}^w\bar\bfZ_{S,\mu,\pm}$ be the (graded) Verma-sheaf 
whose stalk at $x$ is $S$.
We'll use a similar notation for all other objects attached to ${}^w\calG_{\mu,\pm}$.

\smallskip

We'll use the same notation as in Section \ref{rk:basechange}. 
For example ${}^w\!B_{S,\mu,\pm}(x)$ is the nongraded analogue of ${}^w\!\bar B_{S,\mu,\pm}(x)$, and ${}^w\!B_{R,\mu,\pm}(x)=R{}^w\!B_{S,\mu,\pm}(x)$.

\smallskip

We also set  ${}^w\!\bar Z_{k,\mu,\pm}=k {}^w\!\bar Z_{S,\mu,\pm}$,
${}^w\! Z_{k,\mu,\pm}=k {}^w\! Z_{S_0,\mu,\pm}$,
${}^w\bar\bfZ_{k,\mu,\pm}={}^w\!\bar Z_{k,\mu,\pm}\bfgmod$
and ${}^w\bfZ_{k,\mu,\pm}={}^w\!Z_{k,\mu,\pm}\bfmod$.
In the graded case, we put
${}^w\!\bar B_{k,\mu,\pm}(x)=
k{}^w\!\bar B_{S,\mu,\pm}(x)$.
In the non graded case, we put
${}^w\! B_{k,\mu,\pm}(x)=
k{}^w\! B_{S_0,\mu,\pm}(x)$.
To unburden the notation we may omit the subscript $k$ and write
${}^w\! Z_{\mu,\pm}={}^w\! Z_{k,\mu,\pm}$, ${}^w\bfZ_{\mu,\pm}={}^w\bfZ_{k,\mu,\pm}$, etc.
\vspace{2mm}


\vspace{2mm}

\begin{prop}
\label{prop:BMD}
The (graded) BM-sheaves on ${}^w\calG_{\mu,\pm}$
are $\Delta$-filtered.
\end{prop}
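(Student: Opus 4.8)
The plan is to show that each BM-sheaf $\bar B_{A,\mu,\pm}(x)$ on ${}^w\calG_{\mu,\pm}$ is $\Delta$-filtered, i.e., that it is flabby (which holds by construction of BM-sheaves, since F-projective sheaves are flabby) and that each graded $\bar Z_A$-module $\bar B_{A,\mu,\pm}(x)_{[y]}$ is a free graded $A$-module. So the whole content is the freeness of the $\bar B(x)_{[y]}$. Recall from the Braden--MacPherson recursion (Remark \ref{rk:BM}) that $\bar B(x)_{[y]}=\Ker(\rho_{y,d_y}:\bar B(x)_y\to\bar B(x)_{d_y})$, and that $\bar B(x)_y$ is the projective cover of the graded $A$-module $\bar B(x)_{\partial y}=\Im(\rho_{\prec y,d_y})$, with $\rho_{y,d_y}$ being the composition $\bar B(x)_y\twoheadrightarrow\bar B(x)_{\partial y}\hookrightarrow\bar B(x)_{d_y}$. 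Hence $\bar B(x)_{[y]}$ is just the kernel of the projective cover map $\bar B(x)_y\to\bar B(x)_{\partial y}$.

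First I would reduce to the case where $A=S$ (or $A=S_0$) using the base change functor of Remark \ref{rk:basechange}: BM-sheaves over a localization $A$ are obtained from the one over $S$ by $\bullet\otimes_S A$, and the recursion commutes with flat base change, so freeness over $S$ gives freeness over $A$. Then I would argue by induction on $y$ with respect to $\preccurlyeq$, exactly along the inductive construction of the BM-sheaf. The key structural fact to establish is that the graded $A$-module $\bar B(x)_{\partial y}$ is \emph{free}; granting this, the projective cover $\bar B(x)_y$ of a free graded module over the local graded ring $A$ is free of the same graded rank (a graded module over $A=S_0$ is projective iff it is free, and the minimal projective cover of a free module is that module itself), so $\rho_{y,d_y}$ restricted to its image splits and $\bar B(x)_{[y]}=\Ker(\bar B(x)_y\to\bar B(x)_{\partial y})$ is a direct summand of a free module, hence free.

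So the crux is freeness of $\bar B(x)_{\partial y}=\Im(\rho_{\prec y,d_y})$, the image of the global-type compatibility map into $\bar B(x)_{d_y}=\bigoplus_{h\in d_y}\bar B(x)_{h'}/\alpha_h\bar B(x)_{h'}$. Here one uses the GKM hypothesis on ${}^w\calG_\mu$: the labels $\check\alpha_h$ of distinct edges $h\in d_y$ are pairwise non-proportional, which is what makes the Braden--MacPherson combinatorics work. The standard way to proceed is to identify $\bar B(x)_{\partial y}$, up to the inductive hypothesis that all the lower stalks $\bar B(x)_z$ and all $\bar B(x)_{[z]}$ for $z\prec y$ are free, with the image of $\Gamma$ of the subsheaf supported on $\{z\,;\,z\prec y,\ z\succcurlyeq x\}\cap\{$endpoints of $d_y\}$, and to invoke Fiebig's structure results (\cite[lem.~4.5, lem.~4.8, thm.~5.2]{F3}): a flabby $\Delta$-filtered sheaf restricts to a flabby $\Delta$-filtered sheaf on any coideal, and the global sections of such are free. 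Concretely I would verify that the truncation of $\bar B(x)$ to the coideal $\{\prec y\}\cap\{\succcurlyeq x\}$ (with endpoints of $d_y$) is $\Delta$-filtered by induction, apply Remark \ref{rk:filtration} to get that its global sections form a free finitely generated graded $A$-module, and note $\bar B(x)_{\partial y}$ is the image of those global sections in $\bar B(x)_{d_y}$ — which, again by the $\Delta$-filtered exactness criterion (\cite[lem.~4.5]{F3}), is a free quotient, since each $\bar B(x)_{[z]}$ is free and the relevant connecting maps are surjective by flabbiness.

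The main obstacle is precisely this freeness of $\bar B(x)_{\partial y}$: it is where the GKM condition and Fiebig's localization/flabbiness machinery are genuinely needed, and where one must be careful that the inductive hypothesis really supplies $\Delta$-filteredness of all the truncations appearing. Everything else — commuting with base change, the projective-cover-of-free-is-free step, and extracting freeness of the kernel $\bar B(x)_{[y]}$ from that of $\bar B(x)_y$ and $\bar B(x)_{\partial y}$ — is formal. In fact this proposition is essentially \cite[thm.~5.2, prop.~5.1]{F3} transported to the specific moment graph ${}^w\calG_{\mu,\pm}$, the only thing to check being that ${}^w\calG_{\mu,\pm}$ is a GKM-graph for which Fiebig's hypotheses apply; so I would spend most of the write-up confirming the GKM property (distinct edges at a vertex $x\in{}^w\!I_\mu^{\min}$ correspond to distinct affine reflections, hence to non-proportional coroots $\check\alpha$) and then citing \cite{F3} for the rest.
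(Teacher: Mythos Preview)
Your approach handles ${}^w\calG_{\mu,-}$ correctly: that case is precisely \cite[thm.~5.2]{F3}, and verifying the GKM property suffices. But there is a genuine gap for ${}^w\calG_{\mu,+}$. Fiebig's theorem is not a statement about arbitrary ordered GKM graphs; its proof uses the specific combinatorics of the Bruhat order on a Coxeter system (reduced expressions, translation-type functors), and does not transfer when you reverse the order. Your direct argument for freeness of $\bar B(x)_{\partial y}$ is exactly where this shows: the assertion that the image of the global sections in $\bar B(x)_{d_y}$ is ``a free quotient, since each $\bar B(x)_{[z]}$ is free and the relevant connecting maps are surjective by flabbiness'' is unjustified --- freeness of the pieces and surjectivity of some maps do not force the image module to be free. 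This is precisely the hard step where the Coxeter-theoretic input enters in Fiebig's proof, and for a general ordered GKM graph BM-sheaves need not be $\Delta$-filtered.

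The paper handles ${}^w\calG_{\mu,+}$ by an entirely different, representation-theoretic route: it forward-references Proposition~\ref{prop:foncteurV}$(c)$, which (for $A=S_0$) identifies the BM-sheaf ${}^w\!B_{S_0,\mu,+}(x)$ with $\bbV\bigl({}^w\!P_{S_0}(x\bullet\oo_{\mu,+})\bigr)$, the image under the structure functor of an indecomposable projective in the deformed category $\bfO$. Since projectives in a highest weight category are $\Delta$-filtered and $\bbV$ carries ${}^w\bfO_{S_0,\mu,+}^\Delta$ onto ${}^w\bfZ_{S_0,\mu,+}^\Delta$, the BM-sheaves on ${}^w\calG_{\mu,+}$ are $\Delta$-filtered over $S_0$; base change (Remark~\ref{rk:basechange}) then gives the statement for a general localization $A$.
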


\begin{proof}
The (graded) BM-sheaves on ${}^w\calG_{\mu,-}$
are $\Delta$-filtered by \cite[thm.~5.2]{F3}.
We claim that the (graded) BM-sheaves on ${}^w\calG_{\mu,+}$
are also $\Delta$-filtered. 

By Section \ref{rk:basechange}, a graded $S$-sheaf $M$ is 
$\Delta$-filtered if and only if the $S_0$-sheaf $\varepsilon(M)$
is $\Delta$-filtered.
Now, by Proposition \ref{prop:foncteurV}$(c)$ below, the 
BM-sheaves on ${}^w\calG_{\mu,+}$ are $\Delta$-filtered for $R=S_0$.
This proves our claim.
\end{proof}

\vspace{2mm}

Let $l(x)$ be the length of an element $x\in\widehat W$.
From Propositions \ref{df:3.9}, \ref{prop:BMD}, we deduce the following.

\vspace{2mm}

\begin{prop}\label{prop:B} 
For each $x\in{}^w\!I_{\mu,\pm},$ there is 
a unique $\Delta$-filtered graded $S$-sheaf 
${}^w\!\bar B_{S,\mu,\pm}(x)\in{}^w\bar\bfZ_{S,\mu,\pm}^\Delta$
which is indecomposable, F-projective,
supported on the coideal $\{\succcurlyeq\! x\}$ and whose
stalk at $x$ is equal to
$S\langle\pm l(x)\rangle$.
\qed
\end{prop}

\vspace{2mm}

Assume that $w\in I_{\mu,+}$ and let $v=w_-\in I_{\mu,-}$. 
From the anti-isomorphism of posets ${}^w\!I_{\mu,+}\simeq{}^v\!I_{\mu,-}$ in Corollary \ref{lem:C2}(c), 
we  deduce that ${}^w\calG_{\mu,+}$ and ${}^v\calG_{\mu,-}$ are the same moment graph with opposite orders. 
In particular, we have ${}^w\bar Z_{S,\mu,+}={}^v\bar Z_{S,\mu,-}$ as graded algebras. 
Further, by Proposition \ref{prop:F3}, there is an exact equivalence
$$D: {}^w\bar\bfZ_{S,\mu,+}^\Delta\to({}^v\bar\bfZ_{S,\mu,-}^\Delta)^\op.$$

The same is true if $+$ and $-$ are switched everywhere. We define
${}^w\bar C_{S,\mu,\pm}(x)=D({}^v\!\bar B_{S,\mu,\mp}(y))$
with $y=x_{\mp}$, $v=w_\mp$ for each $x\in {}^w\!I_{\mu,\pm}$.
It is a $\Delta$-filtered graded $S$-sheaf on ${}^w\calG_{\mu,\pm}$
which is indecomposable, tilting,
supported on the ideal $\{\preccurlyeq\! x\}$ and
whose costalk at $x$ is equal to $S\langle\pm l(x)\rangle$.
We abbreviate
${}^w\!\bar B_{S,\mu,\pm}=\bigoplus_x{}^w\!\bar B_{S,\mu,\pm}(x)$
and
${}^w\bar C_{S,\mu,\pm}=\bigoplus_x{}^w\bar C_{S,\mu,\pm}(x).$

\vspace{2mm}

By Remark \ref{rk:filtration} we have the following lemma.

\vspace{2mm}

\begin{lemma}
\label{rk:filtration2}
(a) The graded $S$-sheaf ${}^w\!\bar B_{S,\mu,\pm}(x)$ is
filtered by graded Verma-sheaves. The top term in this filtration is $\bar V_{S,\mu,\pm}(x)\langle\pm l(x)\rangle$, which is a quotient of
${}^w\!\bar B_{S,\mu,\pm}(x).$
The other subquotients are of the form
$\bar V_{S,\mu,\pm}(y)\langle j\rangle$ with $y\succ x$ and $j\in\bbZ$. 

(b) The graded $S$-sheaf ${}^w\bar C_{S,\mu,\pm}(x)$ 
is filtered by graded Verma-sheaves. The first term in this filtration is the
sub-object
$\bar V_{S,\mu,\pm}(x)\langle\pm l(x)\rangle.$
The other subquotients are of the form
$\bar V_{S,\mu,\pm}(y)\langle j\rangle$ with $y\prec x$ and $j\in\bbZ$. 
\end{lemma}

\vspace{2mm}

Since ${}^w\bar Z_{S,\mu,+}={}^v\bar Z_{S,\mu,-}$ we may also regard 
${}^v\bar C_{S,\mu,-}(x_-)=D({}^w\!\bar B_{S,\mu,+}(x))$ as a graded ${}^w\bar Z_{S,\mu,+}$-module. 
The following proposition says that ${}^w\!\bar B_{S,\mu,+}(x)$ is self-dual.

\vspace{2mm}

\begin{prop}
\label{prop:duality}
For each $x\in{}^w\!I_{\mu,+},$ we have 
${}^w\!\bar B_{S,\mu,+}(x)={}^v\bar C_{S,\mu,-}(x_-)$ as graded ${}^w\bar Z_{S,\mu,+}$-modules.
\end{prop}

\vspace{.5mm}

\begin{proof}
If $\mu=\phi$ is regular 
then the claim is \cite[thm.~6.1]{F4}.

Assume now that $\mu$ is not regular.
We abbreviate $\bar B_\mu(x)={}^w\!\bar B_{S,\mu,+}(x)$.
We must prove that $D(\bar B_\mu(x))=\bar B_\mu(x).$ 
The regular case implies that $D(\bar B_\phi(x))=\bar B_\phi(x)$.
We can assume that $w\in I_{\mu,-}$ and that $x\in {}^w\!I_{\mu}$.

From Proposition \ref{prop:theta}$(e)$, $(f)$ below, we deduce that
$$\bigoplus_{y\in W_\mu}D(\bar B_\mu(x))\langle 2l(y)-l(w_\mu)\rangle\oplus D(M)=
\bigoplus_{y\in W_\mu}\bar B_\mu(x)\langle l(w_\mu)-2l(y)\rangle\oplus 
M,$$
where $M$ is a direct sum of objects of the form
$\bar B_\mu(z)\langle j\rangle$ with $z< x$.
We have also
$D(\bar B_\mu(e))=\bar B_\mu(e)$.
Thus, by induction, we may assume that
$D(\bar B_\mu(z))=\bar B_\mu(z)$ for all $z<x$, and the equality above implies that
$D(\bar B_\mu(x))=\bar B_\mu(x).$ 
\end{proof}

\vspace{2mm}

\begin{rk} The graded sheaf ${}^w\!\bar B_{S,\mu,-}(x)$
is not self-dual, i.e., ${}^w\!\bar B_{S,\mu,-}(x)$ is not isomorphic to ${}^v\!\bar C_{S,\mu,+}(x_+)$ in general.
\end{rk}

\vspace{3mm}

Let $P_{y,x}^{\mu,q}(t)=\sum_iP_{y,x,i}^{\mu,q}\,t^i$ be
Deodhar's parabolic Kazhdan-Lusztig polynomial ``of type $q$''
associated with the
parabolic subgroup $W_\mu$ of $\widehat W$.

Let $Q_{x,y}^{\mu,-1}(t)=\sum_iQ_{x,y,i}^{\mu,-1}\,t^i$ be
Deodhar's inverse parabolic Kazhdan-Lusztig polynomial 
``of type $-1$''. 

We use the notation in \cite[rk.~2.1]{KT2}, which is not the usual one. 
We abbreviate
$P_{y,x}=P_{y,x}^{\phi,q}$ 
and
$Q_{y,x}=Q_{y,x}^{\phi,-1}$.

\vspace{2mm}

\begin{prop}\label{prop:3.26} 
For each $x,y\in{}^w\!I_{\mu,+},$ we have graded $S$-module isomorphisms

(a) ${}^w\!\bar B_{S,\mu,+}(x)_{y}
=\bigoplus_{i\geqslant 0}(S\langle l(x)-2i\rangle)^{\oplus P^{\mu,q}_{y,x,i}},$

(b) ${}^w\!\bar B_{S,\mu,+}(x)_{[y]}
=\bigoplus_{i\geqslant 0}(S\langle 2l(y)-l(x)+2i\rangle)^{\oplus P^{\mu,q}_{y,x,i}}.$
\end{prop}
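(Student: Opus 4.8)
The plan is to prove both parts simultaneously by induction on $l(x)$, following the Braden-MacPherson construction of $\bar B_{A,\mu,+}(x)$ recalled in Remark \ref{rk:BM} and matching it step by step against Deodhar's recursion for the parabolic Kazhdan-Lusztig polynomials $P^{\mu,q}_{y,x}$. First I would observe that part (a) and part (b) are equivalent given Proposition \ref{prop:vermasheaf} and Remark \ref{rk:filtration}: by the Verma filtration of Remark \ref{rk:filtration2}, the multiplicity of $\bar V_A(y)\{j\}$ in ${}^w\!\bar B_{A,\mu,+}(x)$ is the rank of the free $A$-module ${}^w\!\bar B_{A,\mu,+}(x)_{[y]}$ in degree $j$, and the Verma filtration together with the formula $M_y = \bigoplus_{z \succcurlyeq y \text{ via the filtration}}$ relates the stalk $M_y$ to the graded pieces $M_{[z]}$ for $z \preccurlyeq y$ supported over $y$; more precisely one has the identity $\operatorname{rk} M_y\{j\} = \sum_{z} (\text{mult of }\bar V_A(z)\{j'\}\text{ in }M)\cdot(\text{rk of }\bar V_A(z)_y\text{-contribution})$, which since $\bar V_A(z)_y = A$ only for $z=y$ forces the stalk and $M_{[y]}$ ranks to be related by the shift $j \leftrightarrow 2l(y)-j$ coming from the grading conventions on the structural algebra. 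So the real content is a single statement, and (b) follows from (a) by the self-duality $D({}^w\!\bar B_{A,\mu,+}(x)) = {}^w\!\bar B_{A,\mu,+}(x)$ of Proposition \ref{prop:duality} together with Remark \ref{rk:stalk/costalk}, which says $(M_y)^* = (DM)^y$; for a $\Delta$-filtered $F$-projective BM-sheaf the costalk and stalk at $y$ are graded-dual up to the shift $\{2l(y)\}$, and this precisely exchanges the two displayed formulas.

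Next I would run the induction. For $x$ minimal the sheaf is a single Verma-sheaf $\bar V_{A,\mu}(x)\{l(x)\}$ and both formulas are the trivial statement $P^{\mu,q}_{x,x}=1$. For the inductive step, fix $y \succ x$ and use the BM-algorithm: $\bar B_{A,\mu,+}(x)_h = \bar B_{A,\mu,+}(x)_{h'}/\check\alpha_h \bar B_{A,\mu,+}(x)_{h'}$ for the edge $h$ with goal $y$, then $\bar B_{A,\mu,+}(x)_{\partial y} = \operatorname{Im}(\rho_{\prec y, d_y})$ inside $\bigoplus_{h \in d_y}\bar B_{A,\mu,+}(x)_h$, and finally $\bar B_{A,\mu,+}(x)_y$ is the graded projective cover of $\bar B_{A,\mu,+}(x)_{\partial y}$, i.e. a free $A$-module whose graded rank equals that of the graded $k$-vector space $\bar B_{A,\mu,+}(x)_{\partial y} \otimes_A k$. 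By the inductive hypothesis applied to $\bar B_{A,\mu,+}(x)_z$ for $x \prec z \prec y$ (whose ranks are governed by $P^{\mu,q}_{z,x}$), together with the computation of the edge stalks $\bar B_{A,\mu,+}(x)_h$ as quotients by $\check\alpha_h$, the graded dimension of the image $\bar B_{A,\mu,+}(x)_{\partial y}$ is expressed by exactly the combinatorial sum appearing in Deodhar's recursion for $P^{\mu,q}_{y,x}$: one takes the ``defect'' polynomial $\sum_{z}(\text{something})\cdot P^{\mu,q}_{z,x}$, extracts its part of degree $\geqslant l(y)-l(x)$ in a suitable sense, and the projective cover exactly records the correction term. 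I would cite the precise form of the recursion from \cite{KT2} (their Remark 2.1, matching the unusual normalization used here) and identify it with the map $\rho_{\prec y, d_y}$.

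The main obstacle, and where I would spend most of the effort, is the parabolic bookkeeping: making sure that the moment graph ${}^w\calG_{\mu,+}$ — whose vertices are minimal-length coset representatives in ${}^w\!I_\mu^\min$ and whose edges join $x,y$ when $x \in s_\alpha y W_\mu$ — produces, under the BM-algorithm, precisely Deodhar's \emph{parabolic} polynomials of type $q$ rather than the ordinary ones, and that the costalk/stalk duality shift $2l(y)-l(x)$ lines up with the grading convention $V \subset \bft$ in degree $2$. The regular case $\mu = \phi$ is essentially \cite{F4} (combined with the classical identification of BM-stalks with KL polynomials); the parabolic case requires checking that the edge-labels $k\check\alpha$ and the coset structure do not create extra edges or collapse the recursion, which is where the GKM hypothesis and the explicit description of ${}^w\!I_\mu^\min$ enter. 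I would also need to verify that passing between $A$ and $S_0$ via base change (Remark \ref{rk:basechange}) does not disturb the ranks, which is immediate since everything in sight is free over $A$ and the construction commutes with $\otimes_A A'$. Once the regular statement and the parabolic recursion of \cite{KT2} are in hand, the induction closes formally.
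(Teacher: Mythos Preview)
Your derivation of (b) from (a) via the self-duality of Proposition~\ref{prop:duality} and Remark~\ref{rk:stalk/costalk} is essentially what the paper does, though the paper makes the shift $\{2l(y)\}$ precise by proving the identity $\bar B(x)^y=\alpha_{u_y}\,\bar B(x)_{[y]}$ (using that $\bar B(x)$ is $F$-projective and that distinct edge labels are coprime in $S$), whence $\bar B(x)_{[y]}=\bar B(x)^y\{2l(y)\}$ since $\sharp u_y=l(y)$; combined with $(M_y)^*=(DM)^y=M^y$ this gives $(\bar B(x)_y)^*\{2l(y)\}=\bar B(x)_{[y]}$, and (b) drops out of (a). Your sketch of this step is correct in spirit but the Verma-filtration argument in your first paragraph is muddled and unnecessary.

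For (a), however, your approach diverges sharply from the paper's and contains a real gap. The paper does \emph{not} prove (a) by a combinatorial induction matching the Braden--MacPherson algorithm to Deodhar's recursion. Instead it invokes geometry: Proposition~\ref{prop:loc1} identifies ${}^w\!\bar B_{S,\mu,+}^\vee(x)$ with the $T$-equivariant intersection cohomology $IH_T(\bar X_x)$ of a finite-dimensional affine Schubert variety (this is the Braden--MacPherson theorem \cite{BM}, which rests on the decomposition theorem), and then \cite[thm.~1.4]{KT2} computes the local IH stalks in terms of $P^{\mu,q}_{y,x}$. So (a) is a one-line citation of two geometric inputs.

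The gap in your inductive approach is the sentence ``the graded dimension of the image $\bar B_{A,\mu,+}(x)_{\partial y}$ is expressed by exactly the combinatorial sum appearing in Deodhar's recursion''. Knowing the graded ranks of the stalks $\bar B(x)_z$ for $z\prec y$ does \emph{not} determine the graded rank of $\bar B(x)_{\partial y}=\Im(\rho_{\prec y,d_y})$: the image depends on the section space $\bar B(x)(\{\prec y\})$ and the actual maps $\rho$, not merely on the source modules. Bounding this image from above is easy; showing it is no smaller than predicted by Deodhar's recursion is precisely the content of the decomposition theorem (or, in the algebraic avatar, Soergel's conjecture / the Elias--Williamson theorem). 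You have labeled this step ``parabolic bookkeeping'', but it is the entire difficulty, and it is not something one can close by induction on $l(x)$ alone. The regular case $\mu=\phi$ in \cite{F4} that you cite does not do this either: Fiebig's identification of BM-sheaf stalks with Kazhdan--Lusztig polynomials also goes through geometry or through category~$\bfO$, not through a direct recursion match.
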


\vspace{.5mm}

\begin{proof} Any finitely generated projective $S$-module is free, and that
the $S$-module ${}^w\!\bar B_{S,\mu,+}(x)_{y}$ is projective because ${}^w\!\bar B_{S,\mu,+}(x)$
is F-projective.
Hence, part $(a)$ follows from Proposition \ref{prop:loc1} below and 
\cite[thm.~1.4]{KT2}. Part $(b)$ follows from $(a)$ and Proposition 
\ref{prop:duality} as in \cite[prop.~7.1]{F4} (where it is proved for $\mu$ regular).
More precisely, write
$\bar V(x)=\bar V_{S,\phi}(x),$
$\bar B(x)={}^w\!\bar B_{S,\mu,+}(x)$
and $Z={}^w\! Z_{S,\mu,+}.$
Then, we have
$$\bar B(x)^y=\Ker(\rho_{y,e_y})\subset\bar B(x)_{[y]}=\Ker(\rho_{y,d_y})\subset \bar B(x)_y.$$
In particular, for
$\alpha_{u_y}=\prod_{h\in u_y}\alpha_h$,
we have
$\alpha_{u_y}\,\bar B(x)_{[y]}\subset\bar B(x)^y.$

Next, the graded $S$-module $\bar B(x)_y$ is free and
$\bar B(x)_h=\bar B(x)_y/\alpha_h \bar B(x)_y$ 
for $h\in u_y$ because $\bar B(x)$ is F-projective. 
Thus we have
$\bar B(x)^y\subset\alpha_{u_y}\,\bar B(x)_y,$
because $\alpha_{h_1}$ is prime to $\alpha_{h_2}$ in $S$ if $h_1\neq h_2$.

We claim that 
$\bar B(x)^y=\alpha_{u_y}\,\bar B(x)_{[y]}.$
Let $b\in\bar B(x)^y$.
Write $b=\alpha_{u_y}\, b'$ with $b'\in\bar B(x)_y$.
If $\rho_{y,h}(b')\neq 0$ with $h\in d_y$ then
$\rho_{y,h}(b)=\alpha_{u_y}\,\rho_{y,h}(b')\neq 0,$ 
because the $\alpha_{u_y}$-torsion submodule of
$\bar B(x)_h$ is zero (we have
$\bar B(x)_h=\bar B(x)_{h'}/\alpha_h \bar B(x)_{h'}$,
the $S$-module $\bar B(x)_{h'}$ is free, 
and $\alpha_h$ is prime to $\alpha_{u_y}$).
This implies the claim.

In particular, we have
$\bar B(x)_{[y]}=\bar B(x)^y\langle 2l(y)\rangle$
because $\sharp u_y=l(y)$.
Hence,
by Remark \ref{rk:stalk/costalk}
and Proposition \ref{prop:duality} 
we have a graded $S$-module isomorphism
$$\aligned
(\bar B(x)_y)^*\langle 2l(y)\rangle=\bar B(x)^y\langle 2l(y)\rangle
=\bar B(x)_{[y]}.
\endaligned$$
\end{proof}

\vspace{2mm}

\begin{df}\label{def:3.23}
Consider
the graded $S$-algebra
${}^w\!\bar \scrA_{S,\mu,\pm}=
\End_{{}^w\! Z_{S,\mu,\pm}}\bigl({}^w\!\bar B_{S,\mu,\pm}\bigr)^{\op}$
and the graded $k$-algebra 
${}^w\!\bar \scrA_{\mu,\pm}=
k{}^w\!\bar \scrA_{S,\mu,\pm}.$
Set also ${}^w\!\scrA_{S_0,\mu,\pm}=
\End_{{}^w\! Z_{S_0,\mu}}\bigl({}^w\!B_{S_0,\mu,\pm}\bigr)^{\op}$
and let
${}^w\! \scrA_{\mu,\pm}$ be the 
$k$-algebra underlying ${}^w\!\bar \scrA_{\mu,\pm}.$
\end{df}

\vspace{2mm}

For each $x\in{}^w\!I_{\mu,\pm},$ let $1_x\in {}^w\!\bar \scrA_{\mu,\pm}$ be the idempotent
associated with the direct summand
${}^w\!\bar B_{S,\mu,\pm}(x)$
of ${}^w\!\bar B_{S,\mu,\pm}$.

\vspace{2mm}

\begin{prop}
\label{prop:2.24}
The graded $k$-algebra ${}^w\!\bar \scrA_{\mu,\pm}$ is basic. 
Its Hilbert polynomial is 
$$\gathered
P({}^w\!\bar \scrA_{\mu,+},t)_{x,x'}=
\sum_{y\leqslant x,x'}
P^{\mu,q}_{y,x}(t^{-2})\,P^{\mu,q}_{y,x'}(t^{-2})\,t^{l(x)+l(x')-2l(y)},
\\
P({}^w\!\bar \scrA_{\mu,-},t)_{x,x'}=
\sum_{y\geqslant x,x'}
Q^{\mu,-1}_{x,y}(t^{-2})\,Q^{\mu,-1}_{x',y}(t^{-2})\,t^{2l(y)-l(x)-l(x')}.
\endgathered$$
If $\mu=\phi$ then the matrix equation 
$P({}^w\!\bar \scrA_{\phi,-},t)\,P({}^v\!\bar \scrA_{\phi,+},-t)=1$ holds.
Here $v=w^{-1}$ and
the sets of indices of the matrices in the left and right factors
are identified  through the map $x\mapsto x^{-1}$.
\end{prop}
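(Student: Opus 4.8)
The plan is to compute, for every pair $x,x'$, the graded dimension of $1_x{}^w\!\bar A_{\mu,\pm}1_{x'}$ and to read the three assertions off it. Since passing to the opposite algebra only transposes the index pair and all formulas in sight are symmetric in $x,x'$, this space is $k\Hom_{{}^w\!Z_{S_0,\mu}}\bigl({}^w\!\bar B_{S_0,\mu,\pm}(x),{}^w\!\bar B_{S_0,\mu,\pm}(x')\bigr)$. Consider first the $+$ case. By Proposition \ref{prop:BMD}, ${}^w\!\bar B_{S_0,\mu,+}(x)$ is projective in ${}^w\bar\bfZ^\Delta_{S_0,\mu,+}$ and ${}^w\!\bar B_{S_0,\mu,+}(x')$ is $\Delta$-filtered, so Remark \ref{rk:filtration} applies: the graded $S_0$-module $\Hom_{{}^w\!Z_{S_0,\mu}}\bigl({}^w\!\bar B_{S_0,\mu,+}(x),{}^w\!\bar B_{S_0,\mu,+}(x')\bigr)$ is free, with a filtration whose associated graded is $\bigoplus_r\ghom_{S_0}\bigl({}^w\!\bar B_{S_0,\mu,+}(x)_{y_r},S_0\{i_r\}\bigr)$, where $\bar V_{S_0}(y_r)\{i_r\}$ runs over the subquotients of a Verma filtration of ${}^w\!\bar B_{S_0,\mu,+}(x')$. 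Because the module is free over $S_0$, reduction to $k$ respects this filtration, so the Poincar\'e polynomial of the reduction is the sum over $r$ of those of $k\ghom_{S_0}\bigl({}^w\!\bar B_{S_0,\mu,+}(x)_{y_r},S_0\{i_r\}\bigr)$, and $k\ghom_{S_0}(S_0\{a\},S_0\{b\})$ is $k$ in degree $a-b$. By Remark \ref{rk:filtration} the shifts $i_r$ with $y_r=y$ are those occurring in ${}^w\!\bar B_{S_0,\mu,+}(x')_{[y]}$, and Proposition \ref{prop:3.26} describes both this module and the stalk ${}^w\!\bar B_{S_0,\mu,+}(x)_y$ in terms of the parabolic Kazhdan--Lusztig polynomials $P^{\mu,q}$. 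Substituting and collecting powers of $t$ gives the first displayed formula.

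For the $-$ case one repeats this verbatim, now invoking the analogue of Proposition \ref{prop:3.26} for ${}^w\!\bar B_{A,\mu,-}(x)$ --- whose stalks and $[y]$-parts are governed by the inverse parabolic Kazhdan--Lusztig polynomials $Q^{\mu,-1}$, by the localization picture (Proposition \ref{prop:loc1}) and \cite{KT2}, or alternatively obtained from Proposition \ref{prop:3.26} through the duality ${}^w\!\bar B_{A,\mu,-}(x)=D({}^w\!\bar C_{A,\mu,+}(x))$ and Remark \ref{rk:stalk/costalk}, using ${}^w\!\bar A_{\mu,-}=k\End_{{}^w\!Z_{S_0,\mu}}({}^w\!\bar C_{S_0,\mu,+})$. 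This yields the second formula. Basicness then follows at once: by the degree bounds $P^{\mu,q}_{x,x}=1$, $2\deg P^{\mu,q}_{y,x}<l(x)-l(y)$ for $y<x$ (and likewise for $Q^{\mu,-1}$), the lowest power of $t$ in each summand $P^{\mu,q}_{y,x}(t^{-2})P^{\mu,q}_{y,x'}(t^{-2})t^{l(x)+l(x')-2l(y)}$ is nonnegative, and it vanishes --- the summand then being $1$ --- only when $y=x=x'$. Hence every entry of $P({}^w\!\bar A_{\mu,\pm},t)$ is a polynomial with constant term $\delta_{x,x'}$, so ${}^w\!\bar A_{\mu,\pm}$ is a finite dimensional positively graded $k$-algebra with $({}^w\!\bar A_{\mu,\pm})^0=\bigoplus_xk\,1_x$; that is, it is basic.

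Now let $\mu=\phi$ and $v=w^{-1}$; write $P=P^{\phi,q}$, $Q=Q^{\phi,-1}$. The two formulas factor, with all indices running over $\{z\leqslant w\}$: $P({}^w\!\bar A_{\phi,+},t)=U^+(U^+)^{\mathrm t}$ and $P({}^w\!\bar A_{\phi,-},t)=U^-(U^-)^{\mathrm t}$, where $(U^+)_{x,y}=P_{y,x}(t^{-2})t^{l(x)-l(y)}$ and $(U^-)_{x,y}=Q_{x,y}(t^{-2})t^{l(y)-l(x)}$. Substituting $t\mapsto-t$ into the matrix $U^+$ attached to ${}^v\calG_\phi$ and relabelling its indices by $z\mapsto z^{-1}$ produces the matrix $W$ with $W_{x,y}=P_{y^{-1},x^{-1}}(t^{-2})(-1)^{l(x)-l(y)}t^{l(x)-l(y)}$; consequently $P({}^v\!\bar A_{\phi,+},-t)$, with its indices relabelled through $z\mapsto z^{-1}$, equals $WW^{\mathrm t}$. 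It therefore suffices to show $(U^-)^{\mathrm t}W=1$: from this, $P({}^w\!\bar A_{\phi,-},t)\cdot WW^{\mathrm t}=U^-\bigl((U^-)^{\mathrm t}W\bigr)W^{\mathrm t}=U^-W^{\mathrm t}$, while transposing $(U^-)^{\mathrm t}W=1$ gives $W^{\mathrm t}U^-=1$, hence $U^-W^{\mathrm t}=1$ as well, which is the desired identity. But
\[
\bigl((U^-)^{\mathrm t}W\bigr)_{x,y}
=t^{l(x)-l(y)}\sum_z(-1)^{l(z)-l(y)}Q_{z,x}(t^{-2})\,P_{y^{-1},z^{-1}}(t^{-2}),
\]
and since $P_{y^{-1},z^{-1}}=P_{y,z}$ (the Kazhdan--Lusztig polynomials being invariant under $w\mapsto w^{-1}$) this equals $t^{l(x)-l(y)}\sum_z(-1)^{l(z)-l(y)}P_{y,z}(t^{-2})Q_{z,x}(t^{-2})=\delta_{x,y}$ by the Kazhdan--Lusztig inversion formula in the normalization of \cite{KT2}.

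The main obstacle is the $-$ case input: one must establish the correctly normalised analogue of Proposition \ref{prop:3.26} for ${}^w\!\bar B_{A,\mu,-}(x)$, the delicate point being that these sheaves are not self-dual (contrary to Proposition \ref{prop:duality}), so the identification of stalk with costalk has to be routed through the tilting sheaf ${}^w\!\bar C_{A,\mu,+}(x)$; apart from that, everything is bookkeeping with grading shifts and signs.
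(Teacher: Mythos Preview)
Your treatment of the $+$ case and of the matrix equation at $\mu=\phi$ is correct and essentially identical to the paper's: the filtration from Remark~\ref{rk:filtration}, the stalk/$[y]$-part input from Proposition~\ref{prop:3.26}, the basicness from the degree bound on $P^{\mu,q}$, and the inversion formula together with $P_{y,x}=P_{y^{-1},x^{-1}}$ are exactly what the paper uses.

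The gap is in the $-$ case. You recognise that the analogue of Proposition~\ref{prop:3.26} for ${}^w\!\bar B_{A,\mu,-}(x)$ is the crux, but neither of your proposed routes delivers it. Proposition~\ref{prop:loc1} concerns the finite-dimensional Schubert varieties $\bar X_x\subset X'$ and identifies their $IH_T$ with the $+$ BM-sheaves, not the $-$ ones; it says nothing about ${}^w\!\bar B_{A,\mu,-}$. The duality route is circular: $D$ exchanges ${}^w\!\bar B_{A,\mu,-}(x)$ and ${}^w\!\bar C_{A,\mu,+}(x)$, and Remark~\ref{rk:stalk/costalk} trades stalks for costalks, but you have no independent computation of the stalks, costalks, or Verma multiplicities of either object. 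The self-duality argument that produces Proposition~\ref{prop:3.26}(b) from (a) genuinely fails here, as you note, and there is no substitute within the moment-graph formalism alone.

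The paper resolves this by a different mechanism: it defers the $-$ formula to Proposition~\ref{prop:IC} in Appendix~\ref{app:B}, which develops equivariant perverse sheaves on the \emph{finite-codimensional} Schubert varieties $X^{[x,w]}$ inside Kashiwara's affine flag manifold. There one proves (Proposition~\ref{prop:IH}, Corollary~\ref{cor:B1}) that $IH_T(X^{[x,w]})={}^w\!\bar B^\vee_{S,\mu,-}(x)$ with stalks governed by $Q^{\mu,-1}$, and then computes the Ext algebra directly via parity vanishing on a good finite-dimensional quotient $X^{\leqslant w}_n$ (Lemma~\ref{lem:good}, Proposition~\ref{prop:IC/ICT/IH}). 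This geometric input is substantial --- it is in fact one of the main new tools of the paper --- and is what your argument is missing.
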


\vspace{.5mm}

\begin{proof}
First, note that if $M,B\in\bar\bfZ^\Delta_S$ 
and if $B$ is projective,
then the filtration $(M_r)$ of $M$ in Remark \ref{rk:filtration} yields a filtration
of the graded $S$-module $\Hom_{Z_S}(B,M)$ 
whose associated graded is, see Proposition \ref{prop:vermasheaf},
$$\aligned
\bigoplus_{r=1}^n\Hom_{Z_S}(B,M_r/M_{r-1})
=\bigoplus_{r=1}^n(B_{y_r})^*\langle i_r\rangle.
\endaligned$$

Further, for each $x',$ the graded BM-sheaf 
${}^w\!\bar B_{S,\mu,+}(x')$ is projective 
by Remark \ref{rk:Fprojective} and Proposition \ref{prop:B}.

Therefore, for each $x,x',$ 
there is a finite filtration of the graded $S$-module 
$\Hom_{{}^w\!Z_{S,\mu,+}}
\bigl({}^w\!\bar B_{S,\mu,+}(x'),{}^w\!\bar B_{S,\mu,+}(x)\bigr)$
whose associated graded is, see Proposition \ref{prop:3.26}, 
$$\aligned
&\bigoplus_{y}\bigoplus_{i\geqslant 0}
\bigl({}^w\!\bar B_{S,\mu,+}(x')_y\bigr)^*
\langle 2l(y)-l(x)+2i\rangle^{\oplus P^{\mu,q}_{y,x,i}}=
\\
&=\bigoplus_{y}\bigoplus_{i,i'\geqslant 0}
S\langle 2l(y)-l(x)-l(x')+2i+2i'\rangle^{\oplus P^{\mu,q}_{y,x,i}\,
P^{\mu,q}_{y,x',i'}}.
\endaligned$$

Thus we have a graded $k$-vector space isomorphism
$$1_x{}^w\!\bar \scrA_{\mu,+}1_{x'}=
\bigoplus_{y\leqslant x,x'}\bigoplus_{i,i'\geqslant 0}
k\langle 2l(y)-l(x)-l(x')+2i+2i'\rangle^{\oplus P^{\mu,q}_{y,x,i}\,P^{\mu,q}_{y,x',i'}},$$
where $y,x,x'$ run over ${}^w\!I_{\mu,+}$.

We deduce that
$$P({}^w\!\bar \scrA_{\mu,+},t)_{x,x'}=
\sum_{y\leqslant x,x'}P^{\mu,q}_{y,x}(t^{-2})\,P^{\mu,q}_{y,x'}(t^{-2})\,
t^{l(x)+l(x')-2l(y)}.$$
In other words, the  matrix equation
$P({}^w\!\bar \scrA_{\mu,+},t)=P_{\mu,+}(t)^T\,P_{\mu,+}(t)$ holds
with
$P_{\mu,+}(t)_{y,x}=P^{\mu,q}_{y,x}(t^{-2})\,t^{l(x)-l(y)}$
and
$x,y\in{}^w\!I_{\mu,+}.$

Note that
$P^{\mu,q}_{y,x,i}=0$ if $l(y)-l(x)+2i>0$ and that 
if $l(y)-l(x)+2i=0$ then
$P^{\mu,q}_{y,x,i}=0$ unless $y=x$.
Thus we have
$P({}^w\!\bar \scrA_{\mu,+},t)_{x,x'}\in\delta_{x,x'}+t\bbN[[t]].$
Hence the graded $k$-algebra
${}^w\!\bar\scrA_{\mu,+}$
is basic.

Set  
$Q_{\mu,-}(t)_{x,y}=Q^{\mu,-1}_{x,y}(t^{-2})\,t^{l(y)-l(x)}$ 
for each
$x,y\in{}^w\!I_{\mu,+}.$
The matrix equation
$P({}^w\!\bar \scrA_{\mu,-},t)=Q_{\mu,-}(t)\,Q_{\mu,-}(t)^T$ is proved in Proposition \ref{prop:IC} below.
Hence ${}^w\!\bar \scrA_{\mu,-}$ is also basic.

Next, for each $x,x',y\in {}^w\!I_{\mu,+}$ and $a=-1,q$ we have, see e.g., \cite[(2.39)]{KT2}
$$\sum_{x\leqslant y\leqslant x'}
(-1)^{l(y)-l(x)}Q^{\mu,a}_{x,y}(t)\,P^{\mu,a}_{y,x'}(t)=\delta_{x,x'}.$$
Further, if $\mu=\phi$ is regular then we have 
$P_{y,x}(t)=P^{\mu,q}_{y,x}(t)$ and
$Q_{x,y}(t)=Q^{\mu,q}_{x,y}(t)$. 
So, we have the matrix equations
$Q_{\phi,-}(t)\,P_{\phi,+}(-t)=1$,
$P_{\phi,+}(-t)\,Q_{\phi,-}(t)=
1.$
We deduce that
$$\gathered
P({}^w\!\bar \scrA_{\phi,-},t)P({}^w\!\bar \scrA_{\phi,+},-t)=
Q_{\phi,-}(t)\,Q_{\phi,-}(t)^T\,P_{\phi,+}(-t)^T\,P_{\phi,+}(-t)=1.
\endgathered$$
The matrix equation in the proposition follows easily, using the fact that
$P_{y,x}=P_{y^{-1},x^{-1}}$ and
$Q_{x,y}=Q_{x^{-1},y^{-1}}$. 
\end{proof}

\vspace{2mm}

\begin{rk}\label{rk:3.29}
The grading on ${}^w\!\bar \scrA_{\mu,\pm}$ is positive, because
$P^{\mu,q}_{y,x,i}=0$ if $l(y)-l(x)+2i>0$ and 
$P^{\mu,q}_{y,x,i}=0$ if $l(y)-l(x)+2i=0$ and $y\neq x$.

\end{rk}

\vspace{2mm}

\begin{rk} The results in this section have obvious analogues in finite type.
In this case, we may omit the truncation and
Proposition \ref{prop:3.26}, Corollary \ref{cor:B1} yield
$$
\aligned
\bar B_{S,\mu,+}(x)_{y}
&=\bigoplus_{i\geqslant 0}S\langle l(x)-2i\rangle^{\oplus P^{\mu,q}_{y,x,i}}\\
\bar B_{S,\mu,-}(x)_y\{l(w_0)\}
&=\bigoplus_{i\geqslant 0}S\langle l(w_0x)-2i\rangle^{\oplus Q^{\mu,-1}_{x,y,i}},
\endaligned
$$
where $w_0$ is the longuest element in the Weyl group $W$.
Indeed, we have 
\begin{equation*}
\omega^*\bar B_{S,\mu,+}(w_0xw_\mu)=\bar B_{S,\mu,-}(x)\langle l(w_0w_\mu)\rangle,
\end{equation*}
where $\omega:\calG_{\mu,-}\to\calG_{\mu,+}$ 
is the ordered moment graph isomorphism induced by
the bijections
$I_{\mu,+}\to I_{\mu,+},$ $x\mapsto w_0xw_\mu$ and
$\bft\to\bft,$ $h\mapsto w_0(h).$
Note that \cite[prop.~2.4,2.6]{KT2} and
Kazhdan-Lusztig's inversion formula give
$Q^{\mu,-1}_{x,y}=P^{\mu,q}_{w_0yw_\mu,w_0xw_\mu}.$ 
\end{rk}

\vspace{5mm}

\subsection{Deformed category $\bfO$}
\label{sec:3.6}
Assume that  $R=S_0$ or $k$.  
If $R=k$ we may drop it from the notation.

We define
$\bfO_{R,\mu,\pm}$ as the category consisting of the $(\bfg,R)$-bimodules $M$  such
that
\begin{itemize}
\item $M=\bigoplus_{\lambda\in \bft^*} M_\lambda$ with 
$M_\lambda=\{m\in M\,;\,xm=(\lambda(x)+x)\cdot m,\,x\in\bft\}$,
\vskip1mm
\item $U(\bfb)\,(R\cdot m)$ 
is finitely generated over $R$ for each $m\in M$,
\vskip1mm
\item 
the highest weight of any simple subquotient 
is linked to $\oo_{\mu,\pm}$.
\end{itemize}
Here, for each $r\in R$ and each element $m$ of an $R$-module $M$,
the element $r\cdot m$ is the action of $r$ on $m$. Further, in the symbol
$\lambda(x)+x$ we view $x$ as the element of $R$ given by the image of $x$ under the canonical map
$S=S(\bft)\to R$.
The morphisms are the $(\bfg,R)$-bimodule homomorphisms.

Next, consider the following categories

\begin{itemize}

\item ${}^w\bfO_{R,\mu,-}$ is the 
Serre subcategory of $\bfO_{R,\mu,-}$ consisting
of the finitely generated modules
such that the highest weight of any simple subquotient 
is of the form $\lambda=x\bullet\oo_{\mu,-}$
with $x\preccurlyeq w$ and $x\in I_{\mu,-}$.

\item $^w\bfO_{R,\mu,+}$ is the category of the finitely generated 
objects in the Serre quotient of $\bfO_{R,\mu,+}$ 
by the Serre subcategory consisting of the modules
such that the highest weight of any simple subquotient 
is of the form $\lambda=x\bullet\oo_{\mu,+}$
with $x\not\succcurlyeq w$ and $x\in I_{\mu,+}$.
We'll use the same notation for a module in
$\bfO_{R,\mu,+}$ and a module in the quotient category $^w\bfO_{R,\mu,+}$,
hoping it will not create any confusion.
\end{itemize}

For each $\lambda\in\bft^*$ let $R_\lambda$ be the 
$(\bft,R)$-bimodule which is free of rank one over $R$ and such that the element
$x\in\bft$ acts by multiplication by the image of the element 
$\lambda(x)+x$ by the canonical map $S\to R$. 
The {\it deformed Verma module} 
with highest weight $\lambda$ is the $(\bfg,R)$-bimodule given by
$V_{R}(\lambda)=U(\bfg)\otimes_{U(\bfb)}R_\lambda.$

\vspace{2mm}

\begin{prop} 
\label{prop:deformedhw}
(a) The category ${}^w\bfO_{R,\mu,\pm}$ 
is a highest weight $R$-category. The standard
objects are the deformed Verma modules
$V_{R}(x\bullet\oo_{\mu,\pm})$ with $x\in{}^w\!I_{\mu,\pm}$.

(b) Assume that $w\in I^\nu_{\mu,\pm}.$ 
The Ringel equivalence gives an equivalence
$(\bullet)^\diamond:{}^w\bfO_{R,\mu,\pm}^\Delta\to{}^v\bfO_{R,\mu,\mp}^\nabla$
where $v=w_\mp$.
\end{prop}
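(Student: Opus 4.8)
The plan is to treat (a) and (b) separately, transporting to the deformed setting the arguments of Section \ref{sec:truncation}.

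For (a) the strategy is to identify ${}^w\bfO_{A,\mu,\pm}$ with the category of finitely generated modules over a finite projective $A$-algebra whose reduction to $k=\bbC$ is the highest weight category ${}^w\bfO_{\mu,\pm}$ of Section \ref{sec:truncation}, and then to verify Rouquier's axioms \cite[def.~4.11]{Ro}. The deformed Verma $V_A(\lambda)=U(\bfg)\otimes_{U(\bfb)}A_\lambda$ is free over $A$ by PBW and satisfies $kV_A(\lambda)=V(\lambda)$, so the modules $V_A(x\bullet\oo_{\mu,\pm})$, $x\in{}^w\!I_{\mu,\pm}$, are the natural candidates for the standard objects. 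At negative level ${}^w\bfO_{A,\mu,-}$ is the truncation of $\bfO_{A,\mu,-}$ to the finite ideal $\{x\preccurlyeq w\}$; the existence of projective covers within this truncation, their $\Delta$-filtrations and the deformed BGG reciprocity are part of the general theory of deformed category $\bfO$ at a non-critical level (\cite{F1}, \cite{F3}; cf.\ also \cite{So}), which gives ${}^w\bfO_{A,\mu,-}\simeq\bfmod({}^w\!R_{A,\mu,-})$ with ${}^w\!R_{A,\mu,-}=\End({}^w\!P_{A,\mu,-})^{\op}$ finite projective over $A$. At positive level one runs verbatim over $A$ the Mitchell truncation of Remark \ref{rk:mitchell}: $\bfO_{A,\mu,+}$ has projective covers $P_A(x\bullet\oo_{\mu,+})$, so by \cite[thm.~3.1]{Mi} it is equivalent to $\bfMod(Q_A)$ for the corresponding endomorphism algebra without unit, and localising at $e=\sum_{x\succcurlyeq w}1_x$ gives ${}^w\bfO_{A,\mu,+}\simeq\bfmod(eQ_Ae)$ with $eQ_Ae$ finite projective over $A$. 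In either case the standard objects are the $V_A(x\bullet\oo_{\mu,\pm})$, all projective and $\Delta$-filtered objects are free over $A$ and compatible with the reduction to $k$, and $k\bfO_{A,\mu,\pm}\simeq{}^w\bfO_{\mu,\pm}$; the highest weight axioms over $A$ then follow from \cite[thm.~4.15, rk.~4.17, prop.~4.19]{Ro}.

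For (b) the plan is to copy the proof of Proposition \ref{prop:ringel}. The ($A$-linear) deformed analogue of \cite[thm.~6.6]{So} (cf.\ \cite{F1}, \cite{F2}, or base change from the case $A=k$) provides an exact contravariant equivalence $D:\bfO_{A,\mu,\pm}^\Delta\to\bfO_{A,\mu,\mp}^\Delta$ taking $V_A(\lambda)$ to $V_A(-\lambda-2\hat\rho)$; since $\oo_{\mu,+}=-\oo_{\mu,-}-2\hat\rho$ and $W_\mu$ fixes $\oo_{\mu,\mp}+\hat\rho$, a short computation shows $D$ takes $V_A(x\bullet\oo_{\mu,\pm})$ to $V_A(x_\mp\bullet\oo_{\mu,\mp})$. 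By Remark \ref{rk:bijection} this matches the poset anti-isomorphism $x\mapsto x_\mp$, so $D$ intertwines the truncations defining ${}^w\bfO_{A,\mu,\pm}$ and ${}^v\bfO_{A,\mu,\mp}$, $v=w_\mp$; the ($A$-linear) analogue of Remark \ref{rk:ringeltroncation}, together with part (a), then shows that the restricted $D$ is the tilting equivalence and yields the asserted duality ${}^w\bfO_{A,\mu,\pm}\to{}^v\bfO_{A,\mu,\mp}$.

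The step I expect to be the main obstacle is the positive level half of (a): one must check that the Mitchell truncation of Remark \ref{rk:mitchell} really works over the base ring $A$, not merely over $\bbC$ — in particular that the deformed positive-level category $\bfO_{A,\mu,+}$ has enough projectives, that $eQ_Ae$ is finite projective over $A$ with the deformed Vermas as standard objects, and that the relevant $\Hom$-spaces and $\Delta$-filtrations are free over $A$ and commute with the reduction to $k$. This is precisely where one needs the freeness and base change properties of deformed projectives — established by Fiebig at negative level — to be in force at positive level as well.
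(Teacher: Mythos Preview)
Your proposal is correct and, for part (a), follows essentially the same route as the paper: Fiebig's results \cite[thm.~2.7]{F1} for projective covers and $\Delta$-filtrations in the deformed category, the Mitchell truncation of Remark~\ref{rk:mitchell} carried out over $A$ at positive level, and then Rouquier's criteria \cite[thm.~4.15]{Ro}. You also correctly isolate the one genuine point needing care, namely that the projectives $P_A(x\bullet\oo_{\mu,+})$ furnished by \cite{F1} remain projective in the full $\bfO_{A,\mu,+}$; the paper handles this by observing that they are finitely generated, so $\Hom$ from them commutes with the direct limits expressing an arbitrary object as a union of finitely generated subobjects.

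For part (b) your argument is correct but organised slightly differently from the paper's. You mimic the proof of Proposition~\ref{prop:ringel}: start from an explicit Soergel-type contravariant equivalence on $\Delta$-filtered objects (the deformed version being \cite[sec.~2.6]{F2}), compute its effect on deformed Vermas, and then identify it with the abstract tilting equivalence via Remark~\ref{rk:ringeltroncation}. The paper instead takes the abstract Ringel dual $D$ of Section~\ref{sec:R} (as constructed over a ring in \cite{Ro}) as the starting point, and then invokes \cite[sec.~2.6]{F2} together with the characterisation of tiltings in \cite[lem.~4.21, def.~4.25]{Ro} to check that $D({}^w\bfO_{A,\mu,+})={}^v\bfO_{A,\mu,-}$. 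Both routes pass through \cite[sec.~2.6]{F2} and arrive at the same place; yours is more hands-on, the paper's slightly more economical. One small caveat: your phrase ``or base change from the case $A=k$'' does not make sense as written (one cannot base-change from the residue field back to $A$); drop it and rely on the reference to \cite{F2}.
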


\vspace{.5mm}

\begin{proof}
First, we consider the category ${}^w\bfO_{R,\mu,-}$.
The deformed Verma modules are split, i.e., their endomorphism ring is $R$.
Further, the $R$-category ${}^w\bfO_{R,\mu,-}$ is Hom finite. 
Thus we must check that ${}^w\bfO_{R,\mu,-}$ 
has a projective generator and that the projective modules are 
$\Delta$-filtered. Both statements follow from \cite[thm.~2.7]{F1}.

Next, we consider the category ${}^w\bfO_{R,\mu,+}$.
Once again it is enough to check that ${}^w\bfO_{R,\mu,+}$ 
has a projective generator and that the projective modules are 
$\Delta$-filtered. 
By \cite[thm.~2.7]{F1}, a simple module
$L(x\bullet\oo_{\mu,+})$ in $\bfO_{R,\mu,+}$
has a projective cover $P_R(x\bullet\oo_{\mu,+})$.
Note that the deformed category O in loc.~cit.~ is indeed a subcategory of
$\bfO_{R,\mu,+}$ containing all finitely generated modules. 
Since $P_R(x\bullet\oo_{\mu,+})$ is finitely generated, the functor
$\Hom_{\bfO_{R,\mu,+}}(P_R(x\bullet\oo_{\mu,+}),\bullet)$ commutes with direct limits.
Thus, since any module in $\bfO_{R,\mu,+}$ is the direct limit of its finitely generated submodules,
the module
$P_R(x\bullet\oo_{\mu,+})$ is again projective in
$\bfO_{R,\mu,+}$.
Now, a standard argument using  \cite[thm.~3.1]{Mi} shows that
the functor 
$$\Hom_{\bfO_{R,\mu,+}}\bigl(\bigoplus_x P_R(x\bullet\oo_{\mu,+}),\bullet\bigr),
\quad x\in{}^w\!I_{\mu,+},$$
factors to an equivalence of abelian $R$-categories 
${}^w\bfO_{R,\mu,+}\to A\bfmod,$ where $A$ is the finite projective $R$-algebra
$$A=\End_{\bfO_{R,\mu,+}}\bigl(\bigoplus_x P_R(x\bullet\oo_{\mu,+})\bigr)^\op.$$
Therefore by Proposition 
\ref{prop:ringel}(a) and \cite[thm.~4.15]{Ro} the category ${}^w\bfO_{R,\mu,+}$ is a highest weight category over $R$. 
This finishes the proof of $(a)$.

By \cite[sec.~2.6]{F2} there is an equivalence of exact categories
$\bfO^{\Delta}_{R,\mu,+}\iso\,(\bfO^{\Delta}_{R,\mu,-})^\op$
analogue to the one in Lemma \ref{lem:ringel-soergel}.
It maps $V_R(x\bullet\oo_{\mu,+})$
to $V_R(x_-\bullet\oo_{\mu,-}),$
and $P_R(x\bullet\oo_{\mu,+})$
to $T_R(x_-\bullet\oo_{\mu,-}).$
We deduce that $({}^v\bfO_{R,\mu,-})^\diamond={}^w\bfO_{R,\mu,+}$ for $w\in I_{\mu,+}$ and $v=w_-\in I_{\mu,-}$.
Now part (b) follows from generalities on Ringel equivalences, see Section \ref{sec:R}.

\end{proof}

\vspace{2mm}

Let
${}^w\!P_{R}(x\bullet\oo_{\mu,\pm})$ and
${}^wT_{R}(x\bullet\oo_{\mu,\pm})$ be respectively the projective and the tilting objects in the highest weight categories ${}^w\bfO_{R,\mu,\pm}$ as given by Lemma \ref{lem:hwtbasic}.
Set ${}^wP_{R,\mu,\pm}=\bigoplus_x{}^wP_R(x\bullet\oo_{\mu,\pm})$ and
${}^wT_{R,\mu,\pm}=\bigoplus_x{}^wT_R(x\bullet\oo_{\mu,\pm})$, where $x$ runs over the set
${}^wI_{\mu,\pm}$.
Composing the Ringel duality and the BGG duality, we get an equivalence
\begin{equation}\label{eq:ringelbgg}
D=\dd\circ(\bullet)^\diamond:{}^w\bfO_{R,\mu,\pm}^\Delta\to({}^v\bfO_{R,\mu,\mp}^\Delta)^\op.
\end{equation}

\smallskip
Consider the base change functor 
${}^w\bfO_{S_0,\mu,\pm}\to{}^w\bfO_{k,\mu,\pm}$
such that
$M\mapsto kM.$
By Proposition \ref{prop:2.3} we have 
$k{}^w\!P_{S_0}(x\bullet\oo_{\mu,\pm})=
{}^w\!P(x\bullet\oo_{\mu,\pm})$ and
$k{}^wT_{S_0}(x\bullet\oo_{\mu,\pm})=
{}^wT(x\bullet\oo_{\mu,\pm})$
for each $x\in{}^w\!I_{\mu,\pm}$.





\vspace{3mm}

\subsection{The functor $\bbV$}

Assume that $R=S_0$ or $k$. 
There is a well defined functor
$$\bbV_{\! R}=\Hom_{{}^w\bfO_{R,\mu,-}}({}^w\!P(\oo_{\mu,-}),\bullet): {}^w\bfO_{R,\mu,-}^\Delta\to {}^w\bfZ_{R,\mu,-},$$
called the \emph{structure functor},
see \cite[sec.~3.4]{F2}.

At the positive level, following \cite[sec. before rem.~6]{F2}, we define the structure functor $\bbV_{\! R}$ as the composition
\[\xymatrix{{}^w\bfO_{R,\mu,+}^\Delta\ar[r]^{D}
&({}^v\bfO_{R,\mu,-}^\Delta)^\op\ar[r]^{\bbV_{\! R}}
&({}^v\bfZ_{R,\mu,-})^\op\ar[r]^{D}&{}^w\bfZ_{R,\mu,+}.
}\]

\vspace{2mm}

\begin{prop}
\label{prop:foncteurV}
Let $R=S_0$ or $k$. 
The following hold 

(a) $\bbV_{\! R}$ is exact on ${}^w\bfO_{R,\mu,-}$ and on ${}^w\bfO_{R,\mu,+}^\Delta$,

(b) $\bbV_{\! R}$ commutes with base change,

(c)
the functor $\bbV_{S_0}$ on ${}^w\bfO_{S_0,\mu,\pm}^\Delta$
is such that
\begin{itemize}
\item[(i)] 
$\bbV_{\! S_0}\circ D=D\circ \bbV_{\! S_0},$
\item[(ii)] $\bbV_{\! S_0}$ is an equivalence of exact categories 
${}^w\bfO_{S_0,\mu,\pm}^\Delta\to{}^w\bfZ_{S_0,\mu,\pm}^\Delta,$
\item[(iii)] for each $x\in {}^w\!I_{\mu,\pm}$ we have
\begin{itemize}
\item[-] 
$\bbV_{\! S_0}(V_{S_0}(x\bullet\oo_{\mu,\pm}))=V_{S_0,\mu,\pm}(x),$
\item[-] 
$\bbV_{\! S_0}({}^w\!P_{S_0}(x\bullet\oo_{\mu,\pm}))={}^w\!B_{S_0,\mu,\pm}(x),$
\item[-] 
$\bbV_{\! S_0}({}^wT_{S_0}(x\bullet\oo_{\mu,\pm}))={}^w\!C_{S_0,\mu,\pm}(x).$
\end{itemize}
\end{itemize}

\end{prop}

\vspace{.5mm}

\begin{proof} 
Parts $(a),(b)$ and the first claim of  $(c)$ follow from the construction of $\bbV_R$.

The second claim of $(c)$ follows from \cite[thm.~7.1]{F3}. 
Note that we use here the exact structure
on ${}^w\bfZ_{S_0,\mu,\pm}^\Delta$ defined before Remark \ref{rk:exact}, see \cite[sec.~7.1]{F3}.

The first claim of $(c)$(iii) is given by \cite[prop.~2(2)]{F2}. 
The second one is proved in \cite[rk.~7.6]{F3}. The third one follows from the second and $(c)$(i).
\end{proof}

\vspace{2mm}

Now, we compare the $k$-algebra ${}^w\!A_{\mu,\pm}$ in 
Definition \ref{df:3.6} with the $k$-algebra ${}^w\!\scrA_{\mu,\pm}$ in Definition \ref{def:3.23}.

\vspace{2mm}

\begin{cor}\label{cor:2.31}
We have a $k$-algebra isomorphism
${}^w\!A_{\mu,\pm}\iso\, {}^w\!\scrA_{\mu,\pm}$ such that 
$1_x\mapsto 1_x$ for each $x\in{}^w\!I_{\mu,\pm}$.
\end{cor}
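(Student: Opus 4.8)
The statement follows by combining the equivalence $\bbV$ of Proposition~\ref{prop:foncteurV} over the ring $S_0$ with the base change properties of the deformed category $\bfO$ recorded in Proposition~\ref{prop:projdeform}, and with the definition of ${}^w\!A_{\mu,\pm}$. Here is the plan. First, over $A=S_0$: the full projective module ${}^w\!P_{S_0,\mu,\pm}=\bigoplus_x{}^w\!P_{S_0}(x\bullet\oo_{\mu,\pm})$ of ${}^w\bfO_{S_0,\mu,\pm}$ is $\Delta$-filtered by Proposition~\ref{prop:deformedhw}, so it lies in ${}^w\bfO_{S_0,\mu,\pm}^\Delta$, and by Proposition~\ref{prop:foncteurV}$(c)$ the functor $\bbV$ restricts to an equivalence of exact categories ${}^w\bfO_{S_0,\mu,\pm}^\Delta\to{}^w\bfZ_{S_0,\mu,\pm}^\Delta$ carrying ${}^w\!P_{S_0}(x\bullet\oo_{\mu,\pm})$ to ${}^w\!B_{S_0,\mu,\pm}(x)$. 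Since an equivalence of categories induces a ring isomorphism between the endomorphism algebras of an object and of its image, matching the primitive idempotents attached to indecomposable summands, we obtain a $\bbC$-algebra isomorphism
$$\End_{{}^w\bfO_{S_0,\mu,\pm}}\bigl({}^w\!P_{S_0,\mu,\pm}\bigr)^{\op}\ \simeq\ \End_{{}^w\!Z_{S_0,\mu}}\bigl(\bbV({}^w\!P_{S_0,\mu,\pm})\bigr)^{\op}.$$
In the $+$ case the index set ${}^w\!I_{\mu,+}$ is precisely the vertex set ${}^w\!I_\mu^\min$ of ${}^w\calG_\mu$, so $\bbV({}^w\!P_{S_0,\mu,+})={}^w\!B_{S_0,\mu,+}$ and this isomorphism sends $1_x$ to $1_x$. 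In the $-$ case the objects of ${}^w\bfO_{S_0,\mu,-}$ are labelled by ${}^w\!I_{\mu,-}\subset I_\mu^\max$; since $x\bullet\oo_{\mu,-}$ depends only on the coset $xW_\mu$ we have ${}^w\!P_{S_0}(x\bullet\oo_{\mu,-})={}^w\!P_{S_0}((xw_\mu)\bullet\oo_{\mu,-})$ with $xw_\mu$ a vertex of ${}^w\calG_\mu$, so again $\bbV({}^w\!P_{S_0,\mu,-})={}^w\!B_{S_0,\mu,-}$ and the isomorphism sends $1_x$ to $1_{xw_\mu}$.

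Next I would descend to $k$. By Proposition~\ref{prop:projdeform}$(a)$ one has $k{}^w\!P_{S_0}(x\bullet\oo_{\mu,\pm})={}^w\!P(x\bullet\oo_{\mu,\pm})$, and by Proposition~\ref{prop:projdeform}$(c)$ the canonical comparison map $k\Hom_{{}^w\bfO_{S_0,\mu,\pm}}(M,N)\to\Hom_{{}^w\bfO_{\mu,\pm}}(kM,kN)$ is bijective for $M,N\in{}^w\bfO_{S_0,\mu,\pm}^\Delta$; being compatible with composition, it yields for $M=N={}^w\!P_{S_0,\mu,\pm}$ a $\bbC$-algebra isomorphism
$${}^w\!R_{\mu,\pm}=\End_{{}^w\bfO_{\mu,\pm}}\bigl({}^w\!P_{\mu,\pm}\bigr)^{\op}\ \simeq\ k\otimes_{S_0}\End_{{}^w\bfO_{S_0,\mu,\pm}}\bigl({}^w\!P_{S_0,\mu,\pm}\bigr)^{\op}$$
respecting the idempotents $1_x$. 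On the moment graph side, ${}^w\!A_{\mu,\pm}$ is by definition the $k$-algebra underlying ${}^w\!\bar A_{\mu,\pm}=k\End_{{}^w\!Z_{S_0,\mu}}({}^w\!\bar B_{S_0,\mu,\pm})^{\op}$; since ${}^w\!\bar B_{S_0,\mu,\pm}$ is a graded sheaf of finite type, forgetting the grading gives $\End_{{}^w\!Z_{S_0,\mu}}({}^w\!B_{S_0,\mu,\pm})$, so ${}^w\!A_{\mu,\pm}=k\otimes_{S_0}\End_{{}^w\!Z_{S_0,\mu}}({}^w\!B_{S_0,\mu,\pm})^{\op}$. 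Applying $k\otimes_{S_0}-$ to the isomorphism of the first paragraph and composing with the two identifications just made produces the claimed $\bbC$-algebra isomorphism ${}^w\!R_{\mu,\pm}\to{}^w\!A_{\mu,\pm}$, with $1_x\mapsto 1_x$ when $x\in{}^w\!I_{\mu,+}$ and $1_x\mapsto 1_{xw_\mu}$ when $x\in{}^w\!I_{\mu,-}$.

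I do not expect a genuine obstacle: the argument is a concatenation of results already in place. The one point that needs care is the index bookkeeping in the $-$ case — the identification of the vertex set ${}^w\!I_\mu^\min$ of the moment graph with the labelling set ${}^w\!I_{\mu,-}\subset I_\mu^\max$ of ${}^w\bfO_{\mu,-}$ via $x\mapsto xw_\mu$. This rests on the standard compatibility of the Bruhat order with minimal and maximal left coset representatives of $\widehat W/W_\mu$, together with the fact that $\oo_{\mu,-}$ is fixed by $W_\mu$ under the $\bullet$-action, and on verifying that each of the isomorphisms above is a homomorphism of algebras taking the identity endomorphism of an indecomposable summand to that of the matching summand.
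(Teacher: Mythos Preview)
Your proof is correct and follows essentially the same route as the paper's: the chain
\[
{}^w\!R_{\mu,\pm}
=\End_{{}^w\bfO_{\mu,\pm}}({}^w\!P_{\mu,\pm})^{\op}
=k\End_{{}^w\bfO_{S_0,\mu,\pm}}({}^w\!P_{S_0,\mu,\pm})^{\op}
=k\End_{{}^w\!Z_{S_0,\mu}}({}^w\!B_{S_0,\mu,\pm})^{\op}
={}^w\!A_{\mu,\pm}
\]
via Propositions~\ref{prop:projdeform} and~\ref{prop:foncteurV}$(c)$. Your version is more explicit about the idempotent bookkeeping in the $-$ case (the relabelling $x\mapsto xw_\mu$ from $I_\mu^{\max}$ to the vertex set $I_\mu^{\min}$ of the moment graph), which the paper leaves implicit.
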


\vspace{.5mm}

\begin{proof}
By applying Proposition \ref{prop:2.3} to base change functor ${}^w\bfO_{S_0,\mu,\pm}\to{}^w\bfO_{k,\mu,\pm}$
, we get
$
{}^w\!A_{\mu,\pm}
=k\End_{{}^w\bfO_{S_0,\mu,\pm}}({}^w\!P_{S_0,\mu,\pm})^{\op}.
$
Thus, Proposition \ref{prop:foncteurV}$(c)$ yields
$$
{}^w\!A_{\mu,\pm}
=k\End_{{}^w\!Z_{S_0,\mu,\pm}}({}^w\!B_{S_0,\mu,\pm})^{\op}
=k\End_{{}^w\!Z_{S,\mu,\pm}}({}^w\!B_{S,\mu,\pm})^{\op}
={}^w\!\scrA_{\mu,\pm}.$$
\end{proof}

\vspace{2mm}


\vspace{2mm}

\subsection{Translation functors in $\bfO$}
\label{sec:translationO}
Assume that $R=S_0$ or $k$.

Fix positive integers $d,$ $e$.
Assume that
$\oo_{\mu,-}$, $\oo_{\phi,-}$ have the level $-e-N$, $-d-N$ respectively.
Thus, the integral affine weights $\oo_{\mu,+}$, $\oo_{\phi,+}$ have the level $e-N$, $d-N$ respectively.
In particular, the integral affine weight $\oo_{\mu,\pm}-\oo_{\phi,\pm}$ has the level
$\pm (e-d)$. Assume that $e-d\neq\mp N$, hence we have $\pm (e-d)\neq -N$.
The  affine weight $\oo_{\mu,\pm}-\oo_{\phi,\pm}$ is positive if $\pm( e-d)>-N$ and it is negative else, see
Section \ref{sec:affine} for the terminology.

First, we consider the translation functors on the (deformed) $\Delta$-filtered modules.
We have the following.

\vspace{2mm}

\begin{prop} 
\label{prop:translation1}
Let $R=S_0$ or $k$.
Let $w\in\widehat W$ and $z\in I_{\mu,-}$. Assume that $wW_\mu=zW_\mu$ and that $\pm (e-d)>-N$.
We have $k$-linear functors 
$T_{\phi,\mu}:{}^{z}\bfO_{R,\phi,\pm}^\Delta
\to{}^w\bfO_{R,\mu,\pm}^\Delta$ and
$T_{\mu,\phi}:{}^w\bfO_{R,\mu,\pm}^\Delta
\to{}^{z}\bfO_{R,\phi,\pm}^\Delta$
such that

(a) $T_{\phi,\mu}$, $T_{\mu,\phi}$ 
are exact,

(b) $T_{\phi,\mu}$, $T_{\mu,\phi}$ 
are bi-adjoint,

(c) $T_{\phi,\mu}$, $T_{\mu,\phi}$ 
commute with base change.
\end{prop}

\vspace{.5mm}

\begin{proof} The functors $T_{\phi,\mu}$, $T_{\mu,\phi}$ 
on $\bfO_{R,\phi,\pm}^\Delta$, $\bfO_{R,\mu,\pm}^\Delta$ are constructed in
\cite{F1}, see also \cite[thm.~4(2)]{F2}. 
Since $z\in I_{\mu,-}$,
the functors $T_{\phi,\mu}$, $T_{\mu,\phi}$ 
preserve the subcategories
${}^z\bfO_{R,\phi,-}^\Delta$, ${}^w\bfO_{R,\mu,-}^\Delta$
by \cite[thm.~4(2)]{F2}.
For the same reason the functors $T_{\phi,\mu}$, $T_{\mu,\phi}$ 
factor to the categories
${}^z\bfO_{R,\phi,+}^\Delta$, ${}^w\bfO_{R,\mu,+}^\Delta$.
The claims $(a), (b), (c)$ are proved in \cite[thm.~4(1),(6)(4)]{F2}.
\end{proof}

\vspace{2mm}

Next, in the non-deformed setting, we consider the translation functors on the whole category O. 
We have the following.

\vspace{2mm}

\begin{prop}
\label{prop:translation2}
Let $R=k$. Let $w\in\widehat W$ and $z\in I_{\mu,-}$. Assume that $wW_\mu=zW_\mu$ and that
$\pm (e-d)>-N$.
We have a $k$-linear functor 
$T_{\phi,\mu}:{}^z\bfO_{\phi,\pm}
\to{}^w\bfO_{\mu,\pm}$
which coincides with the functors in Proposition \ref{prop:translation1}
on ${}^z\bfO_{\phi,\pm}^\Delta$ and such that
the following hold

(a) $T_{\phi,\mu}$ has a left adjoint functor $T_{\mu,\phi}$, both functors are exact,

(b) $T_{\phi,\mu}$, $T_{\mu,\phi}$
take projectives to projectives,

(c) $T_{\phi,\mu}$, $T_{\mu,\phi}$ preserve the parabolic category O and
commute with $i,\tau$,

(d)
$T_{\phi,\mu}(L(xw_\mu\bullet \oo_{\phi,\pm}))=L(x\bullet \oo_{\mu,\pm})$
for each $x\in{}^z\!I_{\mu,\pm}$,

(e)
$T_{\phi,\mu}(L(xw_\mu\bullet \oo_{\phi,\pm}))=0$
for each $x\in{}^z\!I_{\phi,\pm}\setminus{}^z\!I_{\mu,\pm}$,

(f)
$T_{\phi,\mu}({}^z\!L_{\phi,\pm}^\nu)={}^w\!L_{\mu,\pm}^\nu$,

(g)
$T_{\mu,\phi}({}^w\!P^\nu(x\bullet \oo_{\mu,\pm}))=
{}^z\!P^\nu(xw_\mu\bullet \oo_{\phi,\pm})$
for each $x\in{}^w\!I_{\mu,\pm}^\nu$.
\end{prop}

\vspace{2mm}

\begin{proof}
The definition of the translation functor
$T_{\phi,\mu}:\bfO_{\phi,\pm}
\to\bfO_{\mu,\pm}$ is well-known,
see e.g., \cite[sec.~3]{KT}.
Its restriction to $\Delta$-filtered objects
coincides with the functor in Proposition \ref{prop:translation1} if $R=k$,
see \cite{F1}, \cite{KT} for details.
Formulas $(d)$, $(e)$ are proved in \cite[prop.~3.8]{KT}. 
Since $z\in I_{\mu,-}$, they imply that
$T_{\phi,\mu}$ factors to a functor ${}^z\bfO_{\phi,\pm}\to{}^w\bfO_{\mu,\pm}$
which satisfies again $(d)$, $(e)$. 

The existence of the left adjoint 
$T_{\mu,\phi}$ follows from the following general fact.

\vspace{2mm}

\begin{claim}
Let $A$, $B$ be finite dimensional $k$-algebras and 
$T : A\bfmod \to B\bfmod$ be an exact $k$-linear functor. Then $T$ has a left and a right adjoint.
\end{claim}

\vspace{2mm}

The exactness of $T_{\phi,\mu}$ is obvious by construction,
see e.g., \cite[sec.~3]{KT}. It is easy to check that $T_{\phi,\mu}$ commutes with the BGG duality.
Thus, its right adjoint is the conjugate of $T_{\mu,\phi}$ by the duality. So, to check that
$T_{\mu,\phi}$ is exact it is enough to prove that $T_{\phi,\mu}$
takes projectives to projectives. This follows from Proposition \ref{prop:translation1}.

Part $(b)$ follows from the proof of $(a)$.

Part $(c)$ for $T_{\phi,\mu}$ is obvious by construction. For $T_{\mu,\phi}$, it follows by adjunction.

To prove $(f)$, note that $(d)$ implies that
$T_{\phi,\mu}({}^z\!L_{\phi,\pm})={}^w\!L_{\mu,\pm}$.
Thus $(c)$ gives
$T_{\phi,\mu}({}^z\!L_{\phi,\pm}^\nu)\subset{}^w\!L_{\mu,\pm}^\nu$.
Further, for each $x\in{}^z\!I_{\mu,\pm}^\nu$ part $(e)$ yields 
\begin{equation}
\label{toto}
L(xw_\mu\bullet \oo_{\phi,\pm})\subset 
T_{\mu,\phi}T_{\phi,\mu}L(xw_\mu\bullet \oo_{\phi,\pm})=
T_{\mu,\phi}L(x\bullet \oo_{\mu,\pm}).
\end{equation}
Thus, by adjunction, 
we have a surjective map
$T_{\phi,\mu}L(xw_\mu\bullet \oo_{\phi,\pm})
\to L(x\bullet \oo_{\mu,\pm}).$
Now, since the right hand side of \eqref{toto} is in
${}^w\bfO^\nu_{\phi,\pm}$ by $(c)$, the left hand side is also
in ${}^z\bfO^\nu_{\phi,\pm}$.
Thus, we have a surjective map
$T_{\phi,\mu}({}^z\!L_{\phi,\pm}^\nu)
\to {}^wL^\nu_{\mu,\pm}.$

Part $(g)$ is a consequence of $(a)$, $(d)$, $(e)$.
\end{proof}

\vspace{2mm}

\begin{rk}
\label{rk:fidele}
Let $w\in\widehat W$ and $w\in I_{\mu,+}^\nu$.
Set $z=ww_\mu$. We have $z\in I^\nu_{\phi,+}\cap I_{\mu,-}$.
Assume that $e-d>-N$. Then, the  functor
$T_{\mu,\phi}:{}^w\bfO_{\mu,+}^\nu\to{}^z\bfO_{\phi,+}^\nu $
is faithful.
To prove this it is enough to check that $T_{\mu,\phi}(L)\neq 0$ for any simple module $L$.
This follows from Proposition \ref{prop:translation2}$(a),(g)$.
\end{rk}

\vspace{2mm}

\subsection{Translation functors in $\bfZ$}
\label{sec:translationZ}
Fix elements $w\in\widehat W$ and $z\in I_{\mu,-}$.

The algebra ${}^z\!\bar Z_{S,\phi,+}={}^z\!\bar Z_{S,\phi,-}$ consists of the
tuples $(a_x)$ of elements of $S$
labelled by elements $x\in \widehat W$ such that $x\leqslant z$ and
$a_{x}-a_{y}\in\check\alpha S$ for all $x,y$ such that $x=s_\alpha y$.
Since $z$ is maximal in the left coset $z W_\mu$, switching the coordinates yields a left $W_\mu$-action on 
such tuples such that $y\cdot(a_x)=(a_{xy})$ for each $y\in W_\mu$.
This yields a left $W_\mu$-action on the algebra ${}^z\!\bar Z_{S,\phi,\pm}$.

Assume that $w$ belongs to the left coset $zW_\mu$. Then, we have a map 
${}^w\!\bar Z_{S,\mu,\pm}\to{}^{z}\!\bar Z_{S,\phi,\pm}$ such that
$(a_{x})\mapsto (b_{xy}),$ where we define
$b_{xy}=a_x$ for each $x\in{}^{w}\!I_{\mu,\pm}$ and $y\in W_\mu.$
This map identifies the algebra ${}^w\!\bar Z_{S,\mu,\pm}$ with the
set of $W_\mu$-invariant elements in the algebra ${}^{z}\!\bar Z_{S,\phi,\pm}$.

\begin{lemma}\label{lem:isom12}
For $z\in I_{\mu,-}$ and $w\in I_{\mu,\pm}$ such that $w\in zW_\mu$ there are graded
${}^w\!\bar Z_{S,\mu,\pm}$-module isomorphisms
\smallskip

(a) $
{}^{z}\!\bar Z_{S,\phi,\pm}
\ \simeq\ 
\bigoplus_{y\in W_\mu}{}^w\!\bar Z_{S,\mu,\pm}\langle -2l(y)\rangle,$

\smallskip

(b) ${}^{z}\!\bar Z_{S,\phi,\pm}\langle 2l(w_\mu)\rangle
\ \simeq\ 
\Hom_{{}^w\!\bar Z_{S,\mu,\pm}}\bigl(
{}^{z}\!\bar Z_{S,\phi,\pm},
{}^w\!\bar Z_{S,\mu,\pm}
\bigr).$
\end{lemma}
\begin{proof}
It is enough to prove the Lemma for ${}^w\!\bar Z_{S,\mu,+}$ because we have an isomorphism of graded algebra 
${}^w\!\bar Z_{S,\mu,-}={}^v\!\bar Z_{S,\mu,+}$ for $w\in I_{\mu,-}$ and $v=w_+\in I_{\mu,+}$.

\smallskip

By Proposition  \ref{prop:loc1}$(a)$ below, the graded $S$-algebra
${}^w\!\bar Z_{S,\mu,+}$ is isomorphic to the equivariant cohomology (graded) algebra
$H_T(\bar X_{\mu,w})$ of the affine Schubert variety $\bar X_{\mu,w}=\bar X_{\mu,z}$. 
Therefore, the Bruhat decomposition yields
$$\aligned
{}^w\!\bar Z_{S,\mu,+}
=H_T(\bar X_{\mu,z})
=\bigoplus_{x\in I_{\mu,-},x\leqslant z}H_T(X_{\mu,x})\langle 2l(z)-2l(x)\rangle,
\endaligned$$
$$\aligned
{}^z\!\bar Z_{S,\phi,+}
=H_T(\bar X_{\phi,z})
=\bigoplus_{y\in W_\mu}\bigoplus_{x\in I_{\mu,-},x\leqslant z}H_T(X_{\phi,xy})\langle 2l(z)-2l(y)-2l(x)\rangle.
\endaligned$$
Since the obvious projection
$X_{\phi,xW_\mu}\to X_{\mu,x}$ is a $P_\mu/B$-fibration, we have
$
H_T(X_{\mu,x})=\bigoplus_{y\in W_\mu}H_T(X_{\phi,xy}).
$
This implies part $(a)$. 

\smallskip

Finally, part $(b)$ follows from $(a)$, because
$$\aligned
\Hom_{{}^w\!\bar Z_{S,\mu,+}}\bigl({}^{z}\!\bar Z_{S,\phi,+},{}^w\!\bar Z_{S,\mu,+}\bigr)
&=
\bigoplus_{y\in W_\mu}\Hom_{{}^w\!\bar Z_{S,\mu,+}}\bigl({}^w\!\bar Z_{S,\mu,+}\langle -2l(y)\rangle,
{}^w\!\bar Z_{S,\mu,+}\bigr)\\
&=
\bigoplus_{y\in W_\mu}{}^w\!\bar Z_{S,\mu,+}\langle 2l(y)\rangle\\
&={}^{z}\!\bar Z_{S,\phi,+}\langle 2l(w_\mu)\rangle.
\endaligned$$
\end{proof}

\begin{rk}
An algebraic proof of Lemma \ref{lem:isom12} is given in \cite[lem.~5.1]{F4}
in the particular case where $\sharp W_\mu=2$. 
\end{rk}

\vspace{3mm}

Let 
$\bar\theta_{\phi,\mu}:{}^{z}\bar\bfZ_{S,\phi,\pm}\to
{}^w\bar\bfZ_{S,\mu,\pm}$ and
$\bar\theta_{\mu,\phi}:{}^w\bar\bfZ_{S,\mu,\pm}\to
{}^{z}\bar\bfZ_{S,\phi,\pm}$
be the restriction and induction functors with respect to the inclusion
${}^w\!\bar Z_{S,\mu,\pm}\subset{}^{z}\!\bar Z_{S,\phi,\pm}.$

Next, let $R=S$ or $S_0$.
Forgetting the gradings, we define in the same way as above
the functors $\theta_{\phi,\mu}:{}^{z}\bfZ_{R,\phi,\pm}\to
{}^w\bfZ_{R,\mu,\pm}$ and
$\theta_{\mu,\phi}:{}^w\bfZ_{R,\mu,\pm}\to
{}^{z}\bfZ_{R,\phi,\pm}.$

\vspace{2mm}

\begin{prop}
\label{prop:theta}
Assume that $w\in zW_\mu$ and $z\in I_{\mu,-}$. Then, we have

(a)
$\bbV_{S_0}\circ T_{\phi,\mu}=\theta_{\phi,\mu}\circ \bbV_{S_0}$
on ${}^z\bfO^\Delta_{S_0,\phi,\pm},$
and $\bbV_{S_0}\circ T_{\mu,\phi}=\theta_{\mu,\phi}\circ \bbV_{S_0}$
on ${}^w\bfO^\Delta_{S_0,\mu,\pm},$

(b) 
$\bar\theta_{\phi,\mu}$ and $\bar\theta_{\mu,\phi}$
are exact functors 
${}^{z}\bar\bfZ_{S,\phi,\pm}^\Delta\to
{}^w\bar\bfZ_{S,\mu,\pm}^\Delta$
and
${}^w\bar\bfZ_{S,\mu,\pm}^\Delta\to
{}^{z}\bar\bfZ_{S,\phi,\pm}^\Delta,$

(c) $(\bar\theta_{\mu,\phi},\bar\theta_{\phi,\mu},
\bar\theta_{\mu,\phi}\langle 2l(w_\mu)\rangle)$ 
is a triple of adjoint functors,

(d) $\bar\theta_{\phi,\mu}\circ D=D\circ\bar\theta_{\phi,\mu}$ and
$\bar\theta_{\mu,\phi}\circ D=D\circ\bar\theta_{\mu,\phi}\circ\langle 2l(w_\mu)\rangle$,

(e) for each $x\in {}^w\!I_{\mu,+},$ there is a sum $M$ of 
${}^w\!\bar B_{S,\mu,+}(t)\langle j\rangle$'s with $t<x$ such that
$$\bar\theta_{\phi,\mu}({}^z\!\bar B_{S,\phi,+}(xw_\mu))=
\bigoplus_{y\in W_\mu}{}^w\!\bar B_{S,\mu,+}(x)\langle l(w_\mu)-2l(y)\rangle\bigoplus M,$$

(f) We have
\begin{itemize}
\item[(i)] $\bar\theta_{\mu,\phi}({}^w\!\bar B_{S,\mu,+}(x))=
{}^z\!\bar B_{S,\phi,+}(xw_\mu)\langle -l(w_\mu)\rangle$ for $x\in {}^w\!I_{\mu,+}$,
\item[(ii)] 
$\bar\theta_{\mu,\phi}({}^w\!\bar B_{S,\mu,-}(x))=
{}^z\!\bar B_{S,\phi,-}(xw_\mu)$ for $x\in {}^w\!I_{\mu,-}$.
\end{itemize}
\end{prop}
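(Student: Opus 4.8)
The plan is to use the fact that $\bar\theta_{\phi,\mu}$ and $\bar\theta_{\mu,\phi}$ are the restriction and induction functors along the inclusion of structural algebras ${}^w\!\bar Z_{A,\mu}\subset{}^z\!\bar Z_{A,\phi}$, to transport the statements about BM-sheaves to the deformed category $\bfO$ via $\bbV$, and to reduce the case of an arbitrary $A$ to $A=S_0$ by flat base change. Part (a) is immediate: the inclusion ${}^w\!\bar Z_{A,\mu}\subset{}^z\!\bar Z_{A,\phi}$ is obtained from the corresponding inclusion of $S$-algebras by $\bullet\otimes_S A$, so both restriction and $({}^z\!\bar Z_{A,\phi})\otimes_{{}^w\!\bar Z_{A,\mu}}(-)$ commute with any $A\to A'$. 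For (b), recall from \cite[prop.~2 and the discussion before rem.~6]{F2} that after applying Fiebig's structure functor the translation functors $T_{\phi,\mu}$, $T_{\mu,\phi}$ of Proposition \ref{prop:translation1} become exactly restriction and induction along ${}^w\!\bar Z_{A,\mu}\subset{}^z\!\bar Z_{A,\phi}$ — this is where the hypothesis $z\in I_\mu^{\max}$, guaranteeing that $\{x\leqslant z\}$ is a disjoint union of complete $W_\mu$-cosets, is used; since $\bbV$ is built from that functor, $\bbV\circ T_{\phi,\mu}=\theta_{\phi,\mu}\circ\bbV$ and $\bbV\circ T_{\mu,\phi}=\theta_{\mu,\phi}\circ\bbV$ follow by unwinding the constructions.

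The core of (d) and (e) is that ${}^z\!\bar Z_{A,\phi}$ is a graded Frobenius extension of ${}^w\!\bar Z_{A,\mu}$ of twist $\{2l(w_\mu)\}$. Since $z\in I_\mu^{\max}$, the vertex set of ${}^z\calG_{\phi}$ is the disjoint union of the cosets $xW_\mu$, $x\in{}^w\!I_\mu^{\min}$, the left $W_\mu$-action permutes the coordinates inside each coset, and ${}^w\!\bar Z_{A,\mu}$ is the subalgebra of $W_\mu$-invariants; Chevalley's theorem for the finite reflection group $W_\mu$ acting on $\bft$ shows that $\Sym(\bft)$ is a free graded module of rank $|W_\mu|$ over $\Sym(\bft)^{W_\mu}$ with top-degree generator in degree $2l(w_\mu)$, and this propagates through the coset decomposition to a non-degenerate ${}^w\!\bar Z_{A,\mu}$-bilinear trace form on ${}^z\!\bar Z_{A,\phi}$ of degree $-2l(w_\mu)$. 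The standard Frobenius-extension formalism then gives that induction $\bar\theta_{\mu,\phi}$ is left adjoint to restriction $\bar\theta_{\phi,\mu}$ and that the right adjoint of $\bar\theta_{\phi,\mu}$, namely coinduction, is isomorphic to $\bar\theta_{\mu,\phi}\{2l(w_\mu)\}$, which is (d). For (e), restriction visibly commutes with the graded $A$-dual $D$ (neither the underlying $A$-module nor the action of the subalgebra changes), hence $D\circ\bar\theta_{\mu,\phi}\circ D$ is a right adjoint of $\bar\theta_{\phi,\mu}$ and therefore equals $\bar\theta_{\mu,\phi}\{2l(w_\mu)\}$; using $D^2=\id$ this yields both identities of (e).

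Part (c): by (a) and the flat-base-change isomorphism for Hom-spaces of $\bar Z$-modules (Remark \ref{rk:basechange}) it suffices to treat $A=S_0$; there $\bbV$ is an equivalence of exact categories onto the $\Delta$-filtered sheaves (Proposition \ref{prop:foncteurV}(c)) which, by (b), intertwines $\bar\theta_{\phi,\mu},\bar\theta_{\mu,\phi}$ with $T_{\phi,\mu},T_{\mu,\phi}$, and the latter are exact and preserve the $\Delta$-filtered subcategories by Proposition \ref{prop:translation1}(a); for general $A$ one uses that $\bar\theta$ commutes with $\bullet\otimes_{S_0}A$ and that this base change preserves $\Delta$-filtered sheaves. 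For (f) and (g) — the main point — I would first note that by (c) and (d) the functor $\bar\theta_{\phi,\mu}$ has the exact right adjoint $\bar\theta_{\mu,\phi}\{2l(w_\mu)\}$ and $\bar\theta_{\mu,\phi}$ has the exact right adjoint $\bar\theta_{\phi,\mu}$; hence both send projective objects of $\bar\bfZ^\Delta$ to projective objects, so by Proposition \ref{prop:decomposition} the sheaves $\bar\theta_{\phi,\mu}({}^z\!\bar B_{A,\phi,+}(xw_\mu))$ and $\bar\theta_{\mu,\phi}({}^w\!\bar B_{A,\mu,\pm}(x))$ are direct sums of shifted BM-sheaves, ${}^w\!\bar B$'s and ${}^z\!\bar B$'s respectively. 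To identify the summands I would pass to $A=S_0$ and use (b) and Proposition \ref{prop:foncteurV}(c): these sheaves become $T_{\phi,\mu}$ and $T_{\mu,\phi}$ applied to indecomposable projective (and tilting) modules in $\bfO$, where the translation principle (Proposition \ref{prop:translation2}, via reduction to $k$) gives the decomposition — $T_{\mu,\phi}$ off the $\mu$-wall takes the indecomposable projective attached to $x$ to the one attached to $xw_\mu$, and $T_{\phi,\mu}$ onto the $\mu$-wall takes the indecomposable projective attached to $xw_\mu$ to one copy of the indecomposable projective attached to $x$ for each $y\in W_\mu$, plus projectives attached to vertices $t<x$. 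The grading shifts are then forced by the graded structure of restriction and induction along a single $W_\mu$-coset, governed by the graded rank $\sum_{y\in W_\mu}t^{2l(y)}$ of $\Sym(\bft)$ over $\Sym(\bft)^{W_\mu}$, together with the normalizations of the stalks at the generating vertices (stalk $A\{l(x)\}$ at $x$ for ${}^w\!\bar B_{A,\mu,+}(x)$, stalk $A\{l(xw_\mu)\}$ at $xw_\mu$ for ${}^z\!\bar B_{A,\phi,+}(xw_\mu)$, with $l(xw_\mu)=l(x)+l(w_\mu)$); this produces the shifts $\{l(w_\mu)-2l(y)\}$ in (f) and the uniform shift $\{-l(w_\mu)\}$ in (g), and Proposition \ref{prop:3.26} together with Deodhar's identities relating the parabolic and ordinary Kazhdan--Lusztig polynomials through $W_\mu$ can be used to double-check the multiplicities. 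Finally, since ${}^w\!\bar B_{A,\mu,\pm}={}^w\!\bar B_{S_0,\mu,\pm}\otimes_{S_0}A$ and ${}^z\!\bar B_{A,\phi,\pm}={}^z\!\bar B_{S_0,\phi,\pm}\otimes_{S_0}A$ (the Braden--MacPherson construction commutes with flat base change) and $\bar\theta$ commutes with $\bullet\otimes_{S_0}A$, the statements for general $A$ follow from the $A=S_0$ case.

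The main obstacle is the last step of (f) and (g): pinning down the grading shifts requires checking that $\bbV$ matches the natural grading on projective and tilting modules in $\bfO$ with the normalization of BM-sheaves by the stalk at the generating vertex, and keeping careful track of the $l(w_\mu)$-discrepancy between the positive- and negative-level graphs. A more self-contained alternative is to run the Braden--MacPherson algorithm for ${}^z\!\bar B_{A,\phi,+}(xw_\mu)$, take $W_\mu$-invariants coordinatewise, and recognize the result as the BM-sheaf on ${}^w\calG_{\mu,+}$ attached to $x$ with the asserted shift (and dually for ${}^w\!\bar B_{A,\mu,\pm}(x)$ under induction), using Proposition \ref{prop:3.26} to identify the stalks; I would carry out the $\bbV$-route and use this algorithmic route as the cross-check.
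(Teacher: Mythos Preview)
Your treatment of (a)--(e) matches the paper's (for (b) the paper cites \cite[thm.~9]{F2} rather than \cite[prop.~2]{F2}, but the content is the same). For (g) the paper also uses $\bbV$ and Proposition~\ref{prop:translation2}(d) for the ungraded identification; for the grading it does not argue abstractly that shifts are ``forced'' but invokes \cite[prop.~5.3]{F4} to compute the whole $W_\mu$-coset stalk at once,
$$\bar\theta_{\mu,\phi}({}^w\!\bar B_{A,\mu,\pm}(x))_{xW_\mu}={}^z\!\bar Z_{\phi,xW_\mu}\otimes_{{}^w\!\bar Z_{\mu,x}}{}^w\!\bar B_{A,\mu,\pm}(x)_x={}^z\!\bar Z_{\phi,xW_\mu}\{\pm l(x)\},$$
and then reads off the stalk at $xw_\mu$ (resp.\ $x$). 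Your argument and the paper's agree in substance here.

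The genuine difference is in (f). You identify the ungraded summands by transporting to $\bfO$ via $\bbV$ and invoking the on-the-wall translation principle $T_{\phi,\mu}(P(xw_\mu\bullet\oo_{\phi,+}))\simeq P(x\bullet\oo_{\mu,+})^{\oplus|W_\mu|}\oplus(\text{lower})$; this fact is standard but is not among the statements of Proposition~\ref{prop:translation2}, so your proof imports an outside ingredient. The paper instead stays on the sheaf side: by \cite[prop.~5.3]{F4} one has $\bar\theta_{\phi,\mu}({}^z\!\bar B_{A,\phi,+}(xw_\mu))_x={}^z\!\bar B_{A,\phi,+}(xw_\mu)_{xW_\mu}$, and Remark~\ref{rk:loc6} (the variety $\bigcup_{y\in W_\mu}X_{\phi,xy}$ is smooth, so its equivariant intersection cohomology is ${}^z\!\bar Z_{\phi,xW_\mu}\{l(xw_\mu)\}$) together with \eqref{isom1} gives this as $\bigoplus_{y\in W_\mu}A\{l(x)+l(w_\mu)-2l(y)\}$. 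This single stalk computation simultaneously yields the multiplicities, the grading shifts, and (with the obvious support bound) that the remaining summands have parameter $t<x$. Your $\bbV$-route is sound but less self-contained; the paper's route uses only moment-graph mechanics plus one geometric observation and avoids any appeal to the translation principle for projectives on the wall.
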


\vspace{.5mm}

\begin{proof}
Part $(a)$ is proved in \cite[thm.~9]{F2}.

\smallskip

For $(b)$, note that the base change with respect to $S\to S_0$ commutes with 
$\theta_{\phi,\mu}$, $\theta_{\mu,\phi}$ and with the functor $M\mapsto M_{[x]}$,
see Section \ref{rk:basechange}.
Now, by $(a)$ and by Propositions \ref{prop:foncteurV}$(c)$, \ref{prop:translation1}(a),
the functors
$\theta_{\phi,\mu}$, $\theta_{\mu,\phi}$ preserve
${}^{z}\bfZ_{S_0,\phi,\pm}^\Delta$,
${}^w\bfZ_{S_0,\mu,\pm}^\Delta.$
Hence, by Lemma \ref{lem:forget}(c), the functor 
$\bar\theta_{\phi,\mu}$, $\bar\theta_{\mu,\phi}$ preserve
${}^{z}\bar\bfZ_{S,\phi,\pm}^\Delta$,
${}^w\bar\bfZ_{S,\mu,\pm}^\Delta.$

Next,  to
prove the exactness of $\bar\theta_{\phi,\mu}$, $\bar\theta_{\mu,\phi}$ it is enough 
to check the exactness of $\theta_{\phi,\mu}$, $\theta_{\mu,\phi}$, because
$\varepsilon$ is faithfully exact by Lemma \ref{lem:forget}(b).
This follows from $(a)$ and Propositions 
\ref{prop:foncteurV}$(c)$, \ref{prop:translation1}(a).

\smallskip

The proof of part $(c)$ is similar to \cite[prop.~5.2]{F4}.
More precisely, by Lemma \ref{lem:isom12}(b)
there is an isomorphism of functors
$$\aligned
{}^{z}\!\bar Z_{S,\phi,\pm}\langle 2l(w_\mu)\rangle
\otimes_{{}^w\!\bar Z_{S,\mu,\pm}}\!\bullet
&\ \simeq\ \Hom_{{}^w\!\bar Z_{S,\mu,\pm}}\bigl(
{}^{z}\!\bar Z_{S,\phi,\pm},\,
{}^w\!\bar Z_{S,\mu,\pm}
\bigr)\otimes_{{}^w\!\bar Z_{S,\mu,\pm}}\bullet
\\
&\ \simeq\ \Hom_{{}^w\!\bar Z_{S,\mu,\pm}}\bigl(
{}^{z}\!\bar Z_{S,\phi,\pm},\,\bullet\bigr).
\endaligned$$
Therefore
$(\bar\theta_{\mu,\phi},\bar\theta_{\phi,\mu},\bar\theta_{\mu,\phi}\langle 2l(w_\mu)\rangle)$ 
is a triple of adjoint of functors.

\smallskip

Part $(d)$ follows from $(c)$.
Indeed, since $\bar\theta_{\phi,\mu}(M)=M$ as a
graded $S$-module, we have
$\bar\theta_{\phi,\mu}\circ D=D\circ\bar\theta_{\phi,\mu}$.
Then, part $(c)$ implies
$\bar\theta_{\mu,\phi}\circ D=D\circ\bar\theta_{\mu,\phi}\langle 2l(w_\mu)\rangle$ by unicity of adjoint functors.

\smallskip

Now, we prove $(e)$. 
To unburden the notation let us temporarily abbreviate $\bar B_\mu(x)={}^w\!\bar B_{S,\mu,+}(x)$,
$\bar B_\phi(x)={}^z\!\bar B_{S,\phi,+}(x)$,
$\bar Z_\mu={}^w\!\bar Z_{S,\mu,+}$, $\bar Z_\phi={}^z\!\bar Z_{S,\phi,+}$ and
$\bar\bfZ_{\phi}={}^z\bar\bfZ_{S,\phi,+}$.
First, note that $\bar\theta_{\phi,\mu}(\bar B_{\phi}(xw_\mu))$ is 
is a direct sum of objects of the form
$\bar B_\mu(z)\langle j\rangle$ by 
Corollary \ref{cor:4.44} below.
Next, for each subset $J\subset {}^z\!I_{\phi,+}$ and each object $M\in\bar\bfZ_{\phi}^\Delta,$
we have defined a graded $S$-module $M_J=\calL(M)(J)$ in Remark \ref{rk:MI}.
Setting $J=xW_\mu$, we claim that
\begin{equation}\label{3.4}\bar B_{\phi}(xw_\mu)_{xW_\mu}=\bar Z_{\phi,xW_\mu}\langle l(xw_\mu)\rangle.
\end{equation}
To prove this, we use the geometric approach to sheaves over moment graphs recalled in Section \ref{sec:localization} below.
We also use the notation given there (for the dual root system). 
The $\bar Z_{\phi}$-module $\bar B_{\phi}(xw_\mu)\in\bar\bfZ_{\phi}^\Delta$
is a graded BM-sheaf on ${}^z\calG_{\phi,+}$.  
Since $x\in I_{\mu,+}$, 
the set $xW_\mu$ is an ideal of the poset ${}^{xw_\mu}\!I_{\mu,+}$.
Thus, by Lemma \ref{lem:foncteurW}, we have
$\bar B_{\phi}(xw_\mu)_{xW_\mu}=IH_T\big(X_{\phi,xW_\mu}\big).$
Since $X_{\phi,xW_\mu}$ is a smooth open subset of $\bar X_{\phi,xw_\mu}$ (see comments before Lemma \ref{lem:foncteurW}),
we deduce that
$$\bar B_{\phi}(xw_\mu)_{xW_\mu}=IH_T\big(X_{\phi,xW_\mu}\big)=
H_T\big(X_{\phi,xW_\mu}\big)\langle l(xw_\mu)\rangle
=\bar Z_{\phi,xW_\mu}\langle l(xw_\mu)\rangle.$$
Now, by \cite[prop.~5.3]{F4}, for each $x\in {}^z\!I_{\mu,+}$ we have
$\bar\theta_{\phi,\mu}(\bar B_{\phi}(xw_\mu))_{ x}
=\bar B_{\phi}(xw_\mu)_{xW_\mu}$.
Thus, by \eqref{3.4} and Lemma \ref{lem:isom12}(a) we have an isomorphism of graded $S$-modules
\begin{eqnarray*}
\bar\theta_{\phi,\mu}(\bar B_{\phi}(xw_\mu))_{x}
&=&(\bar Z_{\phi,xW_\mu})_{xW_\mu}\langle l(xw_\mu)\rangle\\
&=&\bigoplus_{y\in W_\mu}(\bar Z_{\mu,x})_{x}\langle l(xw_\mu)-2l(y)\rangle\\
&=&\bigoplus_{y\in W_\mu}S\langle l(xw_\mu)-2l(y)\rangle.
\end{eqnarray*}
Finally, since for each $t\in I_{\mu,+}$ we have
$$t\not\leqslant x\Rightarrow
\bar\theta_{\phi,\mu}(\bar B_{\phi}(xw_\mu))_{ t}
=\bar B_{\phi}(xw_\mu)_{t W_\mu}=0,$$
the support condition in Proposition \ref{prop:B} implies that the decomposition of 
$\bar\theta_{\phi,\mu}(\bar B_{\phi}(xw_\mu))$ into direct sums of $\bar B_\mu(z)\langle j\rangle$'s has the form as predicted in part $(e)$.

\smallskip

Finally, we prove $(f)$.
By Propositions 
\ref{prop:foncteurV}$(c)$, 
\ref{prop:translation2}$(g)$ 
and part $(a)$ we have
$\theta_{\mu,\phi}({}^w\! B_{S,\mu,\pm}(x))={}^z\! B_{S,\phi,\pm}(xw_\mu)$ for each $x\in {}^w\!I_{\mu,\pm}$.
To identify the gradings, note that, by \cite[prop.~5.3]{F4},
we have
$$\aligned
\bar\theta_{\mu,\phi}({}^w\!\bar B_{S,\mu,\pm}(x))_{xW_\mu}
={}^z\!\bar Z_{\phi,xW_\mu}\otimes_{{}^w\!\bar Z_{\mu,x}}
{}^w\!\bar B_{S,\mu,\pm}(x)_{x}
={}^z\!\bar Z_{\phi,xW_\mu}\langle\pm l(x)\rangle,
\endaligned$$
because 
${}^w\!\bar Z_{\mu, x}=S$
and
${}^w\!\bar B_{S,\mu,\pm}(x)_{ x}=S\langle\pm l(x)\rangle.$
Since
$({}^z\!\bar Z_{\phi, x})_y=S$
for all $y\in xW_\mu,$
we deduce that 
$\bar\theta_{\mu,\phi}({}^w\!\bar B_{S,\mu,+}(x))_{xw_\mu}=S\langle l(x)\rangle$
and
$\bar\theta_{\mu,\phi}({}^w\!\bar B_{S,\mu,-}(x))_{x}=S\langle-l(x)\rangle.$
\end{proof}

\vspace{2mm}

\begin{rk}
\label{rk:theta/k}
Now, we consider the case $R=k$.
Recall the algebras
${}^w Z_\mu$, ${}^w\bar Z_\mu$ and the categories
${}^w \bfZ_\mu$, 
${}^w\bar \bfZ_\mu$ from the beginning of Section \ref{sec:3.4},
and the translation functors $T_{\mu,\phi}$, $T_{\phi,\mu}$
in Section \ref{sec:translationO}.

We define 
$\bar\theta_{\phi,\mu}:{}^{z}\bar\bfZ_{\phi}\to
{}^w\bar\bfZ_{\mu}$ and
$\bar\theta_{\mu,\phi}:{}^w\bar\bfZ_{\mu}\to
{}^{z}\bar\bfZ_{\phi}$
to be the restriction and induction functors with respect to the inclusion
${}^w\!\bar Z_{\mu}\subset{}^{z}\!\bar Z_{\phi}.$
They are exact (for the stupid exact structure).

In the non-graded case, we define $\theta_{\mu,\phi}$, $\theta_{\phi,\mu}$ in a similar way.
Then, we have
$\bbV_k\circ T_{\phi,\mu}=\theta_{\phi,\mu}\circ \bbV_k$
and $\bbV_k\circ T_{\mu,\phi}=\theta_{\mu,\phi}\circ \bbV_k$ on $\Delta$-filtered modules
by Proposition \ref{prop:theta}$(a)$, because $\theta_{\phi,\mu}$, $\theta_{\mu,\phi}$,
$T_{\phi,\mu}$, $T_{\mu,\phi}$ and $\bbV$ commute with base change by 
Propositions  \ref{prop:foncteurV}, \ref{prop:translation1}.
\end{rk}

\vspace{2mm}

\subsection{Moment graphs and the flag ind-scheme}
\label{sec:localization}
Fix a parabolic type $\mu\in\calP$. 
Let $P_\mu\subset G(k((t)))$ be the
parabolic subgroup with Lie algebra $\bfp_\mu$.
Write $B=P_\phi$ and let $T$ be the torus with Lie algebra $\bft$.
Note that $T$ is the product of $k^\times\times k^\times$ by the maximal torus of $G$.

Let $X'$ be the partial (affine) flag ind-scheme $G(k((t)))/P_\mu$.
The affine Bruhat cells are indexed by the cosets $\widehat W/W_\mu$, which we identify with $I_{\mu,+}(=I_\mu^\min)$. 
For each $w\in I_{\mu,+}$
let $X_w\subset\bar X_w\subset X'$ be the corresponding finite dimensional
affine Bruhat cell and Schubert variety. 
To avoid confusions we may write $X'_\mu=X'$, $X_{\mu,w}=X_w$ and $\bar X_{\mu,w}=\bar X_w$.
For any subset $J\subset I_{\mu,+}$ we abbreviate $X_J=\bigcup_{w\in J}X_w$.

The group $T$ acts on $\bar X_w$, with  the first copy of $k^\times$ 
acting by rotating the loop and the last one acting trivially.
The varieties $\bar X_w$ form an inductive system of
complex projective varieties with closed embeddings and 
$X'$ is represented by the ind-scheme
$\text{ind}_w\bar X_w$. Recall that $\bar X_w$ has dimension $l(w)$
and
$\bar X_w=\bigcup_{x\in {}^w\!I_{\mu,+}}X_x$.

Let $\bfD^b(\bar X_w)$ be the bounded 
derived category of constructible
sheaves of $k$-vector spaces on 
$\bar X_w,$ which are locally constant along the $B$-orbits, and
let $\bfP(\bar X_w)$ be the full 
lubcategory  of  perverse sheaves.
Let $\bfD^b_T(\bar X_w)$ and $\bfP_T(\bar X_w)$ be be their $T$-equivariant analogue.

For each $x\in {}^w\!I_{\mu,+},$ let ${}^w\!IC(\bar X_x)$ 
be the  intersection cohomology complex in $\bfP(\bar X_w)$
associated with $\bar X_x$ and
let ${}^w\!IC_T(\bar X_x)$ be its $T$-equivariant analogue.
Let $IH(\bar X_x)$
be the  intersection cohomology of $\bar X_x$
and let  $IH_T(\bar X_x)$ be its $T$-equivariant analogue.
Finally, let $H(\bar X_x)$, $H_T(\bar X_x)$
denote the ($T$-equivariant) cohomology of $\bar X_x$. 
See Section \ref{sec:HT} for details. 

\smallskip
In this section we set $V=\bft^*$. Since
$S$ is the symmetric algebra over $V$, we have $S=H_T(\bullet)$.
Let $S^\vee$ denote the symmetric algebra over $V^*=\bft$.

Let ${}^w\calG_{\mu,\pm}^\vee$ 
be the moment graph over $V$ 
whose set of vertices is ${}^w\!I_{\mu,\pm}$,
with an edge labelled by $k\,\alpha$ between $x,y$ if there is an affine
reflection $s_{\alpha}$ such that $s_\alpha y\in xW_\mu$. 

We define
${}^w\bar Z_{S,\mu,\pm}^\vee$,
${}^w\bar\bfZ_{S,\mu,\pm}^\vee$,
${}^w\bar B_{S,\mu,\pm}(x)^\vee$,
${}^w\bar C_{S,\mu,\pm}(x)^\vee$, etc.,
in the obvious way.
We'll also write ${}^w\bar\bfF_{S,\mu,\pm}^\vee$ for the category of graded $S$-sheaves
of finite type over ${}^w\calG_{\mu,\pm}^\vee$ whose stalks are torsion free as $S$-modules.
We also write 
${}^w\bar B_{\mu,\pm}^\vee(x)=
k\,{}^w\bar B_{S,\mu,\pm}^\vee(x),$
${}^w\bar C_{\mu,\pm}^\vee(x)=
k\,{}^w\bar C_{S,\mu,\pm}^\vee(x),$
${}^w\!\bar Z_{\mu,\pm}^\vee=k{}^w\!\bar Z_{S,\mu,\pm}^\vee$,
${}^w\bar\bfZ_{\mu,\pm}^\vee={}^w\!\bar Z_{\mu,\pm}^\vee\bfmod$, etc.
In the non graded case we use a similar notation.

\vspace{2mm}

\begin{prop} \label{prop:loc1} For $w\in I_{\mu,+}$ and $x\in{}^w\!I_{\mu,+}$ we have

(a) $H_T(\bar X_w)=
{}^w\!\bar Z_{S,\mu,+}^\vee$ and $H(\bar X_w)={}^w\!\bar Z_{\mu,+}^\vee$
as graded $k$-algebras,

(b) 
$IH_T(\bar X_x)={}^w\!\bar B_{S,\mu,+}^\vee(x)$
as graded ${}^w\!\bar Z_{S,\mu,+}^\vee$-modules,

(c) $IH(\bar X_x)={}^w\!\bar B_{\mu,+}^\vee(x)$
as  graded ${}^w\!\bar Z_{\mu,+}^\vee$-modules.
\end{prop}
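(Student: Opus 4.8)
The plan is to deduce $(a)$--$(c)$ from the Goresky--Kottwitz--MacPherson localization theorem together with the Braden--MacPherson description of equivariant intersection cohomology, once the $T$-moment graph of $\bar X_w$ has been identified with ${}^w\calG_\mu^\vee$ of Remark \ref{rk:vee}. The first step is geometric. Restricting the Bruhat decomposition of $X'_\mu$ to $\bar X_w$ writes $\bar X_w$ as a finite union of affine cells indexed by ${}^w\!I_\mu^{\min}$, so $\bar X_w$ has vanishing odd cohomology and is $T$-equivariantly formal; hence $H_T(\bar X_w)$ is a free graded $S$-module and $H(\bar X_w)=k\otimes_S H_T(\bar X_w)$. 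The $T$-fixed points of $\bar X_w$ are the points $xP_\mu$ with $x\in{}^w\!I_\mu^{\min}$, and the closure of a one-dimensional $T$-orbit is a $T$-stable $\bbP^1$ joining $xP_\mu$ and $yP_\mu$ precisely when $y\in s_\alpha xW_\mu$ for some real affine root $\alpha$, with $T$ acting on that curve through $\pm\alpha$. After checking the GKM condition (edges at a common vertex carry non-proportional labels, possibly after passing to a generic one-parameter subgroup of the effective quotient of $T$), the $T$-moment graph of $\bar X_w$ is exactly ${}^w\calG_\mu^\vee$, with $H_T(\pt)=\Sym(\bft^*)=S$.

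Part $(a)$ is then the GKM theorem: the restriction map $H_T(\bar X_w)\to\bigoplus_{x}H_T(xP_\mu)=\bigoplus_xS$ is injective and its image is the structural algebra of the moment graph, that is ${}^w\!\bar Z_{S,\mu}^\vee$. Applying $k\otimes_S(-)$ and using equivariant formality yields $H(\bar X_w)=k\,{}^w\!\bar Z_{S,\mu}^\vee={}^w\!\bar Z_\mu^\vee$.

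For part $(b)$ I would regard $IC(\bar X_x)$ as the perverse sheaf ${}^w\!IC(\bar X_x)$ on $\bar X_w$ and work in the $T$-equivariant derived category, so that $\bbH_T(\bar X_w,{}^w\!IC(\bar X_x))=IH_T(\bar X_x)$. By the Braden--MacPherson theorem \cite{BM}, sending an equivariant complex to its stalks at the fixed points and its sections along the one-dimensional orbits makes $IH_T(\bar X_x)$ into a graded sheaf over ${}^w\calG_\mu^\vee$; by pointwise purity of $IC$ this sheaf is F-projective and indecomposable, it is supported on $\{\,y\leqslant x\,\}$, which is the coideal $\{\succcurlyeq\! x\}$ for the order on ${}^w\calG_{\mu,+}$, and its stalk at $x$ is $S\{l(x)\}$ because near $xP_\mu$ the complex $IC(\bar X_x)$ is the constant sheaf shifted by $\dim\bar X_x=l(x)$. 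Hence it is the BM-sheaf ${}^w\!\bar B_{S,\mu,+}^\vee(x)$; and since the $H_T(\bar X_w)$-module structure on $IH_T(\bar X_x)$ corresponds under this identification to the ${}^w\!\bar Z_{S,\mu}^\vee$-action on global sections, we get $IH_T(\bar X_x)={}^w\!\bar B_{S,\mu,+}^\vee(x)$ as graded ${}^w\!\bar Z_{S,\mu}^\vee$-modules. That the gradings match is cross-checked against Proposition \ref{prop:3.26}$(a)$ and the computation of the stalks of $IC(\bar X_x)$ by the parabolic Kazhdan--Lusztig polynomials $P^{\mu,q}_{y,x}$ of \cite[thm.~1.4]{KT2}.

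Part $(c)$ follows by reduction to $k$: the module $IH_T(\bar X_x)$ is free over $S$ (by Remark \ref{rk:filtration} applied to the $\Delta$-filtered sheaf realizing it, or by equivariant formality of the $IC$-sheaf), so $IH(\bar X_x)=k\otimes_S IH_T(\bar X_x)=k\,{}^w\!\bar B_{S,\mu,+}^\vee(x)={}^w\!\bar B_{\mu,+}^\vee(x)$ by the definition in Remark \ref{rk:vee}. The point I expect to require the most care is the Braden--MacPherson input in this parabolic affine setting: verifying the GKM hypotheses for $\bar X_w\subset X'_\mu$, and matching the cohomological shift in the topological normalization of $IC$ against the algebraic normalizations $A\{\pm l(x)\}$ used for the BM-sheaves on ${}^w\calG_{\mu,\pm}$. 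The equivariant formality statements needed in $(a)$ and $(c)$ reduce to the classical existence of affine pavings on affine Schubert varieties.
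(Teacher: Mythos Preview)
Your proposal is correct and follows essentially the same approach as the paper: part $(a)$ is attributed to \cite{GKM} and parts $(b)$, $(c)$ to \cite[thm.~1.5, 1.6, 1.8]{BM}. You have simply spelled out what those citations provide---the identification of the $T$-moment graph of $\bar X_w$ with ${}^w\calG_\mu^\vee$, equivariant formality from the affine paving, and the Braden--MacPherson characterization of $IH_T$ as the unique indecomposable F-projective sheaf with the correct support and stalk normalization.
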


\vspace{.5mm}

\begin{proof}
Part $(a)$ follows from \cite{GKM},
parts $(b),$ $(c)$ from \cite[thm.~1.5, 1.6, 1.8]{BM}.
\end{proof}

\vspace{2mm}

\begin{cor}\label{cor:4.44} Assume that  $w\in I_{\mu,+}$ and $z=w_-\in I_{\mu,-}$.
For each $x\in{}^z\!I_{\phi,+}$, the graded ${}^w\!\bar Z^\vee_{S,\mu,+}$-module
$\bar\theta_{\phi\mu}({}^z\!\bar B_{S,\phi,+}^\vee(x))$ is a direct sum of graded modules of the form
${}^w\!\bar B_{S,\mu,+}^\vee(y)\langle j\rangle$ with $y\in {}^w\!I_{\mu,+}$ and $j\in\bbZ$.
\end{cor}

\vspace{.5mm}

\begin{proof}
The obvious projection
$\pi:\bar X_{\phi,z}\to\bar X_{\mu,w}$ is proper.
By Proposition \ref{prop:loc1},  for each $\calE\in\bfP_T(\bar X_{\phi,z}),$ we may regard the cohomology spaces
$H(\calE)$, $H(\pi_*\calE)$ as graded modules over
${}^z\!\bar Z_{S,\phi,+}^\vee$ and ${}^w\!\bar Z_{S,\mu,+}^\vee$ respectively.
Since
$\bar\theta_{\phi,\mu}$
is the restriction of graded modules with respect to the inclusion
${}^w\!\bar Z^\vee_{S,\mu,+}\subset{}^{z}\!\bar Z^\vee_{S,\phi,+},$
we have an obvious isomorphism of graded
${}^w\! \bar Z^\vee_{S,\mu,+}$-modules
$H(\pi_*\calE)\simeq\bar\theta_{\phi\mu}H(\calE)$.
Setting $\calE={}^w\!IC_T(\bar X_{\phi,x})$, we obtain a graded module isomorphism
$H(\pi_*({}^z\!IC_T(\bar X_{\phi,x})))\simeq\bar\theta_{\phi\mu}({}^z\!\bar B_{S,\phi,+}^\vee(x))$.
Now, by the decomposition theorem, the complex $\pi_*({}^z\!IC_T(\bar X_{\phi,x}))$ is a direct sum of complexes of the form
${}^w\!IC_T(\bar X_{\mu,y})[j],$ with $y\in{}^w\!I_{\mu,+}$ and $j\in\bbZ$.
We deduce that $\bar\theta_{\phi\mu}({}^z\!\bar B_{S,\phi,+}^\vee(x))$ is a direct sum of graded ${}^w\!\bar Z^\vee_{S,\mu,+}$-modules of the form
${}^w\!\bar B_{S,\mu,+}^\vee(y)\langle j\rangle.$
\end{proof}

\vspace{2mm}

\begin{rk}
The counterpart of Proposition \ref{prop:loc1} for ${}^w\!\bar Z_{S,\mu,-}^\vee$ and ${}^w\!\bar B_{S,\mu,-}^\vee(x)$ is 
proved in Proposition \ref{prop:IH} below. 
\end{rk}

\vspace{2mm}
Recall that the poset ${}^x\!I_{\mu,+}$ is equipped with the opposite Bruhat order. An ideal $J\subset {}^x\!I_{\mu,+}$ is a 
coideal in the set $\{y\in I_{\mu}^\min\,;\, y\leqslant x\}$ 
equipped with the Bruhat order. Hence the variety $X_J$ is a $T$-stable open subvariety of $\bar X_x$. 
We have the following proposition.

\vspace{2mm}

\begin{lemma}\label{lem:foncteurW}
Let $w\in I_{\mu,+}$. For any $x\in{}^w\!I_{\mu,+}$ and any ideal $J\subset {}^x\!I_{\mu,+}$ there is an isomorphism of graded 
$({}^w\!\bar Z^\vee_{S,\mu,+})_J$-modules
${}^w\!\bar B^\vee_{S,\mu,+}(x)_J=IH_T(X_J).$
\end{lemma}

\vspace{.5mm}

\begin{proof}
The $T$-variety $X_J$ satisfies the assumption in \cite[sec.~1.1]{BM}. Hence, by loc.~cit., one can associate a moment graph $\calG^\vee_J$ to it. 
By construction, the graph $\calG^\vee_J$ is the same as the subgraph of ${}^w\calG^\vee_{\mu,+}$ consisting of the vertices 
which belong to $J$ and the edges $h$ with $h',h''\in J$. Let $\bar Z^\vee_{S,J}$ be the graded structural algebra of $\calG^\vee_J$.
We have $\bar Z^\vee_{S,J}=({}^w\!\bar Z^\vee_{S,\mu,+})_J$ as graded rings, see Remark \ref{rk:MI}.

For any $y\in J$, let $\bar B^\vee_{S,J}(y)$ be the set of sections of the graded BM-sheaf on $\calG^\vee_J$ associated with $y$,
as defined in Proposition-Definition \ref{df:3.9}. 
By \cite[thm.~1.5,1.7]{BM}, we have a graded ring isomorphism $\bar Z^\vee_{S,J}=H_T(X_J)$ and a graded $\bar Z^\vee_{S,J}$-module 
isomorphism $\bar B^\vee_{S,J}(y)=IH_T(\bar X_y\cap X_J)$. In particular, we have $\bar B^\vee_{S,J}(x)=IH_T(X_J)$.

Next, since $J\subset {}^x\!I_{\mu,+}$ is an ideal, it follows from the construction of BM-sheaves recalled in Remark \ref{rk:BM} that we have canonical identifications $\bar B^\vee_{S,J}(x)_y={}^w\!\bar B^\vee_{S,\mu,+}(x)_y$, $\bar B^\vee_{S,J}(x)_h={}^w\!\bar B^\vee_{S,\mu,+}(x)_h$ for any $y$, $h\in\calG^\vee_J$ which are compatible with the maps $\rho_{y,h}$. This implies that $\bar B^\vee_{S,J}(x)={}^w\!\bar B^\vee_{S,\mu,+}(x)_J$ as graded $\bar Z^\vee_{S,J}$-modules. We deduce that ${}^w\!\bar B^\vee_{S,\mu,+}(x)_J=IH_T(X_J)$ as $({}^w\!\bar Z^\vee_{S,\mu,+})_J$-modules.

\end{proof}

\vspace{2mm}

\begin{prop} 
\label{prop:loc2}
Assume that  $w\in I_{\mu,+}$. Let $v=w_-^{-1}\in I^\mu_{\phi,-}$.
There is an equivalence of abelian categories 
$\Phi=\Phi^\mu:{}^v\bfO^\mu_{\phi,-}\to\bfP(\bar X_{\mu,w})$
such that $\Phi^\mu(L(y\bullet\oo_{\phi,-}))={}^w\!IC(\bar X_{\mu,x})$
with $x=y_+^{-1}$ for each $y\in{}^v\!I_{\phi,-}^\mu$.
\end{prop}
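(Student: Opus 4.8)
The plan is to realize $\bfP(\bar X_w)$ as a highest weight category whose combinatorics matches that of ${}^v\bfO^\mu_{\phi,-}$, and then to use a categorical characterization to produce the equivalence $\Phi$. First I would recall that $\bfP(\bar X_w)$ is a finite abelian category with enough projectives (a projective generator exists by the general theory of perverse sheaves on a stratified projective variety with finitely many strata; see \cite{BGS}), and that its simple objects are exactly the ${}^w\!IC(\bar X_x)$ for $x$ with $\bar X_x\subset\bar X_w$, i.e. for $x\in{}^w\!I_{\mu,+}$. Moreover $\bfP(\bar X_w)$ is a highest weight category for the Bruhat order on $\{x\in {}^w\!I_{\mu,+}\}$, with standard objects the shifted constant-sheaf complexes on the strata. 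On the other side, ${}^v\bfO^\mu_{\phi,-}$ is a highest weight category (Section \ref{sec:truncation}) with $\Irr$ indexed by ${}^v\!I_{\phi,-}^\mu$, and Remark \ref{rk:bijection} together with $v=w_-^{-1}$ gives an anti-isomorphism (resp.\ isomorphism) of posets matching ${}^v\!I^\mu_{\phi,-}$ with ${}^w\!I^\mu_{\mu,+}={}^w\!I_{\mu,+}$ via $y\mapsto x=y_+^{-1}$, carrying the Bruhat order to the appropriate order. So both sides have the same poset of simples.

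Next I would identify the endomorphism algebra of a projective generator on each side. On the category $\bfO$ side, by Corollary \ref{cor:2.31} and Proposition \ref{prop:foncteurV}(c), the algebra ${}^v\!R^\mu_{\phi,-}$ is isomorphic to $k\End_{{}^v\!Z_{S_0,\phi}}\bigl({}^v\!B_{S_0,\phi,-}\bigr)^{\op}$, i.e.\ to the non-graded endomorphism algebra ${}^v\!A_{\mu,-}$ of the BM-sheaves; by Proposition \ref{prop:loc1}(c) the modules controlling this are the $IH(\bar X_x)$. On the geometry side, the Soergel-type description of $\bfP(\bar X_w)$ says that $\Hom$'s between projectives are computed by (hyper)cohomology, hence by the very same combinatorial data: the endomorphism algebra of a projective generator of $\bfP(\bar X_w)$ is $\End$ of $\bigoplus_x IH(\bar X_x)$ over $H(\bar X_w)={}^w\!\bar Z_\mu^\vee$. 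Thus both endomorphism algebras are identified with $\End_{{}^w\!\bar Z_\mu^\vee}\bigl(\bigoplus_x {}^w\!\bar B^\vee_{\mu,+}(x)\bigr)^{\op}$. Equivalently, one can phrase this via a functor $\mathbb H$ of hypercohomology from $\bfP(\bar X_w)$ to modules over ${}^w\!\bar Z_\mu^\vee$ which is fully faithful on projectives (the Braden--MacPherson / Ginzburg--Kumar input behind Proposition \ref{prop:loc1}), together with the functor $\bbV$ of Proposition \ref{prop:foncteurV}, fully faithful on projectives by Remark \ref{rk:speB-}.

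I would then conclude by Morita theory: since ${}^v\bfO^\mu_{\phi,-}\simeq\bfmod\bigl({}^v\!R^\mu_{\phi,-}\bigr)$ and $\bfP(\bar X_w)\simeq\bfmod(\text{its projective-generator algebra})$, and these two finite-dimensional $k$-algebras have been identified compatibly with the labelling of indecomposable projectives by the poset isomorphism $y\mapsto x=y_+^{-1}$, we obtain an equivalence $\Phi:{}^v\bfO^\mu_{\phi,-}\to\bfP(\bar X_w)$. Tracking simples: an equivalence of highest weight categories sending projective cover $P(y)$ to projective cover of $IC(\bar X_x)$ sends tops to tops, hence ${}^v\!L(y\bullet\oo_{\phi,-})\mapsto {}^w\!IC(\bar X_x)$, as required. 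The main obstacle is the second step: one must make sure the Soergel-type "combinatorial model" genuinely computes $\Hom$'s between \emph{projective} perverse sheaves on the finite-dimensional (negative-level) affine Schubert variety $\bar X_w$, i.e.\ that the hypercohomology functor is fully faithful on projectives and that its essential image matches the module category ${}^w\!\bar\bfZ^\vee_\mu$ appearing via $\bbV$; this is exactly where one invokes the localization theorem for the negative level (the affine flag ind-scheme side), the structure of $IC$-stalks via parabolic Kazhdan--Lusztig polynomials (Proposition \ref{prop:3.26}, \ref{prop:loc1}), and a dimension/rank count to see the two algebras have the same Hilbert data (Proposition \ref{prop:2.24}). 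Once full faithfulness on projectives and the matching of projective generators are in place, everything else is formal.
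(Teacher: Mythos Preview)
The paper does not argue via endomorphism algebras at all; it simply invokes the negative-level localization theorem for affine Lie algebras (\cite{KT4}, \cite[thm.~7.15,~7.16]{BD}, \cite[thm.~2.2]{FG}), i.e., the Beilinson--Bernstein/Riemann--Hilbert equivalence between category $\bfO$ at negative level and $B$-constructible perverse sheaves on the partial affine flag variety, restricted to the truncation. After the combinatorial bijection of Remark~\ref{rk:bijection}, the proposition is a direct specialization of that theorem, which is treated as a black box.

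Your Morita route has a genuine gap in the identification step. Corollary~\ref{cor:2.31}, Proposition~\ref{prop:foncteurV} and Remark~\ref{rk:speB-} concern the \emph{singular non-parabolic} category ${}^v\bfO_{\mu,-}$ and yield ${}^v\!R_{\mu,-}\simeq{}^v\!A_{\mu,-}$; they say nothing about the \emph{parabolic regular} category ${}^v\bfO^\mu_{\phi,-}$ whose algebra is ${}^v\!R^\mu_{\phi,-}$. There is no functor $\bbV$ on the parabolic side in this paper, and the displayed identity ${}^v\!R^\mu_{\phi,-}=k\End_{{}^v\!Z_{S_0,\phi}}({}^v\!B_{S_0,\phi,-})^{\op}$ is false (the right-hand side is ${}^v\!R_{\phi,-}$, not its parabolic quotient). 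In the paper's logical flow, the identification you want (Corollary~\ref{cor:loc4}) is \emph{deduced from} Proposition~\ref{prop:loc2} via Proposition~\ref{prop:loc3}, so using it here is circular. On the geometric side there is a parallel problem: Proposition~\ref{prop:IC/ICT/IH} computes $\Ext$'s between the \emph{simple} objects $IC(\bar X_x)$, not $\Hom$'s between projective perverse sheaves; the Struktursatz ``hypercohomology is fully faithful on projectives in $\bfP(\bar X_w)$'' is a substantial separate theorem, not established here independently of localization. Finally, even if an abstract Morita equivalence could be arranged, it would not automatically intertwine hypercohomology with parabolic inclusion as the paper needs later (Proposition~\ref{prop:loc3}(c), Lemma~\ref{lem:gen4}); the specific localization functor $\Phi$ is used, not just the existence of some equivalence.
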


\vspace{.5mm}

\begin{proof}
Set $z=v^{-1}\in I_{\mu,-}$.
We have  $w=zw_\mu$ and  $x=y^{-1}w_\mu.$
By Corollary \ref{lem:C2}$(b)(c)$, the assignment $y\mapsto x$ yields a bijection 
${}^v\!I_{\phi,-}^\mu\to {}^w\!I_{\mu,+}$. 

By \cite[thm.~5.5]{FG2},
see also \cite[thm.~7.15, 7.16]{BD} and \cite{KT4},
we have
an equivalence of abelian categories 
$\Phi^\phi:{}^v\bfO_{\phi,-}\to\bfP(\bar X_{\phi,z})$ such that
$L(u\bullet\oo_{\phi,-})\mapsto{}^{z}\!IC(\bar X_{u^{-1}})$
for each $u\in{}^v\!I_{\phi,-}.$
Composing $\Phi^\phi$ and the parabolic inclusion functor $i_{\mu,\phi}:{}^v\bfO_{\phi,-}^\mu\to{}^v\bfO_{\phi,-}$ yields an embedding of abelian categories
$\Phi^\phi\circ i_{\mu,\phi}:{}^v\bfO_{\phi,-}^\mu\to\bfP(\bar X_{\phi,z})$.
It takes
$L(y\bullet\oo_{\phi,-})$ to ${}^{z}\!IC(\bar X_{y^{-1}})$ for each
$y\in{}^v\!I_{\phi,-}^\mu.$

Finally, since $z\in I_{\mu,-}$, the obvious projection
$\pi:\bar X_{\phi,z}\to\bar X_{\mu,w}$ is smooth.
Hence, the functor $\pi^{!*}=\pi^*[\dim\pi]$ yields an embedding of abelian categories
$\bfP(\bar X_{\mu,w})\to\bfP(\bar X_{\phi,z})$
such that ${}^w\!IC(\bar X_{\mu,x})\mapsto{}^{z}\!IC(\bar X_{\phi, xw_\mu})$ for each
$x\in{}^w\!I_{\mu,+}.$
The essential images of the functors $\Phi^\phi\circ i_{\mu,\phi}$ and $\pi^{!*}$ are
Serre subcategories of $\bfP(\bar X_{\phi,v^{-1}})$ generated by the same 
set of simple objects. Thus, they are equivalent. Hence, the categories
${}^v\bfO_{\phi,-}^\mu$ and $\bfP(\bar X_{\mu,w})$ are also equivalent.
\end{proof}

\vspace{2mm}

By Proposition \ref{prop:loc1}, 
composing $\Phi^\mu$ and the cohomology we get a functor
$\bbH=\bbH^\mu:{}^v\bfO^\mu_{\phi,-}\to{}^w\bfZ_{\mu,+}^\vee$ for each $w\in I_{\mu,+}$,
$v\in I^\mu_{\phi,-}$ such that $v=w_-^{-1}$.

\vspace{2mm}

\begin{prop}
\label{prop:loc3}
Let $v,w$ be as above and $z=v^{-1}$.
Let $y,t\in{}^v\!I_{\phi,-}^\mu$. Set
$x=y_+^{-1}$ and $s=t_+^{-1}$. We have

(a)
$\bbH^\mu(L(y\bullet \oo_{\phi,-}))={}^w\!B_{\mu,+}^\vee(x)$,

(b) we have graded $k$-vector space isomorphisms
$$\aligned
\Ext_{{}^v\bfO^\mu_{\phi,-}}\bigl(L(y\bullet \oo_{\phi,-}),
L(t\bullet \oo_{\phi,-})\bigr)
&=k\Hom_{{}^w\!Z_{S,\mu,+}^\vee}\bigl({}^w\!\bar B_{S,\mu,+}^\vee(x),
{}^w\!\bar B_{S,\mu,+}^\vee(s)\bigr)\\
&=\Hom_{{}^w\!Z_{\mu,+}^\vee}\bigl({}^w\!\bar B_{\mu,+}^\vee(x),
{}^w\!\bar B_{\mu,+}^\vee(s)\bigr),\endaligned$$

(c) there is a morphism of functors $\theta_{\mu,\phi}\circ\bbH^\mu\to \bbH^\phi\circ i_{\mu,\phi}$
which yields a ${}^z\!Z_{\phi,+}^\vee$-module isomorphism
$\theta_{\mu,\phi}\bbH^\mu(L)\simeq \bbH^\phi i_{\mu,\phi}(L)$
for each $L\in\Irr({}^v\bfO^\mu_{\phi,-})$.
\end{prop}

\vspace{.5mm}

\begin{proof} 
Part $(a)$ follows from Propositions  \ref{prop:loc1}, \ref{prop:loc2}.
To prove part $(b)$, note that, by Proposition \ref{prop:loc2}, we have a graded $k$-vector space isomorphism
$$\Ext_{{}^v\bfO^\mu_{\phi,-}}\bigl(L(y\bullet \oo_{\phi,-}), L(t\bullet \oo_{\phi,-})\bigr)=
\Ext_{\bfD^b(\bar X_w)}\bigl({}^w\!IC(\bar X_x),{}^w\!IC(\bar X_s)\bigr).$$
Further, by Propositions \ref{prop:IC/ICT/IH},  \ref{prop:loc1}, we have
graded $k$-vector space isomorphisms
$$\Ext_{\bfD^b_T(\bar X_w)}\bigl({}^w\!IC_T(\bar X_x),
{}^w\!IC_T(\bar X_s)\bigr)=
\Hom_{{}^w\!Z_{S,\mu,+}^\vee}\bigl({}^w\!\bar B_{S,\mu,+}^\vee(x),
{}^w\!\bar B_{S,\mu,+}^\vee(s)\bigr),$$
$$\Ext_{\bfD^b(\bar X_w)}\bigl({}^w\!IC(\bar X_x),
{}^w\!IC(\bar X_s)\bigr)=
k\Ext_{\bfD^b_T(\bar X_w)}\bigl({}^w\!IC_T(\bar X_x),
{}^w\!IC_T(\bar X_s)\bigr).$$
This proves the first isomorphism in $(b)$. 
To prove the second one, note that we have ${}^w\!\bar B^\vee_{\mu,+}(x)=k{}^w\!\bar B^\vee_{S,\mu,+}(x)$.
Thus, by Propositions \ref{prop:loc1}, \ref{prop:IC/ICT/IH}, we have
graded $k$-vector space isomorphisms
$$\aligned
k\Hom_{{}^w\!Z_{S,\mu,+}^\vee}\bigl({}^w\!\bar B_{S,\mu,+}^\vee(x),
{}^w\!\bar B_{S,\mu,+}^\vee(s)\bigr)&=
k\End_{H_T(\bar X_w)}\big(IH_T(\bar X_x),IH_T(\bar X_s)\big),\\
&=
\End_{H(\bar X_w)}\big(IH(\bar X_x),IH(\bar X_s)\big),\\
&=\Hom_{{}^w\!Z_{\mu,+}^\vee}\bigl({}^w\!\bar B_{\mu,+}^\vee(x),
{}^w\!\bar B_{\mu,+}^\vee(s)\bigr).
\endaligned$$

Now, we prove $(c)$.
By Proposition \ref{prop:loc1}, taking the cohomology gives a functor
$\bfP(\bar X_{\mu,w})\to{}^w\! Z^\vee_{\mu,+}\bfmod$ such that
$\calE\mapsto H(\calE).$
Since $z\in I_{\mu,-}$, the obvious projection
$\pi:\bar X_{\phi,z}\to\bar X_{\mu,w}$ is smooth.
By Proposition \ref{prop:loc1},  for each $\calE\in\bfP(\bar X_{\mu,w})$ we may regard the cohomology spaces
$H(\calE)$, $H(\pi^*\calE)$ as modules over
${}^w\!Z_{\mu,+}^\vee$ and ${}^z\!Z_{\phi,+}^\vee$ respectively.
The unit $1\to \pi_*\pi^*$ yields a map $H(\calE)\to H(\pi_*\pi^*\calE)$.
By Proposition \ref{prop:loc1}, it may be viewed as natural morphism of ${}^w\! Z^\vee_{\mu,+}$-modules
$H(\calE)\to\theta_{\phi,\mu}H(\pi^*\calE)$.
Hence, by adjunction, we get a morphism of functors
$\theta_{\mu,\phi}\circ H\to H\circ \pi^*$ from $\bfP(\bar X_{\mu,w})$ to ${}^z\! Z^\vee_{\phi,+}\bfmod$.
Let $\Phi^\mu:{}^v\bfO^\mu_{\phi,-}\to\bfP(\bar X_{\mu,w})$ and
$\Phi^\phi:{}^v\bfO_{\phi,-}\to\bfP(\bar X_{\phi,z})$ be as above.
The proof of Proposition \ref{prop:loc2} implies
that $\bbH^\mu=H\circ\Phi^\mu$, $\bbH^\phi=H\circ\Phi^\phi$ and $\Phi^\phi\circ i_{\mu,\phi}=\pi^{!*}\circ\Phi^\mu$.
Hence,  we have a morphism of functors
$\theta_{\mu,\phi}\circ\bbH^\mu\to\bbH^\phi\circ i_{\mu,\phi}$.

We claim that, for each $L\in\Irr({}^v\bfO^\mu_{\phi,-}),$
the corresponding ${}^z\! Z_{\phi,+}^\vee$-module homomorphism
$f:\theta_{\mu,\phi}\bbH^\mu(L)\to\bbH^\phi i_{\mu,\phi}(L)$ is an isomorphism.
Indeed, set $L=L(y\bullet\oo_{\phi,-})$ with $y\in {}^v\!I_{\phi,-}^\mu$.
By part $(a)$, we have $\bbH^\mu(L)={}^z\!B_{\mu,+}^\vee(x)$, $\bbH^\phi i_{\mu,\phi}(L)={}^z\!B_{\phi,+}^\vee(xw_\mu)$ with $x=y_+^{-1}$.
Hence, by Proposition \ref{prop:theta}$(f)$ and base change, we have
$\theta_{\mu,\phi}\bbH^\mu(L)=\bbH^\phi i_{\mu,\phi}(L)={}^z\!B_{\phi,+}^\vee(xw_\mu),$
see Remark \ref{rk:theta/k}. Hence, the map $f$ can be regarded as an element in
$\End_{{}^z\!Z_{\phi,+}^\vee}\bigl({}^z\!B_{\phi,+}^\vee(xw_\mu)\bigr).$ 
Now, by part $(b)$ and Propositions \ref{prop:2.3}$(b)$, \ref{prop:foncteurV}$(b), (c)$, we have
$$
\aligned
\End_{{}^z\!Z_{\phi,+}^\vee}\bigl({}^z\!B_{\phi,+}^\vee(xw_\mu)\bigr)
&=k\End_{{}^z\!Z_{S,\phi,+}^\vee}\bigl({}^z\!B_{S,\phi,+}^\vee(xw_\mu)\bigr),\\
&=k\End_{{}^z\bfO^\vee_{S,\phi,+}}\bigl({}^z\!P^\vee_{S}(xw_\mu\bullet \oo_{\phi,+})\bigr),\\
&=\End_{{}^z\bfO^\vee_{\phi,+}}\bigl({}^z\!P^\vee(xw_\mu\bullet \oo_{\phi,+})\bigr).
\endaligned$$
Here, the symbols $\bfO^\vee$ and $P^\vee$ denote the category O and the projective modules associated with the dual root system.
Since the module ${}^z\!P^\vee(xw_\mu\bullet \oo_{\phi,+})$ is projective and indecomposable,
we deduce that the $k$-algebra $\End_{{}^z\!Z_{\phi,+}^\vee}\bigl({}^z\!B_{\phi,+}^\vee(xw_\mu)\bigr)$ is local.
Thus, to prove the claim it is enough to observe that the map $f$ is not nilpotent.
\end{proof}

\vspace{2mm}

We can now prove the following graded analogue of (part of) Corollary \ref{cor:2.31} which
compares the graded $k$-algebra  ${}^{v}\!\bar A^\mu_{\phi,-}$ in 
Definition \ref{df:3.6} with the graded $k$-algebra  ${}^{w}\!\bar \scrA_{\mu,+}$ in Definition \ref{def:3.23}.
The comparison between ${}^{u}\!\bar A^\mu_{\phi,+}$ and ${}^{z}\!\bar \scrA_{\mu,-}$ will be done in Corollary \ref{cor:2.42} below.

\vspace{2mm}

\begin{cor}
\label{cor:loc4}
Let $w\in I_{\mu,+}$ and $v=w^{-1}_-\in I^\mu_{\phi,-}$.
We have
a graded $k$-algebra isomorphism
${}^{v}\!\bar A^\mu_{\phi,-}\to{}^{w}\!\bar \scrA_{\mu,+}$
such that $1_{y}\mapsto 1_x$ with $x=y_+^{-1}$ for each $y\in{}^v\!I_{\phi,-}^\mu$.
\end{cor}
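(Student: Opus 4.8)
The plan is to read the corollary off Proposition~\ref{prop:loc3}(b), after straightening out the difference between the moment graph of $\bfO$ over $\bft^*$ and the one over $\bft$, and accounting for the $\op$ built into the definition of ${}^w\!\bar A_{\mu,+}$. First I would unwind the definition: one has
\[
{}^{v}\!\bar R^\mu_{\phi,-}=\Ext_{{}^v\bfO^\mu_{\phi,-}}\bigl({}^v\!L^\mu_{\phi,-}\bigr)
=\bigoplus_{y,t\in{}^v\!I_{\phi,-}^\mu}\Ext_{{}^v\bfO^\mu_{\phi,-}}\bigl(L(y\bullet\oo_{\phi,-}),L(t\bullet\oo_{\phi,-})\bigr)
\]
with the Yoneda product. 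By Proposition~\ref{prop:loc3}(b) the $(y,t)$-summand is $k\Hom_{{}^w\!Z_{S,\mu}^\vee}\bigl({}^w\!\bar B_{S,\mu,+}^\vee(x),{}^w\!\bar B_{S,\mu,+}^\vee(s)\bigr)$ with $x=y_+^{-1}$ and $s=t_+^{-1}$; since $y\mapsto y_+^{-1}$ is the bijection ${}^v\!I_{\phi,-}^\mu\to{}^w\!I_{\mu,+}$ of Remark~\ref{rk:bijection}, summing over $y,t$ gives an isomorphism of graded $k$-vector spaces ${}^{v}\!\bar R^\mu_{\phi,-}\simeq k\End_{{}^w\!Z_{S,\mu}^\vee}\bigl({}^w\!\bar B_{S,\mu,+}^\vee\bigr)$ sending the idempotent $1_y$ (cutting out $L(y\bullet\oo_{\phi,-})$) to the idempotent $1_x$ (cutting out ${}^w\!\bar B_{S,\mu,+}^\vee(x)$). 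I then need this to be a ring isomorphism, i.e.\ that Yoneda composition corresponds to composition of sheaf morphisms; this is forced by the proof of Proposition~\ref{prop:loc3}(b), where the equivalence $\Phi$ of Proposition~\ref{prop:loc2} transports $\Ext^\bullet$ with its product to $\Ext^\bullet_{\bfD^b(\bar X_w)}$ of the $IC$-complexes, the comparison with $\Ext^\bullet_{\bfD^b_T}$ of the $IC_T$-complexes is a morphism of graded algebras, and $\Ext^\bullet_{\bfD^b_T}$ of $IC_T$-sheaves is identified, compatibly with composition, with $\Hom$ of the associated Braden--MacPherson sheaves over ${}^w\!Z_{S,\mu}^\vee$ (Proposition~\ref{prop:IC/ICT/IH} and the Braden--MacPherson formalism).

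Next I would pass from $V=\bft^*$ to $V=\bft$ and from $S$ to $S_0$. The nondegenerate invariant form $\langle\bullet:\bullet\rangle$ identifies $\bft^*$ with $\bft$ so that, by $(\lambda:\check\alpha)=2\langle\lambda:\alpha\rangle/\langle\alpha:\alpha\rangle$, the line $k\alpha$ is sent to the line $k\check\alpha$ for every real affine root $\alpha$; hence it identifies the ordered moment graph ${}^w\calG_{\mu,+}^\vee$ with ${}^w\calG_{\mu,+}$, and the induced isomorphism $\Sym\bft^*\simeq\Sym\bft$ identifies structural algebras and, by the uniqueness in the Braden--MacPherson construction (Remark~\ref{rk:BM}), carries ${}^w\!\bar B_{S,\mu,+}^\vee$ to ${}^w\!\bar B_{S,\mu,+}$. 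Flat base change along $S\to S_0$ (Remark~\ref{rk:basechange}), which takes the Braden--MacPherson sheaf ${}^w\!\bar B_{S,\mu,+}$ to ${}^w\!\bar B_{S_0,\mu,+}$, then yields $k\End_{{}^w\!Z_{S,\mu}^\vee}\bigl({}^w\!\bar B_{S,\mu,+}^\vee\bigr)\simeq k\End_{{}^w\!Z_{S_0,\mu}}\bigl({}^w\!\bar B_{S_0,\mu,+}\bigr)$ as graded $k$-algebras, still matching the $1_x$'s. Finally, by Proposition~\ref{prop:duality} the graded duality $D$ is a contravariant autoequivalence of ${}^w\bar\bfZ_{S_0,\mu}$ fixing ${}^w\!\bar B_{S_0,\mu,+}$, so $\End_{{}^w\!Z_{S_0,\mu}}\bigl({}^w\!\bar B_{S_0,\mu,+}\bigr)$ is isomorphic to its own opposite; since the opposite is exactly ${}^w\!\bar A_{\mu,+}$, composing all of the above produces the asserted graded $k$-algebra isomorphism ${}^{v}\!\bar R^\mu_{\phi,-}\to{}^w\!\bar A_{\mu,+}$ with $1_y\mapsto 1_x$, $x=y_+^{-1}$.

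The main obstacle is the multiplicativity: one must go back into the proof of Proposition~\ref{prop:loc3}(b) and check that the chain of identifications there --- via $\Phi$, the forgetful functor to the non-equivariant derived category, and the Braden--MacPherson dictionary --- respects Yoneda composition, which ultimately rests on the external input of Proposition~\ref{prop:IC/ICT/IH}. The remaining ingredients (the $\bft^*\leftrightarrow\bft$ identification via the form, the flat base change $S\to S_0$, and the $\op$ produced by the self-duality of the positive Braden--MacPherson sheaves) are then routine.
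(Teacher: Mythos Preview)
Your proposal is correct and follows the same route as the paper: invoke Proposition~\ref{prop:loc3}(b) to identify ${}^{v}\!\bar R^\mu_{\phi,-}$ with $k\End_{{}^w\!Z_{S,\mu}^\vee}\bigl({}^w\!\bar B_{S,\mu,+}^\vee\bigr)$, then use the $\widehat W$-invariant pairing $\bft\simeq\bft^*$ to pass from the $\vee$-side to ${}^{w}\!\bar A_{\mu,+}$. The paper's own proof is two sentences and leaves exactly the points you spell out --- multiplicativity of the chain of identifications, the flat base change $S\to S_0$, and the $\op$ --- implicit; your use of Proposition~\ref{prop:duality} to absorb the $\op$ is a clean way to handle a detail the paper does not address explicitly.
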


\vspace{.5mm}

\begin{proof}
We have
${}^w\!\bar \scrA_{\mu,+}=k\End_{{}^w\!Z_{S^\vee,\mu,+}}\bigl({}^w\!\bar B_{S^\vee,\mu,+}\bigr)^{\op},$
where 
${}^w\!\bar B_{S^\vee,\mu,+}\in{}^w\bar\bfZ_{S^\vee,\mu,+}$ is the space of sections of a direct sum of graded BM-sheaves
on ${}^w\calG_{\mu,+}$. 
Next, since
${}^v\!\bar A^\mu_{\phi,-}=
\Ext_{{}^{v}\bfO^\mu_{\phi,-}}({}^{v}\!L^\mu_{\phi,-})^\op$ by Definition \ref{df:3.6},
we have a graded $k$-algebra isomorphism
${}^v\!\bar A^\mu_{\phi,-}=
k\End_{{}^w\!Z_{S,\mu}^\vee}({}^w\!\bar B_{S,\mu,+}^\vee)^{\op}$
by Proposition \ref{prop:loc3}$(b)$.
Thus, we must check that
$k\End_{{}^w\!Z_{S,\mu}^\vee}({}^w\!\bar B_{S,\mu,+}^\vee)\simeq
k\End_{{}^w\!Z_{S^\vee,\mu}}\bigl({}^w\!\bar B_{S^\vee,\mu,+}\bigr).$

The choice of a $\widehat W$-invariant pairing on $\bft$ yields a $\widehat W$-equivariant 
isomorphism 
$\bft^*\simeq\bft$. 
It induces a $\widehat W$-equivariant graded algebra isomorphism $S\simeq S^\vee$. 
The moment graphs
${}^w\calG_{\mu,+}^\vee$ and ${}^w\calG_{\mu,+}$ are indeed isomorphic.
Using Remark \ref{rk:BM}, it is easy to check that this isomorphism identifies the 
graded BM-sheaves  
$\calL({}^w\!\bar B_{S,\mu,+}^\vee)$ and
$\calL({}^w\!\bar B_{S^\vee,\mu,+})$.
This proves the corollary.
\end{proof}

\vspace{2mm}

\begin{cor}\label{cor:speBpm}
For any $w\in I_{\mu,\pm}$, the following hold

(a)
we have graded $k$-algebra isomorphisms
${}^w\!\bar \scrA_{\mu,\pm}\to
\End_{{}^w\!Z_{\mu,\pm}}({}^w\! \bar B_{\mu,\pm})^\op$,

(b)
the functor $\bbV_{\!k}$ is fully-faithful on projective objects in ${}^w\bfO_{\mu,\pm}^\Delta$.
\end{cor}

\vspace{.5mm}

\begin{proof}
The obvious map ${}^w\!\bar \scrA_{\mu,+}\to
\End_{{}^w\!Z_{\mu,+}}({}^w\! \bar B_{\mu,+})^\op$
is  an isomorphism by Proposition \ref{prop:loc3}$(b)$ (applied to the dual root system). 
This proves $(a)$ in the positive-level case.
The negative one is proved in a similar way, see Corollary \ref{cor:B2}.

Now, let us concentrate on part $(b)$.
By Proposition \ref{prop:foncteurV}$(b), (c),$ we have
${}^w\! B_{\mu,+}=\bbV_{\!k}({}^w\!P_{\mu,+})$ 
and
${}^w\!\scrA_{\mu,+}=k\End_{{}^w\bfO_{S_0,\mu,+}}({}^w\!P_{S_0,\mu,+})^\op.$
The latter is isomorphic to $\End_{{}^w\bfO_{\mu,+}}({}^w\!P_{\mu,+})^\op$ by Proposition \ref{prop:2.3}$(b)$. We deduce
that $\bbV_{\!k}$ yields an isomorphism
$\End_{{}^w\!Z_{\mu,+}}(\bbV_{\!k}({}^w\!P_{\mu,+}))
=\End_{{}^w\bfO_{\mu,+}}({}^w\!P_{\mu,+})$,
which proves that $\bbV_{\!k}$ is fully-faithful on projectives in ${}^w\bfO_{\mu,+}^\Delta$.

The negative-level case is similar.
Indeed, since ${}^w\!P_{\mu,-}=k {}^w\!P_{S_0,\mu,-}$ by Proposition \ref{prop:2.3}$(e)$, it follows from
Proposition \ref{prop:foncteurV}$(b), (c)$ that
${}^w\! B_{\mu,-}=\bbV_{\!k}({}^w\!P_{\mu,-})$ and 
${}^w\!\scrA_{\mu,-}=k\End_{{}^w\bfO_{S_0,\mu,-}}({}^w\!P_{S_0,\mu,-})^\op.$
Thus
$\End_{{}^w\!Z_{\mu,-}}(\bbV_{\!k}({}^w\!P_{\mu,-}))
=\End_{{}^w\bfO_{\mu,-}}({}^w\!P_{\mu,-})$
by Proposition \ref{prop:2.3}$(b)$.
We deduce that the functor $\bbV_{\!k}$ is fully faithful on projectives in ${}^w\bfO_{\mu,-}^\Delta$.
\end{proof}

\vspace{2mm}

\begin{rk}
Assume that $w\in I_{\mu,\pm}$. We have
${}^w\bar C_{S,\mu,\pm}(x)=D({}^v\!\bar B_{S,\mu,\mp}(y))$
with $y=x_{\mp}$, $v=w_\mp$ for each $x\in {}^w\!I_{\mu,\pm}$.
Therefore, there are graded $k$-algebra isomorphisms
${}^v\!\bar \scrA_{\mu,\mp}=
k\End_{{}^w\!Z_{S,\mu,\pm}}\bigl({}^w\bar C_{S,\mu,\pm}\bigr).$
By Corollary \ref{cor:speBpm}, we also have
graded $k$-algebra isomorphisms
${}^v\!\bar \scrA_{\mu,\mp}=
\End_{{}^w\!Z_{\mu,\pm}}({}^w \bar C_{\mu,\pm})$.
\end{rk}

\vspace{1cm}

\section{Proof of the main theorem}
\label{sec:5}

\subsection{The regular case}
\label{sec:3.9}
Fix integers $d,e>0$ and fix $\mu\in\calP$.
Assume that the weights $\oo_{\mu,-}$,  $\oo_{\phi,-}$ have the level
$-e-N$ and $-d-N$ respectively.

Let $w\in I_{\mu,+}$ and put
$u=w^{-1}$, $v=w^{-1}_-$ and $z=w_-$.
Note that $u\in I^\mu_{\phi,+},$
$v\in I^\mu_{\phi,-}$ and $z\in I_{\mu,-}.$

The first step is to compare the algebras
${}^w\!A_{\mu,+}=\End_{{}^w\bfO_{\mu,+}}({}^w\!P_{\mu,+})^{\op}$ and
${}^v\!\bar A^\mu_{\phi,-}=\Ext_{{}^{v}\bfO^\mu_{\phi,-}}({}^{v}\!L^\mu_{\phi,-})^\op,$
and, then, the algebras 
${}^w\!\bar A_{\mu,+}=\Ext_{{}^{w}\bfO_{\mu,+}}({}^{w}\!L_{\mu,+})^\op$ and
${}^v\!A^\mu_{\phi,-}=\End_{{}^v\bfO^\mu_{\phi,-}}({}^v\!P^\mu_{\phi,-})^{\op}.$
More precisely, we prove the following.

\vspace{2mm}

\begin{prop} 
\label{prop:reg1}
If $x\in{}^w\!I_{\mu,+}$ and
$y=x_-^{-1},$ then $y\in{}^v\!I_{\phi,-}^\mu$.
We have a $k$-algebra isomorphism
${}^w\!A_{\mu,+}={}^v\!\bar A^\mu_{\phi,-}$ such that
$1_x\mapsto 1_y$ for each
$x\in{}^w\!I_{\mu,+}$.
We have a $k$-algebra isomorphism
${}^w\!\bar A_{\mu,+}= {}^v\!A^\mu_{\phi,-}$
such that $1_x\mapsto 1_y$ for each $x\in{}^w\!I_{\mu,+}$.
The graded $k$-algebras 
${}^w\!\bar A_{\mu,+}$ and
${}^v\!\bar A^\mu_{\phi,-}$
are Koszul and are Koszul dual to each other.
Further, we have $1_x=1_y^!$ for each $x\in{}^w\!I_{\mu,+}$.
\end{prop}

\vspace{.5mm}

\begin{proof}
By Corollaries \ref{cor:2.31}, \ref{cor:loc4}, composing $\bbH$, $\bbV_k$
yields $k$-algebra isomorphisms
\begin{equation}\label{4.1a}{}^w\!A_{\mu,+}={}^w\! \scrA_{\mu,+}={}^v\!\bar A^\mu_{\phi,-},
\end{equation}
which identify the idempotents
$1_{y}\in {}^{v}\!\bar A^\mu_{\phi,-}$
and $1_x\in{}^w\!A_{\mu,+},{}^w\!\scrA_{\mu,+}$
for each $x\in{}^w\!I_{\mu,+}$ and
$y=x_-^{-1}.$

Now, we claim that 
${}^v\!A^\mu_{\phi,-}$ 
has a Koszul grading.
By Lemma \ref{lem:1.2} and Section \ref{sec:tau}, 
it is enough to check that 
${}^v\! A_{\phi,-}$ has a Koszul grading.
This follows from the matrix equation in Proposition \ref{prop:2.24}
and from \cite[thm.~2.11.1]{BGS}, because
${}^v\!A_{\phi,-}={}^v\! \scrA_{\phi,-}$ as $k$-algebras by 
Corollary \ref{cor:2.31}.

Equip ${}^v\! A^\mu_{\phi,-}$ with the Koszul grading above. 
Then, Lemma \ref{lem:1.1} implies that
\begin{equation}\label{4.1b}
{}^v\!A^{\mu,!}_{\phi,-}=\Ext_{{}^v\bfO^\mu_{\phi,-}}({}^v\!L^\mu_{\phi,-})^\op={}^v\!\bar A^\mu_{\phi,-}.
\end{equation}
Thus,  the graded $k$-algebra ${}^v\!\bar A^\mu_{\phi,-}$ is also Koszul.
Since
${}^w\!A_{\mu,+}={}^v\!\bar A^\mu_{\phi,-}$ as a $k$-algebra,
this implies that the $k$-algebra ${}^w\!A_{\mu,+}$ has a Koszul grading. 

Applying Lemma \ref{lem:1.1} once again, we get
\begin{equation}\label{4.1c}
{}^w\!A_{\mu,+}^!=\Ext_{{}^w\bfO_{\mu,+}}({}^w\!L_{\mu,+})^\op={}^w\!\bar A_{\mu,+}.
\end{equation}
In particular, the graded $k$-algebra ${}^w\!\bar A_{\mu,+}$ is also Koszul.

Finally, using \eqref{4.1b}, \eqref{4.1a} and \eqref{4.1c} we get $k$-algebra isomorphisms
$${}^v\!A^\mu_{\phi,-}=
{}^v\!\bar A^{\mu,!}_{\phi,-}=
{}^w\!A_{\mu,+}^!={}^w\!\bar A_{\mu,+}.$$ 
They identify the idempotent
$1_y\in {}^{v}\! A^\mu_{\phi,-}$
with the idempotent
$1_x\in{}^w\!\bar A_{\mu,+}.$
\end{proof}

\vspace{2mm}

\begin{rk}
The Koszul grading on ${}^v\!A^\mu_{\phi,-}$ 
can also be obtained 
using mixed perverse sheaves on the ind-scheme 
$X'$ as in \cite[thm.~4.5.4]{BGS}, \cite{AK}. 
Note that there is no analogue of \cite[lem.~3.9.2]{BGS} in our situation,
because ${}^v\!R_{\phi,-}$  is not Koszul self-dual.
\end{rk}

\vspace{2mm}

The second step consists of comparing the $k$-algebras
${}^z\!\bar A_{\mu,-}=\Ext_{{}^{z}\bfO_{\mu,-}}({}^{z}\!L_{\mu,-})^\op,$
${}^u\!A^\mu_{\phi,+}=\End_{{}^u\bfO^\mu_{\phi,+}}({}^u\!P^\mu_{\phi,+})^{\op}$
and the $k$-algebras
${}^z\!A_{\mu,-}=\End_{{}^z\bfO_{\mu,+}}({}^w\!P_{\mu,-})^{\op},$
${}^u\!\bar A^\mu_{\phi,+}=\Ext_{{}^{u}\bfO^\mu_{\phi,+}}({}^{u}\!L^\mu_{\phi,+})^\op.$

We can not argue as in the previous proposition, because we have no analogue of the localization functor
$\Phi$ in Proposition \ref{prop:loc2} for positive levels. 
Hence, we have no analogue of Proposition \ref{prop:loc3} and 
Corollary \ref{cor:loc4}.

To remedy this, we'll use the technic of standard Koszul duality.
To do so, we need the following crucial result.

\vspace{2mm}

\begin{lemma}
\label{lem:balanced}
The quasi-hereditary $k$-algebra
${}^w\!A_{\mu,+}$ has a balanced grading.
\end{lemma}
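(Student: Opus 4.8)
The plan is to prove that ${}^w\!R_{\mu,+}$ is balanced in the sense of Section \ref{sec:standardkoszul}, i.e. that its Koszul grading ${}^w\!\bar R_{\mu,+}$ is standard Koszul and that the natural grading on the Ringel dual $D({}^w\!\bar R_{\mu,+})$ is positive. By Proposition \ref{prop:reg1} we already know ${}^w\!\bar R_{\mu,+}$ is Koszul, with $E({}^w\!R_{\mu,+})={}^v\!\bar R^\mu_{\phi,-}$, so the content is entirely in the two extra properties.

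First I would establish the \emph{standard Koszul} property, namely that $E({}^w\!\bar R_{\mu,+})={}^v\!\bar R^\mu_{\phi,-}$ is quasi-hereditary with respect to the poset $(\{E(1_x)\},\geqslant)$ obtained by reversing the highest weight order on ${}^w\!I_{\mu,+}$. The point is that ${}^v\!\bar R^\mu_{\phi,-}$ is, up to grading, ${}^v\!R^\mu_{\phi,-}$, the endomorphism algebra of a projective generator of the highest weight category ${}^v\bfO^\mu_{\phi,-}$, so it \emph{is} quasi-hereditary; what must be checked is that the highest weight order matches the reversed order under the natural bijection $\phi:\Irr({}^w\!R_{\mu,+})\to\Irr(E({}^w\!R_{\mu,+}))$. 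Concretely, by Corollaries \ref{cor:2.31} and \ref{cor:loc4} the bijection sends $1_x$ to $1_y$ with $y=x_-^{-1}\in{}^v\!I^\mu_{\phi,-}$, where ${}^v\bfO^\mu_{\phi,-}$ carries the Bruhat order. One then needs that the highest weight (Bruhat) order on ${}^v\!I^\mu_{\phi,-}$ corresponds, under $x\mapsto x_-^{-1}$, to the opposite of the opposite-Bruhat order on ${}^w\!I_{\mu,+}$, i.e. to the Bruhat order on ${}^w\!I_{\mu,+}$ --- this is a purely combinatorial statement about the anti-isomorphisms in Remark \ref{rk:bijection}, using that $x\mapsto x_-$ and $x\mapsto x^{-1}$ are respectively order-reversing and order-preserving, so their composite is order-reversing, matching opposite-Bruhat on the source with Bruhat on the target.

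Second I would verify the positivity of the natural grading on $D({}^w\!\bar R_{\mu,+})$. Here I would use Proposition \ref{prop:ringel}: the Ringel dual of ${}^w\bfO_{\mu,+}$ is ${}^{z}\bfO_{\mu,-}$ with $z=ww_\mu\in I_{\mu,-}$, so $D({}^w\!R_{\mu,+})\cong{}^{z}\!R_{\mu,-}$, which by Corollary \ref{cor:2.31} is ${}^{z}\!A_{\mu,-}$. By Proposition \ref{prop:2.24} the Hilbert matrix of the graded algebra ${}^{z}\!\bar A_{\mu,-}$ lies in $\delta_{x,x'}+t\bbN[[t]]$ (the same length/parabolic-KL estimates that make ${}^{z}\!\bar A_{\mu,+}$ basic apply verbatim to ${}^{z}\!\bar A_{\mu,-}$, since $Q^{\mu,-1}_{x,y,i}=0$ unless $2l(y)-l(x)-l(x')+2i+2i'>0$ strictly, with equality forcing $x=x'=y$). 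Thus ${}^{z}\!\bar A_{\mu,-}$ is a positively graded basic algebra; I would then check that the natural grading on $D({}^w\!R_{\mu,+})$ coming from the natural grading on the full tilting module agrees with this grading, which follows because $\bbV$ intertwines the tilting equivalence with the duality $D$ on moment-graph sheaves (Proposition \ref{prop:foncteurV}(c)) and the gradings on ${}^w\!\bar B_{S_0,\mu,\pm}$ are the manifestly positive ones coming from BM-sheaves.

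The main obstacle I expect is the bookkeeping in matching the \emph{natural} grading (defined via the inclusion of the highest standard object into a tilting module, Section \ref{sec:standardkoszul}) with the \emph{a priori} grading on ${}^{z}\!\bar A_{\mu,-}$ coming from the moment-graph realization --- one must make sure the contravariant functor $D$ on $\bar\bfZ^\Delta$ used to pass between $B$-sheaves and $C$-sheaves carries the grading normalizations correctly, so that the degree-$0$ part lands where the theory of balanced algebras requires. Once both properties are in hand, the bullet list in Section \ref{sec:standardkoszul} (quoting \cite[thm.~1]{Ma}) gives at once that ${}^w\!\bar R_{\mu,+}$, $D({}^w\!\bar R_{\mu,+})$, $E({}^w\!\bar R_{\mu,+})$ and $DE({}^w\!\bar R_{\mu,+})$ are all Koszul, balanced, quasi-hereditary, and that $DE=ED$, which is exactly what the lemma asserts and what feeds into the proof of Theorem \ref{thm:main}.
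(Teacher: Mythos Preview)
Your overall strategy matches the paper's: verify standard Koszulity and then positivity of the natural grading on the Ringel dual by identifying it with ${}^z\!\bar A_{\mu,-}$ and invoking Proposition~\ref{prop:2.24}. You also correctly identify the main technical obstacle, namely matching the \emph{natural} grading on $D$ with the moment-graph grading; the paper handles this exactly as you anticipate, by constructing a graded lift $\bar\bbV$ of the Fiebig functor over $S_0$ and checking $\bar\bbV({}^w\bar T_{S_0,\mu,+})={}^w\!\bar C_{S_0,\mu,+}$ via the Verma filtrations of Remark~\ref{rk:filtration2}.

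There is, however, a persistent notational confusion that leads to a false intermediate statement. In this paper ${}^w\!\bar R_{\mu,+}$ is \emph{by definition} the Ext-algebra $\Ext_{{}^w\bfO_{\mu,+}}({}^w\!L_{\mu,+})$; its underlying ungraded algebra is ${}^v\!R^\mu_{\phi,-}$, not ${}^w\!R_{\mu,+}$. The Koszul grading on ${}^w\!R_{\mu,+}$ is ${}^v\!\bar R^\mu_{\phi,-}$ (this is the content of Proposition~\ref{prop:reg1} and Corollary~\ref{cor:loc4}). Consequently your sentence ``${}^v\!\bar R^\mu_{\phi,-}$ is, up to grading, ${}^v\!R^\mu_{\phi,-}$'' is wrong: the underlying algebra of ${}^v\!\bar R^\mu_{\phi,-}$ is ${}^w\!R_{\mu,+}$. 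What you actually need for standard Koszulity of ${}^v\!\bar R^\mu_{\phi,-}$ is that its Koszul dual $E({}^v\!\bar R^\mu_{\phi,-})={}^w\!\bar R_{\mu,+}$ be quasi-hereditary with the reversed order, and \emph{that} holds because ${}^w\!\bar R_{\mu,+}\cong{}^v\!R^\mu_{\phi,-}$ as $k$-algebras. Once you swap the roles of ${}^w\!\bar R_{\mu,+}$ and ${}^v\!\bar R^\mu_{\phi,-}$ in your Part~1, the combinatorial order-matching via Remark~\ref{rk:bijection} goes through as you describe, and your Part~2 is already stated for the correct object (you compute $D$ of ${}^w\!R_{\mu,+}$, i.e.\ of its Koszul grading ${}^v\!\bar R^\mu_{\phi,-}$).
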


\vspace{.5mm}

Now we can prove the second main result of this section.

\vspace{2mm}

\begin{prop} 
\label{prop:reg2}
If $x\in{}^z\!I_{\mu,-}$ and $y=x_+^{-1}$ then
$y\in{}^u\!I^\mu_{\phi,+}$.
We have a $k$-algebra isomorphism
${}^z\!\bar A_{\mu,-}={}^{u}\! A^\mu_{\phi,+}$
such that
$1_x\mapsto 1_{y}$ for each $x\in{}^z\!I_{\mu,-}$,
and a $k$-algebra isomorphism
${}^z\!A_{\mu,-}={}^{u}\!\bar A^\mu_{\phi,+}$
such that
$1_x\mapsto 1_y$ for each $x\in{}^z\!I_{\mu,-}$.
The graded $k$-algebras ${}^{u}\!\bar A^\mu_{\phi,+}$ and 
${}^z\!\bar A_{\mu,-}$ are Koszul and are Koszul dual to each other.
Further, we have 
 $1_x=1_y^!$ for each $x\in{}^z\!I_{\mu,-}$.
\end{prop}

\vspace{.5mm}

\begin{proof}
By Proposition \ref{prop:ringel}, the Ringel dual of ${}^w\!A_{\mu,+}$ is
${}^w\!A_{\mu,+}^\diamond={}^z\!A_{\mu,-}$.
Thus, since the $k$-algebra ${}^w\!A_{\mu,+}$ has a balanced grading by Lemma \ref{lem:balanced},
we deduce that the $k$-algebra ${}^z\!A_{\mu,-}$ has a  Koszul grading.

We equip ${}^z\!A_{\mu,-}$ with this grading.
Then, Lemma \ref{lem:1.1} implies that
\begin{equation}\label{4.2b}
{}^z\!A^!_{\mu,-}=\Ext_{{}^z\bfO_{\mu,-}}({}^z\!L_{\mu,-})^\op={}^z\!\bar A_{\mu,-}.
\end{equation}
Hence, the graded $k$-algebra ${}^z\!\bar A_{\mu,-}$ is Koszul and balanced.

Therefore, \cite[thm.~1]{Ma}, Proposition \ref{prop:ringel} and \eqref{4.2b} yield a $k$-algebra isomorphism
$${}^z\!\bar A_{\mu,-}=
{}^z\!A_{\mu,-}^!=
(({}^z\!A^\diamond_{\mu,-})^!)^\diamond=
({}^w\! A_{\mu,+}^!)^\diamond.$$
Hence, using \eqref{4.1c} and Propositions \ref{prop:reg1}, \ref{prop:ringel}, we get
a $k$-algebra isomorphism
$${}^z\!\bar A_{\mu,-}=
{}^w\! \bar A_{\mu,+}^\diamond=
{}^v\!  A^{\mu,\diamond}_{\phi,-}=
{}^{u}\!  A^\mu_{\phi,+}$$
such that $1_x\mapsto 1_y$.
So, the $k$-algebra
${}^{u}\! A^{\mu}_{\phi,+}$ has a Koszul grading which is balanced.

We equip ${}^{u}\! A^{\mu}_{\phi,+}$ with this grading.
Lemma \ref{lem:1.1} implies that
\begin{equation}\label{4.2c}
{}^{u}\! A^{\mu,!}_{\phi,+}=\Ext_{{}^u\bfO^\mu_{\phi,+}}({}^u\!L^\mu_{\phi,+})^\op={}^{u}\! \bar A^{\mu}_{\phi,+}.
\end{equation}
Hence, the graded $k$-algebra ${}^{u}\! \bar A^{\mu}_{\phi,+}$ is Koszul and balanced.

Therefore, \cite[thm.~1]{Ma}, Proposition \ref{prop:ringel}
yield
$${}^{u}\!\bar A^\mu_{\phi,+}=
{}^{u}\!A^{\mu,!}_{\phi,+}=
(({}^{u}\!A^{\mu,\diamond}_{\phi,+})^!)^\diamond=
({}^v\!  A^{\mu,!}_{\phi,-})^\diamond.$$
So, using \eqref{4.1b} and Propositions \ref{prop:reg1}, \ref{prop:ringel}, we get
a $k$-algebra isomorphism
$${}^{u}\!\bar A^\mu_{\phi,+}=({}^v\! \bar A^\mu_{\phi,-})^\diamond=
{}^w\!  A^\diamond_{\mu,+}=
{}^z\!  A_{\mu,-}
.$$
\end{proof}

\vspace{2mm}

Now, let us prove Lemma \ref{lem:balanced}.

\vspace{2mm}

\begin{proof}[Proof of Lemma \ref{lem:balanced}]
We equip
${}^w\!A_{\mu,+}$ 
with the grading given by ${}^v\!\bar A^\mu_{\phi,-}$, see 
Proposition \ref{prop:reg1}. 
Since ${}^v\!\bar A^\mu_{\phi,-}$ is Koszul, we must prove that
${}^v\!\bar A^{\mu,!}_{\phi,-}$ is quasi-hereditary and that
the grading on ${}^v\!\bar A^{\mu,\diamond}_{\phi,-}$ is positive, see Section 
\ref{sec:standardkoszul}.

We have ${}^v\!\bar A^{\mu,!}_{\phi,-}={}^w\!\bar A_{\mu,+}$ 
by Proposition \ref{prop:reg1}.
Hence, it is quasi-hereditary. 
Next, the grading of ${}^z\! \bar \scrA_{\mu,-}$ is positive by Remark \ref{rk:3.29}.
Thus, to prove the lemma, it is enough to check
that we have a graded $k$-algebra isomorphism
${}^v\!\bar A^{\mu,\diamond}_{\phi,-}={}^z\!\bar \scrA_{\mu,-}.$

First, we check that
${}^w\! A^\diamond_{\mu,+}={}^z\! \scrA_{\mu,-}$ as (ungraded) $k$-algebras. 

To do this, note that
Proposition \ref{prop:ringel}$(c)$ yields a $k$-algebra isomorphism 
${}^w\! A_{\mu,+}^\diamond
={}^z\! A_{\mu,-}.$
Further, we have ${}^z\! A_{\mu,-}
=\End_{{}^z\bfO_{\mu,-}}({}^z\! P_{\mu,-})^\op$ and, by Proposition \ref{prop:ringel}$(b),$
 the Ringel equivalence takes
${}^z\bfO_{\mu,-}^\Delta$ to ${}^z\bfO_{\mu,+}^\Delta$. 
Thus, Propositions  \ref{prop:2.3}(b),
\ref{prop:foncteurV}$(c)$ yield
$$\aligned
{}^w\! A_{\mu,+}^\diamond
&=\End_{{}^z\bfO_{\mu,+}}({}^z T_{\mu,+})^\op\\
&=k\End_{{}^z\bfO_{S_0,\mu,+}}({}^z T_{S_0,\mu,+})^\op\\
&=k\End_{{}^z\!Z_{S_0,\mu,+}}({}^z\! C_{S_0,\mu,+})^\op\\
&={}^z\! \scrA_{\mu,-},
\endaligned$$
where the last equality is Definition \ref{def:3.23}.

Now, we must identify the gradings of ${}^v\!\bar A^{\mu,\diamond}_{\phi,-}$ and
${}^z\!\bar \scrA_{\mu,-}$ under the isomorphism
${}^w\! A_{\mu,+}^\diamond={}^z\! \scrA_{\mu,-}$
above.

First, we consider the grading on ${}^v\!\bar A^{\mu,\diamond}_{\phi,-}$.
Let ${}^w\bar T_{\mu,+}$ be the graded ${}^v\!\bar A^\mu_{\phi,-}$-module
equal to ${}^wT_{\mu,+}$ as an ${}^w\! A_{\mu,+}$-module,
with the natural grading (this is well-defined, because ${}^v\!\bar A^\mu_{\phi,-}$ is Koszul).
Then, by Section \ref{sec:standardkoszul}, we have
${}^v\!\bar A^{\mu,\diamond}_{\phi,-}=
\End_{{}^w\! A_{\mu,+}}({}^w\bar T_{\mu,+})^\op$.

Next, we consider the grading on ${}^z\!\bar \scrA_{\mu,-}$.
By Corollary \ref{cor:B2}, we have
$
{}^z\!\bar \scrA_{\mu,-}=
k\End_{{}^w\! Z_{S,\mu,+}}\bigl({}^w\!\bar C_{S,\mu,+}\bigr)
=\End_{{}^w\! Z_{\mu,+}}\bigl({}^w\!\bar C_{\mu,+}\bigr).
$

Let us identify ${}^w\!A_{\mu,+}={}^w\! \scrA_{\mu,+}$ via \eqref{4.1a}.
We must prove the following.

\vspace{2mm}

\begin{claim}\label{claim:4.7} There is a graded $k$-algebra isomorphism
$$\End_{{}^w\! \scrA_{\mu,+}}({}^w\bar T_{\mu,+})=
\End_{{}^w\! Z_{\mu,+}}\bigl({}^w\!\bar C_{\mu,+}\bigr).$$
\end{claim}

\vspace{.5mm}

To do that, we'll need some new material.

Consider the graded categories given by
${}^w\bar\bfO_{\mu,+}={}^w\! \bar \scrA_{\mu,+}\bfgmod$ and
${}^w\bar\bfO_{S,\mu,+}={}^w\!\bar \scrA_{S,\mu,+}\bfgmod.$
Since we have ${}^w\bar T_{\mu,+}\in{}^v\!\bar A^\mu_{\phi,-}\bfgmod$ and
${}^w\! \bar \scrA_{\mu,+}={}^v\!\bar A^\mu_{\phi,-}$ by Corollary \ref{cor:loc4},
we may view 
${}^w\bar T_{\mu,+}$ as an object of ${}^w\bar\bfO_{\mu,+}$.

We have an isomorphism of graded $S$-algebras
${}^w\!\bar \scrA_{S,\mu,+}=
\End_{{}^w\! Z_{S,\mu,+}}\bigl({}^w\!\bar B_{S,\mu,+}\bigr)^{\op}$.
Consider the pair of adjoint functors $(\bar\bbV_S,\bar\psi_S)$ between 
${}^w\bar\bfO_{S,\mu,+}$ and ${}^w\bar Z_{S,\mu,+}\bfgmod$
given by
$$\gathered
\bar\bbV_{S}={}^w\!\bar B_{S,\mu,+}\otimes_{{}^w\!\bar \scrA_{S,\mu,+}}\bullet: 
{}^w\bar\bfO_{S,\mu,+}\to{}^w\bar Z_{S,\mu,+}\bfgmod,\\
\bar\psi_S=\Hom_{{}^w\! Z_{S,\mu,+}}\bigl({}^w\!\bar B_{S,\mu,+},\bullet\bigr):
{}^w\bar Z_{S,\mu,+}\bfgmod\to{}^w\bar\bfO_{S,\mu,+}.
\endgathered$$ 
We consider the object 
${}^w\bar T_{S,\mu,+}$ of ${}^w\bar\bfO_{S,\mu,+}$ given by
${}^w\bar T_{S,\mu,+}=\bar\psi_S({}^w\bar C_{S,\mu,+})$.

Recall the functor $\bbV_{S_0}:{}^w\bfO_{S_0,\mu,+}^\Delta\to{}^w\bfZ_{S_0,\mu,+}^\Delta$
studied in Proposition \ref{prop:foncteurV}.
It yields an $S_0$-algebra isomorphism
$\End_{{}^w\bfO_{S_0,\mu,+}}({}^wP_{S_0,\mu,+})^\op\to{}^w\! \scrA_{S_0,\mu,+}.$
Thus, 
since ${}^wP_{S_0,\mu,+}$ is a pro-generator of 
${}^w\bfO_{S_0,\mu,+}$,
the functor $\phi_{S_0}=\Hom_{{}^w\bfO_{S_0,\mu,+}}({}^w\!P_{S_0,\mu,+},\bullet)$ identifies
${}^w\bfO_{S_0,\mu,+}$ with the category
${}^w\!\scrA_{S_0,\mu,+}\bfmod.$

Consider the functor
$\psi_{S_0}:
{}^wZ_{S_0,\mu,+}\bfmod\to{}^w\!\scrA_{S_0,\mu,+}\bfmod$
given by
$\psi_{S_0}=\Hom_{{}^w\! Z_{S_0,\mu,+}}\bigl({}^w\!B_{S_0,\mu,+},\bullet\bigr)$. 
We have the commutative triangle
\begin{equation}\label{triangle}\begin{split}\xymatrix{
{}^w\bfO_{S_0,\mu,+}^\Delta\ar[r]^-{\bbV_{S_0}}_-\sim\ar@{_{(}->}[d]_-{\phi_{S_0}}&
{}^w\bfZ_{S_0,\mu,+}^\Delta\ar@{^{(}->}[ld]^-{\psi_{S_0}}\cr
{}^w\!\scrA_{S_0,\mu,+}\bfmod,& }\end{split}\end{equation}
such that $\bbV_{S_0}({}^w\!T_{S_0,\mu,+})={}^w\!C_{S_0,\mu,+}$.
Now, we define the module ${}^wT'_{S_0,\mu,+}=\psi_{S_0}({}^w\!C_{S_0,\mu,+})$
in ${}^w\!\scrA_{S_0,\mu,+}\bfmod$.
It is identified with ${}^wT_{S_0,\mu,+}$ by $\phi_{S_0}$, because \eqref{triangle} is commutative.
By Proposition \ref{prop:foncteurV}$(c)$, the functor
$\psi_{S_0}$ yields a $k$-algebra isomorphism
$\End_{{}^w\! Z_{S_0,\mu,+}}\bigl({}^w\!C_{S_0,\mu,+}\bigr)\to
\End_{{}^w\! \scrA_{S_0,\mu,+}}\bigl({}^w\!T'_{S_0,\mu,+}\bigr).$
Note that $\phi_{S_0}$ and $\psi_{S_0}$ are both exact.

Since taking hom's commutes with localization, we have
$\varepsilon({}^w\!\bar \scrA_{S,\mu,+})={}^w\!\scrA_{S_0,\mu,+}$.
Thus,  forgetting the grading and taking base change yield a functor 
$\varepsilon:{}^w\bar\bfO_{S,\mu,+}\to{}^w\bfO_{S_0,\mu,+}$
which is faithful and faithfully exact, see Section \ref{rk:basechange}.
Let ${}^w\bar\bfO_{S,\mu,+}^\Delta\subset{}^w\bar\bfO_{S,\mu,+}$
be the full subcategory of the modules taken to
${}^w\bfO_{S_0,\mu,+}^\Delta$ by $\varepsilon$.

We identify ${}^w\bfO_{S_0,\mu,+}$ with 
${}^w\!\scrA_{S_0,\mu,+}\bfmod$ by $\phi_{S_0}$.
We have the following.

\vspace{2mm}

\begin{claim}\label{claim:4.6} 
We have the following commutative diagram
\begin{equation*}
\label{square}
\begin{split}
\xymatrix{
{}^w\bar\bfO_{S,\mu,+}^\Delta\ar[r]^-{\bar\bbV_{S}}
\ar[d]_-\varepsilon&{}^w\bar\bfZ_{S,\mu,+}^\Delta\ar[d]^-\varepsilon
\ar[r]^-{\bar\psi_{S}}&{}^w\bar\bfO_{S,\mu,+}^\Delta\ar[d]_-\varepsilon
\cr
{}^w\bfO_{S_0,\mu,+}^\Delta\ar[r]^-{\bbV_{S_0}}&{}^w\bfZ_{S_0,\mu,+}^\Delta
\ar[r]^-{\psi_{S_0}}&{}^w\bfO_{S_0,\mu,+}^\Delta.
}
\end{split}
\end{equation*}
\end{claim}

\vspace{2mm}

Since  ${}^w\!\scrA_{S_0,\mu,+}$ is Noetherian, any finitely generated module has a finite presentation.
By Proposition \ref{prop:foncteurV}$(c),$ the functor $\bbV_{S_0}$ is exact and we have
$\bbV_{S_0}({}^w\!P_{S_0,\mu,+})={}^w\!B_{S_0,\mu,+}$.
We identify ${}^w\bfO_{S_0,\mu,+}$ with
${}^w\!\scrA_{S_0,\mu,+}\bfmod$ as above.
By the five lemma, the canonical morphism 
${}^w\!B_{S_0,\mu,+}\otimes_{{}^w\!\scrA_{S_0,\mu,+}}\bullet\to\bbV_{S_0}$
is an isomorphism.
We deduce that the left square in Claim \ref{claim:4.6} is commutative.

Next, by definition we have $\varepsilon\circ\bar\psi_S=\psi_{S_0}\circ\varepsilon$.
From the diagram \ref{triangle}, we deduce that $\psi_{S_0}$ maps into
${}^w\bfO_{S_0,\mu,+}^\Delta$ and is a quasi-inverse to $\bbV_{S_0}$. 
Claim \ref{claim:4.6} follows.



From claim \ref{claim:4.6} we deduce that
$\varepsilon({}^w\bar T_{S,\mu,+})={}^wT_{S_0,\mu,+}$
and ${}^w\bar T_{S,\mu,+}\in{}^w\bar\bfO_{S,\mu,+}^\Delta$.

Next, since 
$\varepsilon:S\bfgmod\to S_0\bfmod$ is faithfully exact by Section \ref{rk:basechange}
and $\psi_{S_0}$ gives an isomorphism $\End_{{}^w\! Z_{S_0,\mu,+}}\bigl({}^w\!C_{S_0,\mu,+}\bigr)\to
\End_{{}^w\! \scrA_{S_0,\mu,+}}\bigl({}^w\!T_{S_0,\mu,+}\bigr)$, we deduce from claim \ref{claim:4.6} that
$\bar\psi_S$ gives a graded $k$-algebra isomorphism 
$\End_{{}^w\! Z_{S,\mu,+}}\bigl({}^w\!\bar C_{S,\mu,+}\bigr)\to
\End_{{}^w\! \scrA_{S,\mu,+}}\bigl({}^w\!\bar T_{S,\mu,+}\bigr).$

Finally, since the functor $\psi_{S_0}$ is exact and 
$\varepsilon$
is faithfully exact by Section \ref{rk:basechange}, we deduce from
claim \ref{claim:4.6} that 
$\bar\psi_S$ is exact on ${}^w\bar\bfZ_{S,\mu,+}^\Delta.$


Now, by Definition \ref{def:3.23}, we have ${}^w\!\bar \scrA_{\mu,+}=k{}^w\!\bar \scrA_{S,\mu,+}$.
Thus, the specialization at $k$ gives the module 
$k{}^w\bar T_{S,\mu,+}$ in ${}^w\bar\bfO_{\mu,+}={}^w\! \bar \scrA_{\mu,+}\bfgmod.$

\vspace{2mm}

\begin{claim}\label{claim:4.8} We have $k{}^w\bar T_{S,\mu,+}={}^w\bar T_{\mu,+}$.
\end{claim}

\vspace{.5mm}

The specialization gives a graded ring homomorphism
$k\End_{{}^w\! \scrA_{S,\mu,+}}({}^w\bar T_{S,\mu,+})\to
\End_{{}^w\! \scrA_{\mu,+}}(k{}^w\bar T_{S,\mu,+}).$
Since $\varepsilon({}^w\bar T_{S,\mu,+})={}^wT_{S_0,\mu,+}$, forgetting the grading it is taken to the obvious
map $k\End_{{}^w\! \bfO_{S_0,\mu,+}}({}^wT_{S_0,\mu,+})\to
\End_{{}^w\! \bfO_{\mu,+}}(k{}^w T_{S_0,\mu,+}).$
The latter is an isomorphism by
Proposition \ref{prop:2.3}$(b)$.
Since $k{}^w\bar T_{S,\mu,+}={}^w\bar T_{\mu,+}$, we deduce that
$
k\End_{{}^w\! \scrA_{S,\mu,+}}({}^w\bar T_{S,\mu,+})=
\End_{{}^w\! \scrA_{\mu,+}}({}^w\bar T_{\mu,+}).
$

Since we have $k\End_{{}^w\! Z_{S,\mu,+}}\bigl({}^w\!\bar C_{S,\mu,+}\bigr)
=\End_{{}^w\! Z_{\mu}}\bigl({}^w\!\bar C_{\mu,+}\bigr)$ and
$\End_{{}^w\! Z_{S,\mu,+}}\bigl({}^w\!\bar C_{S,\mu,+}\bigr)=
\End_{{}^w\! \scrA_{S,\mu,+}}\bigl({}^w\!\bar T_{S,\mu,+}\bigr),$
this implies Claim \ref{claim:4.7}.



Now, we prove Claim \ref{claim:4.8}.
The grading of ${}^w\bar T_{\mu,+}$ is characterized in the following way.
First, we have a decomposition ${}^w\bar T_{\mu,+}=\bigoplus_x{}^w\bar T(x\bullet\oo_{\mu,+})$
where ${}^w\bar T(x\bullet\oo_{\mu,+})$ is the natural graded lift of ${}^wT(x\bullet\oo_{\mu,+})$.
Since ${}^w T(x\bullet\oo_{\mu,+})$ is indecomposable in ${}^w\bfO_{\mu,+}$, 
it admits at most one graded lift in
${}^w\bar\bfO_{\mu,+}$, up to grading shift.
Let $\bar V(x\bullet\oo_{\mu,+})$ be the graded ${}^v\!\bar A^\mu_{\phi,-}$-module
equal to $V(x\bullet\oo_{\mu,+})$ as an ${}^w\! A_{\mu,+}$-module,
with the natural grading (this is well-defined, because ${}^v\!\bar A^\mu_{\phi,-}$ is Koszul).
The natural grading is characterized
by the fact that there is an inclusion
$\bar V(x\bullet\oo_{\mu,+})\subset 
{}^w\bar T(x\bullet\oo_{\mu,+})$
which is homogeneous of degree 0.

The graded object ${}^w\bar T_{S,\mu,+}$ satisfies a similar property.
Indeed, by Lemma \ref{rk:filtration2},
the graded $S$-sheaf ${}^w\!\bar C_{S,\mu,+}(x)$ 
is filtered by shifted Verma-sheaves, and the lower term of this filtration yields
an inclusion
$\bar V_{S,\mu}(x)\langle l(x)\rangle\subset {}^w\bar C_{S,\mu,+}(x).$
Further, consider the 
object ${}^w\bar T_S(x\bullet\oo_{\mu,+})$
in ${}^w\bar\bfO_{S,\mu,+}$ given by
${}^w\bar T_S(x\bullet\oo_{\mu,+})=\bar\psi_S({}^w\bar C_{S,\mu,+}(x))$.
Then, we have the decomposition
${}^w\bar T_{S,\mu,+}=\bigoplus_x{}^w\bar T_S(x\bullet\oo_{\mu,+})$.

For each $x\in{}^wI_{\mu,+},$ we consider also
the 
object $\bar V_S(x\bullet\oo_{\mu,+})$
in ${}^w\bar\bfO_{S,\mu,+}$ given by
$\bar V_{S}(x\bullet \oo_{\mu,+})=\bar\psi_S(\bar V_{S,\mu}(x))\langle l(x)\rangle$.
Since $\bar\psi_S$ is exact, we deduce that
${}^w\bar T_S(x\bullet\oo_{\mu,+})$ 
is filtered by $\bar V_{S}(y\bullet \oo_{\mu,+})$'s, and the lower term of this filtration yields
an inclusion 
$\bar V_{S}(x\bullet \oo_{\mu,+})\subset {}^w\bar T_S(x\bullet\oo_{\mu,+})$
which is homogeneous of degree 0.

Now, by Proposition \ref{prop:foncteurV}$(c),$ we have
$\bbV_{S_0}(V_{S_0}(x\bullet \oo_{\mu,+}))=V_{S_0,\mu}(x)$.
From the proof of claim \ref{claim:4.6}, we deduce that
$\psi_{S_0}(V_{S_0,\mu}(x))=V_{S_0}(x\bullet \oo_{\mu,+})$.
Thus
$$\varepsilon(\bar V_{S}(x\bullet \oo_{\mu,+}))=
\varepsilon\bar\psi_S(\bar V_{S,\mu}(x))=
\psi_{S_0}\varepsilon(\bar V_{S,\mu}(x))=V_{S_0}(x\bullet \oo_{\mu,+}).$$
Since $V_{S_0}(x\bullet \oo_{\mu,+})$ is free over $S_0,$
we deduce that $\bar V_{S}(x\bullet \oo_{\mu,+})$ is free over $S$ by  Section \ref{rk:basechange}.
Therefore, the inclusion $\bar V_{S}(x\bullet \oo_{\mu,+})\subset {}^w\bar T_S(x\bullet\oo_{\mu,+})$ 
gives an inclusion
$k\bar V_{S}(x\bullet \oo_{\mu,+})\subset k{}^w\bar T_S(x\bullet\oo_{\mu,+})$
which is homogeneous of degree 0.

Hence, to prove Claim \ref{claim:4.8} we are reduced to check that  we have
$\bar V(x\bullet\oo_{\mu,+})=k\bar V_{S}(x\bullet \oo_{\mu,+})$
in ${}^w\bar\bfO_{\mu,+}$.
Note that $\bar V(x\bullet\oo_{\mu,+})$ and $k\bar V_{S}(x\bullet \oo_{\mu,+})$ are
both graded lifts of
the Verma module $V(x\bullet\oo_{\mu,+}),$ which is indecomposable.
Thus they coincide up to a grading shift.

To identify this shift, recall that by Lemma \ref{rk:filtration2} we have
a surjection
${}^w\!\bar B_{S,\mu,+}(x)\to\bar V_{S}(x)\langle l(x)\rangle.$
We define ${}^w\bar P_{S}(x\bullet\oo_{\mu,+})=\bar\psi_S({}^w\!\bar B_{S,\mu,+}(x)).$
Since $\bar\psi_S$ is exact, we have a surjection
${}^w\!\bar P_{S}(x\bullet\oo_{\mu,+})\to\bar V_{S}(x\bullet\oo_{\mu,+}).$

Now, since ${}^v\!\bar A^\mu_{\phi,-}$ is Koszul, we can
consider the natural graded lift ${}^w\!\bar P(x\bullet\oo_{\mu,+})$ of
${}^w\!P(x\bullet\oo_{\mu,+})$ in ${}^w\bar\bfO_{\mu,+}$. By definition of the natural grading, 
we have a surjection
${}^w\!\bar P(x\bullet\oo_{\mu,+})\to\bar V(x\bullet\oo_{\mu,+}).$

So, we must prove that
${}^w\!\bar P(x\bullet\oo_{\mu,+})=k{}^w\!\bar P_{S}(x\bullet \oo_{\mu,+})$ in ${}^w\bar\bfO_{\mu,+}.$
This is obvious, because 
${}^w\!\bar P(x\bullet\oo_{\mu,+})={}^w\! \bar \scrA_{\mu,+}1_x$ 
by definition of the natural grading of ${}^w\! P(x\bullet\oo_{\mu,+}),$
and because Definition \ref{def:3.23} yields the chain of isomorphisms
$${}^w\bar P_{S}(x\bullet\oo_{\mu,+})=\bar\psi_S({}^w\!\bar B_{S,\mu,+}(x))
=\End_{{}^w\! Z_{S,\mu,+}}\bigl({}^w\!\bar B_{S,\mu,+}\bigr)^\op1_x
={}^w\! \bar \scrA_{\mu,+}1_x.$$
\end{proof}

\vspace{2mm}

We can now prove the following graded analogue of Corollary \ref{cor:2.31}.
See also Corollary \ref{cor:loc4}.

\vspace{2mm}

\begin{cor}
\label{cor:2.42}
Assume that $u\in I^\mu_{\phi,+}$. Let $z=u_-^{-1}\in I_{\mu,-}.$
We have an isomorphism of graded $k$-algebras
${}^{u}\!\bar A^\mu_{\phi,+}\to
{}^{z}\!\bar \scrA_{\mu,-}$
such that $1_{y}\mapsto 1_x$ with $x=y_-^{-1}$ for each $y\in{}^v\!I_{\phi,+}^\mu$.
\end{cor}

\vspace{.5mm}

\begin{proof}
By \cite[thm.~1]{Ma} and Lemma
\ref{lem:balanced}, the graded $k$-algebra
${}^v\!\bar A^{\mu,\diamond}_{\phi,-}={}^z\! \bar \scrA_{\mu,-}$ is Koszul.
By Proposition \ref{prop:reg2}, the graded $k$-algebra
${}^u\!\bar A^\mu_{\phi,+}$ is  Koszul and is isomorphic to
${}^w\! A_{\mu,-}$ as a $k$-algebra. 
By Proposition \ref{prop:reg2}, we have
${}^{v}\!\bar A^\mu_{\phi,-}={}^w\! A_{\mu,+}$ as a $k$-algebra.
Finally, by Proposition \ref{prop:ringel}, the Ringel dual of ${}^w\!A_{\mu,+}$ is
${}^w\!A_{\mu,+}^\diamond={}^z\!A_{\mu,-}$.
Thus we have a $k$-algebra isomorphism
${}^z\! A_{\mu,-}={}^z\! \scrA_{\mu,-},$
which lifts to a graded $k$-algebra isomorphism
${}^u\!\bar A^\mu_{\phi,+}={}^{z}\!\bar \scrA_{\mu,-}$
by unicity of the Koszul grading \cite[cor.~2.5.2]{BGS}.
\end{proof}

\vspace{2mm}

\subsection{The general case} 
\label{sec:3.10}
We can now complete the proof of Theorem \ref{thm:main}.
We first prove a series of preliminary lemmas.

Fix parabolic types $\mu,\nu\in\calP$ and integers $d,e,f>0$.
Choose integral weights $\oo_{\mu,-}$,  $\oo_{\nu,-}$,  $\oo_{\phi,-}$ of level
$- e-N$, $-f-N$ and $-d-N$ respectively.

Fix  an element $w\in\widehat W$. Let $\tau_{\phi,\nu}:{}^w\bfO_{\mu,+}\to{}^w\bfO^\nu_{\mu,+}$ be
the parabolic truncation functor, see Section \ref{sec:tau}.
Applying the functor $\tau_{\phi,\nu}$ to the module ${}^w\!P_{\mu,+}$, we get
a $k$-algebra homomorphism
$\tau_{\phi,\nu}:{}^w\!A_{\mu,+}\to{}^w\!A_{\mu,+}^\nu$.

\vspace{2mm}

\begin{lemma}
\label{lem:gen1}
Assume that $w\in I_{\mu,+}^\nu.$ 
Then, the functor the $k$-algebra homomorphism 
$\tau_{\phi,\nu}:{}^w\!A_{\mu,+}\to {}^w\!A^\nu_{\mu,+}$ is surjective.
Its kernel is the two-sided ideal generated by the idempotents
$1_x$ with $x\in{}^w\!I_{\mu,+}\setminus {}^w\!I_{\mu,+}^\nu$.
Further, we have $\tau_{\phi,\nu}(1_x)=1_x$ 
for each $x\in{}^w\!I_{\mu,+}^\nu$.
\end{lemma}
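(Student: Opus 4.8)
The plan is to identify the category ${}^w\bfO_{\mu,+}$ with a module category over its projective endomorphism algebra and then use the functorial description of $\tau$ in Remark \ref{rk:tau} to extract a concrete algebra homomorphism. First I would recall that the equivalence of Remark \ref{rk:mitchell} identifies ${}^w\bfO_{\mu,+}$ with $\bfmod({}^w\!R_{\mu,+})$, where ${}^w\!R_{\mu,+}=\End_{{}^w\bfO_{\mu,+}}({}^w\!P_{\mu,+})^{\op}$ and ${}^w\!P_{\mu,+}=\bigoplus_{x\in{}^w\!I_{\mu,+}}{}^w\!P(x\bullet\oo_{\mu,+})$. Likewise ${}^w\bfO^\nu_{\mu,+}\simeq\bfmod({}^w\!R^\nu_{\mu,+})$ with ${}^w\!R^\nu_{\mu,+}=\End_{{}^w\bfO^\nu_{\mu,+}}({}^w\!P^\nu_{\mu,+})^{\op}$. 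The functor $\tau$ is a quotient functor (the left adjoint of the inclusion $i$), so it is right exact and sends a projective generator to a projective object. By Remark \ref{rk:tau}(b),(c) we have $\tau({}^w\!P(x\bullet\oo_{\mu,+}))={}^w\!P^\nu(x\bullet\oo_{\mu,+})$ when $x\in{}^w\!I_{\mu,+}^\nu$, and $\tau({}^w\!P(x\bullet\oo_{\mu,+}))=0$ otherwise; hence $\tau({}^w\!P_{\mu,+})=\bigoplus_{x\in{}^w\!I_{\mu,+}^\nu}{}^w\!P^\nu(x\bullet\oo_{\mu,+})={}^w\!P^\nu_{\mu,+}$, so $\tau$ carries a projective generator of ${}^w\bfO_{\mu,+}$ onto one of ${}^w\bfO^\nu_{\mu,+}$.

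Next I would define $\tau_{\phi,\nu}$ as the induced map on endomorphism rings: for $f\in\End_{{}^w\bfO_{\mu,+}}({}^w\!P_{\mu,+})$ set $\tau_{\phi,\nu}(f)=\tau(f)\in\End_{{}^w\bfO^\nu_{\mu,+}}({}^w\!P^\nu_{\mu,+})$, which is an (opposite-)algebra homomorphism since $\tau$ is a functor. It sends the idempotent $1_x$ (the projection onto the summand ${}^w\!P(x\bullet\oo_{\mu,+})$) to the projection onto $\tau$ of that summand, which is $1_x$ when $x\in{}^w\!I_{\mu,+}^\nu$ and $0$ when $x\notin{}^w\!I_{\mu,+}^\nu$. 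Surjectivity of $\tau_{\phi,\nu}$ is the point where I would use that $\tau$ is a quotient functor: since $i$ is fully faithful (its counit $\tau i\to\bfone$ is an isomorphism, $F\circ G=\bfone$ in the language of Remark \ref{rk:mitchell}), for any $g\in\Hom_{{}^w\bfO^\nu_{\mu,+}}({}^w\!P^\nu(x\bullet\oo_{\mu,+}),{}^w\!P^\nu(y\bullet\oo_{\mu,+}))=\Hom({}^w\!P^\nu(x\bullet\oo_{\mu,+}),i{}^w\!P^\nu(y\bullet\oo_{\mu,+}))$ the adjunction isomorphism $\Hom_{{}^w\bfO_{\mu,+}}({}^w\!P(x\bullet\oo_{\mu,+}),i{}^w\!P^\nu(y\bullet\oo_{\mu,+}))\xrightarrow{\sim}\Hom_{{}^w\bfO^\nu_{\mu,+}}(\tau{}^w\!P(x\bullet\oo_{\mu,+}),{}^w\!P^\nu(y\bullet\oo_{\mu,+}))$ exhibits $g$ as the image under $\tau$ of a map ${}^w\!P(x\bullet\oo_{\mu,+})\to{}^w\!P(y\bullet\oo_{\mu,+})$ (using that ${}^w\!P(y\bullet\oo_{\mu,+})\to i{}^w\!P^\nu(y\bullet\oo_{\mu,+})$ is surjective onto a projective, hence split as a map in ${}^w\bfO_{\mu,+}$ after composing with $\tau$, or more directly that projectivity of ${}^w\!P(x\bullet\oo_{\mu,+})$ lifts the composite along this surjection). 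Summing over $x,y\in{}^w\!I^\nu_{\mu,+}$ gives surjectivity.

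For the kernel, let $J$ be the two-sided ideal of ${}^w\!R_{\mu,+}$ generated by $\{1_x\,;\,x\in{}^w\!I_{\mu,+}\setminus{}^w\!I_{\mu,+}^\nu\}$. The inclusion $J\subseteq\Ker(\tau_{\phi,\nu})$ is immediate since $\tau_{\phi,\nu}(1_x)=0$ for those $x$. For the reverse inclusion I would argue that ${}^w\!R_{\mu,+}/J$ is precisely the endomorphism algebra of $\bigoplus_{x\in{}^w\!I^\nu_{\mu,+}}{}^w\!P(x\bullet\oo_{\mu,+})$ cut out by the complementary idempotent $e=\sum_{x\in{}^w\!I^\nu_{\mu,+}}1_x$, i.e.\ $e({}^w\!R_{\mu,+})e$ — this is exactly the situation of Remark \ref{rk:mitchell} with $(F,G)$, where $F$ realizes ${}^w\bfO^\nu_{\mu,+}\to\bfmod(e({}^w\!R_{\mu,+})e)$ as the quotient by the subcategory supported off ${}^w\!I^\nu_{\mu,+}$. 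Since $\tau$ factors through this same quotient (both $\tau$ and $F$ are the quotient by the Serre subcategory generated by the $L(x\bullet\oo_{\mu,+})$ with $x\notin{}^w\!I^\nu_{\mu,+}$, and such a quotient is unique up to canonical equivalence), $\tau_{\phi,\nu}$ is identified with the canonical surjection ${}^w\!R_{\mu,+}\to e({}^w\!R_{\mu,+})e$ given by $f\mapsto efe$, whose kernel is the ideal generated by $1-e$, namely $J$. I expect the main obstacle to be the bookkeeping needed to match the two quotient descriptions — checking carefully that $\tau$ and the idempotent-truncation functor $F$ agree (including on morphisms, not just objects), so that the abstract ``quotient by a Serre subcategory is unique'' argument applies and pins down $\Ker(\tau_{\phi,\nu})=J$ rather than merely $\Ker(\tau_{\phi,\nu})\supseteq J$.
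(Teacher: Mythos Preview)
Your construction of $\tau_{\phi,\nu}$ and the surjectivity argument are correct and close in spirit to the paper's (the paper phrases surjectivity more directly: the unit $M\to i\tau(M)$ is surjective, so for $P$ projective the induced map $\Hom(P,M)\to\Hom(P,i\tau(M))=\Hom(\tau P,\tau M)$ is onto).

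Your kernel argument, however, contains a genuine error. You write that ${}^w\!R_{\mu,+}/J$ equals $e({}^w\!R_{\mu,+})e$ and that $\tau_{\phi,\nu}$ is the map $f\mapsto efe$. Neither is correct. For an idempotent $e$ in a ring $R$, the quotient $R/R(1-e)R$ and the corner $eRe$ are different algebras in general, and $f\mapsto efe$ is not even a ring homomorphism. More conceptually, you are conflating two distinct constructions: $\bfmod(R/J)$ is the \emph{thick subcategory} of $\bfmod(R)$ generated by the simples $L_x$ with $1_x$ a summand of $e$, whereas $\bfmod(eRe)$ is the \emph{Serre quotient} of $\bfmod(R)$ by the thick subcategory generated by the complementary simples. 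The parabolic inclusion $i:{}^w\bfO^\nu_{\mu,+}\hookrightarrow{}^w\bfO_{\mu,+}$ of Remark~\ref{rk:tau} is the former (a full subcategory), not the latter; so $\tau$ is the left adjoint to a subcategory inclusion, not a Serre quotient functor, and the analogy with the functor $F$ of Remark~\ref{rk:mitchell} (which handles the ${}^w$-truncation, a genuine Serre quotient) does not apply.

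The fix is short and is what the paper does: since ${}^w\bfO^\nu_{\mu,+}$ is exactly the thick subcategory of ${}^w\bfO_{\mu,+}$ generated by the simples $L(x\bullet\oo_{\mu,+})$ with $x\in{}^w\!I^\nu_{\mu,+}$, it identifies with $\bfmod({}^w\!R_{\mu,+}/J)$, so ${}^w\!R_{\mu,+}/J\simeq{}^w\!R^\nu_{\mu,+}$ as $k$-algebras. You have already shown $J\subseteq\Ker(\tau_{\phi,\nu})$ and that $\tau_{\phi,\nu}$ is surjective; comparing with this isomorphism forces $\Ker(\tau_{\phi,\nu})=J$.
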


\vspace{.5mm}

\begin{proof}
By Section \ref{sec:tau}, the functor
$\tau_{\phi,\nu}$ takes the minimal projective generator of ${}^w\bfO_{\mu,+}$
to the minimal projective generator of ${}^w\bfO_{\mu,+}^\nu$.
Let $i_{\nu,\phi}$ be the right adjoint of $\tau_{\phi,\nu}$.
For any $M$ the unit $M\to i_{\nu,\phi}\tau_{\phi,\nu}(M)$ is surjective. 
Hence, for any projective module $P$ we have a surjective map
$$\Hom_{{}^w\bfO_{\mu,+}}(P,M)\to
\Hom_{{}^w\bfO_{\mu,+}}(P,i_{\nu,\phi}\tau_{\phi,\nu}(M))=
\Hom_{{}^w\bfO_{\mu,+}^\nu}(\tau_{\phi,\nu}(P),\tau_{\phi,\nu}(M)).$$
Thus, the $k$-algebra homomorphism 
$\tau_{\phi,\nu}$ is surjective.

Let $I\subset {}^w\!A_{\mu,+}$ be the 2-sided ideal generated by the idempotents
$1_x$ such that $x\in{}^w\!I_{\mu,+}$ and $\tau_{\phi,\nu}(1_x)=0$.
By  Section \ref{sec:tau}(c), the latter are precisely the idempotents
$1_x$ with $x\in{}^w\!I_{\mu,+}\setminus {}^w\!I_{\mu,+}^\nu$.

We have a $k$-algebra isomorphism ${}^w\!A_{\mu,+}/I\simeq {}^w\!A^\nu_{\mu,+}$,
because ${}^w\bfO_{\mu,+}^\nu$ is the Serre subcategory of ${}^w\bfO_{\mu,+}$ generated by the
simple modules killed by $I$. Under this isomorphism, the map $\tau_{\phi,\nu}$ is the
canonical projection ${}^w\!A_{\mu,+}\to{}^w\!A_{\mu,+}/I$.

The last claim in the lemma follows from  Section \ref{sec:tau}(b).
\end{proof}

\vspace{2mm}

Now, let $v\in I_{\nu,-}$ and $w=v^{-1}_+$. We have $w\in I^\nu_{\phi,+}.$

We equip the $k$-algebras 
${}^v\!A_{\nu,-}$,
${}^v\!A_{\phi,-}$
with the gradings
${}^w\!\bar A^\nu_{\phi,+}$,
${}^w\!\bar A_{\phi,+}$,
see Proposition \ref{prop:reg2}.
Consider the categories 
${}^v\bar\bfO_{\nu,-}={}^w\!\bar A_{\phi,+}^\nu\bfgmod$
and
${}^v\bar\bfO_{\phi,-}={}^w\!\bar A_{\phi,+}\bfgmod,$
which are graded analogues of the categories
${}^v\bfO_{\nu,-}$ and
${}^v\bfO_{\phi,-}.$

To unburden the notation, we'll abbreviate $L_\nu={}^vL_{\nu,-}$ and $L_\phi={}^vL_{\phi,-}$.
Let $\bar L_{\nu}$,
$\bar L_{\phi}$ be the natural graded lifts  in
${}^v\bar\bfO_{\nu,-}$,
${}^v\bar\bfO_{\phi,-}$ of the semi-simple modules
$L_{\nu}$, $L_{\phi}$
respectively.

For $v\in I_{\nu,-}$ and $d+N>f,$ we'll use the following graded analogue of the translation functor 
$T_{\phi,\nu}:{}^v\bfO_{\phi,-}\to {}^v\bfO_{\nu,-}$
in Proposition \ref{prop:translation2}.

\vspace{2mm}

\begin{lemma}
\label{lem:gradedtranslationfunctor}
Assume that $v\in I_{\nu,-}$ and $d+N>f$.
Then, there is an exact functor of graded categories
$\bar T_{\phi,\nu}:{}^v\bar\bfO_{\phi,-}\to {}^v\bar\bfO_{\nu,-}$
such that $\bar T_{\phi,\nu}(\bar L_{\phi})=\bar L_{\nu}$
and $\bar T_{\phi,\nu}$
coincides with $T_{\phi,\nu}$ when forgetting the grading.
\end{lemma}

\vspace{0.5mm}

\begin{proof}
First, note that the translation functor 
$T_{\phi,\nu}:{}^v\bfO_{\phi,-}\to{}^v\bfO_{\nu,-}$ is well-defined,
because $d+N>f$ and $v\in I_{\nu,-}$.
See Section \ref{sec:translationO} for more details.

Now, by Corollary \ref{cor:2.31}, we have
${}^v\bfO_{\nu,-}={}^v\! \scrA_{\nu,-}\bfmod$ and
${}^v\bfO_{\phi,-}={}^v\! \scrA_{\phi,-}\bfmod.$
By Corollary \ref{cor:speBpm}, we can view
${}^v\! B_{\nu,-}$,
${}^v\! B_{\phi,-}$ as right modules over
${}^v\! \scrA_{\nu,-}$,
${}^v\! \scrA_{\phi,-}$ respectively.
Consider the functors
$\bbV'_k:{}^v\bfO_{\nu,-}\to{}^v\bfZ_{\nu,-}$
and
$\bbV'_k:{}^v\bfO_{\phi,-}\to{}^v\bfZ_{\phi,-}$
given by
$\bbV'_k={}^v\! B_{\nu,-}\otimes_{{}^v\! \scrA_{\nu,-}}\bullet$
and
$\bbV'_k={}^v\! B_{\phi,-}\otimes_{{}^v\! \scrA_{\phi,-}}\bullet.$

By Proposition \ref{prop:foncteurV},
the functors $\bbV_k$ on ${}^v\bfO_{\nu,-}$, ${}^v\bfO_{\phi,-}$ are exact
and we have
$\bbV_k({}^v\! \scrA_{\nu,-})={}^v\! B_{\nu,-}$,
$\bbV_k({}^v\! \scrA_{\phi,-})={}^v\! B_{\phi,-}.$
Further, since  ${}^v\!\scrA_{\nu,+}$, ${}^v\!\scrA_{\phi,+}$ are Noetherian, 
any finitely generated module has a finite presentation.
Thus, by the five lemma, the obvious morphism of functors $\bbV'_k\to\bbV_k$ is invertible.

Next, by Corollary \ref{cor:2.42}, we have 
$
{}^w\!\bar A^\nu_{\phi,+}=
{}^v\!\bar \scrA_{\nu,-}$
and
$
{}^w\!\bar A_{\phi,+}=
{}^v\!\bar \scrA_{\phi,-}
$
as graded $k$-algebras.
Thus, we have
${}^v\bar\bfO_{\nu,-}={}^v\! \bar \scrA_{\nu,-}\bfmod$ and
${}^v\bar\bfO_{\phi,-}={}^v\! \bar \scrA_{\phi,-}\bfmod.$
Consider the functors on ${}^v\bar\bfO_{\nu,-}$,
${}^v\bar\bfO_{\phi,-}$ given by
$\bar\bbV_k={}^v\!\bar B_{\nu,-}\otimes_{{}^v\!\bar \scrA_{\nu,-}}\bullet,$
$\bar\bbV_k={}^v\!\bar B_{\phi,-}\otimes_{{}^v\!\bar \scrA_{\phi,-}}\bullet$ respectively.
We have the commutative squares
$$
\xymatrix{
{}^v\bar\bfO_{\nu,-}\ar[r]_-{\bar\bbV_k}
\ar[d]&{}^v\bar\bfZ_{\nu,-}\ar[d]
\cr
{}^v\bfO_{\nu,-}\ar[r]^-{\bbV_k}&{}^v\bfZ_{\nu,-}
}
\qquad
\xymatrix{
{}^v\bar\bfO_{\phi,-}\ar[r]_-{\bar\bbV_k}
\ar[d]&{}^v\bar\bfZ_{\phi,-}\ar[d]
\cr
{}^v\bfO_{\phi,-}\ar[r]^-{\bbV_k}&{}^v\bfZ_{\phi,-}.
}
$$

Now, let
${}^v\bar\bfO_{\phi,-}^\text{proj}\subset
{}^v\bar\bfO_{\phi,-}$
be the full subcategory of the projective objects.
We define ${}^v\bar\bfO_{\nu,-}^\text{proj}$,
${}^v\bfO_{\nu,-}^\text{proj}$ and 
${}^v\bfO_{\phi,-}^\text{proj}$
in a similar way.

The functor $\bbV_k$ is fully faithful
on ${}^v\bfO_{\nu,-}^\text{proj}$ 
and
${}^v\bfO_{\phi,-}^\text{proj}$ by Corollary \ref{cor:speBpm}.
Hence, the functor $\bar\bbV_k$ is fully faithful on 
${}^v\bar\bfO_{\nu,-}^\text{proj}$ 
and
${}^v\bar\bfO_{\phi,-}^\text{proj}$.
Therefore, we may identify
${}^v\bar\bfO_{\nu,-}^\text{proj}$,
${}^v\bar\bfO_{\phi,-}^\text{proj}$
with some full subcategories
${}^v\bar\bfZ_{\nu,-}^\text{proj}$, 
${}^v\bar\bfZ_{\phi,-}^\text{proj}$
of
${}^v\bar\bfZ_{\nu,-}$, 
${}^v\bar\bfZ_{\phi,-}$
via $\bar\bbV_k$

The functor $\bar\theta_{\phi,\nu}$ in Remark \ref{rk:theta/k} gives an exact functor
${}^v\bar\bfZ_{\phi,-}^\text{proj}\to{}^v\bar\bfZ_{\nu,-}^\text{proj}$.
We define the functor
$\bar T_{\phi,\nu}:{}^v\bar\bfO_{\phi,-}^\text{proj}\to
{}^v\bar\bfO_{\nu,-}^\text{proj}$ so that it coincides with
$\bar\theta_{\phi,\nu}$ under $\bar\bbV_k$.
It gives a functor of triangulated categories
$\bfK^b({}^v\bar\bfO_{\phi,-}^\text{proj})\to\bfK^b({}^v\bar\bfO_{\nu,-}^\text{proj}),$
where $\bfK^b$ denotes the bounded homotopy category.

Since ${}^v\bar\bfO_{\nu,-}^\text{proj}$ 
and
${}^v\bar\bfO_{\phi,-}^\text{proj}$ have finite global dimensions,  there are canonical equivalences
$\bfK^b({}^v\bar\bfO_{\phi,-}^\text{proj})\simeq\bfD^b({}^v\bar\bfO_{\phi,-})$
and
$\bfK^b({}^v\bar\bfO_{\nu,-}^\text{proj})\simeq \bfD^b({}^v\bar\bfO_{\nu,-})$.
Thus, we can view $\bar T_{\phi,\nu}$ as a functor of triangulated categories
$\bfD^b({}^v\bar\bfO_{\phi,-})\to\bfD^b({}^v\bar\bfO_{\nu,-}).$

By Proposition \ref{prop:translation2}$(b),$ the functor
$T_{\phi,\nu}$ gives an exact functor
${}^v\bfO_{\phi,-}^\text{proj}\to{}^v\bfO_{\nu,-}^\text{proj}$.
By Remark \ref{rk:theta/k}, the functor $\bar T_{\phi,\nu}$ coincides with $T_{\phi,\nu}$
when forgetting the grading.
The functor $T_{\phi,\nu}$ yields a functor of triangulated categories
$\bfD^b({}^v\bfO_{\phi,-})\to\bfK^b({}^v\bfO_{\nu,-})$ which is $t$-exact for the standard $t$-structures
and which coincides with $T_{\phi,\nu}$ when forgetting the grading.
Hence, the functor $\bar T_{\phi,\nu}$ is also $t$-exact.
Thus, it gives an exact functor 
$\bar T_{\phi,\nu}:{}^{v}\bar\bfO_{\phi,-}\to {}^v\bar\bfO_{\nu,-}$
which coincides with $T_{\phi,\nu}$ when forgetting the grading.

Now, we concentrate on the equality 
$\bar T_{\phi,\nu}(\bar L_{\phi})=\bar L_{\nu}$.

Since $\bar T_{\phi,\nu}$
coincides with $T_{\phi,\nu}$ when forgetting the grading, by
Proposition \ref{prop:translation2}$(d)$,
for each $x\in{}^w\!I_{\nu,-}$ there is an integer $j$ such that
$\bar T_{\phi,\nu}(\bar L(xw_\nu\bullet\oo_{\phi,-}))=
\bar L(x\bullet\oo_{\nu,-})\langle j\rangle.$ 
We must check that $j=0$. 

Let $\bar T_{\nu,\phi}$ be the left adjoint to $\bar T_{\phi,\nu}$,
see Remark \ref{rk:gradedtranslationfunctor} below.
By Corollary \ref{cor:2.42} 
we have
${}^v\bar\bfO_{\nu,-}={}^v\! \bar \scrA_{\nu,-}\bfmod$ and
${}^v\bar\bfO_{\phi,-}={}^v\! \bar \scrA_{\phi,-}\bfmod.$
The graded $k$-algebras ${}^w\!\bar A^\nu_{\phi,+}$,
${}^w\!\bar A_{\phi,+}$ (=${}^v\! \bar \scrA_{\nu,-}$, 
${}^v\! \bar \scrA_{\phi,-}$) are Koszul.
The natural graded indecomposable projective modules are of the form
${}^v\! \bar \scrA_{\nu,-}1_x$, 
${}^v\! \bar \scrA_{\phi,-}1_x$ with $x\in {}^v\! I_{\nu,-}, 
{}^v\! I_{\phi,-}$ respectively.
We must check that
$\bar T_{\nu,\phi}({}^v\!\bar \scrA_{\nu,-}1_{x})=
{}^v\!\bar \scrA_{\phi,-}1_{xw_\nu}$
for each $x\in{}^w\!I_{\nu,-}$.

By definition of $\bar T_{\phi,\nu}$ we have an isomorphism 
$\bar\theta_{\phi,\nu}\circ\bar\bbV_k\simeq\bar\bbV_k\circ\bar T_{\phi,\nu}$ 
of functors on ${}^v\bar\bfO_{\phi,-}^\text{proj}$.
Further, by definition of $\bar\bbV_k,$ we have
$\bar\bbV_k({}^v\! \bar \scrA_{\nu,-}1_x)={}^v\!\bar B_{\nu,-}(x)$ and
$\bar\bbV_k({}^v\!\bar \scrA_{\phi,-}1_{xw_\nu})={}^v\!\bar B_{\phi,-}(xw_\nu).$
Therefore, it is enough to check that
$\bar\theta_{\nu,\phi}({}^v\!\bar B_{\nu,-}(x))=
{}^v\!\bar B_{\phi,-}(xw_\nu).$

By Remark \ref{rk:theta/k}, this follows from  Proposition \ref{prop:theta}$(f)$ by base change.
\end{proof}

\vspace{2mm}

\begin{rk}
\label{rk:gradedtranslationfunctor}
General facts imply that
the functor $\bar T_{\phi,\nu}$ has a left adjoint 
$\bar T_{\nu,\phi}:{}^v\bar\bfO_{\nu,-}\to{}^v\bar\bfO_{\phi,-}$,
see the proof of Proposition \ref{prop:translation2}.
By definition of
$T_{\nu,\phi}$ and the unicity of the left adjoint, the functor $\bar T_{\nu,\phi}$
coincides with $T_{\nu,\phi}$ when forgetting the grading.
\end{rk}

\vspace{2mm}

Now, we prove the following lemma which is dual to Lemma \ref{lem:gen1}.

\vspace{2mm}

\begin{lemma}
\label{lem:gen2}
Assume that $v\in I_{\nu,-}$ and $d+N>f.$
Then, the functor  $T_{\phi,\nu}:{}^v\bfO_{\phi,-}\to{}^v\bfO_{\nu,-}$ 
induces a surjective graded $k$-algebra homomorphism
$T_{\phi,\nu}:{}^v\!\bar A^\mu_{\phi,-}\to {}^v\!\bar A^\mu_{\nu,-}.$
Its kernel contains the two-sided ideal generated by 
$\{1_x\,;\,x\in{}^v\!I_{\phi,-}^\mu,\,x\notin I_{\nu,+}\}$.
Further, for $x\in {}^v\!I^\mu_{\phi,-}\cap I_{\nu,+}$
we have $xw_\nu\in {}^v\!I^\mu_{\nu,-}$
and $T_{\phi,\nu}(1_{x})=1_{xw_\nu}$.
\end{lemma}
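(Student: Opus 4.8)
The plan is to obtain the homomorphism as the one induced on $\Ext$-algebras by the restriction of $T_{\phi,\nu}$ to the parabolic truncations, and then to prove surjectivity by transporting a projective resolution through $T_{\phi,\nu}$. First I would record the following consequences of what is already available. Since $v\in I^{\max}_\nu$ and $d+m>f$, Propositions \ref{prop:translation1}(a) and \ref{prop:translation2}(b),(c) show that $T_{\phi,\nu}$ restricts to an exact functor ${}^v\bfO^\mu_{\phi,-}\to{}^v\bfO^\mu_{\nu,-}$ commuting with the parabolic inclusion and truncation functors; Proposition \ref{prop:translation2}(e),(f),(g) together with the coset-representative combinatorics of Appendix \ref{app:C} (compare Remark \ref{rk:bijection}) then give, for $x\in{}^v\!I^\mu_{\phi,-}$, that $xw_\nu\in{}^v\!I^\mu_{\nu,-}$ and $T_{\phi,\nu}L(x\bullet\oo_{\phi,-})=L(xw_\nu\bullet\oo_{\nu,-})$ when $x\in I_{\nu,+}$, that $T_{\phi,\nu}L(x\bullet\oo_{\phi,-})=0$ when $x\notin I_{\nu,+}$, and that $T_{\phi,\nu}({}^v\!L^\mu_{\phi,-})={}^v\!L^\mu_{\nu,-}$. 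An exact functor carrying simple objects to simple objects or to zero induces a unital homomorphism between the $\Ext$-algebras of the corresponding direct sums of simples, automatically compatible with the $\Ext$-gradings; this produces the graded $k$-algebra map ${}^v\!\bar R^\mu_{\phi,-}\to{}^v\!\bar R^\mu_{\nu,-}$ of the statement. It annihilates the identity endomorphism of $L(x\bullet\oo_{\phi,-})$ for each $x\in{}^v\!I^\mu_{\phi,-}\setminus I_{\nu,+}$, hence the two-sided ideal generated by the corresponding idempotents $1_x$, and sends $1_x\mapsto 1_{xw_\nu}$ for $x\in{}^v\!I^\mu_{\phi,-}\cap I_{\nu,+}$.

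To prove surjectivity I would establish the sharper claim that $T_{\phi,\nu}$ induces isomorphisms $\Ext^i_{{}^v\bfO^\mu_{\phi,-}}(L(x\bullet\oo_{\phi,-}),L(x'\bullet\oo_{\phi,-}))\to\Ext^i_{{}^v\bfO^\mu_{\nu,-}}(L(xw_\nu\bullet\oo_{\nu,-}),L(x'w_\nu\bullet\oo_{\nu,-}))$ for all $i$ and all $x,x'\in{}^v\!I^\mu_{\phi,-}\cap I_{\nu,+}$, which identifies the image of the homomorphism above with ${}^v\!\bar R^\mu_{\nu,-}$. The decisive ingredient is the effect of $T_{\phi,\nu}$ on indecomposable projectives: I claim that $T_{\phi,\nu}({}^v\!P^\mu(y\bullet\oo_{\phi,-}))={}^v\!P^\mu(yw_\nu\bullet\oo_{\nu,-})$ for $y\in{}^v\!I^\mu_{\phi,-}\cap I_{\nu,+}$ and $T_{\phi,\nu}({}^v\!P^\mu(y\bullet\oo_{\phi,-}))=0$ otherwise. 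That $T_{\phi,\nu}$ takes projectives to projectives is Proposition \ref{prop:translation2}(b); to compute the top of the projective $T_{\phi,\nu}({}^v\!P^\mu(y\bullet\oo_{\phi,-}))$ I would use the right adjoint of $T_{\phi,\nu}$ (available by the Claim in the proof of Proposition \ref{prop:translation2}), Proposition \ref{prop:translation2}(d) describing $T_{\nu,\phi}$ on indecomposable projectives, and the BGG duality of Remark \ref{rk:BGGduality} (which fixes simple modules and exchanges top and socle), together with the rigidity of the translation off a wall of a simple module, in the same spirit as the proof of Proposition \ref{prop:translation2}(g): these combine to show that $T_{\phi,\nu}({}^v\!P^\mu(y\bullet\oo_{\phi,-}))$ has simple top $L(yw_\nu\bullet\oo_{\nu,-})$ for $y$ surviving, hence is the corresponding projective cover, and has no simple quotient for $y$ not surviving, hence vanishes.

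Granting this, the rest is routine. Choosing a projective resolution $Q^\bullet\twoheadrightarrow L(x\bullet\oo_{\phi,-})$ in ${}^v\bfO^\mu_{\phi,-}$ and applying the exact functor $T_{\phi,\nu}$ produces a projective resolution $T_{\phi,\nu}Q^\bullet\twoheadrightarrow L(xw_\nu\bullet\oo_{\nu,-})$ in ${}^v\bfO^\mu_{\nu,-}$, through which the map induced by $T_{\phi,\nu}$ becomes the cochain map $f\mapsto T_{\phi,\nu}f$ from $\Hom_{{}^v\bfO^\mu_{\phi,-}}(Q^\bullet,L(x'\bullet\oo_{\phi,-}))$ to $\Hom_{{}^v\bfO^\mu_{\nu,-}}(T_{\phi,\nu}Q^\bullet,L(x'w_\nu\bullet\oo_{\nu,-}))$. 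Writing each $Q^j$ as a direct sum of indecomposable projectives and using the description of $T_{\phi,\nu}$ on them together with the fact that a projective cover has simple top, one sees that source and target of this cochain map are naturally identified in each degree (both being a sum of copies of $k$ indexed by the summands of $Q^j$ of the form ${}^v\!P^\mu(x'\bullet\oo_{\phi,-})$), and that $T_{\phi,\nu}$ identifies the evident bases because it is nonzero on the canonical surjection ${}^v\!P^\mu(x'\bullet\oo_{\phi,-})\twoheadrightarrow L(x'\bullet\oo_{\phi,-})$; hence the cochain map is a degreewise isomorphism, and the desired $\Ext$-isomorphisms follow on passing to cohomology. The one genuinely delicate point in this programme is the identification of $T_{\phi,\nu}$ on indecomposable projectives in the second paragraph; everything else is formal manipulation with the functorial properties of translation functors already established in Propositions \ref{prop:translation1} and \ref{prop:translation2}.
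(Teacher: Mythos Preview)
Your first paragraph is correct and matches the paper: the construction of the graded algebra map, the statement about the kernel containing the idempotent ideal, and the identification $1_x\mapsto 1_{xw_\nu}$ all follow from Proposition~\ref{prop:translation2}(c),(e),(f),(g) and Appendix~\ref{app:C} as you say.

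The second paragraph, however, contains a fatal error. Your claim that
\[
T_{\phi,\nu}\bigl({}^v\!P^\mu(y\bullet\oo_{\phi,-})\bigr)=
\begin{cases}
{}^v\!P^\mu(yw_\nu\bullet\oo_{\nu,-})&y\in I_{\nu,+},\\
0&\text{otherwise},
\end{cases}
\]
is false on both branches. Translation \emph{to} the wall is exact and sends every (parabolic) Verma to a nonzero (parabolic) Verma, so it is never zero on a nonzero $\Delta$-filtered module; in particular it is never zero on a projective. And it does not preserve indecomposability: already for $\mathfrak{sl}_2$ with $\mu=\phi$ and $W_\nu=W$, the big projective $P(\oo_{\phi,-})$ has a two-step Verma flag, so $T_{\phi,\nu}(P(\oo_{\phi,-}))$ has length two in a semisimple block, hence equals $P(\oo_{\nu,-})^{\oplus 2}$. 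Your adjunction argument would require knowing the full composition series of $T_{\nu,\phi}(L(x\bullet\oo_{\nu,-}))$, not just its top and socle; ``rigidity'' of that module does not suffice. Since this claim fails, your third paragraph collapses: applying $T_{\phi,\nu}$ to a projective resolution of $L(x\bullet\oo_{\phi,-})$ gives a projective resolution with the \emph{wrong} terms, and the Hom-complex comparison breaks down.

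The paper's proof takes a completely different route that avoids any statement about $T_{\phi,\nu}$ on projectives. One passes by adjunction to the counit $\eta:\Theta(L_\phi)\to L_\phi$ with $\Theta=T_{\nu,\phi}T_{\phi,\nu}$, reduces from $\bfO^\mu$ to $\bfO$ via Remark~\ref{rk:tau}, and then works in the \emph{graded} categories ${}^v\bar\bfO_{\phi,-}$, ${}^v\bar\bfO_{\nu,-}$ with the Koszul gradings supplied by Proposition~\ref{prop:reg2}. The key point is that the graded lift $\bar T_{\phi,\nu}$ of Lemma~\ref{lem:gradedtranslationfunctor} sends $\bar L_\phi$ to $\bar L_\nu$ without shift, so by Koszulity $\Ext^i_{{}^v\bar\bfO_{\nu,-}}(\bar L_\nu,\bar L_\nu\{j\})=0$ unless $i=j$. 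One then shows $\top(\bar\Theta(\bar L_\phi))$ is pure of degree~$0$, hence $\Ker\eta$ lives in strictly positive degrees, and Koszulity of ${}^v\bar\bfO_{\phi,-}$ forces $\Ext^i(\Ker\eta,\bar L_\phi\{i\})=0$; the long exact sequence then gives surjectivity. In short, the Koszul grading established earlier is not optional here: it is the mechanism that replaces the (false) control on projectives you were hoping for. Your sharper claim that the induced map is an isomorphism on the surviving $\Ext$-groups is in fact true, but the paper obtains it only later (Lemma~\ref{lem:gen3}), combining the surjectivity proved here with a separate injectivity argument via Remark~\ref{rk:fidele} and a dimension count.
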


\vspace{0.5mm}

\begin{proof}
First, note that, since $v\in I_{\nu,-}$ and $d+N>f$,
the functor  $T_{\phi,\nu}:{}^v\bfO_{\phi,-}\to{}^v\bfO_{\nu,-}$  is well-defined and it takes
${}^v\bfO_{\phi,-}^\mu$ into ${}^v\bfO_{\nu,-}^\mu$ by Proposition 
\ref{prop:translation2}$(c)$.

We'll abbreviate
$L_\phi={}^v\!L_{\phi,-}^\mu$ and $L_\nu={}^v\!L_{\nu,-}^\mu.$
By Proposition \ref{prop:translation2}$(f),$ we have 
$T_{\phi,\nu}(L_{\phi})=L_{\nu}$.
Thus, since $T_{\phi,\nu}$ is exact, it
induces a graded $k$-algebra homomorphism
$T_{\phi,\nu}:{}^v\!\bar A^\mu_{\phi,-}=\Ext_{{}^v\bfO^\mu_{\phi,-}}(L_{\phi})^\op\to
{}^v\!\bar A^\mu_{\nu,-}=\Ext_{{}^v\bfO^\mu_{\nu,-}}(L_{\nu})^\op.$

Composing $T_{\phi,\nu}$ with its left adjoint $T_{\nu,\phi},$ we get
the functor $\Theta=T_{\nu,\phi}\circ T_{\phi,\nu}$.
To prove that $T_{\phi,\nu}$ is surjective,
we must prove that the counit 
$\Theta\to\bf 1$ yields a surjective map 
$\Ext_{{}^v\bfO_{\phi,-}^\mu}(L_{\phi})\to
\Ext_{{}^v\bfO_{\phi,-}^\mu}(\Theta(L_{\phi}),L_{\phi}).$

The parabolic inclusion 
${}^v\bfO_{\phi,-}^\mu\subset{}^v\bfO_{\phi,-}$
is injective on extensions by Section \ref{sec:tau}. So we must prove
that the counit yields a surjective map 
$\Ext_{{}^v\bfO_{\phi,-}}(L_{\phi})\to
\Ext_{{}^v\bfO_{\phi,-}}(\Theta(L_{\phi}),L_{\phi}).$

Let us consider the graded analogue of this statement.
Set $\bar\Theta=\bar T_{\nu,\phi}\circ \bar T_{\phi,\nu}$,
where $\bar T_{\nu,\phi}$, $\bar T_{\phi,\nu}$ are as in Lemma \ref{lem:gradedtranslationfunctor}
and Remark \ref{rk:gradedtranslationfunctor}.
We have
$$\gathered
\Ext_{{}^v\bfO_{\phi,-}}(L_{\phi})=
\bigoplus_j\Ext_{{}^v\bar\bfO_{\phi,-}}(
\bar L_{\phi},\bar L_{\phi}\langle j\rangle),\cr
\Ext_{{}^v\bfO_{\phi,-}}(\Theta(L_{\phi}),L_{\phi})=
\bigoplus_j\Ext_{{}^v\bar\bfO_{\phi,-}}(
\bar\Theta(\bar L_{\phi}),\bar L_{\phi}\langle j\rangle).
\endgathered$$
Thus we must prove that for each $i,j$ the counit
$\eta:\bar\Theta\to\bf 1$ yields a surjective map
\begin{equation}
\label{surj}
\Ext^i_{{}^v\bar\bfO_{\phi,-}}(\bar L_{\phi},
\bar L_{\phi}\langle j\rangle)\to
\Ext^i_{{}^v\bar\bfO_{\phi,-}}
(\bar\Theta(\bar L_{\phi}),\bar L_{\phi}\langle j\rangle).
\end{equation}

By Lemma \ref{lem:gradedtranslationfunctor},
we have
$\bar T_{\phi,\nu}(\bar L_{\phi})=\bar L_{\nu}$.
Further, since ${}^v\bar\bfO_{\nu,-}={}^w\!\bar A_{\phi,+}^\nu\bfgmod$
and
${}^v\bar\bfO_{\phi,-}={}^w\!\bar A_{\phi,+}\bfgmod$, the gradings on 
${}^v\bar\bfO_{\phi,-}$ and ${}^v\bar\bfO_{\nu,-}$
are Koszul by Proposition \ref{prop:reg2}. 
Hence, since the right hand side of \eqref{surj} is 
$\Ext^i_{{}^v\bar\bfO_{\nu,-}}(\bar L_{\nu},\bar L_{\nu}\langle j\rangle),$ 
it is zero unless $i=j$.

Now, we define the integer
$\ell=\min\bigl\{d\,;\,\bar\Theta(\bar L_{\phi})^{d}\neq 0\bigr\}.$
Recall that the grading on ${}^w\!\bar A_{\phi,+}$ is positive.
Further, since 
${}^v\bar\bfO_{\phi,-}={}^w\!\bar A_{\phi,+}\bfgmod$, we can view
$\bar\Theta(\bar L_{\phi})$ as a graded ${}^w\!\bar A_{\phi,+}$-module.
Thus $\bar\Theta(\bar L_{\phi})^\ell$ 
is a quotient of $\bar\Theta(\bar L_{\phi})$ which is killed by the radical of 
${}^w\!\bar A_{\phi,+}$. We deduce that
$\bar\Theta(\bar L_{\phi})^\ell\subset\top(\bar\Theta(\bar L_{\phi})).$

Next, we claim that for any simple graded ${}^w\!\bar A_{\phi,+}$-module $\bar L$ such that
$\bar T_{\phi,\nu}(\bar L)\neq 0$, the map $\eta(\bar L):\bar\Theta(\bar L)\to\bar L$ 
yields an isomorphism  $\top(\bar\Theta(\bar L))\to\bar L$. 
Indeed, $\eta(\bar L)$ is surjective because it is non zero, and
for any simple quotient
$\bar\Theta(\bar L)\to\bar L'$ we have
$$0\neq\Hom_{{}^v\bar\bfO_{\phi,-}}(\bar\Theta(\bar L),\bar L')=
\Hom_{{}^v\bar\bfO_\nu}(\bar T_{\phi,\nu}(\bar L),\bar T_{\phi,\nu}(\bar L')).$$
By Proposition \ref{prop:translation2}$(d),(e)$ this implies that
$\bar T_{\phi,\nu}(\bar L)=\bar T_{\phi,\nu}(\bar L'),$ and that it is non zero.
Therefore, we have $\bar L=\bar L'$, proving the claim.

Recall that $\bar L_\phi$ is a semi-simple module, see Section \ref{sec:truncation}.
Applying the claim to the simple summands $\bar L\subset \bar L_\phi$
such that $\bar T_{\phi,\nu}(\bar L)\neq 0$, we get that
$\top(\bar\Theta(\bar L_\phi))=\Im\eta(\bar L_\phi)$.
In particular 
$\top(\bar\Theta(\bar L_{\phi}))$ is pure of degree zero.
This implies that we have $\ell=0$.
Therefore, the kernel of $\eta(\bar L_{\phi})$ 
lives in degrees $>0$.
Hence, by Koszulity of the grading of ${}^v\bar\bfO_{\phi,-}$, we get
$$\Ext^i_{{}^v\bar\bfO_{\phi,-}}\bigl(\Ker\eta(\bar L_\phi),\bar L_{\phi}\langle i\rangle\bigr)=0.$$
Hence the surjectivity of \eqref{surj} for $i=j$ follows from the long exact sequence of Ext's groups
associated with the exact sequence
$$0\to\Ker(\eta(\bar L_{\phi}))\to\bar\Theta(\bar L_\phi)\to\bar L_\phi.$$
This proves that the map $T_{\phi,\nu}$ is surjective, proving the first part of the lemma.

Next, we have
$I^\mu_{\nu,-}=\{xw_\nu\;;\; x\in \!I^\mu_{\phi,-}\cap I_{\nu,+}\}$
by Corollary \ref{lem:C2}$(a)$.
Further, we have
$T_{\phi,\nu}(1_{xw_\nu})=0$ for $x\notin I_{\nu,-}$
by Proposition \ref{prop:translation2}$(e)$, and 
$xw_\nu\in I_{\nu,+}$ if and only if $x\in I_{\nu,-}$. 
This proves the second claim of the lemma.
Finally, the last claim of the lemma follows from
Proposition \ref{prop:translation2}$(d)$.
\end{proof}

\vspace{2mm}

Next, we prove the following.

\vspace{2mm}

\begin{lemma}
\label{lem:gen3}
Assume that $w\in I_{\mu,+}^\nu$. Let $v=w_-^{-1}$. We have $v\in I^\mu_{\nu,-}$. 
Assume also that $d+N>f$ and $e+N>d$. There is a $k$-algebra isomorphism 
$p_{\mu,\nu}:{}^w\! A^\nu_{\mu,+}\to
{}^v\!\bar A^\mu_{\nu,-}$ such that $p_{\mu,\nu}(1_x)=1_{y}$ 
for each $x\in{}^w\!I_{\mu,+}^\nu$, where $y=x_-^{-1}$, and such that the following square
is commutative
\begin{equation}
\label{square4}
\begin{split}
\xymatrix{
{}^{w_\nu w}\!A_{\mu,+}\ar@{=}[r]_-{\eqref{prop:reg1}}\ar[d]^-{\tau_{\phi,\nu}}
&{}^v\!\bar A^\mu_{\phi,-}\ar[d]_{T_{\phi,\nu}}\cr
{}^w\! A^\nu_{\mu,+}\ar@{=}[r]^-{p_{\mu,\nu}}
&{}^v\!\bar A^\mu_{\nu,-}.}
\end{split}
\end{equation}
\end{lemma}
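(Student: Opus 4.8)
The plan is to realize $p_{\mu,\nu}$ as a map induced on quotients. Write $\alpha\colon{}^{w_\nu w}\!R_{\mu,+}\to{}^v\!\bar R^\mu_{\phi,-}$ for the $k$-algebra isomorphism of Proposition \ref{prop:reg1}, applied to the element $w_\nu w\in I_{\mu,+}$ (note that $w_\nu w\in I_{\mu,+}$ and that its associated negative-level element is $w_\mu(w_\nu w)^{-1}=v$, by Appendix \ref{app:C}); recall from that proposition that $\alpha(1_x)=1_{w_\mu x^{-1}}$ for $x\in{}^{w_\nu w}\!I_{\mu,+}$, where $x\mapsto w_\mu x^{-1}$ is the anti-isomorphism ${}^{w_\nu w}\!I_{\mu,+}\to{}^v\!I^\mu_{\phi,-}$ of Remark \ref{rk:bijection}. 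The vertical arrows $\tau_{\phi,\nu}$ and $T_{\phi,\nu}$ of \eqref{square4} are the $k$-algebra homomorphisms of Lemmas \ref{lem:gen1} and \ref{lem:gen2}. The whole content of the lemma is then the inclusion $\Ker\tau_{\phi,\nu}\subseteq\Ker(T_{\phi,\nu}\circ\alpha)$: granting it, the universal property of the surjection $\tau_{\phi,\nu}$ produces a unique $k$-algebra homomorphism $p_{\mu,\nu}\colon{}^w\!R^\nu_{\mu,+}\to{}^v\!\bar R^\mu_{\nu,-}$ with $p_{\mu,\nu}\circ\tau_{\phi,\nu}=T_{\phi,\nu}\circ\alpha$, that is, making \eqref{square4} commute.

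To establish this inclusion, I would use that, by Lemma \ref{lem:gen1}, $\Ker\tau_{\phi,\nu}$ is the two-sided ideal generated by the idempotents $1_x$ with $x\in{}^{w_\nu w}\!I_{\mu,+}\setminus{}^w\!I^\nu_{\mu,+}$; since $T_{\phi,\nu}\circ\alpha$ is a ring homomorphism it is enough to check $T_{\phi,\nu}(\alpha(1_x))=0$ for each such $x$. Now $\alpha(1_x)=1_{w_\mu x^{-1}}$, and by Lemma \ref{lem:gen2} the homomorphism $T_{\phi,\nu}$ kills $1_z$ whenever $z\in{}^v\!I^\mu_{\phi,-}$ does not lie in $I_{\nu,+}$. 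Hence the step to carry out is the combinatorial statement --- proved in Appendix \ref{app:C} from the descriptions of the index sets in Section \ref{sec:truncation} --- that under $x\mapsto w_\mu x^{-1}$ the complement ${}^{w_\nu w}\!I_{\mu,+}\setminus{}^w\!I^\nu_{\mu,+}$ is carried into $\{z\in{}^v\!I^\mu_{\phi,-}\,;\,z\notin I_{\nu,+}\}$, or equivalently that ${}^w\!I^\nu_{\mu,+}$ is carried bijectively onto ${}^v\!I^\mu_{\phi,-}\cap I_{\nu,+}$. Granting it, $T_{\phi,\nu}\circ\alpha$ annihilates every generator of $\Ker\tau_{\phi,\nu}$, hence annihilates the whole ideal, and $p_{\mu,\nu}$ is defined.

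It then remains to identify $p_{\mu,\nu}$ on idempotents. For $x\in{}^w\!I^\nu_{\mu,+}$ we have $\tau_{\phi,\nu}(1_x)=1_x$ by Lemma \ref{lem:gen1}, so $p_{\mu,\nu}(1_x)=T_{\phi,\nu}(\alpha(1_x))=T_{\phi,\nu}(1_{w_\mu x^{-1}})$; by the combinatorics just invoked $w_\mu x^{-1}\in{}^v\!I^\mu_{\phi,-}\cap I_{\nu,+}$, so Lemma \ref{lem:gen2} gives $w_\mu x^{-1}w_\nu\in{}^v\!I^\mu_{\nu,-}$ and $T_{\phi,\nu}(1_{w_\mu x^{-1}})=1_{w_\mu x^{-1}w_\nu}$. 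Finally $w_\mu x^{-1}w_\nu=x_-^{-1}$ is the image of $x$ under the anti-isomorphism $I^\nu_{\mu,+}\to I^\mu_{\nu,-}$, $x\mapsto x_-^{-1}$, of Remark \ref{rk:bijection}(c) (here $x_-=w_\nu xw_\mu$), so $p_{\mu,\nu}(1_x)=1_{x_-^{-1}}=1_y$, as asserted. The main obstacle is entirely combinatorial: matching the three index bijections in play --- the regular ($\phi$) case of Proposition \ref{prop:reg1}, the right multiplication by $w_\nu$ coming from the translation functor of Lemma \ref{lem:gen2}, and the $\nu$-case anti-isomorphisms $x\mapsto x_\pm$ --- so that the parabolic truncation $\tau_{\phi,\nu}$ lines up with the kernel of the translation functor $T_{\phi,\nu}$; once Appendix \ref{app:C} and Lemmas \ref{lem:gen1}, \ref{lem:gen2} are available, everything above is formal.
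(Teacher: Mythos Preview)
Your argument correctly constructs the homomorphism $p_{\mu,\nu}$ and identifies its effect on idempotents, and this part matches the paper's approach. However, you have overlooked that the bottom arrow in \eqref{square4} is drawn as an equality: $p_{\mu,\nu}$ is asserted to be an \emph{isomorphism}, and this is precisely how the lemma is used in the proof of Theorem~\ref{thm:main} (to identify ${}^w\!R^\nu_{\mu,+}$ with ${}^v\!\bar R^\mu_{\nu,-}$ as $k$-algebras). Your claim that ``the whole content of the lemma is then the inclusion $\Ker\tau_{\phi,\nu}\subseteq\Ker(T_{\phi,\nu}\circ\alpha)$'' is therefore not right: that inclusion only yields existence of $p_{\mu,\nu}$, and surjectivity then follows from Lemma~\ref{lem:gen2}, but injectivity does not.

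Injectivity is the substantive part of the paper's proof and requires a genuinely new idea. One cannot simply compare dimensions, since a priori nothing relates $\dim{}^w\!R^\nu_{\mu,+}$ to $\dim{}^v\!\bar R^\mu_{\nu,-}$ for general $\mu$. The paper instead embeds the square \eqref{square4} into a larger diagram by applying the translation functor $T_{\mu,\phi}$ (regularising in the $\mu$-direction), and proves a separate compatibility statement (Lemma~\ref{lem:gen4}) asserting that $T_{\mu,\phi}$ intertwines the isomorphism of Proposition~\ref{prop:reg1} with the parabolic inclusion $i$. This reduces injectivity of $p_{\mu,\nu}$ to injectivity of $p_{\phi,\nu}$, using that $T_{\mu,\phi}$ is faithful on projectives (Remark~\ref{rk:fidele}); and for $p_{\phi,\nu}$ one \emph{can} compare dimensions, since $\dim{}^{ww_\mu}\!R^\nu_{\phi,+}=\dim{}^v\!\bar R_{\nu,-}$ by Proposition~\ref{prop:reg2}. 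Your write-up contains none of this, so as it stands the argument is incomplete.
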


\vspace{.5mm}

\begin{proof}
We have $v=w_\mu w^{-1} w_\nu$. Note that
$w_\nu w\in I^\nu_{\phi,-}\cap I_{\mu,+}$, because by Lemma \ref{lem:C} we have $w^{-1}\in I^\mu_{\nu,+}$, hence $w^{-1}w_\nu\in I^\mu_{\phi,+}\cap I_{\nu,-}$ and $w_\nu w=(w^{-1}w_\nu)^{-1}\in I_{\mu,+}\cap I^\nu_{\phi,-}$.

Let $\pi_{\mu,\nu}:{}^{w_\nu w}\!A_{\mu,+}\to{}^v\!\bar A^\mu_{\nu,-}$ 
be the composition of the $k$-algebra homomorphism
$T_{\phi,\nu}:{}^v\!\bar A^\mu_{\phi,-}\to{}^v\!\bar A^\mu_{\nu,-}$ in Lemma \ref{lem:gen2}
and of the $k$-algebra isomorphism 
${}^{w_\nu w}\!A_{\mu,+}={}^v\!\bar A^\mu_{\phi,-}$ in \eqref{prop:reg1}.

Note that
${}^{w_\nu w}\! A^\nu_{\mu,+}={}^w\! A^\nu_{\mu,+}$.
We must construct a $k$-algebra isomorphism
$p_{\mu,\nu}:{}^w\! A^\nu_{\mu,+}\to{}^v\!\bar A^\mu_{\nu,-}$
such that $\pi_{\mu,\nu}=p_{\mu,\nu}\circ\tau_{\phi,\nu}$.

Let  $x\in{}^{w_\nu w}\!I_{\mu,+}$. 
Thus $w_\mu x^{-1}\in {}^v\!I^\mu_{\phi,-}$.
By Lemma \ref{lem:gen2}, we have 
$$\pi_{\mu,\nu}(1_x)\neq 0
\iff T_{\phi,\nu}(1_{w_\mu x^{-1}})\neq 0
\iff w_\mu x^{-1}\in I^\mu_{\phi,-}\cap I_{\nu,+}.$$
By Lemma \ref{lem:gen1}, we have 
$$\tau_{\phi,\nu}(1_x)\neq 0
\iff x\in I^\nu_{\mu,+}.$$
Again by Lemma \ref{lem:C}, we have
$$\aligned
w_\mu x^{-1}\in I^\mu_{\phi,-}\cap I_{\nu,+}
\iff xw_\mu\in I^\nu_{\phi,+}\cap I_{\mu,-}
\iff x\in I^\nu_{\mu,+}.
\endaligned$$
Hence, we have
$\tau_{\phi,\nu}(1_x)=0$ if and only if $\pi_{\mu,\nu}(1_x)=0$.
Thus, we have
$\Ker(\tau_{\phi,\nu})\subset\Ker(\pi_{\mu,\nu}),$ because the left hand side
is generated by the $1_x$'s killed by $\tau_{\phi,\nu}$ and the right hand side 
contains the $1_x$'s killed by $T_{\phi,\nu}$.

This proves the existence of a $k$-algebra homomorphism 
$p_{\mu,\nu}$ such that $\pi_{\mu,\nu}=p_{\mu,\nu}\circ\tau_{\phi,\nu}$
and  $p_{\mu,\nu}(1_x)=1_{w_\mu x^{-1}}$ 
for each $x\in{}^w\!I_{\mu,+}^\nu$.
The map $p_{\mu,\nu}$ is surjective, because the map 
$T_{\phi,\nu}$ is surjective by Lemma \ref{lem:gen2}.

Now, we prove that $p_{\mu,\nu}$ is injective.
By Section \ref{sec:tau}, the parabolic inclusion functor 
$i_{\mu,\phi}:{}^v\bfO^\mu_{\nu,-}\to{}^v\bfO_{\nu,-}$
yields a graded $k$-algebra homomorphism 
$i_{\mu,\phi}:{}^v\!\bar A_{\nu,-}^\mu\to{}^v\!\bar A_{\nu,-}$.
Set $z=v^{-1}=w_\nu ww_\mu$. 
The following is proved below.

\vspace{2mm}

\begin{claim}
\label{claim:gen4}
We have the commutative diagram
\begin{equation}
\label{diag1}
\begin{split}
\xymatrix{
{}^{w_\nu w}\!A_{\mu,+}\ar@{=}[r]_-{\eqref{prop:reg1}}\ar[d]^-{T_{\mu,\phi}}
&{}^v\!\bar A_{\phi,-}^\mu
\ar[r]_-{T_{\phi,\nu}}\ar[d]_{i_{\mu,\phi}}
&{}^v\!\bar A_{\nu,-}^\mu
\ar[d]_{i_{\mu,\phi}}\cr
{}^{z}\!A_{\phi,+}\ar@{=}[r]^-{\eqref{prop:reg1}}
&{}^v\!\bar A_{\phi,-}
\ar[r]^-{T_{\phi,\nu}}
&{}^v\!\bar A_{\nu,-}.
}
\end{split}
\end{equation}
\end{claim}

Note that $z\in I_{\mu,-}$, 
hence  the map $T_{\mu,\phi}$ is well-defined.


Now, by Proposition \ref{prop:translation2}$(g)$, the translation functor $T_{\mu,\phi}$ yields a map
${}^w\!A_{\mu,+}^\nu\to{}^{ww_\mu}\!A_{\phi,+}^\nu$.
Consider the diagram
\begin{equation}
\label{diag2}
\begin{split}
\xymatrix{
{}^{w_\nu w}\!A_{\mu,+}\ar[r]_-{\tau_{\phi,\nu}}\ar[d]^-{T_{\mu,\phi}}
&{}^w\!A_{\mu,+}^\nu\ar[r]_-{p_{\mu,\nu}}
\ar[d]^-{T_{\mu,\phi}}
&{}^v\!\bar A_{\nu,-}^\mu
\ar[d]_{i_{\mu,\phi}}\cr
{}^z\!A_{\phi,+}\ar[r]^{\tau_{\phi,\nu}}
&{}^{ww_\mu}\!A^\nu_{\phi,+}\ar[r]^-{p_{\phi,\nu}}
&{}^v\!\bar A_{\nu,-}.
}
\end{split}
\end{equation}

By Claim \ref{claim:gen4}, the outer rectangle in \eqref{diag1} is commutative.
Thus, the outer rectangle in \eqref{diag2} is commutative.
The left square in \eqref{diag2} is commutative, by
Proposition \ref{prop:translation2}$(c)$.
Thus, since $\tau_{\phi,\nu}$ is surjective, the right square in 
\eqref{diag2} is also commutative.


The middle vertical map in \eqref{diag2} is injective by Remark \ref{rk:fidele}.
Therefore,
to prove that $p_{\mu,\nu}$ is injective it is enough to check that $p_{\phi,\nu}$ is injective.

Now, it is easy to see that the map $p_{\phi,\nu}$ is indeed invertible, beause
 it is surjective by the discussion above (applied to the choice $\nu=\phi$) and
$\dim({}^{ww_\mu}\!A_{\phi,+}^\nu)=\dim({}^v\!\bar A_{\nu,-})$
by Proposition \ref{prop:reg2}.

Finally, to finish the proof of Lemma \ref{lem:gen3} we must check that
$p_{\mu,\nu}(1_x)=1_{y}$ 
for each $x\in{}^w\!I_{\mu,+}^\nu$, where $y=x_-^{-1}$.

To do that, it suffices to observe that $x_-=w_\nu x w_\mu$, and that,
by Proposition \ref{prop:reg1} and  Lemmas \ref{lem:gen1}, \ref{lem:gen2},
the square of maps
in \eqref{square4} gives the following diagram
$$\xymatrix{
1_x\ar@{|->}[r]^-{\eqref{prop:reg1}}\ar@{|->}[d]^-{\tau_{\phi,\nu}}&1_{w_\mu x^{-1}}\ar@{|->}[d]_-{T_{\phi,\nu}}\\
1_x\ar@{|->}[r]^-{p_{\mu,\nu}}&1_{w_\mu x^{-1}w_\nu}.
}$$

Now, we prove Claim \ref{claim:gen4}.
The right square in \eqref{diag1} is commutative, by 
Proposition \ref{prop:translation2}$(c)$.

Let us concentrate on the left square.
We must prove that the isomorphisms 
${}^{w_\nu w}\!A_{\mu,+}={}^v\!\bar A_{\phi,-}^\mu$
and
${}^w\!A_{\phi,+}=
{}^v\!\bar A_{\phi,-}$
in Proposition \ref{prop:reg1} 
yield a commutative square
\begin{equation}\label{4.27}
\begin{split}
\xymatrix{
{}^{w_\nu w}\!A_{\mu,+}\ar@{=}[r]_-{\eqref{prop:reg1}}\ar[d]^-{T_{\mu,\phi}}
&{}^v\!\bar A_{\phi,-}^\mu\ar[d]_-i\cr
{}^z\!A_{\phi,+}\ar@{=}[r]^-{\eqref{prop:reg1}}
&{}^v\!\bar A_{\phi,-}.
}
\end{split}
\end{equation}
By Proposition \ref{prop:translation2}$(g)$, the module 
$T_{\mu,\phi}({}^{w_\nu w}\!P_{\mu,+})$ is a direct summand of
${}^z\!P_{\phi,+}$. 
By Proposition \ref{prop:theta}$(f)$ and Remark \ref{rk:theta/k}, 
the sheaf
$\theta_{\mu,\phi}({}^w\!B_{\mu,+})$ is a direct summand of ${}^w\!B_{\phi,+}.$
Thus, we have the following diagram
\begin{equation*}
\begin{split}
\xymatrix{
{}^{w_\nu w}\!A_{\mu,+}\ar@{=}[r]^-{\bbV_k}\ar[d]_-{T_{\mu,\phi}}
&\End_{{}^{w_\nu w}\!Z_{\mu,+}}({}^{w_\nu w}\!B_{\mu,+})^\op
\ar[d]^-{\theta_{\mu,\phi}}
\cr
{}^z\!A_{\phi,+}\ar@{=}[r]^-{\bbV_k}
&\End_{{}^z\!Z_{\phi,+}}({}^z\!B_{\phi,+})^\op.
}
\end{split}
\end{equation*}
Note that the horizontal maps are invertible by Corollary \ref{cor:speBpm}.
This diagram is commutative by Remark \ref{rk:theta/k}, see also
Proposition \ref{prop:theta}$(a)$.
Next, by Proposition \ref{prop:loc3}$(c)$ and Corollary \ref{cor:loc4}, we have a commutative diagram
\begin{equation*}
\begin{split}
\xymatrix{
\End_{{}^{w_\nu w}\!Z_{\mu,+}}({}^{w_\nu w}\!B_{\mu,+})^\op
\ar[d]_-{\theta_{\mu,\phi}}
&{}^v\!\bar A^\mu_{\phi,-}\ar@{=}[l]_-\bbH
\ar[d]^-i\cr
\End_{{}^z\!Z_{\phi,+}}({}^z\!B_{\phi,+})^\op
&{}^v\!\bar A_{\phi,-}.\ar@{=}[l]_-\bbH
}
\end{split}
\end{equation*}
Finally, the horizontal maps in \eqref{4.27} are equal to the composition of 
$\bbH$ and $\bbV_k$.
\end{proof}

\vspace{2mm}

Finally, we prove our main theorem.

\vspace{2mm}

\begin{proof}[Proof of Theorem \ref{thm:main}]
Since the highest weight categories
${}^w\bfO^\nu_{\mu,+},$
${}^v\bfO^\mu_{\nu,-}$ do not depend on $e,$ $f$ by Remark \ref{rk:level},
we can assume that there is a positive integer $d$
such that $d+N>f$ and $e+N>d$. Thus the hypothesis of
Lemma \ref{lem:gen3} is satisfied.

By Propositions \ref{prop:reg1}, \ref{prop:reg2}
the $k$-algebra
${}^w\!A_{\mu,\pm}$ has a Koszul grading.
Thus, by Lemma \ref{lem:1.2}  and Section \ref{sec:tau}, the $k$-algebra
${}^w\!A^\nu_{\mu,\pm}$ has also a Koszul grading.

Let us equip ${}^w\!A^\nu_{\mu,\pm}$ with this grading.
By Lemma \ref{lem:1.1},  we have
${}^w\!A^{\nu,!}_{\mu,\pm}={}^w\!\bar A^\nu_{\mu,\pm}$
as graded $k$-algebras.
Therefore,  the graded $k$-algebra ${}^w\!\bar A^\nu_{\mu,\pm}$ is Koszul and its Koszul dual is isomorphic to
${}^w\!A^\nu_{\mu,\pm}$ as a $k$-algebra.

We have
${}^w\!\bar A^{\nu,!}_{\mu,+}={}^w\!A^\nu_{\mu,+}$ as a $k$-algebra.
By Lemma \ref{lem:gen3}, we have also a $k$-algebra isomorphism
${}^w\!A^\nu_{\mu,+}={}^v\!\bar A^\mu_{\nu,-}$.
Thus, we have
${}^w\!\bar A^{\nu,!}_{\mu,+}={}^v\!\bar A^\mu_{\nu,-}$
as $k$-algebras. By unicity of the Koszul grading, we deduce that
${}^w\!\bar A^{\nu,!}_{\mu,+}={}^v\!\bar A^\mu_{\nu,-}$
as graded $k$-algebras.

The involutivity of the Koszul duality implies that we have also
${}^w\!\bar A^{\nu,!}_{\mu,-}=
{}^v\!\bar A^\mu_{\nu,+}$ as graded $k$-algebras, and 
${}^w\!A^\nu_{\mu,-}={}^v\!\bar A^\mu_{\nu,+}$ as $k$-algebras. 

Finally, we must check that under the isomorphism
${}^w\!\bar A^{\nu,!}_{\mu,+}={}^v\!\bar A^\mu_{\nu,-}$ we have
$1_x^!=1_{y}$ with $y=x^{-1}_-$ for each $x\in{}^w\!I_{\mu,+}^\nu.$

If $\nu=\phi$ this is Proposition \ref{prop:reg1}.
If $\mu=\phi$ this is Proposition \ref{prop:reg2}.

The isomorphism ${}^w\!\bar A^{\nu,!}_{\mu,+}={}^w\!A^\nu_{\mu,+}$ above,
which is given by Lemma \ref{lem:1.1}, identifies the idempotents $1_x^!$ and $1_x$ for each
$x\in{}^w\!I_{\mu,+}^\nu.$
Thus, by Lemma \ref{lem:gen3}, the isomorphism
${}^w\!\bar A^{\nu,!}_{\mu,+}={}^v\!\bar A^\mu_{\nu,-}$
identifies the idempotents $1_x^!$ and $1_y$, where $y=x_-^{-1}$.
\end{proof}

\vspace{1cm}

\section{Type A and applications to CRDAHA's}\label{app:A}

Fix  integers $e,\ell,N>0$. Let $\frakg=\frakg\frakl(N)$.

\subsection{Koszul duality in the type A case}
Let $\frakb,\frakt\subset\frakg$
be the Borel subalgebra  of upper
triangular matrices and
the maximal
torus  of  diagonal matrices.
Let $(\epsilon_i)$ be the canonical
basis of $\bbC^N$. 
We identify $\frakt^*=\bbC^N$,
$\frakt=\bbC^N$ and $W=\frakS_N$ in the obvious way.
Put $\rho=(0,-1,\dots,1-N)$ and
$\alpha_i=\epsilon_i-\epsilon_{i+1}$ for each $i\in[1,N).$ 

We define the affine Lie algebra $\bfg$ of $\frakg$ as in Section
\ref{sec:affine}.
For any subset $X\subset\bbC^\ell$ and any $d\in\bbC$ let
$X(d)=\bigl\{(x_1,\dots,x_\ell)\in X\,;\,\sum_ix_i=d\bigr\}.$
Set $\scrC^\ell_d=\bbN^\ell(d)$ and fix an element
$\nu\in\scrC^\ell_N$.
Let $\nu$ denote also the parabolic type $\{\alpha_i;\  i\neq \nu_1, \nu_1+\nu_2,...\}$.
Let $\bfp_\nu\subset\bfg$ be the corresponding parabolic subalgebra. 
The Levi subalgebra of $\bfp_\nu$ is the Lie subalgebra $\frakg_\nu=\frakg\frakl(\nu_1)\oplus\dots\oplus\frakg\frakl(\nu_\ell)$ of $\frakg$ 
consisting of block diagonal elements. 

Let $P=\bbZ^N$ be the set of  integral weights of $\frakg$ and let
$P^\nu\subset P$ be the subset of $\nu$-dominant integral weights. 
Fix an element $\mu\in\scrC^e_N$.
Consider the $N$-tuple
$1_\mu=(1^{\mu_1}2^{\mu_2}\cdots e^{\mu_e})$. 
Since $1_\mu\in P+\rho,$ the affine weight
$\oo_{\mu,-}=(1_\mu-\rho)_e$
is an antidominant integral classical affine weight of $\bfg$ of level $-e-N$, see Section
\ref{sec:2.9}.

Recall that  $W_\mu\subset W$ is the parabolic subgroup
generated by the simple affine reflections
$s_i$ with $\alpha_i\notin\mu$. Let $\widetilde W=\frakS_N\ltimes\bbZ^N$ be the extended affine symmetric group.
We define the \emph{$e$-action of  $\widetilde W$ on $\bbZ^N$} to be such that
$\frakS_N$ acts by permutation of the entries of a $N$-tuple, while $\tau\in\bbZ^N$ acts by 
translation by the $N$-tuple $-e\,\tau$.
Let $w\cdot_ex$ be the result of the $e$-action of $w$ on the element $x\in\bbZ^N$. 
The stabilizer of the $N$-tuple $1_\mu$ is equal to $W_\mu$. It is a standard 
parabolic subgroup.

Consider the category $\bfO^\nu_{\mu,-}$
introduced in Section \ref{sec:2.9}. 
It is canonically equivalent to a block of a truncated category O of the affine Lie algebra associated with
$\frak{sl}(N)$, because $\frak{sl}(N)$ differs from  $\frakg$ by a central element.
Hence, all the results  above can be applied to the category $\bfO^\nu_{\mu,-}$.
We'll write $\bfO^\nu_{\mu,-e}=\bfO^\nu_{\mu,-}$ and
$\bfO^\nu_{-e}=\bigoplus_{\mu\in\scrC^e_N}\bfO^\nu_{\mu,-e}.$
The classes $[V^\nu(\lambda_e)]$ with $\lambda\in P^\nu$ form a $\bbC$-basis of the complexified Grothendieck group $[\bfO^\nu_{-e}]$.
Composing the Koszul equivalence in Theorem \ref{thm:main} and the tilting equivalence, we get an equivalence of 
triangulated 
categories $K=E((\bullet)^\diamond):\bfD^b(\bfO^\nu_{\mu,-e})\to\bfD^b(\bfO^\mu_{\nu,-\ell})$, which
induces a $\bbC$-linear isomorphism 
$K:[\bfO^\nu_{\mu,-e}]\to[\bfO^\mu_{\nu,-\ell}]$.

\vspace{3mm}

\subsection{The level-rank duality}

In thist section, we'll relate the map $K$ above to the level-rank duality.
Consider the Lie algebra $\fraks\frakl(e)$. We identify the set of weights of $\fraks\frakl(e)$  with $\bbC^e/\bbC 1^e$.
Thus, elements of $\bbC^e$ can be viewed as weights of $\fraks\frakl(e),$ or
equivalently, as 
level 0 classical affine weights of the affine Kac-Moody algebra $\widehat{\fraks\frakl}(e).$ 
Let
$\{\varepsilon_{i}\,;\,i\in[1,e]\}$ be the canonical basis of $\bbC^e$. For each $\lambda\in P$ and each $k\in[1,N]$, we decompose the
$k$-th entry of $\lambda+\rho$ as the sum
$\lambda_k+\rho_k=i_k+e\,r_k$ with $i_k\in[1,e]$ and $r_k\in\bbZ$.
Then, we  can view the sum
$\widehat\wt_e(\lambda)=\sum_{k=1}^N\varepsilon_{i_k}+
(\sum_{k=1}^Nr_k)\,\delta$ as a level 0 affine weight of $\widehat{\fraks\frakl}(e)$, and the sum
$\wt_e(\lambda)=\sum_{k=1}^N\varepsilon_{i_k}$ as a weight of ${\fraks\frakl}(e).$

The vector spaces $V_e=\bbC^e\otimes\bbC[z^{-1},z]$ carries an obvious level zero action of $\widehat{\fraks\frakl}(e)$. It induces an action of $\widehat{\fraks\frakl}(e)$ on the space $\bigwedge^\nu(V_e)=\bigotimes_{p=1}^\ell\bigwedge^{\nu_p}(V_e)$ of tensor product of wedge powers.
Write $v_{i+er}=\varepsilon_i\otimes z^r\in V_e$ for each $i\in[1,e]$, $r\in\bbZ$.
Note that a tuple $a\in\bbZ^N$ of the form $a=(a_{1,1},a_{1,2},\dots,a_{\ell,\nu_\ell})$
belongs to $P^\nu+\rho$ if and only if we have
$a_{p,1}>a_{p,2}>\cdots>a_{p,\nu_p}$ for each $p\in[1,\ell]$.
Set
$\wedge^\nu(a)=\bigotimes_{p=1}^\ell(v_{a_{p,1}}\wedge v_{a_{p,2}}
\wedge\dots\wedge v_{a_{p,\nu_p}})$.
Then the set $\{\wedge^\nu(a)\,;\,a\in P^\nu+\rho\}$  is a basis of $\bigwedge^\nu (V_e).$ 
Further, for each $\lambda\in P^\nu$ the element $\wedge^\nu(\lambda+\rho)$ 
has the weight $\widehat\wt_e(\lambda)$.
Let $\bigwedge^\nu(V_e)_\mu$ be the weight subspace of $\bigwedge^\nu(V_e)$
of weight 
$\bar\mu=\sum_{i=1}^e\mu_i\,\varepsilon_i$ with respect to the action of
$\fraks\frakl(e)$.
The set
$\{\wedge^\nu(a)\,;\,a\in (P^\nu+\rho)\cap(\widetilde W\cdot_e1_\mu)\}$ 
is a basis of $\bigwedge^\nu (V_e)_\mu.$ 
In a similar way, we equip the vector space $V_\ell=\bbC^\ell\otimes\bbC[z^{-1},z]$
with the basis $\{v_{p+\ell r}\,;\,p\in[1,\ell],\,r\in\bbZ\}$. 
Then, we define the $\widehat{\fraks\frakl}(\ell)$-module
$\bigwedge^\mu(V_\ell)$ as above.

\smallskip

Next, consider the $\widehat{\fraks\frakl}(e)\times\widehat{\fraks\frakl}(\ell)$-module
$\bigwedge^N(V_{e,\ell})$ with $V_{e,\ell}=\bbC^e\otimes\bbC^\ell\otimes\bbC[z^{-1},z].$
Let $\bar\nu=\sum_{p=1}^\ell\nu_p\,\varepsilon_p,$ viewed as a weight of ${\fraks\frakl}(\ell)$.
The weight subspace of $\bigwedge^N(V_{e,\ell})$ of weight
$\bar\mu$ for the $\fraks\frakl(e)$-action is isomorphic to
the $\widehat{\fraks\frakl}(\ell)$-module
$\bigwedge^\mu(V_\ell)$, and the weight subspace of weight
$\bar\nu$ for the $\fraks\frakl(\ell)$-action is isomorphic to
the $\widehat{\fraks\frakl}(e)$-module
$\bigwedge^\nu(V_e)$.

Since ${\textstyle\bigwedge^\nu}(V_e)_\mu$ and ${\textstyle\bigwedge^\mu}(V_\ell)_\nu$ are both canonically identified
with the weight $(\bar\mu,\bar\nu)$ subspace $\bigwedge^N(V_{e,\ell})_{\mu,\nu}$ of $\bigwedge^N(V_{e,\ell})$
for the ${\fraks\frakl}(e)\times{\fraks\frakl}(\ell)$-action,
we have a canonical linear isomorphism
$LR:{\textstyle\bigwedge^\nu}(V_e)_\mu\to
{\textstyle\bigwedge^\mu}(V_\ell)_\nu$.
More precisely, we have an isomorphism
$f_{\mu,\nu}:{\textstyle\bigwedge^\nu}(V_e)_\mu\to\bigwedge^N(V_{e,\ell})_{\mu,\nu}$
which takes the element $\wedge^\nu(a)$,
with $a=(a_{p,k})$ in $P^\nu+\rho,$
to the monomial
$\bigwedge_{(p,k)}(v_{i_{p,k}}\otimes v_p\otimes z^{r_{p,k}}).$
Here $i_{p,k}\in[1,e]$, $r_{p,k}\in\bbZ$ are such that $a_{p,k}=i_{p,k}+e\,r_{p,k}$, and
the pair $(p,k)$ runs from $(1,1)$ to $(\ell,\nu_\ell)$ in lexicographic order.
Next, there is an obvious isomorphism of ${\fraks\frakl}(e)\times{\fraks\frakl}(\ell)$-modules
$\tau:\bigwedge^N(V_{e,\ell})\to\bigwedge^N(V_{\ell,e})$
 which exchanges the vector spaces
$\bbC^e$ and $\bbC^\ell$.
Then, we define $LR=(f_{\nu,\mu})^{-1}\circ \tau\circ f_{\mu,\nu}$.
We'll call $LR$ the \emph{level-rank duality}.

Our next result compares the isomorphisms $LR$ and $K$. To do so, we
consider the $\bbC$-linear isomorphism  $\theta:[\bfO^\nu_{-e}]\to\bigwedge^\nu(V_e)$  such that
$\theta([V^\nu(\lambda_e)])=\wedge^\nu(\lambda+\rho)$ for each $\lambda\in P^\nu$. 
It takes $[\bfO^\nu_{\mu,-e}]$ onto the weight subspace
$\bigwedge^\nu(V_e)_\mu$.
We can now prove the following.

\vspace{2mm}

\begin{prop}\label{prop:B1}
We have a commutative square
$$\xymatrix{
[\bfO^\nu_{\mu,-e}]\ar[r]^-{K}\ar[d]_\theta&[\bfO^\mu_{\nu,-\ell}]\ar[d]^\theta\\
{\textstyle\bigwedge^\nu}(V_e)_\mu\ar[r]^-{LR}&
{\textstyle\bigwedge^\mu}(V_\ell)_\nu.
}$$
\end{prop}

\vspace{.5mm}

\begin{proof}
Let $\triv_\mu$, $\sgn_\mu$ be the idempotents of the $\bbC$-algebra of the parabolic subgroup 
$W_\mu\subset\widetilde W$ 
associated with the trivial representation and the signature.

For each tuple $a\in\bbZ^N$ we write $v(a)=\bigotimes_{k=1}^N v_{a_k}$.
Let $(V_e)_\mu^{\otimes N}$ be the subspace of $(V_e)^{\otimes N}$ of weight 
$\bar\mu$ relatively to the $\fraks\frakl(e)$-action.
The element
$v(a)$ of $(V_e)^{\otimes N}$ belongs to 
$(V_e)_\mu^{\otimes N}$ if and only if $a\in\widetilde W\cdot_e1_\mu$.
Therefore, the assignment $x\cdot\triv_\mu\mapsto v(x\cdot_e 1_\mu)$ with $x\in\widetilde W$
yields a $\bbC$-linear isomorphism $\bbC\widetilde W\cdot\triv_\mu\to(V_e)_\mu^{\otimes N}.$

Since $x\,\bullet\,\oo_{\mu,-}=(x\cdot_e1_\mu-\rho)_e$,
we have $x\in I^\nu_{\mu,-}$ if and only if the $N$-tuple $x\cdot_e1_\mu$ lies in $P^\nu+\rho$.
We deduce that the isomorphism above factors to a $\bbC$-linear isomorphism
$B_{\nu,\mu}:\sgn_\nu\!\cdot\,\bbC\widetilde W\cdot\triv_\mu\to\bigwedge^\nu (V_e)_\mu$ such that
$\sgn_\nu\!\cdot\, x\cdot\triv_\mu\mapsto 
\wedge^\nu(x\cdot_e1_\mu).$
Under this isomorphism, the basis $\{\wedge^\nu(a)\,;\,a\in (P^\nu+\rho)\cap(\widetilde W\cdot_e1_\mu)\}$  of 
$\bigwedge^\nu (V_e)_\mu$ is identified with
$\{\sgn_\nu\!\cdot\, x\cdot\triv_\mu\,;\,x\in I^\nu_{\mu,-}\}$.

By Corollary \ref{lem:C2}, we have a bijection $ I^\nu_{\mu,-}\to I^\mu_{\nu,-}$
such that $x\mapsto x^{-1}$.
We claim that, under the isomorphisms $B_{\nu,\mu}$ and $B_{\mu,\nu}$,
the map $LR$ is identified with the $\bbC$-linear map
$\sgn_\nu\!\cdot\,\bbC\widetilde W\cdot\triv_\mu\to\sgn_\mu\!\cdot\,\bbC\widetilde W\cdot\triv_\nu$
such that
$\sgn_\nu\!\cdot\, x\cdot\triv_\mu\mapsto\sgn_\mu\!\cdot\, x^{-1}\cdot\triv_\nu$.
Since, by Remark \ref{rk:2.20}, the isomorphism $K$ takes the element
$[V^\nu(x\bullet\oo_{\mu,-})]$ to $[V^\mu(x^{-1}\bullet\oo_{\nu,-})]$,
this finishes the proof of the proposition.

The claim is a direct consequence of the definition of the map $LR$.
\end{proof}

\vspace{3mm}

\subsection{The CRDAHA} 
\label{sec:CRDAHA}
Let $\Gamma\subset\bbC^\times$ be the group of the $\ell$-th roots of 1.
Fix $\nu\in\bbZ^\ell(N)$ and fix an integer $d>0$. 

Let $\Gamma_d=\frakS_d\ltimes\Gamma^d$. It is a complex reflection group.
Let $\scrP_d$ be the set of {\it partitions of $d$}. Write $|\lambda|=d$ 
and let $l(\lambda)$ be the {\it length} of $\lambda$.
Let $\scrP^\ell_d$ be the set of {\it $\ell$-partitions of $d$}, i.e., the
set of $\ell$-tuples $\lambda=(\lambda_p)$ of partitions with
$\sum_p|\lambda_p|=d$. There is a bijection between the set of irreducible representations of $\Gamma_d$ and $\scrP^\ell_d$, see e.g. \cite[sec.~6]{Ro}.

We set 
$h=1/e$ and
$h_p=\nu_{p+1}/e-p/\ell$ for each $p\in\bbZ/\ell\bbZ.$ 
Let $H^\nu(d)$ 
be the CRDAHA of $\Gamma_d$
with parameters $h$ and $(h_p)$.
It is the quotient
of the smash product of $\bbC \Gamma_{d}$ and the tensor algebra of
$(\bbC^2)^{\oplus d}$ by the relations
$$[y_i,x_i]=1-k\sum_{j\neq i}\sum_{\gamma\in\Gamma}s_{ij}^{\gamma}
-\sum_{\gamma\in\Gamma\setminus\{1\}}c_\gamma\gamma_i,$$
$$[y_i,x_j]=k\sum_{\gamma\in\Gamma}\gamma s_{ij}^{\gamma}
\quad \text{if}\ i\neq j,$$
$$[x_i,x_j]=[y_i,y_j]=0.$$
The
parameters are such that
$k=-h$ and $-c_\gamma=\sum_{p=0}^{\ell-1}\gamma^{-p}(h_{p}-h_{p-1})$ for $\gamma\neq1.$ 

\smallskip

Let $\calO^\nu_{-e}\{d\}$ 
be the  category O of $H^\nu(d)$.
It is a highest weight category with set of standard modules
$\Delta(\calO^\nu_{-e}\{d\})=\{\Delta(\lambda)\,;\,\lambda\in \scrP^\ell_d\},$
see \cite[sec.~3.3, 3.6]{SV}, \cite[sec.~3.6]{VV}.
To avoid confusions we may write
$\Delta_e^\nu(\lambda)=\Delta(\lambda).$
Let $S_e^\nu(\lambda)$ be the top of
$\Delta_e^\nu(\lambda)$.

\smallskip

We have the block decomposition
$\calO^{\nu}_{-e}\{d\}=\bigoplus_{\mu}\calO^{\nu}_{\mu}\{d\},$
where $\mu=(\mu_1,\dots,\mu_e)$ is identified with $\bar\mu=\sum_{i=1}^{e}\mu_i\,\varepsilon_i$
and it runs over the set of all integral weights of $\fraks\frakl(e)$.
By \cite{LM}, these blocks are determined by the following combinatorial rule.

\smallskip

For each $\lambda\in \scrP^\ell_d,$ an {\it $(i,\nu)$-node} in $\lambda$ 
is a triple $(x,y,p)$ with 
$x,y>0$ and $y\leqslant(\lambda_p)_x$ 
such that
$y-x+\nu_p=i$ modulo $e$. 
Let $n_{i}^\nu(\lambda)$ be the number of $(i,\nu)$-nodes 
in $\lambda$.
Then, we have $\Delta^\nu_e(\lambda)\in\calO^{\nu}_{\mu}\{d\}$ if and only if 
$\lambda\in \Lambda^\nu_\mu\{d\}$, where
\begin{equation}
\label{form:1.2}
\Lambda^\nu_\mu\{d\}=\Big\{\lambda\in \scrP^\ell_d\,;\,\sum_{p=1}^{\ell}\omega_{\nu_p}-
\sum_{i=1}^{e-1}\bigl(n_i^\nu(\lambda)-n_0^\nu(\lambda)\bigr)\,\alpha_i=\bar\mu\Big\}.
\end{equation}
The expression above should be regarded as
an equality of integral weights of $\fraks\frakl(e)$. More precisely,
the symbols $\omega_1,\omega_2,\dots,\omega_{e-1}$ and
$\alpha_1,\alpha_2,\dots,\alpha_{e-1}$ are respectively the fundamental weights and
the simple roots of $\fraks\frakl(e)$, and the subscript $\nu_p$ in \eqref{form:1.2} should be 
viewed as the residue class of $\nu_p$ in $[1,e)$.
See \cite[lem.~5.16]{SV} for details.

By \cite[rem.~4.5]{RSVV}, the condition \eqref{form:1.2} is equivalent to
$$\Lambda^\nu_\mu\{d\}=\Big\{\lambda\in \scrP^\ell_d\,;\,\wt_e(\lambda+\rho_\nu-\rho)=
\bar\mu\Big\}.$$
Note that an integral weight of $\fraks\frakl(e)$ can be represented
by an element of $\bbZ^e(k)$ if and only if it lies in $\omega_k+e\bbZ\Pi$.
Thus, since $\nu\in\bbZ^\ell(N)$, if $\Lambda^\nu_\mu\{d\}\neq\emptyset$ then
$\mu$ can be represented by an $e$-tuple in $\bbZ^e(N)$.
Indeed, it is not difficult to check that if $\Lambda^\nu_\mu\{d\}\neq\emptyset$ then
$\mu\in\scrC^e_N.$

\smallskip

For each $\mu\in\scrC^e_N$
and $a\in\bbN,$ we write
$\calO_{\mu}^\nu[a]=\calO^\nu_\mu\{d\}$ and
$\Lambda^\nu_{\mu}[a]=\Lambda^\nu_\mu\{d\}$, where
$d=a\,e+\bigl\langle \sum_{p=1}^\ell\omega_{\nu_p}-\mu:\rho\bigr\rangle.$
For each $\lambda\in\scrP^\ell$, we have
$\lambda\in\Lambda^{\nu}_{\mu}[a]$
if and only if $n_0^\nu(\lambda)=a$ and $\wt_e(\lambda+\rho_\nu-\rho)=\bar\mu$.

\smallskip
We are interested by the following conjecture \cite[conj.~6]{CM}.

\vspace{2mm}

\begin{conj} \label{conj:A.0}
The blocks $\calO^{\nu}_{\mu}[a]$ and 
$\calO^{\mu}_{\nu}[a]$ have a (standard) Koszul grading.
The Koszul dual of  $\calO^{\nu}_{\mu}[a]$ is equivalent to the Ringel dual of
$\calO^{\mu}_{\nu}[a]$. 
\end{conj}

\vspace{2mm}

\subsection{The Schur category }
Fix a composition $\nu\in\scrC^\ell_N$. 
Let $\scrP^\nu_d=\{\lambda\in \scrP^\ell_d\,;\,l(\lambda_p)\leqslant\nu_p\}$.
There is an  inclusion
$\scrP^\nu_d\subset P^\nu\subset \bbZ^N$ such that $$\lambda\mapsto
\bigl(\lambda_10^{\nu_1-l(\lambda_1)}, \lambda_20^{\nu_2-l(\lambda_2)},\dots,
\lambda_\ell 0^{\nu_\ell-l(\lambda_\ell)}\bigr),$$
where the partition $\lambda_p$ is viewed as an element in $\bbZ^{\l(\lambda_p)}$.

Consider the $\nu$-dominant weight  $\rho_\nu=(\nu_1,\nu_1-1,\dots,1,\nu_2,\nu_2-1,\dots,\nu_\ell,\dots 1)\in P^\nu.$
For each $\lambda\in \scrP^\nu,$ we abbreviate
$V_e^\nu(\lambda)=V^\nu((\lambda+\rho_\nu-\rho)_e)$ and
$L_e^\nu(\lambda)=L^\nu((\lambda+\rho_\nu-\rho)_e)$.

Following \cite{VV}, let $\bfA^{\nu}_{-e}\{d\}\subset\bfO^{\nu}_{-e}$ be the Serre subcategory 
consisting of the
finite length $\bfg$-modules of level $-e-N$ 
whose constituents belong to the set
$\{L_e^\nu(\lambda)\,;\,\lambda\in \scrP^\nu_d\}.$
By \cite{VV}, it is a highest weight category with
the set of standard modules
$\Delta(\bfA^{\nu}_{-e}\{d\})=\{V_e^\nu(\lambda)\,;\,\lambda\in \scrP^\nu_d\}.$

We have the following conjecture \cite[conj.~8.8]{VV}.

\vspace{2mm}

\begin{conj}
\label{conj:A.1}
There is a quotient functor 
$\calO^\nu_{-e}\{d\}\to\bfA^{\nu}_{-e}\{d\}$ taking
$S_e^\nu(\lambda)$ to $L_e^\nu(\lambda)$ if $\lambda\in \scrP^\nu_d$
and to 0 else. If $\nu_p\geqslant d$ for each $p$, then we have $\scrP^\nu_d=\scrP_d$ and the functor above is an equivalence
of highest weight categories.
\end{conj}

A proof of Conjecture \ref{conj:A.1} is given in \cite{RSVV}.

\vspace{2mm}

\subsection{Koszul duality of the Schur category} 
Fix an integer $d\geqslant 0$ and compositions $\mu\in\scrC^e_N$, $\nu\in\scrC^\ell_N$.
Let $\bfA^{\nu}_{\mu}\{d\},$ $\bfA^\nu_{\mu}[a]$
be the Serre subcategories of $\bfA^{\nu}_{-e}=\bigoplus_{d\in\bbN}\bfA^{\nu}_{-e}\{d\}$
generated by the modules $L^\nu_e(\lambda)$ such that
$\lambda\in\Lambda^{\nu}_{\mu}\{d\}$, $\Lambda^{\nu}_{\mu}[a]$ respectively.
Write $\bfA^{\nu}_{\mu,-e}=\bigoplus_{d\in\bbN}\bfA^{\nu}_{\mu,-e}\{d\}.$

In view of Conjecture \ref{conj:A.1}, the following claim can be regarded
as an analogue of Conjecture \ref{conj:A.0}.

\vspace{2mm}

\begin{thm}\label{thm:1.3}
The category $\bfA^{\nu}_{\mu}[a]$ has a (standard) Koszul grading.
The Koszul dual of  $\bfA^{\nu}_{\mu}[a]$ is equivalent to the Ringel dual of
$\bfA^{\mu}_{\nu}[a]$. 
\end{thm}

\vspace{.5mm}

\begin{proof}
Recall the $\bbC$-linear map
$K:[\bfO^\nu_{\mu,-e}]\to[\bfO^\mu_{\nu,-\ell}]$.
First, we claim that
$K([\bfA^\nu_{\mu,-e}])=[\bfA^\mu_{\nu,-\ell}]$.
By Proposition \ref{prop:B1} and the definition of the Schur category,
we must compute the element
$LR(\wedge^\nu(a)),$ for each $a=\lambda+\rho_\nu$ such that $\lambda\in\scrP^\nu$ and
$\wedge^\nu(a)\in{\textstyle\bigwedge^\nu}(V_e)_\mu$.

First, we consider the $\bbC$-vector space
$LR\big({\textstyle\bigwedge^\nu}(V_e)_\mu\big)$.
The map $f_{\mu,\nu}$ takes the monomial $\wedge^\nu(a),$ 
with $a=(a_{p,k})\in P^\nu+\rho,$ to the monomial
$\bigwedge_{(p,k)}(v_{i_{p,k}}\otimes v_p\otimes z^{r_{p,k}})$ such that
$a_{p,k}=i_{p,k}+e\,r_{p,k}$. The weight subspace
${\textstyle\bigwedge^\nu}(V_e)_\mu$ is spanned by the monomials
$\wedge^\nu(a)$ as above such that 
$\mu_i=\sharp\{(p,k)\,;\,i_{p,k}=i\}$ for each $i\in[1,e]$.
Therefore, we have
$f_{\mu,\nu}\big({\textstyle\bigwedge^\nu}(V_e)_\mu\big)=\bigwedge^N(V_{e,\ell})_{\mu,\nu},$ 
which is the subspace of
$\bigwedge^N(V_{e,\ell})$ spanned by all
the monomials
$\wedge^N(x)=\bigwedge_{(i',p',r')}(v_{i'}\otimes v_{p'}\otimes z^{r'})$ 
where $(i',p',r')$ runs over the entries of an $N$-tuple
$x\in([1,e]\times[1,\ell]\times\bbZ)^N$
such that
$\mu_i=\sharp\{(i',p',r')\in x\,;\,i'=i\}$ and
$\nu_p=\sharp\{(i',p',r')\in x\,;\,p'=p\}$ for each $i\in[1,e],$ $p\in[1,\ell]$.

Next, we consider the subspace
$LR\circ \theta\big([\bfA^\nu_{\mu,-e}]\big)$.
The subspace $\theta\big([\bfA^\nu_{\mu,-e}]\big)$ of ${\textstyle\bigwedge^\nu}(V_e)_\mu$ 
is spanned by the monomials $\wedge^\nu(a)$
such that there is an $\ell$-partition $\lambda\in\Lambda^\nu_\mu$
with $a_{p,k}=\lambda_{p,k}+\nu_p+1-k$ for each $p,k$. Here 
$\lambda_{p,k}$ is the 
$k$-th part of the $p$-th partition $\lambda_p$ of $\lambda$.
Thus, the map $f_{\mu,\nu}$ takes 
$\theta\big([\bfA^\nu_{\mu,-e}]\big)$ to
the subspace of
$\bigwedge^N(V_{e,\ell})_{\mu,\nu}$ spanned by all
the monomials
$\wedge^N(x)$ as above such that 
$x\in([1,e]\times[1,\ell]\times\bbN)^N$.
This implies in particular that 
$LR\circ \theta\big([\bfA^\nu_{\mu,-e}]\big)=\theta\big([\bfA^\mu_{\nu,-\ell}]\big)$, proving the claim.

Now, to finish the proof of the theorem, it is enough to observe that
$\theta([\bfA^\nu_{-e}[a]])$ is the sum, over all affine weights
$\hat\mu\in P+a\delta,$ of the intersection of
of $\theta([\bfA^\nu_{-e}])$ with the subspace of 
${\textstyle\bigwedge^\nu}(V_e)$ of weight $\hat\mu$
for the $\widehat{\fraks\frakl}(e)$-action. 
\end{proof}

\vspace{2mm}

The following is obvious.

\vspace{2mm}

\begin{cor}
Conjecture \ref{conj:A.1} implies Conjecture \ref{conj:A.0}. 
\qed
\end{cor}

\vspace{2mm}

\subsection{Koszulity of the $q$-Schur algebra}\label{sec:schurkoszul}
Now, we consider the case $\ell=1$.

By the Kazhdan-Lusztig equivalence \cite[thm.~38.1]{KL}, 
the category $\bfA^{(N)}_{-e}\{d\}$ is equivalent to the module category of the $q$-Schur algebra.
This equivalence is an equivalence of highest-weight categories. This follows from the fact that
an equivalence of abelian categories $\bfC_1\to\bfC_2$ such that $\bfC_1$, $\bfC_2$
are of highest weight
and that the induced bijection $\Irr(\bfC_1)\to\Irr(\bfC_2)$ is increasing is an equivalence of highest weight 
categories, i.e., it induces a bijection $\Delta(\bfC_1)\to\Delta(\bfC_2)$.
See also \cite[thm.~A.5.1]{VV} for a more detailed proof.

Therefore, Theorem \ref{thm:1.3} implies the following.

\vspace{2mm}

\begin{cor} The $q$-Schur algebra is Morita
equivalent to a Koszul algebra which is balanced.
\qed
\end{cor}

\vspace{2mm}

See also \cite{CM} for a different approach to this result.
Note that our proof gives
an explicit description of the Koszul dual of the $q$-Schur algebra
in term of affine Lie algebras.

\appendix

\vspace{1cm}

\section{Finite codimensional affine Schubert varieties}\label{app:B}

By a  scheme we always mean a scheme  over the field $k$, and by a variety we'll mean a reduced scheme of finite type
which is quasi-projective.
Let $T$ be a torus.
A {\it $T$-scheme}  is a scheme  with an algebraic $T$-action.

Fix a contractible topological $T$-space $ET$ with a topologically free $T$-action.
For any $T$-scheme $X,$ we set $X_T=X\times_TET$. There are obvious projections
$p:X\times ET\to X$ and $q:X\times ET\to X_T.$

We'll  identify $S$ with the $T$-equivariant cohomology $k$-space $H_T(\bullet)$ of a point.

\subsection{Equivariant perverse sheaves on finite dimensional varieties}
\label{sec:HT}

Fix a $T$-variety $X.$
Let $\bfD_T^b(X)$ be the $T$-equivariant bounded derived category.
It is the full subcategory of the bounded derived category $\bfD^b(X_T)$
of sheaves of $k$-vector spaces on $X_T$ (for the analytic topology on $X$), 
consisting of the complexes of sheaves $\calF$ with an isomorphism
$q^*\calF\simeq p^*\calF_X$ for some $\calF_X\in\bfD^b(X)$. 

The cohomology of an object $\calF\in\bfD^b_{T}(X)$ is the graded $S$-module
$
H(\calF)=\bigoplus_{i\in\bbZ}
\Hom_{\bfD^b_T(X)}\bigl(k_X,\calF[i]\bigr).
$
For each $\calE,\calF\in\bfD^b_T(X)$ we have an object
$\calR\calH om(\calE,\calF)$ in $\bfD^b(X_T)$ such that
$\Ext_{\bfD^b_T(X)}(\calE,\calF)=H(\calR\calH om(\calE,\calF)).$

If $Y\subset X$ is a 
$T$-equivariant embedding, let $\bar Y$ be its closure in $X$.
Let $IC_T(\overline{Y})$ be the minimal extension of 
$k_{Y}[\dim Y]$ (the shifted equivariant constant sheaf). 
It is a perverse sheaf on $X$ supported on 
$\bar Y$.

Let $IH_T(\bar Y)$ be the $k$-space of equivariant intersection cohomology of $\bar Y$
and let $H_T(\bar Y)$ be its equivariant cohomology.
We have
$IH_T(\bar{Y})=H(IC_T(\bar Y))$ and
$H_T(\bar Y)=H(k_{\bar Y})$.
Forgetting the $T$-action we define $IC(\bar Y)$, $IH(\bar Y)$ and $H(\bar Y)$ in the same way.

We'll say that $X$ is {\it good}
if the following hold
\begin{itemize}
\item $X$ has a  Whitney stratification
$X=\bigsqcup_xX_x$ by $T$-stable subvarieties,
\item $X_x=\bbA^{l(x)}$ with a linear $T$-action, for some integer $l(x)\in\bbN$,
\item there are integers
$n_{x,y,i}\in\bbN$ such that
\begin{equation}
\label{parity1}
j_y^*IC_T(\bar X_x)=\bigoplus_pk_{X_y}[l(y)][l(x)-l(y)-2p]^{\oplus n_{x,y,p}}
\end{equation}
where $j_x$ is the inclusion $X_x\subset X$.
Equivalently, we have
\begin{equation}
\label{parity2}
j_y^!IC_T(\bar X_x)=
\bigoplus_{q}k_{X_y}[l(y)][l(y)-l(x)+2q]^{\bigoplus n_{x,y,q}}.
\end{equation}
\end{itemize}
We call the third property the {\it parity vanishing}.

\vspace{2mm}

\begin{prop}
\label{prop:IC/ICT/IH}
If $X$ is a good $T$-variety, then the following hold

(a) 
$\dim\Ext_{\bfD^b(X)}^i
\bigl(IC(\bar X_x),IC(\bar X_y)\bigr)=
\sum_{z,p,q}n_{x,z,p}n_{y,z,q}$ where $z,p,q$ runs over the set of triples such that
$2l(z)-l(y)-l(x)+2p+2q=i,$

(b) $\Ext_{\bfD^b(X)}\bigl(IC(\bar X_x),
IC(\bar X_y)\bigr)=
k\Ext_{\bfD^b_T(X)}\bigl(IC_T(\bar X_x),
IC_T(\bar X_y)\bigr),$

(c) $\Ext_{\bfD^b_T(X)}
\bigl(IC_T(\bar X_x),IC_T(\bar X_y)\bigr)
=\Hom_{H_T(X)}\bigl(IH_T(\bar X_x),IH_T(\bar X_y)\bigr),$

(d) $\Ext_{\bfD^b(X)}
\bigl(IC(\bar X_x),IC(\bar X_y)\bigr)
=\Hom_{H(X)}\bigl(IH(\bar X_x),IH(\bar X_y)\bigr),$

(e) $IH(\bar X_x)$ vanishes in degrees $\not\equiv l(x)$ modulo 2 and
$IH(\bar X_x)=kIH_T(\bar X_x).$
\end{prop}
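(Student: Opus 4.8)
The single mechanism behind all five parts is that the parity-vanishing hypothesis makes the complexes $IC(\bar X_x)$, and their equivariant lifts $IC_T(\bar X_x)$, \emph{even}: for every stratum $X_z$ the complexes $j_z^*IC_T(\bar X_x)$ and $j_z^!IC_T(\bar X_x)$ are direct sums of shifted constant (equivariant) sheaves on $\bbA^{l(z)}$ concentrated in degrees of one fixed parity, and likewise non-equivariantly. Equivariantly this is not \eqref{parity1}--\eqref{parity2} verbatim, but it follows from it because each stratum $X_z=\bbA^{l(z)}$ is equivariantly formal, so $j_z^*IC_T(\bar X_x)$ is the non-equivariant stalk tensored over $k$ with $H_T(X_z)=S$. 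I would then isolate the following ``formality for even complexes'' statement and deduce everything from it: if $\calE,\calF\in\bfD^b_T(X)$ are even, then $H(\calR\calH om(\calE,\calF))$ is a free graded $S$-module, and the natural map
\[
\Ext^\bullet_{\bfD^b_T(X)}(\calE,\calF)\longrightarrow\Hom^\bullet_{H_T(X)}\bigl(H(\calE),H(\calF)\bigr),
\]
induced by applying the functor $H=H(X,-)$ and remembering the $H_T(X)=\Ext^\bullet(k_X,k_X)$-action, is an isomorphism; the same holds non-equivariantly with $H_T(X)$ replaced by $H(X)$.

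\textbf{Proof of the lemma.} I would argue by induction on the number of strata meeting $\supp(\calE)\cup\supp(\calF)$. Pick a closed stratum $i\colon X_z\hra X\hookleftarrow U\colon j$ (so $X_z$ is minimal) and apply $H(X,-)$ to the recollement triangle $i_*i^!\to\id\to j_*j^*$ evaluated on $\calR\calH om(\calE,\calF)$, using $i^!\calR\calH om(\calE,\calF)=\calR\calH om(i^*\calE,i^!\calF)$ and $j^*\calR\calH om(\calE,\calF)=\calR\calH om(j^*\calE,j^*\calF)$. The ``closed'' term $H(X_z,\calR\calH om(i^*\calE,i^!\calF))$ is a sum of shifts of $S$ (resp.\ $k$) concentrated in a single parity, because $i^*\calE$ and $i^!\calF$ are sums of shifted constant sheaves on $\bbA^{l(z)}$ of fixed parities and $H(\bbA^{l(z)},\underline k)=k$ in degree $0$; the ``open'' term is $S$-free and of the same fixed parity by induction. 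Since in the resulting long exact sequence the connecting maps go between objects of opposite parity, they vanish, so the sequence breaks into short exact sequences; the subquotients being free graded $S$-modules with free quotient, the middle term is $S$-free. The natural map to $\Hom_{H_T(X)}$ is compatible with this decomposition, and for $X=\pt$ it is the tautology $\Ext^\bullet_{\bfD^b_T(\pt)}(k,k)=S=\Hom_S(S,S)$, which gives the base case; the induction step needs that $H$ is exact on the above triangle and identifies $\Ext$'s over $X$, $X_z$, $U$ with the corresponding $\Hom$'s of cohomology modules, which is exactly the input of GKM/Braden--MacPherson theory for affine-celled GKM varieties, cf.~\cite{GKM},\cite{BM}.

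\textbf{Deducing (a)--(e).} Part (c) is the lemma applied to $\calE=IC_T(\bar X_x)$, $\calF=IC_T(\bar X_y)$, together with $H(IC_T(\bar X_x))=IH_T(\bar X_x)$. Part (e): apply the freeness half of the lemma to $\calF=IC_T(\bar X_x)$ to get that $IH_T(\bar X_x)$ is $S$-free, with the parity bookkeeping (via $H_c(\bbA^{l(z)},-)$ along the strata) putting it in degrees $\equiv l(x)\pmod2$; equivariant formality then gives $IH(\bar X_x)=k\otimes_S IH_T(\bar X_x)$. Part (a): run the stratification computation for $\calR\calH om(IC(\bar X_x),IC(\bar X_y))$; by \eqref{parity1}--\eqref{parity2} the stratum $X_z$ contributes, for each pair $p,q$, a one-dimensional space in a single degree with multiplicity $n_{x,z,p}n_{y,z,q}$, and the degeneration of the spectral sequence (again by parity) yields the stated dimension formula. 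Part (b): by \eqref{form:rhom}, $\Ext^\bullet_{\bfD^b_T(X)}(IC_T(\bar X_x),IC_T(\bar X_y))=H(\calR\calH om(IC_T(\bar X_x),IC_T(\bar X_y)))$ is $S$-free by the lemma, and reduction modulo $S^+$ for equivariantly formal complexes identifies its fibre with $\Ext^\bullet_{\bfD^b(X)}(IC(\bar X_x),IC(\bar X_y))$. Part (d) is the non-equivariant form of the lemma, or equivalently $\otimes_S k$ applied to (c), combined with (b) and $H(X)=k\otimes_S H_T(X)$, $IH(\bar X_x)=k\otimes_S IH_T(\bar X_x)$.

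\textbf{Main obstacle.} The genuinely delicate points all sit at the equivariant/non-equivariant interface. First, one must promote the non-equivariant parity hypothesis to the statement that $IC_T(\bar X_x)$ is even and that its stalks along strata are \emph{constant} equivariant sheaves — this is the assertion that $IC_T$ is a parity sheaf on an affine-celled GKM variety, and it rests on equivariant formality of the cells. Second, and harder, is the identification of the geometric $\Ext$ with $\Hom_{H_T(X)}$ of the equivariant cohomologies: showing that $H(X,-)$ is fully faithful on even complexes and exact on the recollement triangles is where the moment-graph/GKM machinery is really used, and one must be careful that the reduction-mod-$S^+$ statements hold at the level of $\calR\calH om$-complexes and of $\Hom_{H_T(X)}$-modules — not merely of cohomology — which is ensured by the $S$-freeness established in the lemma.
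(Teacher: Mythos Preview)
Your overall strategy matches the paper's: parts (a), (b), (e) are handled by parity-degeneration of stratification and equivariant-to-ordinary spectral sequences, and parts (c), (d) by an induction on strata using recollement triangles for $\calR\calH om$. The paper cites \cite[thm.~3.4.1]{BGS} for (a), uses the spectral sequence $E_2^{p,q}=S^p\otimes H^q(\calF_X)\Rightarrow H^{p+q}(\calF)$ of \cite[sec.~5.5]{GKM} for (b), and degenerates the stratification spectral sequence for (e), exactly as you outline. One cosmetic difference: the paper peels off an \emph{open} stratum (the top cell $X_x$ in $X_{\leqslant x}$) rather than a closed one, so its $i$ and $j$ are swapped relative to yours; this is immaterial.

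The real gap is in your ``proof of the lemma'' for (c)/(d). You correctly argue that the $\Ext$ long exact sequence splits into short exact sequences by parity, and that the outer vertical maps (over the single stratum and over the complement) are isomorphisms. But to conclude that the middle map is an isomorphism you need the $\Hom_{H_T(X)}$ side to form a short exact sequence as well, and this is precisely what you do not prove. You defer it to ``the input of GKM/Braden--MacPherson theory'', but in fact the paper supplies a concrete argument here which is not a citation: one first checks, again by parity, that $0\to H(i_*i^!\calF_2)\to H(\calF_2)\to H(j_*j^*\calF_2)\to 0$ and the analogous sequence for $\calF_1$ are short exact (this gives injectivity of the map you would call $a$), and then proves $\Im(a)=\Ker(b)$ by a fundamental-class trick: if $\phi\in\Ker(b)$ then $\Im(\phi)\subset H(i_*i^!\calF_2)$, and since the fundamental class $\omega\in H_c(X_x)$ annihilates $H(i_*i^!\calF_2)$ one gets $\phi\circ\omega=0$, i.e.\ $\phi$ vanishes on $H(j_!j^*\calF_1)=\omega\cdot H(\calF_1)$, hence $\phi\in\Im(a)$. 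This step is the actual content of Ginzburg's argument \cite[sec.~1]{G} (cf.\ \cite[sec.~3.3]{BY}), and without it your induction does not close. Your ``main obstacle'' paragraph correctly identifies this as the delicate point, but treats it as known input rather than proving it; the paper does prove it, and you should too.
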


\vspace{.5mm}

\begin{proof}
Part $(a)$  is  \cite[thm.~3.4.1]{BGS}.
We sketch briefly the proof  for the comfort of the reader.
Set $X_p=\bigsqcup_{p=l(z)}X_z$.
Let $j_p$ be the inclusion of $X_p$ into $X$.
For each $\calF\in\bfD^b(X),$
there is a spectral sequence
$E_1^{p,q}=H^{p+q}(j_p^!\calF)\Rightarrow H^{p+q}(\calF).$
Therefore, if
$\calF=\calR\calH om\bigl(IC(\bar X_x),IC(\bar X_y)\bigr),$
we get a spectral sequence
$$\gathered
E_1^{p,q}
=\bigoplus_{z}
H^{p+q}
\calR\calH om\bigl(j_z^*IC(\bar X_x),j_z^!IC(\bar X_y)\bigr)\Rightarrow 
\Ext_{\bfD^b(X)}^{p+q}
\bigl(IC(\bar X_x),IC(\bar X_y)\bigr),
\endgathered$$
where $z$ runs over the set of elements with $l(z)=p$.
By \eqref{parity1}, \eqref{parity2}
the spectral sequence degenerates at $E_1$,
and we get
$$\dim\Ext_{\bfD^b(X)}^i
\bigl(IC(\bar X_x),IC(\bar X_y)\bigr)=
\sum_{z,p,q}n_{x,z,p}n_{y,z,q},$$
where $z,p,q$ are as in $(a)$.

Now, we prove $(b)$.
For each $\calF\in\bfD^b_T(X),$
there is a spectral sequence 
$E_2^{p,q}=S^p\otimes H^q(\calF_X)\Rightarrow H^{p+q}(\calF)$, see \cite[sec.~5.5]{GKM}.
Thus, if
$\calF=\calR\calH om\bigl(IC_T(\bar X_x),IC_T(\bar X_y)\bigr),$
we get a spectral sequence
$$E_2^{p,q}=S^p\otimes \Ext^q_{\bfD^b(X)}(IC(\bar X_x),IC(\bar X_y))
\Rightarrow\Ext^{p+q}_{\bfD^b_T(X)}(IC_T(\bar X_x),IC_T(\bar X_y)).$$
Now, we have $\Ext^q_{\bfD^b(X)}(IC(\bar X_x),IC(\bar X_y))=0$ for
$q\not\equiv l(x)+l(y)$ modulo 2 by $(a)$. 
Therefore, since $S$ vanishes in odd degrees, 
the spectral sequence degenerates at $E_2$.
Thus, we have
$$S\otimes \Ext_{\bfD^b(X)}(IC(\bar X_x),IC(\bar X_y))
=\Ext_{\bfD^b_T(X)}(IC_T(\bar X_x),IC_T(\bar X_y)).$$

The proof  of $(d)$ is as in \cite[sec.~1]{G}, \cite[sec.~ 3.3]{BY}.
Since our setting is slightly different we sketch briefly the main arguments.
Fix a partial order on the set of strata such that
$\bar X_x=X_{\leqslant x}=\bigsqcup_{y\leqslant x}X_y.$
Consider the obvious inclusions
$j_{\leqslant x}:X_{\leqslant x}\to X$,
$i=j_{<x}:X_{<x}\to X_{\leqslant x}$ and $j=j_{x}:X_{x}\to X_{\leqslant x}.$
For each $y\leqslant x,$ we set
$\calF_1=j_{\leqslant x}^*IC(\bar X_x)$ and $\calF_2=j_{\leqslant x}^*IC(\bar X_y)$.
For $s=1,2$ and $p\in\bbN$, there are integers $d_s$ and $d_{s,y,p}$ such that
\begin{equation}
\label{parity}
j_y^*\calF_s=\bigoplus_pk_{X_y}[d_s-2p]^{\oplus d_{s,y,p}},\quad
j_y^!\calF_s=\bigoplus_{q}k_{X_y}[2l(y)-d_s+2q]^{\bigoplus d_{s,y,q}}.
\end{equation}
Consider the diagram of graded $k$-vector spaces  given by
$$\xymatrix{
\Ext_{\bfD^b(X_{<x})}(i^*\calF_1,i^!\calF_2)\ar[r]\ar[d]&
\Hom_{H(X_{<x})}(H(i^*\calF_1),H(i^!\calF_2))\ar[d]^-a
\\
\Ext_{\bfD^b(X_{\leqslant x})}(\calF_1,\calF_2)\ar[r]\ar[d]&
\Hom_{H(X_{\leqslant x})}(H(\calF_1),H(\calF_2))\ar[d]^-b
\\
\Ext_{\bfD^b(X_x)}(j^*\calF_1,j^*\calF_2)\ar[r]&\Hom_{H(X_{ x})}(H(j^*\calF_1),H(j^*\calF_2)).
}$$
In the right side sequence, the Hom's are the $k$-spaces of graded module homomorphisms over  the graded
$k$-algebras $H(X_{<x})$, $H(X_{\leqslant x})$, $H(X_{x})$ respectively, which are computed in the category
of non-graded modules. The horizontal maps are given by taking hyper-cohomology.

We'll prove that the middle map is invertible by induction on $x$. This proves part $(d)$.
The short exact sequence associated with the left side is exact by \eqref{parity}, see e.g., \cite[lem.~ 5.3]{FW} and the references there.
The lower map is obviously invertible, because $j^*\calF_s$ are constant sheaves on $X_x$ for each $s=1,2$.
The complexes $i^*\calF_s$ and $i^!\calF_s$ on $X_{<x}$ satisfy again \eqref{parity} for 
any stratum $X_y\subset X_{<x}$.
Thus, by induction, we may assume that the upper 
map is invertible. Then, to prove that the middle map is invertible it is enough to check that
$a$ is injective and that $\Im(a)=\Ker(b)$. 

By \eqref{parity}, the distinguished triangles
$i_*i^!\to1\to j_*j^*{\buildrel +1\over\to}$  and $j_!j^*\to 1\to i_*i^*{\buildrel +1\over\to}$
yield exact sequences of $H(X_{\leqslant x})$-modules
\begin{equation*}\gathered
0\to H(i^!\calF_s){\buildrel\alpha_s\over\to} H(\calF_s){\buildrel\beta_s\over\to} H(j^*\calF_s)\to 0,\\
0\to H_c(j^*\calF_s){\buildrel\gamma_s\over\to} H(\calF_s) {\buildrel\delta_s\over\to}H(i^*\calF_s)\to 0,
\endgathered\end{equation*}
where $H_c(j^*\calF_s)=H(j_!j^*\calF_s)$ is the cohomology with compact supports of $j^*\calF_s$.
Thus, the map $a$ is injective, because $a(\phi)=\alpha_2\circ\phi\circ\delta_1$ for each $\phi$, $\alpha_2$ is injective and $\delta_1$ is surjective. 

Next, fix an element $\phi\in\Ker(b)$.
Since $b(\phi)\circ\beta_1=\beta_2\circ\phi$ by construction and $\beta_1$ is surjective, we have 
$\beta_2\circ\phi=0$. Hence, there is an $H(X_{\leqslant x})$-linear map $\phi':H(\calF_1)\to H(i^!\calF_2)$ such that $\phi=\alpha_2\circ\phi'$.
We must check that $\phi'\circ\gamma_1=0$.

To do that, set $d=\dim X_x$. Since $X_x\simeq k^d$, the fundamental class $\omega$ of $X_x$
belongs to $H_c^{2d}(X_x)$.
The cup product with $\omega$ restricts to 0 on $X_{<x}$, because there is an exact sequence
$0\to H_c(X_x)\to H(X_{\leqslant x})\to H(X_{<x})\to 0.$
Thus, it yields a map
$g_s:H(\calF_s)\to H_c(j^*\calF_s)[2d]$ which vanishes on $\Im(\alpha_s)$.
We deduce that $g_2\circ\phi=0.$ 
Since $H_c(X_x)\subset H_c(X_{\leqslant x})$
and $\phi$ is $H_c(X_{\leqslant x})$-linear,
we have also $\phi\circ g_1=0$.
Since $g_1$ is surjective, we deduce that
$\phi\circ\gamma_1=0.$ 
Hence, we have $\phi'\circ\gamma_1=0$, because $\alpha_2$ is injective.

Part $(c)$ is proved as part $(d)$, using equivariant cohomology. 

Finally, we prove $(e)$. By parity vanishing, the spectral sequence
$$E_1^{p,q}=H^{p+q}(j_p^!IC(\bar X_x))\Rightarrow IH^{p+q}(\bar X_x)$$
degenerates at $E_1$. This yields the first claim, as in part $(a)$.
The second one follows from the first one, because the spectral sequence 
$E_2^{p,q}=S^p\otimes IH^{q}(\bar X_x)\Rightarrow IH^{p+q}(\bar X_x)$
degenerates at $E_2$.
\end{proof}

\vspace{2mm}

\subsection{Equivariant perverse sheaves on infinite dimensional varieties}
This section is a reminder on equivariant perverse sheaves on infinite dimensional varieties,
to be used in the next section.

Let $X$ be an {\it essentially smooth} $T$-scheme, in the sense of
\cite[sec.~1.6]{KT3}. In \cite[sec.~2]{KT3} 
the derived category of constructible complexes on
$X$ and perverse sheaves on $X$ (for the analytic topology) are defined.
Note that the convention for perverse sheaves here 
differs from the convention for perverse sheaves on varieties (as in Section \ref{sec:HT}) :
indeed $k_Y[-\codim Y]$ is perverse if $Y$ is 
an essentially smooth $T$-scheme, while $k_Y[\dim Y]$
is perverse for a smooth variety $Y$.
We'll use the same terminology as in loc.~cit. and 
we formulate our results in the $T$-equivariant setting. 
The equivariant version of the constructions in \cite{KT3} 
is left to the reader. 

Let $\bfD_{T}(X)$ be the $T$-equivariant derived category on $X$.
If $X,Y$ are  essentially smooth and $Y\to X$ is a 
$T$-equivariant embedding of finite presentation,
let $IC_T(\overline{Y})$ be the minimal extension of  the equivariant constant shifted sheaf
$k_{Y}[-\codim Y]$ on $Y$. 
It is a perverse sheaf on $X$ supported on the Zariski closure
$\bar Y$ in $X$.
If $\calF\in\bfD_{T}(X),$ we set
$H(\calF)=\bigoplus_{i\in\bbZ}
\Hom_{\bfD_T(X)}\bigl(k_X,\calF[i]\bigr),$
a graded $S$-module.
We abbreviate 
$
IH_T(\bar{Y})=H(IC_T(\bar Y))$
and
$H_T(Y)=H(k_Y).$
We call $H_T(Y)$ the {\it equivariant cohomology of $Y$}.
It is an $H_T(X)$-module.

Recall  that for a morphism $f:Z\to X$ of essentially smooth $T$-schemes 
there is a  functor
$f^*:\bfD_T(X)\to\bfD_T(Z)$, see \cite[sec.~3.7]{KT3}.
If $f$ is the inclusion of a $T$-stable subscheme we write $\calF_Z=i^*\calF$ and
$IH_T(\overline{Y})_Z=H(IC_T(\overline{Y})_Z). $

Now, assume that $X$ is the limit of a projective system of smooth schemes
\begin{equation}\label{projsystem}
X=\pro_nX_n\end{equation} 
as in \cite[sec.~1.3]{KT3}.
Let $p_n:X\to X_n$ be the projection and set $Y_n=p_n(Y)$.
Assume that $Y_n$ is locally closed in $X_n$.
Since $k_Y=p_n^*k_{Y_n}$
we have an obvious map
$$\aligned
H_T(Y_n)
&=\bigoplus_{i\in\bbZ}
\Hom_{\bfD_T(X_n)}\bigl(k_{X_n},k_{Y_n}[i]\bigr)\\
&\to\bigoplus_{i\in\bbZ}
\Hom_{\bfD_T(X)}\bigl(p_n^*k_{X_n},p_n^*k_{Y_n}[i]\bigr)\\
&\to\bigoplus_{i\in\bbZ}
\Hom_{\bfD_T(X)}\bigl(k_{X},k_{Y}[i]\bigr)\\
&\to H_T(Y).
\endaligned$$
It yields an isomorphism
$H_T(Y)=\ind_nH_T(Y_n).$

\vspace{2mm}

\subsection{Moment graphs and the Kashiwara flag manifold}
In this section we consider the localization on Kashiwara's flag manifold.
The main motivation is Corollary \ref{cor:B2} below that we used in the rest of text, in particular in
Corollary \ref{cor:speBpm} to prove that the functor
$\bbV_{\!k}$ is fully-faithful on projectives in ${}^w\bfO_{\mu,-}^\Delta$.

Let $P_\mu$ be the parabolic subgroup corresponding to the Lie algebra
$\bfp_\mu$.
Let $X=X_\mu=\calG/P_\mu$ be the Kashiwara partial
flag manifold associated with $\bfg$ and $\bfp_\mu$, see \cite{K}.
Here $\calG$ is the schematic analogue of $G(k((t)))$ defined in \cite{K}, which has a 
locally free right action of the group-$k$-scheme $P_\mu$ and a 
locally free left action of the group-$k$-scheme $B^-$, 
the Borel subgroup
opposit to $B$.
Recall that $X$ is an essentially smooth, not quasi-compact,  $T$-scheme,
which is covered by $T$-stable quasi-compact open subsets 
isomorphic to $\bbA^\infty=\Spec k[x_k\,;\,k\in\bbN]$.

Let $e_X=P_\mu/P_\mu$ be the origin of $X$. For each $x\in I_{\mu,+},$ we set 
$X^x=B^-xe_X=B^-xP_\mu/P_\mu.$
Note that $X^x$ is a locally closed $T$-stable subscheme of $X$ 
of codimension $l(x)$ which is isomorphic to $\bbA^\infty$.
Consider the $T$-stable subschemes
$$\overline{X^x}=X^{\geqslant x}=\bigsqcup_{y\geqslant x}X^y,\quad
X^{\leqslant x}=\bigsqcup_{y\leqslant x}X^y,\quad
X^{<x}=\bigsqcup_{y<x}X^y.$$
We call $\overline{X^x}$
a {\it finite-codimensional affine Schubert variety.}
We call $X^{\leqslant x}$ an
{\it admissible open set.}

If $\Omega$ is an admissible open set, there are canonical 
isomorphisms 
$$
IC_T(\overline{X^x})_\Omega=IC_T(\overline{X^x}\cap \Omega),\quad
IH_T(\overline{X^x})_\Omega=IH_T(\overline{X^x}\cap \Omega).
$$
We can view $\Omega$ as the limit of a projective system of smooth schemes 
$(\Omega_n)$ as in  \cite[lem.~4.4.3]{KT3}. So,
the projection $p_n:\Omega\to \Omega_n$ is a good quotient by a congruence subgroup
$B^-_{n}$ of $B^-$. 
Let $n$ be large enough.
Then, we have $IC_T(\overline{X^x}\cap \Omega)=p_n^*IC_T(\overline{X^x_{\Omega,n}})$
with $\overline{X^x_{\Omega,n}}=p_n(\overline{X^x}\cap \Omega)$
by \cite[sec.~2.6]{KT3}.
Thus we have a map
$$\aligned
IH_T(\overline{X^x_{\Omega,n}})
&=\bigoplus_{i\in\bbZ}
\Hom_{\bfD_T(\Omega_n)}\bigl(k_{\Omega_n},IC_T(\overline{X^x_{\Omega,n}})[i]\bigr)\\
&\to\bigoplus_{i\in\bbZ}
\Hom_{\bfD_T(\Omega)}\bigl(k_{\Omega},p_n^*IC_T(\overline{X^x_{\Omega,n}})[i]\bigr)\\
&\to IH_T(\overline{X^x}\cap \Omega)
\endaligned$$
which yields an isomorphism
$$IH_T(\overline{X^x}\cap \Omega)=\ind_nIH_T(\overline{X^x_{\Omega,n}}).$$

For $\Omega=X^{\leqslant w}$ and $x\leqslant w$ we abbreviate
$X^{[x,w]}=\overline{X^x}\cap X^{\leqslant w}$ and
$X^{[x,w]}_n=p_n(X^{[x,w]}).$
Since $p_n$ is a good quotient by  $B^-_n$
and since $X^{x}$ is $B^-_n$-stable,
we have an algebraic stratification 
$X^{\leqslant w}_n=\bigsqcup_{x\leqslant w}X^{x}_n,$
where $X^{x}_n$ is an affine space whose Zarisky closure 
is $X^{[x,w]}_n$.

\vspace{2mm}

\begin{lemma}
\label{lem:good}
(a) The $T$-variety $X^{\leqslant w}_n$ is smooth and good. 

(b) It is covered by $T$-stable open affine  subsets
with an attractive fixed point.
The fixed points subset is naturally identified with ${}^w\!I_{\mu,+}$.

(c) There is a finite number of one-dimensional orbits. The closure of each of them is smooth.
Two fixed points are joined by a one-dimensional orbit if and only if the corresponding 
points in ${}^w\!I_{\mu,+}$ are joined by an edge in ${}^w\calG_\mu$.
\end{lemma}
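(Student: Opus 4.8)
The plan is to realize each $X^{\leqslant w}_n$ (for $n$ large, so that $B^-_n$ acts freely on $X^{\leqslant w}$ with good, quasi-projective smooth quotient by \cite[lem.~4.4.3]{KT3}) as a finite-dimensional smooth variety carrying both a Bialynicki--Birula type stratification into affine cells and a GKM torus action whose moment graph is exactly ${}^w\calG_\mu$. The three assertions then become, respectively, the goodness axioms of Section~\ref{sec:HT}, the local affine structure around the $T$-fixed points, and the combinatorics of the one-dimensional $T$-orbits. Since $B^-_n$ is normal in $B^-$, I would work throughout with the induced action of the finite-dimensional connected solvable group $H=B^-/B^-_n$ on $X^{\leqslant w}_n$, whose orbits are precisely the cells $X^x_n$ ($x\leqslant w$), with closures $\overline{X^x_n}=\bigsqcup_{x\leqslant y\leqslant w}X^y_n$.

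For part (b) I would start from the standard atlas of the Kashiwara flag manifold: around each $T$-fixed point $y_X$, $y\in I_{\mu,+}$, there is a $T$-stable chart isomorphic to $\bbA^\infty$ on which the loop-rotation subgroup $\bbC^\times\subset T$ acts with strictly positive weights, so that $y_X$ is its unique, hence attractive, fixed point. Intersecting these charts with $X^{\leqslant w}$ and passing to the $B^-_n$-quotient gives the required finite $T$-stable affine open cover with attractive fixed points. The $T$-fixed locus of the Kashiwara manifold is $\{y_X\,;\,y\in I_{\mu,+}\}\simeq\widehat W/W_\mu$, and $y_X\in X^{\leqslant w}$ exactly when $y\leqslant w$; since $B^-_n\cap T=1$ these points remain distinct in the quotient and $(X^{\leqslant w}_n)^T$ is naturally identified with $\{y_X\,;\,y\in{}^w\!I_{\mu,+}\}$, i.e.\ with the vertex set of ${}^w\calG_\mu$.

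For part (c) I would analyse the $T$-stable irreducible curves of $X^{\leqslant w}_n$ via their preimages in $X^{\leqslant w}$. In the Kashiwara flag manifold the $T$-stable curves joining two fixed points are the Schubert curves $SL_2(\alpha)\cdot y_X\cong SL_2/B\simeq\bbP^1$, joining $y_X$ to the fixed point attached to $s_\alpha yW_\mu$; such a curve lies in $X^{\leqslant w}$ precisely when both its endpoints do, that is when $y\leqslant w$ and $s_\alpha yW_\mu\leqslant w$, which is exactly the condition for an edge of ${}^w\calG_\mu$ between the two vertices. Passing to the $B^-_n$-quotient ($B^-_n$ acting freely and meeting $T$ trivially) preserves these curves and their smoothness, and there are finitely many because $\dim X^{\leqslant w}_n<\infty$. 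Together with (b) this identifies the one-dimensional orbit structure of $X^{\leqslant w}_n$ with ${}^w\calG_\mu$, the edge label $k\check\alpha$ matching the $T$-weight of the curve under the $\widehat W$-invariant pairing $\bft\simeq\bft^*$.

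For part (a), smoothness and quasi-projectivity are \cite[lem.~4.4.3]{KT3}, and it remains to verify the goodness axioms of Section~\ref{sec:HT} for the stratification $X^{\leqslant w}_n=\bigsqcup_{x\leqslant w}X^x_n$, re-indexed so that the closure relation becomes the order $\preccurlyeq$ of $I_{\mu,+}$ and the length attached to the cell $X^x_n$ is $\dim X^x_n=\dim X^{\leqslant w}_n-l(x)$. The strata are the orbits of the connected solvable group $H$, so the stratification is a Whitney stratification by $T$-stable subvarieties satisfying the frontier condition, and each $X^x_n$ is an affine space on which $T$ acts linearly in root-subgroup coordinates centred at $x_X$. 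The substantive point is the parity vanishing \eqref{parity1}--\eqref{parity2}: here I would invoke the Kashiwara--Tanisaki description of the stalks of $IC_T(\overline{X^x})$ by Deodhar's inverse parabolic Kazhdan--Lusztig polynomials \cite{KT3,KT4}, which manifestly satisfy the parity condition, and transport it down to $X^{\leqslant w}_n$ through the isomorphisms $p_n^*IC_T(\overline{X^x_{\Omega,n}})=IC_T(\overline{X^x}\cap\Omega)$ recalled just before the lemma; alternatively, parity could be deduced from the $\bbC^\times$-Bialynicki--Birula decomposition of part (b) by a purity argument in the style of \cite[\S3.4]{BGS}. \textbf{I expect this parity vanishing to be the main obstacle}: it is the one step that genuinely imports Kazhdan--Lusztig theory for finite-codimensional affine Schubert varieties rather than following from elementary geometry, everything else being bookkeeping with charts, fixed points and orbits.
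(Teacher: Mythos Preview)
Your overall strategy for (b), (c), and the parity-vanishing portion of (a) matches the paper's: affine charts around fixed points descending from $\bbA^\infty$-charts on $X$, $T$-stable $\bbP^1$'s coming from rank-one subgroups, and stalks of the IC-sheaves given by inverse parabolic Kazhdan--Lusztig polynomials (the paper cites \cite[thm.~1.3]{KT2} rather than \cite{KT3,KT4}, but the content is the same).

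There is, however, a genuine gap in your treatment of (a). The notion of ``good'' in Section~\ref{sec:HT} is set up for \emph{quasi-projective} $T$-varieties, and quasi-projectivity is used downstream (e.g.\ to apply \cite{GKM} and \cite{BM} in Propositions~\ref{prop:IH} and~\ref{prop:IC}). You attribute quasi-projectivity of $X^{\leqslant w}_n$ to \cite[lem.~4.4.3]{KT3}, but that lemma only furnishes the projective system of \emph{smooth} quotients; it does not assert that the quotients are quasi-projective, and in fact the paper's proof treats this as the main nontrivial point. Their argument is not elementary: they reinterpret $X_n$ in terms of the moduli stack of $G$-bundles on $\bbP^1$ (via the Drinfeld--Simpson theorem, a $k$-point of $X$ is a $G$-bundle on $\bbP^1$ with a trivialization over the formal disc at $\infty$), reduce to $G=SL_r$ via a faithful representation, and then use Grothendieck's Quot-scheme construction together with a representability result from \cite{W} to produce quasi-projective models over an exhausting system of open substacks. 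You should not expect this to be automatic from Kashiwara--Tanisaki's framework; their essentially smooth schemes are only locally of finite type, and passing to a $B^-_n$-quotient of an open piece does not by itself give an ample line bundle.

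A secondary point: your claim that the loop-rotation $\bbC^\times$ acts with strictly positive weights on the standard $\bbA^\infty$-chart is the right intuition, but be careful to check signs in the thick (Kashiwara) flag manifold as opposed to the thin one; the paper is content to assert that each $V^x_n=p_n(V^x)$ contains a unique $T$-fixed point without invoking attractiveness explicitly.
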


\vspace{.5mm}

\begin{proof}
The $T$-variety $X^{\leqslant w}_n$ is smooth by \cite{KT3}, because
 $X^{\leqslant w}$ is smooth and $p_n$ is a $B^-_n$-torsor for $n$ large enough.
We claim that  it is also quasi-projective.

Let $X_0$ be the stack of $G$-bundles on $\bbP^1$. 
We may assume that  $P_\mu$ is maximal parabolic.
Then, by the Drinfeld-Simpson theorem, a $k$-point of $X$ is the same as a 
$k$-point of $X_0$ with a trivialization of its pullback to $\Spec(k[[t]])$. 
Here $t$ is regarded
as a local coordinate at $\infty\in\bbP^1$ and we identify $B^-$ with the Iwahori subgroup in
$G(k[[t]])$. We may choose
$B^-_n$ to be the kernel of the restriction $G(k[[t]])\to G(k[t]/t^n).$
Then, a $k$-point of
$X_n=X/B_n^-$ is the same as a $k$-point of $X_0$ with a trivialization 
of its pullback to $\Spec(k[t]/t^n)$. 
We'll prove that there is an increasing system of open subsets
$\calU_m\subset X_0$ such that for each $m$ and for $n\gg0$ 
the fiber product $X_n\times _{X_0}\calU_m$ is representable by a quasi-projective variety.
This implies our claim.

Choosing a faithful representation $G\subset SL_r$
we can assume that $G=SL_r$.
So a $k$-point of $X_0$ is the same as a rank $r$ vector bundle
on  $\bbP^1$ of degree 0. 
For an integer $m>0$  let
$\calU_m(k)$ be the set of  $V$ in $X_0(k)$ with
$H^1(\bbP^1,V\otimes\calO(m))=0$ which are generated by global sections. It is the set of $k$-points of an open substack $\calU_m$ of $X_0$.
Note that $\calU_m\subset\calU_{m+1}$ and $X_0=\bigcup_m\calU_m$.
Now, the set
$\calY_m(k)$ of pairs $(V,b)$ where $V\in \calU_m(k)$  and $b$ is a  basis of 
$H^0(\bbP^1,V\otimes\calO(m))$
is the set of $k$-points of a quasi-projective variety $\calY_m$ by 
the Grothendieck theory of Quot-schemes.
Further, there is a canonical $GL_{r(m+1)}$-action on $\calY_m$ such that the morphism
$\calY_m\to\calU_m$, $(V,b)\mapsto V$ is a $GL_{r(m+1)}$-bundle. 
Now,  for $n\gg 0$
the fiber product $X_n\times _{X_0}\calU_m$ is representable by a quasi-projective variety,
see e.g., \cite[thm.~5.0.14]{W}.

Next, note that  $X^{\leqslant w}_n$ is recovered by the open subsets 
$V^x_n=p_n(V^x)$ with $x\leqslant w.$ Each of them contains a unique fixed point 
under the $T$-action 
and finitely many one-dimensional orbits.

Finally,  the parity vanishing holds :
since $IC(X^{[x,w]})=p_n^*IC(X^{[x,w]}_{n})$ we have 
$$IC(X^{[x,w]}_n)_{X^y_n}=\bigoplus_ik_{X^y_n}[-l(y)][l(y)-l(x)-2i]^{\oplus Q^{\mu,-1}_{x,y,i}}$$
by \cite[thm.~1.3]{KT2}.
The change in the degrees with respect
to Section \ref{sec:HT} is due to the change of convention for perverse sheaves mentioned above.
\end{proof}

\vspace{2mm}

Now we set $V=\bft^*$ and we consider the moment graph ${}^w\calG^\vee$.

\vspace{2mm}

\begin{prop}
\label{prop:IH}
We have

(a) $H_T(X^{\leqslant w})={}^w\!\bar Z^\vee_{S,\mu,-}$ 
and $H(X^{\leqslant w})={}^w\!\bar Z^\vee_{\mu,-}$ as graded $k$-algebras,

(b) $IH_T(X^{[x,w]})={}^w\!\bar B_{S,\mu,-}^\vee(x)$ as a graded
${}^w\!\bar Z_{S,\mu,-}$-module,

(c) $IH(X^{[x,w]})={}^w\!\bar B_{\mu,-}^\vee(x)$ as a graded
${}^w\!\bar Z_{\mu,-}^\vee$-module.
\end{prop}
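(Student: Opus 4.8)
The plan is to reduce the statement to the finite\nobreakdash-dimensional Braden--MacPherson machinery applied to the smooth good $T$\nobreakdash-varieties $X^{\leqslant w}_n = X^{\leqslant w}/B^-_n$ furnished by Lemma \ref{lem:good}, and then to pass to the inductive limit. First I would fix $n$ large enough that Lemma \ref{lem:good} applies. Then $X^{\leqslant w}_n$ is a smooth quasi-projective good $T$-variety whose $T$-fixed points are identified with ${}^w\!I_{\mu,+}$, whose one-dimensional $T$-orbits realise the edges of ${}^w\calG^\vee_\mu$ (the $T$-characters of these orbits give the labels $k\alpha$, whence the need to work over $V=\bft^*$), and whose affine paving $X^{\leqslant w}_n=\bigsqcup_{y\leqslant w}X^y_n$, together with the parity vanishing $IC(X^{[x,w]}_n)_{X^y_n}=\bigoplus_i k_{X^y_n}[-l(y)][l(y)-l(x)-2i]^{\oplus Q^{\mu,-1}_{x,y,i}}$ from the proof of Lemma \ref{lem:good}, makes $X^{\leqslant w}_n$ equivariantly formal. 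I would then invoke the GKM theorem \cite{GKM} to get $H_T(X^{\leqslant w}_n)={}^w\!\bar Z^\vee_{S,\mu}$ as graded $k$-algebras, and \cite[thm.~1.5, 1.6, 1.8]{BM} to get $IH_T(X^{[x,w]}_n)={}^w\!\bar B^\vee_{S,\mu,-}(x)$ as a graded ${}^w\!\bar Z^\vee_{S,\mu}$-module. Here one must match, on the one hand, the support $\{y\geqslant x\}$ (Bruhat order) of the Braden--MacPherson sheaf on ${}^w\calG^\vee_{\mu,-}$ with the stratification $X^{[x,w]}_n=\bigsqcup_{x\leqslant y\leqslant w}X^y_n$, and on the other hand its normalisation (stalk $A\{-l(x)\}$ at $x$) with the self-normalisation $k_{X^x_n}[-l(x)]$ of the $IC$-sheaf on the open stratum; both matchings are exactly what the parity statement in Lemma \ref{lem:good} and the Hilbert-polynomial formula of Proposition \ref{prop:2.24} are set up to provide.

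Next I would pass to the limit. For $n'>n$ the projection $X^{\leqslant w}_{n'}\to X^{\leqslant w}_n$ is a torsor under the prounipotent group $B^-_n/B^-_{n'}$, which is a successive extension of vector groups and hence an affine-space bundle, so the pullback maps $H_T(X^{\leqslant w}_n)\to H_T(X^{\leqslant w}_{n'})$ and $IH_T(X^{[x,w]}_n)\to IH_T(X^{[x,w]}_{n'})$ are isomorphisms. Feeding this into the identifications $H_T(X^{\leqslant w})=\ind_n H_T(X^{\leqslant w}_n)$ and $IH_T(X^{[x,w]})=\ind_n IH_T(X^{[x,w]}_n)$ recorded just before the statement yields parts (a) and (b), including the graded $k$-algebra and graded module structures. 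For the non-equivariant halves of (a) and for (c), I would apply Proposition \ref{prop:IC/ICT/IH}(e) (and equivariant formality for the structure sheaf) to each $X^{\leqslant w}_n$, giving $H(X^{\leqslant w}_n)=k\,H_T(X^{\leqslant w}_n)$ and $IH(X^{[x,w]}_n)=k\,IH_T(X^{[x,w]}_n)$, combine with the defining identities $k{}^w\!\bar Z^\vee_{S,\mu}={}^w\!\bar Z^\vee_\mu$ and $k{}^w\!\bar B^\vee_{S,\mu,-}(x)={}^w\!\bar B^\vee_{\mu,-}(x)$ from Remark \ref{rk:vee}, and take limits once more.

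The main obstacle I expect is bookkeeping of the two grading conventions rather than anything conceptual. The moment-graph objects ${}^w\!\bar Z^\vee_{S,\mu}$ and ${}^w\!\bar B^\vee_{S,\mu,-}(x)$ are graded with $\bft^*$ in degree $2$ and with the ``codimensional'' normalisation $k_{X^x}[-l(x)]$, whereas the $IC$-sheaves on the auxiliary finite-dimensional $X^{\leqslant w}_n$ naturally carry the ordinary perverse normalisation $k_{X^x_n}[\dim X^x_n]$, which differs by a global shift depending on $\dim X^{\leqslant w}_n$. One must check that this shift is absorbed uniformly along the tower (it is built into the pro-formalism of \cite{KT3} used before the statement) so that the inductive-limit maps are strictly degree-preserving and the final isomorphisms hold as graded, not merely shifted, objects. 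A secondary point worth an explicit remark is that \cite{BM} is being applied to the quasi-projective, not necessarily projective, $X^{\leqslant w}_n$; since the stalk and costalk computations there are local along the stratification and use only goodness, this is harmless, but it should be stated.
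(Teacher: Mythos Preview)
Your proposal is correct and follows essentially the same approach as the paper: reduce to the finite-dimensional good $T$-variety $X^{\leqslant w}_n$ via Lemma \ref{lem:good}, apply the GKM localization theorem for (a) and the Braden--MacPherson result \cite{BM} for (b), and deduce (c) from Proposition \ref{prop:IC/ICT/IH}(e). The paper is terser about the passage to the limit (simply writing $H_T(X^{\leqslant w})=H_T(X^{\leqslant w}_n)$ for $n$ large) and about the grading bookkeeping, but your more explicit treatment of the affine-bundle transition maps and the codimensional normalisation is a welcome clarification rather than a departure.
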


\vspace{.5mm}

\begin{proof}
Assuming $n$ to be large enough we may assume that
$H_T(X^{\leqslant w})=H_T(X^{\leqslant w}_n),$
$IH_T(X^{[x,w]})=IH_T(X^{[x,w]}_n)$, etc.
By Lemma \ref{lem:good} the $S$-module $H_T(X^{\leqslant w}_n)$ is free.
Thus we can apply the localization theorem \cite[thm.~6.3]{GKM}, which
proves  $(a)$.

Now, we concentrate on $(b)$. 
The graded $k$-module $IH(X^{[x,w]}_n)$ vanishes in odd degree 
by Proposition \ref{prop:IC/ICT/IH} and Lemma \ref{lem:good}. 
Thus, applying \cite{BM} to
$X_n^{[x,w]},$ we get a graded ${}^w\bar Z_{S,\mu,-}$-module isomorphism 
$IH^*_T(X^{[x,w]}_n)={}^w\!\bar B^\vee_{S,\mu,-}(x).$

Part $(c)$ follows from $(b)$, Proposition \ref{prop:IC/ICT/IH} and Lemma \ref{lem:good}.
\end{proof}

\vspace{2mm}

\begin{cor}
\label{cor:B1}
We have a graded $S$-module isomorphism
$${}^w\!\bar B_{S,\mu,-}^\vee(x)_{y}=
\bigoplus_{i\geqslant 0}(S\langle -l(x)-2i\rangle)^{\oplus Q^{\mu,-1}_{x,y,i}}.$$
\end{cor}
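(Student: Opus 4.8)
The proof will combine the geometric identification of ${}^w\!\bar B_{S,\mu,-}^\vee(x)$ in Proposition~\ref{prop:IH}(b) with the parity vanishing for finite codimensional affine Schubert varieties established in Lemma~\ref{lem:good}. First I would fix $w$ and $x$ and pass to the finite dimensional truncation: for $n\gg0$ one has $H_T(X^{\leqslant w})=H_T(X^{\leqslant w}_n)$ and $IH_T(X^{[x,w]})=IH_T(X^{[x,w]}_n)$, where $X^{[x,w]}_n=p_n(X^{[x,w]})$ is the Zariski closure of the stratum $X^x_n$ in the smooth, good, quasi-projective $T$-variety $X^{\leqslant w}_n$. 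By Proposition~\ref{prop:IH}(a),(b) we then have ${}^w\!\bar Z^\vee_{S,\mu}=H_T(X^{\leqslant w}_n)$ and ${}^w\!\bar B_{S,\mu,-}^\vee(x)=IH_T(X^{[x,w]}_n)=H\bigl(IC_T(X^{[x,w]}_n)\bigr)$ as graded modules over this structural algebra.

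Next I would read off the stalk at a vertex $y\in{}^w\!I_{\mu,+}$. By Lemma~\ref{lem:good} the $T$-variety $X^{\leqslant w}_n$ is good, with fixed point set ${}^w\!I_{\mu,+}$ and one-dimensional orbits realising the edges of ${}^w\calG^\vee_\mu$, so the GKM localization theorem \cite[thm.~6.3]{GKM} and the Braden--MacPherson construction used in the proof of Proposition~\ref{prop:IH}(b) apply. Thus $IH_T(X^{[x,w]}_n)$ is the module of global sections of the BM-sheaf on ${}^w\calG^\vee_\mu$, and by \cite{BM} its moment-graph stalk at $y$ is the local equivariant intersection cohomology ${}^w\!\bar B_{S,\mu,-}^\vee(x)_y=H\bigl(IC_T(X^{[x,w]}_n)|_{X^y_n}\bigr)$. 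Since the stratum $X^y_n\cong\bbA^{l(y)}$ carries a linear $T$-action with $y$ as attractive fixed point, it is $T$-equivariantly contractible onto $y$, which is what makes this last identification meaningful.

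It then remains to compute the graded $S$-module $H\bigl(IC_T(X^{[x,w]}_n)|_{X^y_n}\bigr)$, which is where the parity input enters. By the proof of Lemma~\ref{lem:good}, based on \cite[thm.~1.3]{KT2}, the non-equivariant restriction is
$$IC(X^{[x,w]}_n)_{X^y_n}=\bigoplus_i k_{X^y_n}[-l(y)][l(y)-l(x)-2i]^{\oplus Q^{\mu,-1}_{x,y,i}}=\bigoplus_i k_{X^y_n}[-l(x)-2i]^{\oplus Q^{\mu,-1}_{x,y,i}}.$$
Exactly as in the proof of Proposition~\ref{prop:IC/ICT/IH}(e), the spectral sequence $E_2^{p,q}=S^p\otimes H^q\bigl(IC(X^{[x,w]}_n)_{X^y_n}\bigr)\Rightarrow H^{p+q}\bigl(IC_T(X^{[x,w]}_n)|_{X^y_n}\bigr)$ of \cite[sec.~5.5]{GKM} degenerates at $E_2$, because the $E_2$-page is concentrated in total degrees of a single parity; equivalently the equivariant restriction splits as a direct sum of shifted equivariant constant sheaves with the same multiplicities. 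Using $H_T(X^y_n,k)=S$ and the fact that $k_{X^y_n}[-l(x)-2i]$ contributes $S\{-l(x)-2i\}$, we conclude
$${}^w\!\bar B_{S,\mu,-}^\vee(x)_y=H\bigl(IC_T(X^{[x,w]}_n)|_{X^y_n}\bigr)=\bigoplus_{i\geqslant0}S\{-l(x)-2i\}^{\oplus Q^{\mu,-1}_{x,y,i}},$$
which is the assertion.

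The main difficulty I expect is bookkeeping rather than conceptual content: one must reconcile the shift $\{-l(x)\}$ built into the definition of ${}^w\!\bar B_{S,\mu,-}(x)$, the change of convention for perverse sheaves on the Kashiwara manifold noted just before the proof of Lemma~\ref{lem:good} (so that $k_Y[-\codim Y]$, not $k_Y[\dim Y]$, is perverse), and the degree shifts appearing in the parity vanishing of \cite[thm.~1.3]{KT2}, and verify that these combine to give precisely the exponent $-l(x)-2i$ with no residual dependence on $l(y)$. All the structural input needed is already available: Proposition~\ref{prop:IH}, Lemma~\ref{lem:good}, \cite{BM}, and the degeneration argument of Proposition~\ref{prop:IC/ICT/IH}(e).
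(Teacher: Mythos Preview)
Your proof is correct and follows essentially the same route as the paper's: the paper's own proof reads in full ``Apply Proposition~\ref{prop:IH} and \cite[thm.~1.3(i)]{KT2}'', and what you have written is precisely a careful unpacking of that sentence---identifying the BM-sheaf with equivariant intersection cohomology via Proposition~\ref{prop:IH}(b) and \cite{BM}, then reading off the stalk from the Kashiwara--Tanisaki computation, with the spectral sequence degeneration (as in Proposition~\ref{prop:IC/ICT/IH}(e)) making explicit the passage from the non-equivariant statement in \cite{KT2} to the equivariant one. Your caution about the grading bookkeeping is well placed but, as you note, the $l(y)$ terms cancel and the final shift $-l(x)-2i$ is correct.
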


\vspace{.5mm}

\begin{proof}
Apply Proposition \ref{prop:IH} and \cite[thm.~1.3(i)]{KT2}.
\end{proof}

\vspace{2mm}

\begin{prop} 
\label{prop:IC}
For each $x,y\leqslant w$ we have

(a) $\sum_{i\in\bbZ}t^i\dim k\Ext^i_{\bfD_T(X^{\leqslant w})}
\bigl(IC_T(X^{[x,w]}),IC_T(X^{[y,w]})\bigr)=
\sum_zQ_\mu(t)_{x,z}Q_\mu(t)_{y,z},$

(b) $\Ext_{\bfD_T(X^{\leqslant w})}
\bigl(IC_T(X^{[x,w]}),IC_T(X^{[y,w]})\bigr)
=\Hom_{H_T(X^{\leqslant w})}\bigl(IH_T(X^{[x,w]}),IH_T(X^{[y,w]})\bigr),$

(c) $\Ext_{\bfD(X^{\leqslant w})}
\bigl(IC(X^{[x,w]}),IC(X^{[y,w]})\bigr)=\Hom_{H(X^{\leqslant w})}\bigl(IH(X^{[x,w]}),IH(X^{[y,w]})\bigr),$

(d) $\Ext_{\bfD(X^{\leqslant w})}
\bigl(IC(X^{[x,w]}),IC(X^{[y,w]})\bigr)=k\Ext_{\bfD_T(X^{\leqslant w})}
\bigl(IC_T(X^{[x,w]}),IC_T(X^{[y,w]})\bigr).$
\end{prop}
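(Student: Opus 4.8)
The final statement to prove is Proposition \ref{prop:IC}, which computes the equivariant and non-equivariant Ext groups between intersection cohomology complexes on the finite-codimensional affine Schubert variety $X^{\leqslant w}$, in terms of the inverse parabolic Kazhdan--Lusztig polynomials $Q^{\mu,-1}$.

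\textbf{The plan.} The key observation is that all four assertions should be deduced from the corresponding statements about the finite-dimensional \emph{good} $T$-varieties $X^{\leqslant w}_n$ of Lemma \ref{lem:good}, by passing to the limit over $n$. So the first step is to fix $n$ large enough so that, as already noted in the proof of Proposition \ref{prop:IH}, we have $H_T(X^{\leqslant w})=H_T(X^{\leqslant w}_n)$, $IH_T(X^{[x,w]})=IH_T(X^{[x,w]}_n)$, and more generally so that $IC_T(X^{[x,w]})=p_n^*IC_T(X^{[x,w]}_n)$ for all $x\leqslant w$; then $\Ext$ groups in $\bfD_T(X^{\leqslant w})$ between such complexes coincide with those in $\bfD_T(X^{\leqslant w}_n)$ (this uses that $p_n$ is a $B^-_n$-torsor with $B^-_n$ connected, so pullback is fully faithful on the relevant subcategories, together with the ind-description of cohomology established earlier in this subsection). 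Once this reduction is in place, parts $(b)$, $(c)$, $(d)$ follow \emph{immediately} from Proposition \ref{prop:IC/ICT/IH}$(c)$, $(d)$, $(b)$ respectively, applied to the good $T$-variety $X^{\leqslant w}_n$; there is essentially nothing to compute. Note that the convention shift for perverse sheaves (here $k_Y[-\codim Y]$ is perverse, whereas in Section \ref{sec:HT} $k_Y[\dim Y]$ is perverse) only shifts the $IC$ complexes by an overall $[\dim X^{\leqslant w}_n]$ and hence does not affect the $\Ext$ groups.

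\textbf{Part $(a)$.} This is the quantitative statement and the only one requiring a genuine computation. Applying Proposition \ref{prop:IC/ICT/IH}$(a)$ to $X^{\leqslant w}_n$ gives
$$\dim\Ext^i_{\bfD(X^{\leqslant w}_n)}\bigl(IC(X^{[x,w]}_n),IC(X^{[y,w]}_n)\bigr)=\sum_{z,p,q}n_{x,z,p}\,n_{y,z,q},$$
the sum over triples with $2l(z)-l(x)-l(y)+2p+2q=i$ (here I use that $X^{[x,w]}_n$ has the same codimension data as encoded by $l$, up to the global shift). By Lemma \ref{lem:good}, the stalk parity numbers $n_{x,z,p}$ are exactly the coefficients $Q^{\mu,-1}_{x,z,p}$ of the inverse parabolic Kazhdan--Lusztig polynomial, via \cite[thm.~1.3]{KT2}. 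Substituting and repackaging the generating function, using the normalization $Q_\mu(t)_{x,z}=Q^{\mu,-1}_{x,z}(t^{-2})\,t^{l(z)-l(x)}$ already introduced in Proposition \ref{prop:2.24}, gives precisely $\sum_z Q_\mu(t)_{x,z}Q_\mu(t)_{y,z}$; then Proposition \ref{prop:IC/ICT/IH}$(b)$ lets us pass from the equivariant to the non-equivariant dimensions (which is what $(a)$ actually asserts, via the "$k\Ext$"), and the spectral sequence argument degenerates exactly as in loc.~cit.~by the parity vanishing. This also establishes the matrix identity $P({}^w\!\bar A_{\mu,-},t)=Q_{\mu,-}(t)\,Q_{\mu,-}(t)^T$ that was invoked in the proof of Proposition \ref{prop:2.24}, once one translates via Corollary \ref{cor:loc4} and Proposition \ref{prop:loc1}.

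\textbf{Main obstacle.} The technical heart is really the reduction to finite dimensions: one must check carefully that $IC_T(X^{[x,w]})$ is the pullback $p_n^*IC_T(X^{[x,w]}_n)$ (this is \cite[sec.~2.6]{KT3} and was used already in Proposition \ref{prop:IH}), and that $\Ext$ computed in the pro-scheme agrees with $\Ext$ on the truncation $\Omega_n=X^{\leqslant w}_n$ for $n\gg 0$ --- i.e.~that the Hom-spaces stabilize. Given the ind/pro descriptions of $H_T$ and of the $IC$-complexes set up in this appendix and in \cite{KT3}, this is routine but must be stated; and one must keep track of the degree shift coming from the nonstandard perversity convention, which is harmless for $\Ext$ groups but must be reconciled when quoting Proposition \ref{prop:IC/ICT/IH}. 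After that, every part of the proposition is a direct citation of Proposition \ref{prop:IC/ICT/IH} combined with Lemma \ref{lem:good} and \cite[thm.~1.3]{KT2}.
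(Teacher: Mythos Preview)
Your proposal is correct and follows essentially the same approach as the paper: the paper's own proof is the single line ``Apply Proposition \ref{prop:IC/ICT/IH} and Lemma \ref{lem:good}'', and you have unpacked precisely what this entails --- the reduction to the finite-dimensional good $T$-variety $X^{\leqslant w}_n$ via the $B^-_n$-torsor $p_n$, the identification $n_{x,z,p}=Q^{\mu,-1}_{x,z,p}$ from the parity vanishing in Lemma \ref{lem:good}, and the bookkeeping of the perversity shift. Your remark that part $(a)$ also supplies the matrix identity used in Proposition \ref{prop:2.24} is apt and is indeed the role this proposition plays in the paper.
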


\vspace{.5mm}

\begin{proof}
Apply Proposition \ref{prop:IC/ICT/IH} and Lemma \ref{lem:good}.
\end{proof}

\vspace{2mm}

Finally, we obtain the following.

\vspace{2mm}

\begin{cor}
\label{cor:B2}
We have a graded $k$-algebra isomorphism
$
{}^w\!\bar \scrA_{\mu,-}^\vee=
\End_{{}^w\!Z^\vee_{\mu}}\bigl({}^w\!\bar B^\vee_{\mu,-}\bigr)^\op.
$ 
\end{cor}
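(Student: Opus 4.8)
The plan is to unwind the definition and then transport everything from the topology of the finite codimensional affine Schubert varieties, essentially by chaining Propositions~\ref{prop:IH} and~\ref{prop:IC}. By Remark~\ref{rk:vee} the graded $k$-algebra ${}^w\!\bar A_{\mu,-}^\vee$ is $k\End_{{}^w\!Z^\vee_{S,\mu}}\bigl({}^w\!\bar B^\vee_{S,\mu,-}\bigr)^{\op}$, defined exactly as ${}^w\!\bar A_{\mu,-}$ preceding Proposition~\ref{prop:2.24} but over $V=\bft^*$; so the content of the corollary is that reduction to $k$ commutes with the formation of this graded endomorphism algebra, i.e.\ that the natural map $k\End_{{}^w\!Z^\vee_{S,\mu}}\bigl({}^w\!\bar B^\vee_{S,\mu,-}\bigr)\to\End_{{}^w\!Z^\vee_{\mu}}\bigl({}^w\!\bar B^\vee_{\mu,-}\bigr)$ is an isomorphism of graded $k$-algebras. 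I would prove this by realising both sides as $\Ext$-algebras of intersection cohomology complexes on the $X^{[x,w]}\subset X^{\leqslant w}$ and quoting Proposition~\ref{prop:IC}$(d)$, which is exactly the statement that the non-equivariant $\Ext$ between these $IC$'s is the reduction to $k$ of the equivariant one.

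In detail I would argue in three steps. First, Proposition~\ref{prop:IH}$(a)$,$(b)$ supplies, through the equivariant cohomology functor, a graded $k$-algebra isomorphism $H_T(X^{\leqslant w})={}^w\!\bar Z^\vee_{S,\mu}$ together with compatible isomorphisms of graded modules $IH_T(X^{[x,w]})={}^w\!\bar B^\vee_{S,\mu,-}(x)$ for $x\leqslant w$; summing over $x,y\leqslant w$ and passing to opposite algebras identifies $\End_{{}^w\!Z^\vee_{S,\mu}}\bigl({}^w\!\bar B^\vee_{S,\mu,-}\bigr)^{\op}$ with $\bigoplus_{x,y}\Hom_{H_T(X^{\leqslant w})}\bigl(IH_T(X^{[x,w]}),IH_T(X^{[y,w]})\bigr)^{\op}$, and by Proposition~\ref{prop:IC}$(b)$ the latter equals $\bigoplus_{x,y}\Ext_{\bfD_T(X^{\leqslant w})}\bigl(IC_T(X^{[x,w]}),IC_T(X^{[y,w]})\bigr)^{\op}$. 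Second, applying $k\otimes_S(-)$ and Proposition~\ref{prop:IC}$(d)$ turns this into $\bigoplus_{x,y}\Ext_{\bfD(X^{\leqslant w})}\bigl(IC(X^{[x,w]}),IC(X^{[y,w]})\bigr)^{\op}$. Third, Proposition~\ref{prop:IC}$(c)$ rewrites this as $\bigoplus_{x,y}\Hom_{H(X^{\leqslant w})}\bigl(IH(X^{[x,w]}),IH(X^{[y,w]})\bigr)^{\op}$, and Proposition~\ref{prop:IH}$(a)$,$(c)$ recognises it as $\End_{{}^w\!Z^\vee_{\mu}}\bigl({}^w\!\bar B^\vee_{\mu,-}\bigr)^{\op}$. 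Concatenating the three identifications yields the asserted isomorphism.

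The step needing genuine care, and where the cited propositions are doing the real work, is to check that every identification above is an isomorphism of graded $k$-algebras in a mutually compatible way, not merely of graded vector spaces or modules. This comes down to three points: that the cohomology functors $H$ and $H_T$ are fully faithful on the additive subcategories of $\bfD(X^{\leqslant w})$ and $\bfD_T(X^{\leqslant w})$ generated by the $IC$'s, which is Proposition~\ref{prop:IC}$(b)$,$(c)$ and rests on the equivariant formality and parity vanishing provided by Lemma~\ref{lem:good} and Proposition~\ref{prop:IC/ICT/IH}; that the reduction isomorphism of Proposition~\ref{prop:IC}$(d)$ is induced by the monoidal forgetful functor $\bfD_T\to\bfD$ and so respects Yoneda composition; and that the module identifications of Proposition~\ref{prop:IH} intertwine the $H_T(X^{\leqslant w})$- and $H(X^{\leqslant w})$-actions and respect the grading normalisations, both because these identifications come from the Braden--MacPherson construction via GKM localization and because the normalisation ${}^w\!\bar B^\vee_{S,\mu,-}(x)_x=S\{-l(x)\}$ is compatible with the chosen shift of $IC_T$. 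Once these compatibilities are secured the chain of equalities is formal, and no further computation is required.
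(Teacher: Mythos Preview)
Your proof is correct and follows exactly the route the paper intends: the paper's proof reads in full ``Apply Propositions~\ref{prop:IH}, \ref{prop:IC}'', and you have simply unpacked that one-liner, running the chain
\[
{}^w\!\bar A_{\mu,-}^\vee
\;\overset{\text{def}}{=}\;
k\End_{{}^w\!Z^\vee_{S,\mu}}\bigl({}^w\!\bar B^\vee_{S,\mu,-}\bigr)^{\op}
\;\overset{\ref{prop:IH}(a,b),\,\ref{prop:IC}(b)}{=}\;
k\Ext_{\bfD_T}
\;\overset{\ref{prop:IC}(d)}{=}\;
\Ext_{\bfD}
\;\overset{\ref{prop:IC}(c),\,\ref{prop:IH}(a,c)}{=}\;
\End_{{}^w\!Z^\vee_{\mu}}\bigl({}^w\!\bar B^\vee_{\mu,-}\bigr)^{\op}.
\]
Your remarks on why the identifications are compatible with the graded algebra structure (functoriality of $H$, $H_T$, and of the forgetful functor $\bfD_T\to\bfD$) are the right justifications and are implicit in the paper's citation.
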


\vspace{.5mm}

\begin{proof}
Apply Propositions \ref{prop:IH},  \ref{prop:IC}.
\end{proof}

\vskip3cm

\end{document}